\documentclass[reqno]{amsart}

\usepackage{graphicx}
\usepackage{amssymb}
\usepackage{xcolor}
\usepackage{enumitem}
\usepackage{etoolbox}
\usepackage{natbib}
\usepackage{mleftright}
\usepackage{mathtools}
\usepackage{hyperref}
\usepackage{lipsum}

\newtheorem{theorem}{Theorem}[section]

\newtheorem{lemma}[theorem]{Lemma}
\newtheorem{proposition}[theorem]{Proposition}
\newtheorem{corollary}[theorem]{Corollary}
\newtheorem{remark}[theorem]{Remark}

\DeclareMathOperator*{\supp}{supp}

\numberwithin{equation}{section}
\hypersetup{colorlinks=false}
\mathtoolsset{showonlyrefs} 
\allowdisplaybreaks[2]
\mleftright

\begin{document}
\title[Global well-posedness and decays for viscous $\beta$-plane equations]{Global well-posedness and temporal decay estimates for the viscous $\beta$-plane equations} 
\author[T. Yoshizawa]{Tomoaki Yoshizawa}
\address{Graduate School of Mathematical Sciences, The University of Tokyo, 3-8-1 Komaba, Meguro-ku, Tokyo, 153-8914, Japan.}
\email{tomoaki@ms.u-tokyo.ac.jp}
\keywords{viscous $\beta$-plane equations, global well-posedness, smoothing effect, temporal decay estimates, large time asymptotics.}
\subjclass[2020]{76D03, 76U65, 35B40, 35Q86}
\maketitle
\begin{abstract}
We consider the Cauchy problem of the viscous $\beta$-plane equations.
We first establish the global well-posedness of the system for the initial data sufficiently small compared to the Rossby parameter.
The smoothing effect of the flow is then proved, and it is shown that the obtained global solution satisfies the equation in the classical sense.
We also reveal that under some additional assumptions, the solution decays as fast as the corresponding linear solution and asymptotically behaves like a constant multiple of the integral kernel of the linearized equation. In particular, the decay rate of the solution is faster than expected from the flow of the heat equation.
\end{abstract} 
\tableofcontents


\section{Introduction} \label{sect:1}
In this article, we consider the Cauchy problem of the viscous $\beta$-plane equations given by
	\begin{align}
		\begin{dcases*}
			\partial_t\omega+(u\cdot \nabla)\omega =\Delta \omega+\beta L_1\omega & in $(0, \infty)\times\mathbb R^2$, \\
			u=\nabla^\perp(-\Delta)^{-1}\omega 	&in $(0, \infty)\times\mathbb R^2$, \\
			\omega(0, \cdot)=\omega_0 & in $\mathbb R^2, $
		\end{dcases*}\label{eq:1.8}
	\end{align}
where
	$\omega=\omega(t, x)\colon(0,\infty)\times\mathbb R^2\to\mathbb R$
 is the unknown scalar vorticity and the operator
	$L_1$
is defined by
	$L_1\coloneqq \partial_1(-\Delta)^{-1}.$
The system describes the dynamics of the two-dimensional fluids under the influence of the Coriolis force whose magnitude varies with respect to latitude. The Rossby parameter
	$\beta\in\mathbb R$
corresponds to the rate of change of the Coriolis parameter.
The aim of this paper is to study the global well-posedness of the system \eqref{eq:1.8} and related properties of the equation, such as smoothing effect, and temporal decay estimates of the solutions. For the research on the inviscid model, we refer to \cite{EW17} and \cite{PW18}.

Let us first consider the global well-posedness of \eqref{eq:1.8}. By the Duhamel formula, 
the system can be rewritten to the integral equation which is formally equivalent to the original problem \eqref{eq:1.8}: 
	\begin{align}
		\omega(t)
		=T_\beta(t)\omega_0-\int_0^tT_\beta(t-\tau)\operatorname{div}(u(\tau)\omega(\tau))\, d\tau.\label{eq:1.9}
	\end{align}
Here, the operator 
	$T_\beta(t)$
denotes the linear propagator defined by
	$T_\beta(t)=e^{t(\Delta+\beta L_1)}.$
To be precise, we show the unique existence of the global solution of the integral equation \eqref{eq:1.9}.
Before stating our first main theorem on this matter, let us briefly recall known results on the systems related to \eqref{eq:1.8}.
If we set
	$\beta=0$, 
the system \eqref{eq:1.8} coincides with the usual two-dimensional Navier--Stokes equations in vorticity formulation.
For this system, Giga, Miyakawa, and Osada~\cite{GMO88} show the existence of the global solution when the initial vorticity
	$\omega_0$
lies in the space of the finite Radon measures in
	$\mathbb R^2.$
The uniqueness of the solution for general initial data in that space is later complemented by Gallagher and Gallay~\cite{GG05}. See also \cite{BA94}, \cite{Bre94} and \cite{Kat94} for the related discussions on the global well-posedness of the system.
	
It is also of note that the linear part on the right-hand side of the first equation in \eqref{eq:1.8} involves both of the operators with dissipative and dispersive nature, namely, 
	$\Delta$
and
	$L_1.$
For such systems, the linearized solution sometimes exhibits faster decays compared to those expected from the heat equation, and the resulting estimates may benefit us when attempting to establish the global well-posedness of the nonlinear problem. Examples of such results are those of the incompressible Navier--Stokes equations with the Coriolis force in
	$\mathbb R^3$
given by
	\begin{align}
		\begin{dcases*}
			\partial_tu+(u\cdot \nabla)u +\nabla p-\Delta u+\Omega e_3\times u=0 & in $(0, \infty)\times\mathbb R^3$, \\
			\operatorname{div} u=0 &in $(0, \infty)\times\mathbb R^3$, \\
			u(0, \cdot)=u_0 & in $\mathbb R^3,$
		\end{dcases*}\label{eq:1.8'}
	\end{align}
where 
	$u=u(t, x)\colon (0, \infty)\times\mathbb R^3\to\mathbb R^3$
and
	$p=p(t, x)\colon (0, \infty)\times\mathbb R^3\to\mathbb R$
are unknown functions, 
	$e_3\coloneqq (0, 0, 1)\in\mathbb R^3$, 
and
	$\Omega\in\mathbb R.$
In \cite{CDGG02} and \cite{CDGG06}, Chemin, Desjardins, Gallagher, and Grenier show that for any initial velocity
	$u_0\in L^2(\mathbb R^3)^3+ H^\frac12(\mathbb R^3)^3$
there is a constant
	$C>0$
such that if
	$|\Omega|\geq C, $
the system \eqref{eq:1.8'} admits the unique global solution.
Subsequently, Iwabuchi and Takada~\cite{IT13} prove the unique existence of the global solution of \eqref{eq:1.8'} in
	$\dot H^{s}(\mathbb R^3)$
for
	$\frac12<s<\frac34, $
if the initial data
	$u_0$
satisfies
	$\left\|u_0\right\|_{\dot H^s}\leq C|\Omega|^{\frac s2-\frac14} $
for a constant
	$C>0$
independent of
	$u_0$
and
	$\Omega.$
The range of 
	$s$
is later extended to
	$\frac12<s<\frac9{10}$
by Koh, Lee, and Takada~\cite{KLT14}.
Later on, Ahn, Kim, and Lee~\cite{AKL22} generalize the result and show the global well-posedness in the case where the Laplacian is replaced by the fractional one
	$(-\Delta)^\alpha $
for
	$\frac12<\alpha<\frac52.$
We also refer to \cite{AKL21} and \cite{Kim22} for the study of the incompressible magnetohydrodynamics (MHD) equations in the rotational frame of reference.

Now we give the precise statement of our first main theorem. We establish the global well-posedness of the system~\eqref{eq:1.8} in a framework similar to those of~\cite{IT13} and its subsequent works.
\begin{theorem}\label{thm:1.1}
	Let
		$\beta\neq0, 2<p_1, p_2, r_1<\infty, 2\leq r_2<\infty$, 
	and
		$0\leq\delta\leq\frac15$
	satisfy
		\begin{align}
			&\delta <\min\left\{\frac2{p_1}, \frac2{p_2}\right\}, 
			\quad\max\left\{1-\frac1{p_1}, 1-\frac1{p_2}\right\}\leq \frac1{p_1}+\frac{1}{p_2}-\frac\delta2\label{eq:1.2}, \\
			&\max\left\{\frac12-\frac{1}{p_1}+\frac\delta2-\frac32\left(1-\frac2{p_2}\right), \frac{1}{2}\left(1-\frac2{p_1}\right)\right\}
			\leq\frac{1}{r_1}
			\leq \min\left\{\frac12-\frac{1}{p_1}+\frac\delta2, \frac54\left(1-\frac2{p_1}\right)\right\}, \notag\\
			&1-\frac1{p_2}+\frac\delta2-\frac32\left(1-\frac2{p_1}\right)
			\leq\frac1{r_2}
			\leq 1-\frac{1}{p_2}+\frac\delta2\label{eq:1.4}
		\end{align}
	and either
		\begin{align}
			r_2>2\quad\text{and}\quad\frac{1}{2}\left(1-\frac2{p_2}\right)\leq\frac{1}{r_2}\leq\frac{5}{4}\left(1-\frac2{p_2}\right)\label{eq:1.5}
		\end{align}
	or
		\begin{align}
			r_2=2\quad\text{and}\quad\frac{1}{2}\left(1-\frac2{p_2}\right)<\frac{1}{2}<\frac{5}{4}\left(1-\frac2{p_2}\right).\label{eq:1.6}
		\end{align}
	Then there is a constant
		$C_0>0$
	such that for any
		$\omega_0\in \dot H^{-1+\delta}\cap\dot H^\delta$
	with
		\begin{align}
			\left|\beta\right|^{-\frac\delta3}\left\|\omega_0\right\|_{\dot H^{-1+\delta}}+\left|\beta\right|^{-\frac{1+\delta}3}\left\|\omega_0\right\|_{\dot H^{\delta}}
			\leq C_0, \label{eq:1.7}
		\end{align}
	there exists a unique solution 
		$\omega\in C([0, \infty); \dot H^{-1+\delta})\cap L^{r_1}([0, \infty); \dot W^{-1+\delta, p_1})\cap L^{r_2}([0, \infty); \dot W^{\delta, p_2})$
	of the equation \eqref{eq:1.9}.
\end{theorem}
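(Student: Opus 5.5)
The plan is a contraction-mapping argument for \eqref{eq:1.9} in the space
\[
X\coloneqq C([0,\infty);\dot H^{-1+\delta})\cap L^{r_1}(0,\infty;\dot W^{-1+\delta,p_1})\cap L^{r_2}(0,\infty;\dot W^{\delta,p_2}),
\]
equipped with a norm weighted by powers of $|\beta|$ that match \eqref{eq:1.7}. Two kinds of linear estimates for $T_\beta(t)$ come first: dispersive estimates and Strichartz estimates.

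\textbf{Step 1: dispersive estimate.} The symbol of $\Delta+\beta L_1$ is $-|\xi|^2+i\beta\xi_1/|\xi|^2$. Write $T_\beta(t)=e^{t\Delta}e^{i\beta t\xi_1/|\xi|^2}$. Localize with a Littlewood--Paley decomposition at frequency $2^j$. The phase $\xi_1/|\xi|^2$ has a Hessian that degenerates only in one direction, so stationary phase should give
\[
\|e^{it\beta\xi_1/|\xi|^2}\Delta_j f\|_{L^\infty}\lesssim (|\beta| t)^{-1/2}\,2^{3j}\|f\|_{L^1},
\]
or a similar bound with decay $t^{-1/2}$. The heat factor then gives additional decay and damps high frequencies.

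\textbf{Step 2: Strichartz estimates.} Interpolate with the $L^2$ bound and apply the $TT^*$ argument in the Keel--Tao form. This gives
\[
\|T_\beta(\cdot)f\|_{L^r_t\dot W^{s,p}}\lesssim|\beta|^{-\theta}\|f\|_{\dot H^{s'}},
\]
with an inhomogeneous counterpart for the Duhamel term, for admissible triples. I expect the ranges $\tfrac12(1-\tfrac2p)\le\tfrac1r\le\tfrac54(1-\tfrac2p)$ to be exactly these admissible ranges. The lower end is the dispersive Strichartz range, and the upper end is obtained by using heat smoothing to trade derivatives. The endpoint case $r_2=2$ in \eqref{eq:1.6} requires strict inequalities, which is consistent with avoiding the Keel--Tao endpoint. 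The powers $|\beta|^{-\delta/3}$ and $|\beta|^{-(1+\delta)/3}$ in \eqref{eq:1.7} should follow from scaling. The equation is invariant under $\omega_\lambda(t,x)=\lambda^2\omega(\lambda^2t,\lambda x)$ provided $\beta\mapsto\lambda^3\beta$. I would therefore first reduce to $|\beta|=1$ by this scaling and prove everything with $\beta=\pm1$.

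\textbf{Step 3: bounding the nonlinear term.} Write $\operatorname{div}(u\omega)$ and estimate the Duhamel integral in each component of $X$.
\begin{itemize}
\item For the $C(\dot H^{-1+\delta})$ part, use the dual (inhomogeneous) Strichartz estimate. Bound $\|u\omega\|$ in a dual Lebesgue norm by Hölder and Sobolev. The velocity satisfies $u\in L^{r_1}\dot W^{\delta,p_1}$, since $u=\nabla^\perp(-\Delta)^{-1}\omega$ gains one derivative over $\omega\in L^{r_1}\dot W^{-1+\delta,p_1}$. The vorticity satisfies $\omega\in L^{r_2}\dot W^{\delta,p_2}$. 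A fractional Leibniz rule then handles the $\delta$ derivatives.
\item The time exponents must satisfy $\tfrac1{r_1}+\tfrac1{r_2}=$ the dual time exponent. The space exponents must match through Sobolev embedding, which requires the gap $\tfrac1{p_1}+\tfrac1{p_2}-\tfrac\delta2$. The conditions \eqref{eq:1.2} and \eqref{eq:1.4} are exactly what makes Hölder and Sobolev close.
\item For the $L^{r_1}$ and $L^{r_2}$ components, use the inhomogeneous Strichartz estimates in the non-endpoint Christ--Kiselev form. Where the dispersive range fails, use instead the heat-kernel smoothing together with Hardy--Littlewood--Sobolev in time.
\end{itemize}
This yields
\[
\|\Phi(\omega)\|_X\le C\|\omega_0\|+C\|\omega\|_X^2,
\qquad
\|\Phi(\omega)-\Phi(\tilde\omega)\|_X\le C(\|\omega\|_X+\|\tilde\omega\|_X)\|\omega-\tilde\omega\|_X .
\]
Choosing $C_0$ small then gives a contraction on a ball, hence existence and uniqueness. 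Time continuity in $\dot H^{-1+\delta}$ follows by the standard argument.

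\textbf{Main obstacle.} I expect the hardest step to be Step 1: the dispersive and Strichartz estimates with the precise frequency-dependent constants, especially the low-frequency behaviour where $\xi_1/|\xi|^2$ is singular. I would handle it by rescaling each dyadic block to unit frequency, which gives a time scale $2^{-3j}|\beta|^{-1}$. On each block, combine the $t^{-1/2}$ stationary-phase bound, coming from the single nondegenerate curvature direction, with the heat decay $e^{-ct2^{2j}}$. Summing over blocks afterwards, the restriction $\delta\le\frac15$ should appear when making the dyadic sums converge.
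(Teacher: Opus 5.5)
The gap is in Step~1, and everything downstream inherits it. The phase $\xi_1/|\xi|^2$ is not degenerate in any direction. It equals $\operatorname{Re}(1/z)$ with $z=\xi_1+i\xi_2$, so it is harmonic, and its Hessian has determinant $-|2z^{-3}|^2=-4|\xi|^{-6}\neq0$. Two-dimensional stationary phase therefore gives decay $t^{-1}$, not $t^{-1/2}$. After rescaling $\xi=2^k\eta$, the phase at frequency $2^k$ reads $t\beta2^{-k}\eta_1/|\eta|^2$. The correct bound is therefore $\|e^{t\beta L_1}P_kf\|_{L^\infty}\lesssim2^{2k}(2^{-k}|\beta t|)^{-1}\|P_kf\|_{L^1}=2^{3k}|\beta t|^{-1}\|P_kf\|_{L^1}$, which is Lemma~\ref{lem:2.1} (taken from Elgindi--Widmayer and Pusateri--Widmayer). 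Your bound $(|\beta|t)^{-1/2}2^{3j}$ is not even consistent with the scaling $\beta\mapsto\lambda^3\beta$ that you invoke.

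The rate is what produces the numerology of the theorem. Combining $\min\{2^{2k},2^{3k}|\beta t|^{-1}\}e^{-ct2^{2k}}$ with the $L^2$ bound gives the $L^{p'}\to L^p$ decay $t^{-(1-\frac2p)}\min\{1,|\beta|^{-1+\frac2p}t^{-\frac32(1-\frac2p)}\}$. The $TT^*$ kernel then lies in $L^{r/2}$ (weak $L^{r/2}$ at the endpoints) precisely when $\frac12(1-\frac2p)\le\frac1r\le\frac54(1-\frac2p)$. The lower end comes from the short-time singularity; the heat flow alone allows only that line. Everything above it comes from the long-time dispersive gain $|\beta|^{-1+\frac2p}t^{-\frac32(1-\frac2p)}$. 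The same factor yields the terms $-\frac32(1-\frac2{p_j})$ in the lower bounds on $\frac1{r_1},\frac1{r_2}$, via Young or HLS in time (Proposition~\ref{prop:2.5}). With a $t^{-1/2}$ rate, correctly scaled as $2^{5k/2}|\beta t|^{-1/2}$, you would get $\frac78$ and $\frac34$ in place of $\frac54$ and $\frac32$. That proves the theorem only on a strict subset of its hypotheses. Also, the proof never uses $\delta\le\frac15$; by Remark~\ref{rem:1.2} it only guarantees that admissible exponents exist.

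A second, fixable problem concerns the $\dot H^{-1+\delta}$ component. You put $C([0,\infty);\dot H^{-1+\delta})$ with a sup-in-time norm into the contraction space and bound it by dual Strichartz with the $Y_1\times Y_2$ pairing. The dual pair is then $\frac1{\tilde r}=1-\frac1{r_1}-\frac1{r_2}$, $\frac1{\tilde p}=1-\frac1q$, with $\frac1q=\frac1{p_1}+\frac1{p_2}-\frac\delta2$. This pair lies outside the Strichartz range of Proposition~\ref{prop:2.3}. For the family $p_1=p_2$, $\frac1p=\frac13+\frac\delta6$ of Remark~\ref{rem:1.2}, one finds $\frac1{\tilde r}\ge\frac{5+\delta}{12}>\frac{5(1-\delta)}{12}=\frac54(1-\frac2{\tilde p})$ for every $\delta>0$. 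A Christ--Kiselev treatment of the $Y_1$ component with this pairing runs into the same dual pair.

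None of this is needed. The paper runs the fixed point in $Y_1\cap Y_2$ only and estimates the Duhamel term by your fallback:
\begin{itemize}
\item write $T_\beta(t-\tau)=T_{2\beta}(\frac{t-\tau}2)e^{\frac{t-\tau}2\Delta}$;
\item use the heat half to pass from $L^q$ to $L^{p_j'}$, which is where $\max\{1-\frac1{p_1},1-\frac1{p_2}\}\le\frac1q$ is used, and, for $Y_2$, to absorb the divergence;
\item apply the pointwise $L^{p_j'}\to L^{p_j}$ decay;
\item close with Young or HLS in time.
\end{itemize}
Membership in $\dot H^{-1+\delta}$ and continuity are then proved from heat smoothing alone. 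The bound grows like $t^{\frac32-\frac1{r_1}-\frac1{r_2}-\frac1q}$, and this exponent is positive thanks to the upper bounds on $\frac1{r_1},\frac1{r_2}$ and to $p_2\le4$, a consequence of \eqref{eq:1.2}. That is all the theorem asserts.
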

\begin{remark}\label{rem:1.2'}
\rm{Although the system~\eqref{eq:1.8} describes the motions of two-dimensional fluids, 
we are forced to impose a smallness condition for the initial data
	$\omega_0$
to obtain the global solution, since this is necessary to exploit the dispersive nature of the operator
	$L_1$
in our framework.
This stands in strong contrast to the previous works on the two-dimensional viscous fluids such as~\cite{GMO88}, where the smallness condition is unnecessary to show the global well-posedness.}
\end{remark}

\begin{remark}\label{rem:1.2''}\rm{
In the case of the Navier--Stokes equations with the Coriolis force in
	$\mathbb R^3, $
the divergence-free condition on the velocity field
	$u$
allows us to rewrite the nonlinear term to
	$(u\cdot \nabla)u
	=\operatorname{div}(u\otimes u), $
where the nonlinearity appears as a product of two terms without derivatives, namely, 
	$u\otimes u.$
By contrast, in our setting the nonlinear term is given by
	$(u\cdot \nabla)\omega
	=\operatorname{div}(u\omega), $
and therefore we have to deal with the product of two unknown functions where one factor has at least one more derivative than the other. Here, the one-derivative difference between the two factors is critical in that the failure of the embedding
	$\dot H^{1}(\mathbb R^2)\not\hookrightarrow L^\infty(\mathbb R^2)$
causes several difficulties in handling the nonlinear estimate of the equation. This structure of the nonlinearity is the main reason why we need some technicalities which do not appear in the analysis of \cite{IT13}, \cite{KLT14} and \cite{AKL22}, such as the additional regularity of the initial data
	$\omega_0\in \dot H^\delta$
and the use of the two auxiliary space-time norms, i.e., those of 
	$L^{r_1}([0, \infty); \dot W^{-1+\delta, p_1})$
and
	$L^{r_2}([0, \infty); \dot W^{\delta, p_2}).$
}\end{remark}
\begin{remark}\label{rem:1.2}\rm
	The set of all
		$\mathcal A\coloneqq (\delta, p_1, r_1, p_2, r_2)$
	which satisfies the conditions in Theorem~\ref{thm:1.1} is not empty if
		$0\leq\delta\leq\frac15.$
	In fact, for such choice of
		$\delta, $
	it is possible to define an exponent
		$p$
	by the formula
		$\frac1p=\frac13+\frac\delta6$
	and set
		$p_1=p_2=p.$
	In that case, the assumptions on
		$(\delta, p_1, p_2)$
	in Theorem~\ref{thm:1.1} are always satisfied, and the conditions on
		$(r_1, r_2)$
	with
		$2<r_2<\infty$
	are equivalent to
		\begin{align*}
			\frac{1-\delta}6
			\leq\frac1{r_1}
			\leq \frac{1+2\delta}6
			\quad\text{and}\quad
			\frac{1+5\delta}6
			\leq\frac1{r_2}
			\leq\frac5{12}(1-\delta).
		\end{align*}
	The set of
		$r_1$
	and
		$r_2$
	satisfying this condition is not empty, and hence nor is the set of
		$\mathcal A$
	with the conditions in Theorem~\ref{thm:1.1}.
\end{remark}
\begin{remark}\label{rem:1.3}\rm
	Let
		$\lambda >0$.
	If a pair
		$(\omega, \beta)$
	gives a solution to \eqref{eq:1.8}, so does
		$(\omega_\lambda, \beta_\lambda)$
	defined by
		\begin{align}
			\omega_\lambda(t, x)\coloneqq\lambda^2\omega(\lambda^2 t, \lambda x)
			\quad\text{and}\quad
		\beta_\lambda\coloneqq \lambda^3\beta.\label{eq:1.10}
		\end{align}
	The condition \eqref{eq:1.7} is invariant under this scaling transformation, and several quantities such as 
	$|\beta|^{-\frac{1+s}{3}}\left\|\omega_0\right\|_{\dot H^s}, s\in\mathbb R, $
which later appear in this article, also satisfy the same scaling invariance condition.
\end{remark}

We next go on to the discussion on the smoothing effect of the flow by the equation~\eqref{eq:1.8}. As for the related systems, Ahn--Kim--Lee~\cite{AKL22} point out that in the case of the incompressible fractional Navier--Stokes equations with the Coriolis force, the obtained global solution is smooth away from the time
	$t=0.$
Kim~\cite{Kim22} observes similar regularizing effect for the MHD equations with the Coriolis force. Both of these results are established via the energy estimates under a Serrin-type condition on the solutions.

In our second main theorem, we show that with a generic choice of the exponents
	$\mathcal A, $
the solution of the integral equation \eqref{eq:1.9} is also instantaneously regularized and turns out to be a classical solution of \eqref{eq:1.8}. We should note that in our framework, this is achieved not through the energy estimate of the differential equation~\eqref{eq:1.8}, but by the direct estimates of the integral equation~\eqref{eq:1.9}.
\begin{theorem}\label{thm:1.2}
	Let
		$\beta\neq 0$
	be given
	and
		$\mathcal A = (\delta, p_1, r_1, p_2, r_2)$
	satisfy the assumption of Theorem~\ref{thm:1.1}. Assume also that
		\begin{align}
			\frac{1}{r_1}<\frac12-\frac{1}{p_1}+\frac\delta2
			\quad\text{and}\quad
			\frac{1}{r_2}<1-\frac{1}{p_2}+\frac\delta2.\label{eq:1.11}
		\end{align}
	Then for any
		$m\geq 0$
	the solution 
		$\omega$
	obtained in Theorem~\ref{thm:1.1} satisfies
		$\omega\in C((0, \infty); \dot H^{-1+\delta+m})\cap C^{1}((0, \infty); \dot H^{\delta+m})$
	and 
	gives a classical solution of 
	 \eqref{eq:1.8} 
	for
		$t>0$. 
	Moreover, if the initial data satisfy
		$\omega_0\in \dot H^{-1+\delta}\cap \dot H^{\delta+m}$
	for some
		$m\geq0$, 
	then the solution
		$\omega$
	satisfies
		$\omega\in C([0, \infty); \dot H^{-1+\delta+m})$.
\end{theorem}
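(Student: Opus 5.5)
The plan is to bootstrap regularity directly from the integral equation \eqref{eq:1.9}, using weighted-in-time norms on half-derivative increments. For $m\ge 0$ I introduce
\[
X_m(T)\coloneqq \sup_{0<t<T} t^{\frac m2}\|\omega(t)\|_{\dot H^{-1+\delta+m}}+\bigl\|t^{\frac m2}\omega\bigr\|_{L^{r_1}(0,T;\dot W^{-1+\delta+m,p_1})}+\bigl\|t^{\frac m2}\omega\bigr\|_{L^{r_2}(0,T;\dot W^{\delta+m,p_2})}.
\]
I first prove $X_m(\infty)<\infty$ for all $m\in[0,\frac12]$, and then iterate in steps of size $\frac12$.

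\textbf{Linear term.} The symbol of $T_\beta(t)$ is $e^{-t|\xi|^2}e^{i\beta t\xi_1/|\xi|^2}$. The dispersive factor has modulus one, so
\[
t^{\frac m2}\|T_\beta(t)\omega_0\|_{\dot H^{-1+\delta+m}}\le C\|\omega_0\|_{\dot H^{-1+\delta}}.
\]
For the auxiliary Strichartz-type norms I will reuse the linear estimates behind Theorem~\ref{thm:1.1}. The extra weight $t^{m/2}|\xi|^m$ is absorbed by the heat factor via $\sup_t (t|\xi|^2)^{m/2}e^{-t|\xi|^2/2}\le C$, at the cost of using only half of the heat decay. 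This is exactly where the strict inequalities \eqref{eq:1.11} enter. Adding $m$ derivatives with a weight $t^{m/2}$ keeps the scaling, but the time-integrability exponent must have slack. With equality in \eqref{eq:1.1}-type endpoints, the Hardy–Littlewood–Sobolev step in time used for the Duhamel term becomes critical and would fail once weights are inserted.

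\textbf{Duhamel term.} I split $\int_0^t=\int_0^{t/2}+\int_{t/2}^t$.

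On $(0,t/2)$, all $m$ derivatives fall on the kernel. Since $t-\tau\sim t$, this produces a factor $t^{-m/2}$ against the unweighted bilinear estimate of Theorem~\ref{thm:1.1}.

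On $(t/2,t)$, I have $\tau\sim t$, so the weight transfers to $\tau^{m/2}$. I distribute derivatives on the product $u\omega$ by the fractional Leibniz rule, with $u=\nabla^\perp(-\Delta)^{-1}\omega$ gaining one derivative. Then I apply the same bilinear and time-convolution estimates as in Theorem~\ref{thm:1.1}, with one factor in the weighted norm and the other in the unweighted norm, which is bounded by $C\|\omega_0\|$-type quantities via \eqref{eq:1.7}. This gives
\[
X_m\le C\|\omega_0\|+C\,X_0\,X_m .
\]
Since $X_0$ is small, $X_m$ is finite. To avoid a priori finiteness issues I will run this on $(0,T)$, or within the contraction argument itself.

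\textbf{Iteration.} Replacing the initial time by $t_0>0$ with data $\omega(t_0)\in\dot H^{-1+\delta+m}$ (and $\dot H^{\delta+m}$), the same argument raises regularity by another $\frac12$. After finitely many steps I obtain $\omega\in C((0,\infty);\dot H^{-1+\delta+m})$ for every $m$. Continuity in time follows from strong continuity of $T_\beta$ on $\dot H^s$ together with dominated convergence in the Duhamel integral.

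\textbf{Time derivative.} Differentiating \eqref{eq:1.9} for $t>0$ gives $\partial_t\omega=\Delta\omega+\beta L_1\omega-\operatorname{div}(u\omega)$. The right-hand side lies in $C((0,\infty);\dot H^{\delta+m})$. The operator $L_1$ is of order $-1$, so $L_1\omega\in\dot H^{\delta+m}$ from $\omega\in\dot H^{-1+\delta+m}$. The product $u\omega$ is handled by Sobolev algebra properties at high regularity, combining $\dot H^{-1+\delta}$ with high $\dot H^s$ to control the low frequencies. Sobolev embedding for large $m$ then yields a classical solution.

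\textbf{Regular data.} If $\omega_0\in\dot H^{\delta+m}$, I repeat the estimates without weights. The data term is then continuous up to $t=0$ in $\dot H^{-1+\delta+m}$, and the Duhamel term tends to $0$ as $t\to0$.

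\textbf{Main obstacle.} I expect the difficulty to be the weighted bilinear estimate on $(t/2,t)$, and especially the low-frequency part of $u$. Because of the failure of $\dot H^1\hookrightarrow L^\infty$, the $\dot W^{\delta,p_2}$ and $\dot W^{-1+\delta,p_1}$ norms must be paired carefully, as in Theorem~\ref{thm:1.1}. Making the time-HLS exponents work with the weight is what consumes the strict inequalities \eqref{eq:1.11}.
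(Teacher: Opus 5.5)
Your core step, $X_m\le C\|\omega_0\|+CX_0X_m$ followed by ``since $X_0$ is small, $X_m$ is finite'', has a genuine gap. Absorbing $CX_0X_m$ needs two things you do not have.

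First, you need $X_m<\infty$ a priori. Restricting to $(0,T)$ does not help: the obstruction is that $\omega(t)$ is not yet known to have any extra derivatives, not its large-time behaviour. Incidentally, the component $\sup_{t>0}\|\omega(t)\|_{\dot H^{-1+\delta}}$ of $X_0$ is not controlled globally by Theorem~\ref{thm:1.1}, whose bound \eqref{eq:3.33} grows in $t$.

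Second, you need $CX_0<1$ for your own constant $C$. That constant contains the fractional Leibniz constant at order $\delta+m$, the factor $2^{m/2}$ from $t\le 2\tau$, and the extra heat splitting. It is not the constant of Theorem~\ref{thm:1.1}, so even the first step only works after shrinking $C_0$ (by re-running the contraction in a weighted space). Since these constants grow with the regularity level, one fixed smallness condition \eqref{eq:1.7} cannot be used for absorption at every level.

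The restart at $t_0$ does not repair this. Re-solving from $t_0$ by contraction would require \eqref{eq:1.7} for $\omega(t_0)$, which is unknown: Theorem~\ref{thm:1.1} does not even give $\omega(t_0)\in\dot H^\delta$, and its $\dot H^{-1+\delta}$ bound grows in $t$. It would also require $\omega(t_0)\in\dot H^{\delta+m}$, which your previous step does not provide, since it controls only $\dot H^{-1+\delta+m}$ pointwise in time. Running the restart as an a priori estimate instead brings back both problems above. Your regular-data case has the same issue.

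The missing idea is what \eqref{eq:1.11} is actually for, and your diagnosis of it is off. Inserting the weight does not affect the Hardy--Littlewood--Sobolev step, because on $(t/2,t)$ the weight simply moves to $\tau$ and the time kernel is unchanged. The strict inequalities are needed to let the \emph{propagator} carry a small number of extra derivatives on $(t/2,t)$.

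Choose $\alpha>0$ as in \eqref{eq:4.1}. Then the kernel $(t-\tau)^{-\frac1{p_2}+\frac\delta2-\frac\alpha2}\min\{1,|\beta|^{-1+\frac2{p_1}}(t-\tau)^{-\frac32(1-\frac2{p_1})}\}$ is still in $L^{r_2'}$, and the analogous kernel is in $L^{r_1'}$. Consequently:
\begin{itemize}
\item the norms of $\omega$ in $L^{r_1}([\varepsilon,\infty);\dot W^{-1+\delta+(k+1)\alpha,p_1})$ and $L^{r_2}([\varepsilon,\infty);\dot W^{\delta+(k+1)\alpha,p_2})$ are bounded by the level-$k\alpha$ norms on $[\varepsilon/2,\infty)$ times $\|\omega\|_{Y_1}$ and $\|\omega\|_{Y_2}$;
\item the linear term and the integral over $(0,t/2)$ cost only a factor $\varepsilon^{-m/2}$.
\end{itemize}
This is a linear induction on $k$ for the given solution. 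The right-hand side is finite by the induction hypothesis, so nothing is absorbed, no extra smallness or restart is needed, and constants growing with $k$ are harmless. For data in $\dot H^{\delta+m}$, the paper runs the same derivative gain on all of $(0,t)$ and tracks constants through the recurrence $a_{k+1}=b_0a_k+a_0b_k+x$.

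Once these space--time bounds are in hand, pointwise membership in $\dot H^{-1+\delta+m}$, continuity, and the $C^1$ statement follow along the lines you sketch. The paper does the difference quotients explicitly.
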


Finally, we show the temporal decay estimates and asymptotics of the solution.
In the case of
	$\beta=0, $
temporal decay estimates of the solution
	$\omega$
	\begin{align}
		\left\|\partial_t^\ell\partial_x^\alpha\omega(t)\right\|_{L^p}
		\leq Ct^{-\ell-\frac{|\alpha|}2-1+\frac1p}\label{eq:1.7'}
	\end{align}
is obtained by \cite{Kat94} and \cite{GG99} (see also \cite{GGS10}) for the integrable initial data
	$\omega_0\in L^1, $
 where
	$\ell\in\mathbb N\cup\{0\}, $
	$\alpha\in(\mathbb N\cup\{0\})^2, $
and
	$p\in[1, \infty]$.
The estimates here imply that the solutions of the two-dimensional vorticity equations decay as fast as the solutions of the heat equation. In fact, it is even true that the obtained solution asymptotically behaves like a constant multiple of the heat kernel. Indeed, in \cite{GK88}, \cite{Car94}, \cite{GW05} and \cite{Mae08}, it is shown that the solution
	$\omega$
satisfies the asymptotic estimate
	\begin{align*}
		\lim_{t\to\infty}t^{\ell+\frac{|\alpha|}{2}+1-\frac1p}\left\|\partial_t^\ell\partial_x^\alpha\omega(t)
		-\int_{\mathbb R^2}\omega_0(y)\, dy\cdot\partial_t^\ell\partial_x^\alpha G_t\right\|_{L^p}=0
	\end{align*}
for
	$\ell\in\mathbb N\cup\{0\}$, 
	$\alpha\in(\mathbb N\cup\{0\})^2$, 
	$p\in[1, \infty]$.
Here, 
	$G_t$
denotes the Gauss kernel defined by
	$G_t(x)\coloneqq (4\pi t)^{-1}\exp(-\frac{|x|^2}{4t}).$
Our next aim is to establish similar estimates on decays and asymptotics for the solutions of the system \eqref{eq:1.8}. As in Proposition~\ref{cor:2.3}, we see that the linearized solutions of \eqref{eq:1.8} in fact decay faster than those in the case of
	$\beta=0, $
due to the dispersive effect of the operator
	$\beta L_1.$
Therefore, we may expect the solution of \eqref{eq:1.8} to decay faster than \eqref{eq:1.7'} under suitable assumptions on the initial data
	$\omega_0.$
Now our attempt is to rigorously justify this observation.

For the incompressible Navier--Stokes equations with the Coriolis force, temporal decays and asymptotics of the solutions have also been studied. For example, Ahn, Kim, and Lee~\cite{AKL22} derive the estimates for the solution
	$u$
of the type
	\begin{align*}
		\left\|u(t)\right\|_{L^p}\leq C\left\|u_0\right\|_{L^q}t^{-\frac3{2}(\frac1q-\frac1p)}(1+|\Omega|t)^{-1+\frac2p}
	\end{align*}
for exponents
	$p$
and
	$q$
satisfying
	$1<q\leq \frac p{p-1}\leq 2\leq p<3$
and several conditions relavant to the global well-posedness.
Egashira and Takada~\cite{ET23} also consider the decay estimates and show that under suitable assumptions on the initial data
	$u_0\in L^1(\mathbb R^3)\cap \dot H^s(\mathbb R^3)$
for some
	$\frac12<s<\frac9{10}, $
the solution exhibits the faster temporal decay
	\begin{align*}
		\left\|u(t)\right\|_{L^p}\leq Ct^{-\frac3{2}(1-\frac1p)}(1+|\Omega|t)^{-1+\frac2p}
	\end{align*}
for exponents
	$p$
included in an interval 
	$[2, p_*], $
where the upper bound
	$p_*$
is less than
	$3.$
They also derive asymptotic expansions of the solution
	$u(t)$
in 
	$L^p(\mathbb R^3), $
if the initial velocity fulfills the additional assumption
	$(1+|x|)u_0\in L^1(\mathbb R^3).$
The decay for the energy critical norm
	$\left\|u(t)\right\|_{\dot H^\frac12}$
is also dealt with in~\cite{IKNP24}.
As for the MHD equations with the Coriolis force, Kim~\cite{Kim22} proves that if the initial data
	$(u_0, B_0)$
is sufficiently small compared to the speed of rotation
	$|\Omega|$
in
	$L^1(\mathbb R^3)\cap H^{\frac12+\gamma}(\mathbb R^3)$
for some
	$\gamma\in (0, 1), $
the velocity field
	$u$
of the solution satisfies the faster decay estimate
	\begin{align*}
		\left\|u(t)\right\|_{L^p}\leq Ct^{-\frac32(1-\frac1p)-1+\frac2p}|\Omega|^{-1+\frac2p}
	\end{align*}
for exponents
	$p$
included in a certain subinterval of $[2, 3)$.

In Theorem~\ref{thm:1.3} stated below, we show the corresponding temporal decay estimates of the solution
 of \eqref{eq:1.8} under some suitable conditions for
	$\omega_0\in L^1$
and the related exponents
	$\mathcal A$.
More precisely, we prove that the solution
	$\omega$
obtained in Theorem~\ref{thm:1.1} decays as fast as the linear solution 
	$T_\beta(t)\omega_0$
and behaves like a constant multiple of the integral kernel of the linearized equation as
	$t\to\infty.$
New feature of our result is that temporal decays shown here are valid for all 
	$L^p$-type
norms with
	$2\leq p\leq\infty, $
which is not the case for the previous results such as \cite{AKL22},	\cite{ET23}, and \cite{Kim22}.
In the argument concerning the proof of 
Theorem~\ref{thm:1.3}, for simplicity we assume that
	$p_1$
and
	$p_2$
are defined by
	$\frac{1}{p_1}=\frac{1}{p_2}=\frac1p\coloneqq\frac13+\frac\delta6.$
Note that the set of all
	$\mathcal A=(\delta, p, r_1, p, r_2)$
 satisfying the assumption of Theorem~\ref{thm:1.1} and the condition \eqref{eq:1.11} is not empty if
	$0\leq \delta\leq \frac15$: 
See Remark~\ref{rem:1.2}.
\begin{theorem}\label{thm:1.3}
	Assume that
		$0\leq \delta<\frac1{13}$
	and
		$\mathcal A=(\delta, p, r_1, p, r_2)$
	satisfies the assumption of Theorem~\ref{thm:1.1} and the condition \eqref{eq:1.11}. Then there is a constant
		$C_{\mathcal A}>0$
	such that if
		$\omega_0\in \dot H^{-1+\delta}\cap \dot H^{3-\frac2p}\cap  L^1\cap \dot W^{-1+\delta, \frac{p}{p-1}}$
	satisfies
		\begin{align*}
			\left|\beta\right|^{-\frac{\delta}{3}}\left\|\omega_0\right\|_{\dot H^{-1+\delta}}
			+\left|\beta\right|^{-\frac{1+\delta}{3}}\left\|\omega_0\right\|_{\dot H^{\delta}}
			\leq C_{\mathcal A}, 
		\end{align*}
	then for any
		$s\geq 0$
	and
		$a\in[2, \infty]$
	there exists a positive constant
		$C$
	depending only on
		$s, $
		$\mathcal A, $
		$\left\|\omega_0\right\|_{L^1}, $
		$|\beta|^{-\frac\delta3}\left\|\omega_0\right\|_{\dot H^{-1+\delta}}$, 
		$|\beta|^{-\frac{1}{3}(\frac{15}4-\frac2p)}\left\|\omega_0\right\|_{\dot H^{\frac{11}4-\frac2p}}$
	and 
		$|\beta|^{-\frac13(1+\delta-\frac2{p'})}\left\|\omega_0\right\|_{\dot W^{-1+\delta, p'}}$, 
	satisfying
		\begin{align}
			\left\|\omega(t)\right\|_{\dot W^{s, a}}
			\leq Ct^{-\frac s2-1+\frac1a}\min\left\{1, |\beta|^{-1+\frac2a}t^{-\frac32(1-\frac2a)}\right\}\label{eq:1.12}
		\end{align}
	for all
		$t>0.$
	Moreover, if the condition
		$\frac12-\frac1p+\frac\delta2-\frac32(1-\frac2p)<\frac1{r_1}$
	holds, then it follows that
		\begin{align}
			\lim_{t\to\infty}\left(t^{-\frac s2-1+\frac1a}\min\left\{1, |\beta|^{-1+\frac2a}t^{-\frac32(1-\frac2a)}\right\}\right)^{-1}\left\|\omega(t)-K_{\beta, t}\int_{\mathbb R^2}\omega_0(y)\, dy\right\|_{\dot W^{s, a}}=0.\notag
		\end{align}
	Here, the function 
		$K_{\beta, t}\colon \mathbb R^2\to \mathbb R$
	is the integral kernel of the linearized equation defined by
		$K_{\beta, t}=e^{t\beta L_1}G_t$.
\end{theorem}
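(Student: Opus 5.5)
We will work throughout with the Duhamel formula \eqref{eq:1.9} and with the linear estimates for $T_\beta(t)$ announced in Proposition~\ref{cor:2.3}. The operator $T_\beta(t)$ is the Fourier multiplier $e^{-t|\xi|^2+it\beta\xi_1/|\xi|^2}$. Combining the heat factor with the stationary-phase (dispersive) decay of $e^{t\beta L_1}$ on dyadic frequency shells gives, for $1\le b\le 2\le a\le\infty$,
\begin{align*}
\left\|T_\beta(t)f\right\|_{\dot W^{s,a}}\le Ct^{-\frac s2-(\frac1b-\frac1a)}\min\left\{1,|\beta|^{-1+\frac2a}t^{-\frac32(1-\frac2a)}\right\}\|f\|_{L^b},
\end{align*}
at least for $b=1$ and $b=a'$. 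With $f=\omega_0\in L^1$ this is exactly the rate in \eqref{eq:1.12}. Set $\Phi_{s,a}(t)\coloneqq t^{-\frac s2-1+\frac1a}\min\{1,|\beta|^{-1+\frac2a}t^{-\frac32(1-\frac2a)}\}$. The task is to show that the Duhamel term $\int_0^tT_\beta(t-\tau)\operatorname{div}(u\omega)(\tau)\,d\tau$ obeys the same bound, and is in fact $o(\Phi_{s,a}(t))$ under the extra strict inequality on $r_1$.

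\textbf{Step 1: bootstrap quantity.} By scaling (Remark~\ref{rem:1.3}) we may reduce to $|\beta|=1$. We introduce
\begin{align*}
M(T)\coloneqq\sup_{0<t<T}\Big(\Phi_{0,1}(t)^{-1}\|\omega(t)\|_{L^1}+\Phi_{0,2}(t)^{-1}\|\omega(t)\|_{L^2}+\Phi_{0,p}(t)^{-1}\|\omega(t)\|_{L^p}+\dots\Big)
\end{align*}
together with a weighted norm of $u$ in some $L^q$, $q>2$, controlled through $\dot W^{-1+\delta,p}$ and $\dot H^{-1+\delta}$. The $L^1$ bound comes from $L^1$-contractivity of the heat part combined with $L^1$-boundedness of $e^{t\beta L_1}G_t$. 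For small times, $t\lesssim1$, we use the smoothing of Theorem~\ref{thm:1.2} together with the hypothesis $\omega_0\in\dot H^{3-\frac2p}$. This is why the constant depends on $\|\omega_0\|_{\dot H^{\frac{11}4-\frac2p}}$: it lets us control $\|\nabla\omega\|$ and the $L^\infty$-type norms near $t=0$ without singular weights. For $t\gtrsim1$ we split the Duhamel integral at $\tau=t/2$.
\begin{itemize}
\item On $(0,t/2)$ we put the derivative on the kernel and apply the $L^1\to\dot W^{s+1,a}$ estimate to $u\omega\in L^1$, which costs $\|u\|_{L^2}\|\omega\|_{L^2}$ or $\|u\|_{L^{p}}\|\omega\|_{L^{p'}}$.
\item On $(t/2,t)$ we use the $L^{a'}$- or $L^2$-based estimate and place the decay on $u\omega$.
\end{itemize}
The global space-time bounds $L^{r_1}\dot W^{-1+\delta,p}$ and $L^{r_2}\dot W^{\delta,p}$ from Theorem~\ref{thm:1.1}, together with the smallness $C_{\mathcal A}$, give the factor needed to close $M(T)\le C+C_{\mathcal A}M(T)^{\,\cdot}$, uniformly in $T$. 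The assumption $\omega_0\in\dot W^{-1+\delta,p'}$ supplies the initial control of $u$ in the dispersive space. The restriction $\delta<\frac1{13}$ should appear exactly when we check that the time integrals $\int_0^{t/2}(t-\tau)^{-\gamma}\Phi(\tau)\,d\tau$ converge and reproduce $\Phi(t)$ without logarithmic losses.

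\textbf{Step 2: all $s\ge0$ and $a\in[2,\infty]$.} Once the $L^2$ and $L^p$ rates are established, we obtain higher derivatives by iterating the Duhamel formula from $t/2$ to $t$. For $s>0$ we write $\omega(t)=T_\beta(t/2)\omega(t/2)-\int_{t/2}^t\cdots$ and use parabolic smoothing, gaining $t^{-s/2}$ each time. We obtain $a=\infty$ by Gagliardo--Nirenberg interpolation between $\dot W^{s,2}$ rates, and this preserves the form of $\Phi_{s,a}$.

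\textbf{Step 3: asymptotics.} We write
\begin{align*}
\omega(t)-mK_{\beta,t}=\big(T_\beta(t)\omega_0-mK_{\beta,t}\big)-\int_0^t\cdots,\qquad m\coloneqq\int_{\mathbb R^2}\omega_0.
\end{align*}
The first term is $o(\Phi_{s,a})$ by the standard density argument: it holds for $\omega_0\in C_c^\infty$, where the mean-value theorem gives an extra $t^{-1/2}$, and extends by the uniform $L^1$ bound. For the nonlinear term we must show that it is $o(\Phi_{s,a})$ rather than $O(\Phi_{s,a})$. Because the divergence structure produces an extra factor $(t-\tau)^{-1/2}$, the strict inequality on $1/r_1$ turns the borderline Hölder-in-time estimate into one with a positive power gain, $t^{-\epsilon}$. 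Combined with dominated convergence on $(0,t/2)$, this yields the limit. \textbf{The main obstacle} is Step 1: closing the bootstrap for the dispersive factor $\min\{1,t^{-\frac32(1-\frac2a)}\}$ despite the derivative-mismatched nonlinearity $\operatorname{div}(u\omega)$. The interval $(t/2,t)$ near $\tau=t$ must be handled in $L^{a'}$-spaces with $a'<2$, where the dispersive estimate is weakest. If that fails, I would first try routing this portion through the $\dot H^{3-\frac2p}$ regularity and Sobolev embedding.
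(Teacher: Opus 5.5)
\textbf{Main gap: the endpoint $a=\infty$.} Your Step 2 obtains $a=\infty$ by Gagliardo--Nirenberg interpolation of the $\dot W^{s,2}$ rates, and this step fails. The symbol of $e^{t\beta L_1}$ has modulus one, so $\|T_\beta(t)f\|_{\dot H^s}=\|e^{t\Delta}f\|_{\dot H^s}$. Hence the $L^2$-based rates $t^{-\frac s2-\frac12}$ carry no dispersive factor, and interpolating them yields only $\|\omega(t)\|_{\dot W^{s,\infty}}\lesssim t^{-\frac s2-1}$, missing $\min\{1,|\beta|^{-1}t^{-\frac32}\}$. Going through the $L^p$ rates ($p\le 3$) and Sobolev embedding recovers only $(|\beta|^{-1}t^{-\frac32})^{1-\frac2p}$. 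A direct $\dot W^{s,\infty}$ estimate of the Duhamel integral is therefore unavoidable, and its half over $(t/2,t)$ is exactly the obstacle you leave open.

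In the paper, the interval $(0,t/2)$ is handled as in your first bullet (Lemma~\ref{lem:5.8}). That argument needs the uniform bound $\sup_\tau\|u(\tau)\|_{L^2}=\sup_\tau\|\omega(\tau)\|_{\dot H^{-1}}<\infty$, which is a separate result you never establish (Lemma~\ref{lem:5.7}, using $\dot W^{-1+\delta,p'}\cap\dot H^{-1+\delta}\hookrightarrow\dot H^{-1}$). The interval $(t/2,t)$ is treated in Proposition~\ref{prop:5.10}. There the paper composes heat smoothing $L^{8/3}\to L^\infty$ with the dispersive bound $L^{8/5}\to L^{8/3}$, and uses $\|u\cdot\nabla\omega\|_{L^{8/5}}\lesssim\|\omega\|_{\dot W^{-1+\delta,p}}\|\omega\|_{\dot W^{1,b}}$ with $\frac1b=\frac2p-\frac38$. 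This requires two further inputs:
\begin{itemize}
\item decay of $\|\omega(t)\|_{\dot W^{-1+\delta,p}}$ at the rate $t^{-1+\frac2p}\min\{1,|\beta|^{-1+\frac2p}t^{-\frac32(1-\frac2p)}\}$ (Lemma~\ref{lem:5.4}); this is where $\omega_0\in\dot W^{-1+\delta,p'}$ and the smallness of $|\beta|^{-\frac{1+\delta}3}\|\omega_0\|_{\dot H^\delta}$ are actually used;
\item $\dot W^{s,b}$ rates, obtained by interpolating the $\dot H^s$ and $\dot W^{s,4}$ rates (Proposition~\ref{prop:5.6}, Lemma~\ref{lem:5.9}).
\end{itemize}
The hypothesis $\delta<\frac1{13}$ enters exactly here, as $-\frac52-\frac38<-7+\frac{13}p<-\frac52$, so that the $(t/2,t)$ integral reproduces $|\beta|^{-1}t^{-\frac52}$. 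It is not a convergence condition on $\int_0^{t/2}$, as you guessed.

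\textbf{The bootstrap quantity is also flawed.} The $L^1$ entry of $M(T)$ rests on a false claim: $K_{\beta,t}=e^{t\beta L_1}G_t$ is not uniformly bounded in $L^1$. Using $\|K_{\beta,t}\|_{L^2}\simeq t^{-\frac12}$ and the dispersive bound $\|K_{\beta,t}\|_{L^\infty}\lesssim|\beta|^{-1}t^{-\frac52}$, one gets $\|K_{\beta,t}\|_{L^1}\ge\|K_{\beta,t}\|_{L^2}^2/\|K_{\beta,t}\|_{L^\infty}\gtrsim|\beta|t^{\frac32}$ once $|\beta|t^{\frac32}\ge1$. Moreover, with your definition $\Phi_{0,1}(t)=\min\{1,|\beta|t^{\frac32}\}\to0$ as $t\to0$, so that entry of $M(T)$ is infinite. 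For the same reason, the restart $\omega(t)=T_\beta(t/2)\omega(t/2)-\int_{t/2}^t\cdots$ in Step 2 cannot produce the dispersive factor: nothing controls $\omega(t/2)$ in $L^1$ or $L^{a'}$.

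The paper never restarts and never tracks $\|\omega(t)\|_{L^1}$; it uses $\omega_0\in L^1$ only in the linear term. The $L^p$ bootstrap (Proposition~\ref{prop:5.2}) closes through estimates your sketch does not identify:
\begin{itemize}
\item on $(t/2,t)$, the exact H\"older bound $\|u\omega\|_{L^{p'}}\lesssim\|\omega\|_{\dot W^{-1+\delta,p}}\|\omega\|_{L^p}$ (the reason for $\frac1p=\frac13+\frac\delta6$), paired with H\"older in time against the $L^{r_1}\dot W^{-1+\delta,p}$ norm from Theorem~\ref{thm:1.1};
\item on $(0,t/2)$, an $L^2$--$L^p$ interpolation combined with the energy bound $\|\omega(\tau)\|_{L^2}\le\|\omega_0\|_{L^2}$.
\end{itemize}
The resulting term is absorbed using smallness of $|\beta|^{-\frac\delta3}\|\omega_0\|_{\dot H^{-1+\delta}}$. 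Your Step 3 matches the paper's asymptotic argument in spirit. For $a=\infty$, however, it additionally needs the decay $\|\omega(t)\|_{\dot H^{-1}}\lesssim t^{-c_0}$ (Lemma~\ref{lem:5.11}), and it presupposes the $L^\infty$ decay that Step 2 does not deliver.
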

\begin{remark}\label{rem:1.7}\rm
	Theorem~\ref{thm:1.3} allows us to obtain temporal decays of the solution in the full range of the Sobolev spaces
	$\dot W^{s, a}$
for
	$a\in[2, \infty]$
and
	$s\geq0.$
Compared to the existing results on the related systems in which faster decays of the linearized solutions are expected by dispersion, our result improves the range of the exponents from two perspectives: it yields the
	$L^\infty$
decays, and arbitrarily higher orders of spatial derivatives are permitted. However, it seems difficult to extend the decay estimates to the time derivatives of
	$\omega$, 
since differentiation with respect to the time variable cause the term
	$L_1\omega$, 
whose rate of the decay is even slower than that of the original term
	$\omega.$
\end{remark}

\section{Preliminaries}\label{sect:2}
In this section, we collect some preliminaries necessary for our later arguments. We start by recalling the definition of the homogeneous Besov spaces 
	$\dot B^s_{p, r}$. 
Let $\varphi\colon \mathbb R^2\to [0, 1]$ be a smooth radial function satisfying 
	$\supp\varphi\subset\left\{3/4\leq |\xi|\leq 8/3\right\}$ and
	$\sum_{k\in\mathbb Z}\varphi(2^{-k}\xi)=1$
for all 
	$\xi\in\mathbb R^2\setminus\{0\}$. 
We define the homogeneous Littlewood--Paley projection operators 
	$P_k$ 
for 
	$k\in\mathbb Z$ 
by 
	$P_kf\coloneqq \mathcal F^{-1}(\varphi(2^{-k}\cdot)\widehat f)$, 
	$f\in\mathcal S'.$
We also define 
	$\widetilde{P}_k\coloneqq P_{k-1}+P_k+P_{k+1}.$
Note that 
	$\widetilde P_kP_k=P_k$
hold for any 
	$k\in\mathbb Z$.
For 
	$s\in\mathbb R, 1\leq p\leq \infty$
and 
	$1\leq r\leq \infty$, 
we define the homogeneous Besov norm 
	$\left\|\cdot\right\|_{\dot B^s_{p, r}}$ 
 by
	$$\left\|f\right\|_{\dot B^s_{p, r}}
	\coloneqq \left\|\left(2^{sk}\left\|P_kf\right\|_{L^p}\right)_{k\in\mathbb Z}\right\|_{\ell^r}.$$
The homogeneous Besov space
	$\dot B^s_{p, r}$
is the set of all 
	$f\in\mathcal S'$
satisfying
	\begin{align*}
		\left\|f\right\|_{\dot B^s_{p, r}}<\infty
		\quad\text{and}\quad
		\big\|\mathcal F^{-1}\big(\theta(\lambda\cdot)\widehat f\big)\big\|_{L^\infty}\to0
		\text{ as $\lambda\to\infty$ for any $\theta\in\mathcal D(\mathbb R^2).$}
	\end{align*}
For basic properties of the operators
	$P_k$
and the Besov spaces
	$\dot B^s_{p, r}$
necessary for our analysis, see ~\cite{BCD11}, for example.

We next deduce several estimates of the linear propagators.
\begin{lemma}\label{lem:2.1}
	There exists a constant 
		$C>0$ 
	such that for any 
		$f\in \mathcal S, k\in\mathbb Z, \beta\in \mathbb R\setminus\{0\}$ 
	and 
		$t\in\mathbb R\setminus\{0\}$, 
	it follows that
		\begin{align}\label{eq:2.1}
			\left\|e^{t\beta L_1}P_kf\right\|_{L^\infty}
			\leq C2^{3k}|\beta t|^{-1}\left\|P_kf\right\|_{L^1}.
		\end{align}
\end{lemma}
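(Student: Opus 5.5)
We reduce to the frequency-localized kernel estimate. Write $e^{t\beta L_1}P_kf = \Phi_k * P_kf$ with
\begin{align*}
\Phi_k(x)=\frac{1}{(2\pi)^2}\int_{\mathbb R^2}e^{ix\cdot\xi}\,e^{it\beta\xi_1/|\xi|^2}\,\widetilde\varphi(2^{-k}\xi)\,d\xi ,
\end{align*}
where $\widetilde\varphi=\varphi(2\,\cdot)+\varphi+\varphi(\cdot/2)$ is the symbol of $\widetilde P_k$, which equals one on $\supp\varphi(2^{-k}\cdot)$. Here we use that $L_1$ has symbol $i\xi_1/|\xi|^2$, so that $e^{t\beta L_1}$ is the Fourier multiplier $e^{it\beta\xi_1/|\xi|^2}$. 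Since $P_kf=\widetilde P_kP_kf$, Young's inequality gives $\|e^{t\beta L_1}P_kf\|_{L^\infty}\le\|\Phi_k\|_{L^\infty}\|P_kf\|_{L^1}$. It therefore suffices to prove
\begin{align*}
\|\Phi_k\|_{L^\infty}\le C2^{3k}|\beta t|^{-1}.
\end{align*}

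First we scale. Substituting $\xi=2^k\eta$ gives $\Phi_k(x)=2^{2k}\Phi(2^kx;\,\beta t2^{-k})$, where
\begin{align*}
\Phi(y;\tau)=\int e^{i(y\cdot\eta+\tau\eta_1/|\eta|^2)}\,\widetilde\varphi(\eta)\,d\eta .
\end{align*}
The claim thus reduces to the uniform bound $\sup_y|\Phi(y;\tau)|\le C|\tau|^{-1}$ for all $\tau\neq0$, with $C$ independent of $y$. When $|\tau|\le1$ this is trivial, because $|\Phi|\le\|\widetilde\varphi\|_{L^1}$. So only large $|\tau|$ matters, and there the decay $|\tau|^{-1}=|\tau|^{-d/2}$ with $d=2$ is exactly the two-dimensional stationary phase rate.

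The main step is a uniform stationary phase estimate for the phase $\psi(\eta)=\tau\bigl(\eta_1/|\eta|^2+y\cdot\eta/\tau\bigr)$ on the annulus $\{1/2\lesssim|\eta|\lesssim 4\}$. Set $h(\eta)=\eta_1/|\eta|^2$. A direct computation shows that $\det\nabla^2h(\eta)=-4|\eta|^{-6}\neq0$ (equivalently, $h$ is $\partial_1$ of $\log|\eta|$, whose Hessian determinant is $-|\eta|^{-4}$). Hence $h$ has nondegenerate Hessian on the support, with uniform bounds. By the standard stationary phase lemma with nondegenerate Hessian (for example the Littman/Stein estimate), the oscillatory integral $\int e^{i\tau(h(\eta)+z\cdot\eta)}a(\eta)\,d\eta$ is bounded by $C|\tau|^{-1}$ uniformly in the linear perturbation $z=y/\tau\in\mathbb R^2$. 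This uniformity holds because a linear term does not change the Hessian.

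To make the uniformity in $z$ rigorous, I would split into cases.
\begin{itemize}
\item If $|z|$ is large, or more generally $|\nabla h+z|\ge c$ on the support, integrate by parts to get $O(|\tau|^{-N})$.
\item Otherwise, $\nabla h+z$ has only finitely many (boundedly many) nondegenerate zeros on the compact support. Localize near each zero with a partition of unity and apply the Morse lemma or a direct stationary phase bound, with constants controlled by $C^3$ bounds on $h$ and a lower bound on $|\det\nabla^2h|$.
\end{itemize}
The obstacle I expect is precisely this uniformity in $z$. The cleanest route is to cite the general fact that $|\int e^{i\tau\phi}a|\le C|\tau|^{-1}$ whenever $|\det\nabla^2\phi|\ge c>0$ on $\supp a$, with $C$ depending only on $c$, on finitely many derivatives of $\nabla^2\phi$ and of $a$, and on the support.

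Undoing the scaling then gives $\|\Phi_k\|_{L^\infty}\le C2^{2k}(|\beta t|2^{-k})^{-1}=C2^{3k}|\beta t|^{-1}$, which is \eqref{eq:2.1}.
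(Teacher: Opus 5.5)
Your proposal is correct. It is the standard argument that the paper does not write out but simply cites from Pusateri--Widmayer and Elgindi--Widmayer: rescale to $k=0$, then apply two-dimensional stationary phase to the nondegenerate phase $\eta_1/|\eta|^2$; the uniformity in $y$ you flag is immediate from Littman's theorem, since your integral is the Fourier transform at $(y,\tau)\in\mathbb R^3$ of a smooth compactly supported density on the graph of $h$, a surface with nonvanishing Gaussian curvature, and hence is $O(|(y,\tau)|^{-1})\le O(|\tau|^{-1})$.

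One small slip: the Hessian of $h=\partial_1\log|\eta|$ is not determined by that of $\log|\eta|$, so your parenthetical does not justify the determinant. The value $-4|\eta|^{-6}$ is nonetheless correct, e.g.\ because $h=\operatorname{Re}(1/\zeta)$ with $\zeta=\eta_1+i\eta_2$ is harmonic, so $\det\nabla^2h=-|(1/\zeta)''|^2=-4|\zeta|^{-6}$.
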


\proof
	 See \citep[Lemma~2.1]{PW18} (see also \citep[PROPOSITION~5.1]{EW17}).
\qed

\begin{proposition}\label{prop:2.2}
	For any 
		$s\leq s', $
		$2\leq p\leq\infty$, 
	and
		$1\leq r\leq\infty$,  
	there is a constant 
		$C>0$ 
	satisfying
		\begin{align}\label{eq:2.2}
			\left\|T_\beta(t)f\right\|_{\dot B^{s'}_{p, r}}
			\leq C t^{-\frac 12(s'-s)-1+\frac 2p}\min\left\{1, \left|\beta\right|^{-1+\frac 2p}t^{-\frac32(1-\frac2p)}\right\}\left\|f\right\|_{\dot B^s_{p', r}}
		\end{align}
	for all $t>0$ and $f\in \dot B^s_{p', r}$.
\end{proposition}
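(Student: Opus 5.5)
The plan is to argue one Littlewood--Paley block at a time and then sum in $\ell^r$. Since $T_\beta(t)=e^{t\Delta}e^{t\beta L_1}$ and all of $P_k$, $e^{t\Delta}$ and $e^{t\beta L_1}$ are Fourier multipliers, they commute. Moreover $e^{t\beta L_1}$ has a purely imaginary symbol $i\beta\xi_1|\xi|^{-2}$, so it is unitary on $L^2$. The target is the dyadic bound
\begin{align}
	\left\|T_\beta(t)P_kf\right\|_{L^p}
	\leq C\,2^{2k(1-\frac2p)}e^{-ct2^{2k}}\min\left\{1, \left(2^{3k}|\beta t|^{-1}\right)^{1-\frac2p}\right\}\left\|P_kf\right\|_{L^{p'}}.
\end{align}
To get it, I will write $T_\beta(t)P_k=e^{t\Delta}\widetilde P_k\,e^{t\beta L_1}P_k$. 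Then I will interpolate $e^{t\beta L_1}P_k$ between two endpoint estimates:
\begin{itemize}
\item the $L^2\to L^2$ bound with constant $1$;
\item the $L^1\to L^\infty$ bound from Lemma~\ref{lem:2.1}, with constant $C2^{3k}|\beta t|^{-1}$, valid on functions localized by $\widetilde P_k$.
\end{itemize}
Riesz--Thorin gives $L^{p'}\to L^p$ with constant $(2^{3k}|\beta t|^{-1})^{1-2/p}$. For the other branch of the minimum, I will instead use Bernstein, $\|P_kg\|_{L^p}\le C2^{2k(1/p'-1/p)}\|P_kg\|_{L^{p'}}$, together with boundedness of $e^{t\beta L_1}P_k$ on $L^{p'}$ or $L^p$. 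Finally, $e^{t\Delta}\widetilde P_k$ contributes the factor $e^{-ct2^{2k}}$ on $L^p$ by the standard heat estimate on annuli.

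The step I expect to be the main obstacle is the Bernstein branch: $e^{t\beta L_1}$ is not uniformly bounded on $L^{p'}$ for $p'\neq2$. I will avoid it by routing through $L^2$ instead:
\begin{align}
	\|P_kg\|_{L^p}\lesssim 2^{2k(\frac12-\frac1p)}\|P_kg\|_{L^2},
	\qquad
	\|e^{t\beta L_1}P_kf\|_{L^2}=\|P_kf\|_{L^2}\lesssim 2^{2k(\frac1{p'}-\frac12)}\|P_kf\|_{L^{p'}}.
\end{align}
The two powers of $2^{2k}$ combine to $2^{2k(1-2/p)}$, so unitarity on $L^2$ replaces any $L^{p'}$ boundedness. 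The dispersive branch needs no such care, since Lemma~\ref{lem:2.1} and Riesz--Thorin go directly from $L^{p'}$ to $L^p$. Interpolating requires $\widetilde P_k$ to be inserted so that the operator $e^{t\beta L_1}\widetilde P_k$ is defined on all of $L^1$ and $L^2$. The kernel bound behind Lemma~\ref{lem:2.1} applies equally to $\widetilde P_k$, at the cost of a harmless constant.

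To finish, I multiply the dyadic bound by $2^{s'k}$ and write $2^{s'k}=2^{sk}2^{(s'-s)k}$. I then need a bound, uniform in $k$, on
\begin{align}
	M_k(t)\coloneqq 2^{(s'-s)k}2^{2k(1-\frac2p)}e^{-ct2^{2k}}\min\left\{1, (2^{3k}|\beta t|^{-1})^{1-\frac2p}\right\}.
\end{align}
The claim is $\sup_k M_k(t)\le C t^{-\frac12(s'-s)-1+\frac2p}\min\{1,|\beta|^{-1+\frac2p}t^{-\frac32(1-\frac2p)}\}$. I will check this by splitting into two cases.
\begin{itemize}
\item Using the first branch of the minimum, set $\lambda=2^{2k}t$. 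Then $M_k(t)=t^{-\frac12(s'-s)-1+\frac2p}\lambda^{\frac12(s'-s)+1-\frac2p}e^{-c\lambda}$, and $\lambda^ae^{-c\lambda}$ is bounded because the exponent is nonnegative.
\item Using the second branch, $M_k(t)=|\beta t|^{-(1-\frac2p)}2^{k(s'-s)}2^{5k(1-\frac2p)}e^{-ct2^{2k}}$. Writing $2^k=(\lambda/t)^{1/2}$, this gives the factor $|\beta|^{-1+\frac2p}t^{-1+\frac2p-\frac12(s'-s)-\frac52(1-\frac2p)}$ times a bounded function of $\lambda$.
\end{itemize}
The time exponent in the second case equals $-\frac12(s'-s)-1+\frac2p-\frac32(1-\frac2p)$, as required. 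Taking the smaller of the two bounds gives the minimum. Taking $\ell^r$ norms over $k$ then yields \eqref{eq:2.2}.
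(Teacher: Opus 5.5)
Your route is essentially the paper's. You localize dyadically and bound $T_\beta(t)P_k$ from $L^{p'}$ to $L^p$ by the heat factor $e^{-ct2^{2k}}$ times the minimum of two bounds: a Bernstein-through-$L^2$ bound, which uses unitarity of $e^{t\beta L_1}$, and a dispersive bound from Lemma~\ref{lem:2.1} plus Riesz--Thorin. You then take $\sup_k$ of the symbol and sum in $\ell^r$. The paper interpolates all of $T_\beta(t)\widetilde P_k$ between $L^2\to L^2$ and an $L^1\to L^\infty$ bound that already contains the minimum. You instead interpolate only $e^{t\beta L_1}\widetilde P_k$ and apply Bernstein directly at the $L^{p'}\to L^p$ level. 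The two are equivalent and both yield \eqref{eq:2.6}.

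However, an algebra slip makes your final step false as written. What your two branches actually prove is
\[
\|T_\beta(t)P_kf\|_{L^p}\lesssim e^{-ct2^{2k}}\min\bigl\{2^{2k(1-\frac2p)},\,(2^{3k}|\beta t|^{-1})^{1-\frac2p}\bigr\}\|P_kf\|_{L^{p'}},
\]
because the dispersive branch carries no Bernstein factor. Your displayed target instead pulls $2^{2k(1-\frac2p)}$ out of both entries of the minimum, so its second entry becomes $2^{5k(1-\frac2p)}|\beta t|^{-(1-\frac2p)}$. This is not what you proved, and it is in fact false at low frequencies. Take $p=\infty$, $t=2^{-2k}$ and $2^{5k}\ll|\beta|\ll 2^{3k}$. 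Then the phase of $e^{t\beta L_1}$ on the band is $O(|\beta|2^{-3k})$, so the left side is $\sim 2^{2k}\|P_kf\|_{L^1}$ for a bump, while your right side is $\sim 2^{7k}|\beta|^{-1}\ll 2^{2k}$.

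The slip propagates into your second case. There you obtain the time exponent $-\frac12(s'-s)-(1-\frac2p)-\frac52(1-\frac2p)$, and your assertion that it equals $-\frac12(s'-s)-(1-\frac2p)-\frac32(1-\frac2p)$ is wrong whenever $p>2$. The repair is immediate: with the correct second entry,
\[
M_k(t)=|\beta t|^{-(1-\frac2p)}2^{k(s'-s)}2^{3k(1-\frac2p)}e^{-ct2^{2k}}.
\]
Substituting $2^k=(\lambda/t)^{1/2}$ gives
\[
|\beta|^{-1+\frac2p}\,t^{-\frac12(s'-s)-1+\frac2p-\frac32(1-\frac2p)}\,\lambda^{\frac12(s'-s)+\frac32(1-\frac2p)}e^{-c\lambda},
\]
which is exactly what \eqref{eq:2.2} requires. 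The rest of your argument then goes through unchanged.
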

\proof
	It suffices to consider the case $s'\geq 0=s$.
	For any
		$g\in L^2$
	and
		$k\in\mathbb Z, $
	we have 
		\begin{align}\label{eq:2.3}
			\left\|T_\beta(t)\widetilde P_kg\right\|_{L^2}
			\lesssim e^{-ct2^{2k}}\left\|g\right\|_{L^2}
		\end{align}
	where
		$c>0$
	 is some universal constant .

	By Lemma~\ref{lem:2.1} we also have
		$$\left\|T_\beta(t)\widetilde P_kg\right\|_{L^\infty}
		\lesssim 2^{3k}\left|\beta t\right|^{-1}\sum_{|\ell|\leq 1}\left\|e^{t\Delta}P_{k+\ell}g\right\|_{L^1}$$
	for any
		$g\in L^1$
	and
		$k\in\mathbb Z.$
	By Bernstein's inequality, we see that
		$$\left\|T_\beta(t)\widetilde P_kg\right\|_{L^\infty}
		\lesssim 2^k\left\|T_\beta(t)\widetilde P_kg\right\|_{L^2}
		=2^k\left\|e^{t\Delta}\widetilde P_kg\right\|_{L^2}
		\lesssim 2^{2k}\left\|e^{t\Delta}\widetilde P_kg\right\|_{L^1}.$$
	Since there is a constant
		$c>0$ 
	which satisfies
		$\left\|e^{t\Delta} P_{k+\ell}g\right\|_{L^1}
		\lesssim e^{-ct2^{2k}}\left\|g\right\|_{L^1}$
	for each 
		$\ell\in \{-1, 0, 1\}$, 
	the above estimates give
		\begin{align}\label{eq:2.4}
			\left\|T_\beta(t)\widetilde P_kg\right\|_{L^\infty}
			\lesssim\min\left\{2^{2k}, 2^{3k}\left|\beta t\right|^{-1}\right\}e^{-ct2^{2k}}\left\|g\right\|_{L^1}.
		\end{align}
	By combining \eqref{eq:2.3} and \eqref{eq:2.4}, for any
		$2\leq p\leq \infty$, 
	the Riesz--Thorin interpolation theorem yields
		\begin{align}\label{eq:2.5}
			\left\|T_\beta(t)\widetilde P_kg\right\|_{L^p}
			\lesssim \min\left\{2^{2k(1-\frac 2p)}, 2^{3k(1-\frac2p)}\left|\beta t\right|^{-1+\frac 2p}\right\}e^{-c_pt2^{2k}}\left\|g\right\|_{L^{p'}}
		\end{align}
	for all
		$g\in L^{p'}$, 
	where 
		$c_p$
	is a positive constant.
	
	Taking 
		$g=P_{k}f$
	in \eqref{eq:2.5} leads to
		\begin{align}\label{eq:2.6}
			\left\|T_\beta(t)P_kf\right\|_{L^p}
			\lesssim \min\left\{2^{2k(1-\frac 2p)}, 2^{3k(1-\frac2p)}\left|\beta t\right|^{-1+\frac 2p}\right\}e^{-c_pt2^{2k}}\left\|P_kf\right\|_{L^{p'}}.
		\end{align}
	Since the estimate
		$$2^{s'k}\min\left\{2^{2k(1-\frac 2p)}, 2^{3k(1-\frac2p)}\left|\beta t\right|^{-1+\frac 2p}\right\}e^{-c_pt2^{2k}}
		\lesssim t^{-\frac{s'}{2}}\min\left\{t^{-1+\frac 2p}, t^{-\frac{3}{2}(1-\frac2p)}\left|\beta t\right|^{-1+\frac 2p}\right\}$$
	holds for all
		$t>0$
	and
		$k\in\mathbb Z$, 
	from \eqref{eq:2.6} we have
		\begin{align}\label{eq:2.7}
			2^{s'k}\left\|T_\beta(t)P_kf\right\|_{L^p}
			\lesssim t^{-\frac{s'}{2}-1+\frac 2p}\min\left\{1, \left|\beta\right|^{-1+\frac 2p}t^{-\frac{3}{2}(1-\frac2p)}\right\}\left\|P_kf\right\|_{L^{p'}}.
		\end{align}
	By summing up with respect to
		$k\in\mathbb Z$, 
	we obtain the desired estimate.
\qed

\begin{proposition}\label{cor:2.3}
	Assume that
		$s\geq 0$
	and
		$a\in[2, \infty].$
	Then there is a constant
		$C>0$
	satisfying 
		\begin{align}
			\left\|T_\beta(t)f\right\|_{\dot W^{s, a}}
			\leq Ct^{-\frac s2-1+\frac1a}\min\left\{1, \left|\beta\right|^{-1+\frac2a}t^{-\frac32(1-\frac2a)}\right\}\left\|f\right\|_{L^1}\label{eq:2.69}
		\end{align}
	for all
		$t>0$
	and
		$f\in L^1.$
\end{proposition}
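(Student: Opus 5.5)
The strategy is to reduce the $L^1\to\dot W^{s,a}$ estimate to frequency-localized bounds, in the same way as the proof of Proposition~\ref{prop:2.2}. The one new point is that the source norm is now $L^1$ rather than $L^{a'}$. First write $T_\beta(t)=T_\beta(t/2)T_\beta(t/2)$. The first factor $T_\beta(t/2)$ is dispersive and maps into $L^a$. The second factor is applied first; it is essentially the heat semigroup on each dyadic block and takes $L^1$ to $L^{a'}$ with the gain $t^{-(1-\frac1{a'})}=t^{-\frac1a}$.

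\textbf{Step 1 (blockwise bound).} Fix $k\in\mathbb Z$ and use $P_k=\widetilde P_kP_k$. By \eqref{eq:2.5} with $p=a$ and $g=T_\beta(t/2)P_kf$,
\begin{align*}
\left\|T_\beta(t)P_kf\right\|_{L^a}\lesssim \min\left\{2^{2k(1-\frac2a)},2^{3k(1-\frac2a)}|\beta t|^{-1+\frac2a}\right\}e^{-c_at2^{2k}}\left\|T_\beta(t/2)P_kf\right\|_{L^{a'}}.
\end{align*}
For the last factor, interpolate between two estimates. The first is the $L^1\to L^1$ bound $\|T_\beta(t/2)P_kf\|_{L^1}\lesssim e^{-ct2^{2k}}\|f\|_{L^1}$; this needs justification, because $e^{t\beta L_1}$ is not bounded on $L^1$. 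The second is the $L^1\to L^\infty$ bound \eqref{eq:2.4}, which gives $2^{2k}e^{-ct2^{2k}}\|f\|_{L^1}$. Together these give
\begin{align*}
\left\|T_\beta(t/2)P_kf\right\|_{L^{a'}}\lesssim 2^{2k/a}e^{-ct2^{2k}}\|f\|_{L^1}.
\end{align*}
An alternative that avoids $L^1\to L^1$ bounds for the dispersive group: apply Bernstein directly, $\|T_\beta(t)P_kf\|_{L^a}\lesssim 2^{2k(\frac12-\frac1a)}\|T_\beta(t)P_kf\|_{L^2}$, and $\|T_\beta(t)P_kf\|_{L^2}=\|e^{t\Delta}P_kf\|_{L^2}\lesssim 2^{k}e^{-ct2^{2k}}\|f\|_{L^1}$. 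This yields the first branch of the minimum. For the second branch, interpolate \eqref{eq:2.4} (an $L^1\to L^\infty$ bound) with the $L^1\to L^2$ bound just obtained. Both routes give the multiplier
\begin{align*}
m_k(t)=2^{sk}\,2^{2k(1-\frac1a)}\min\left\{1,\,2^{k(1-\frac2a)}|\beta t|^{-1+\frac2a}\right\}e^{-ct2^{2k}}
\end{align*}
times $\|f\|_{L^1}$.

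\textbf{Step 2 (summation).} Since $\min\{A,B\}\le A^{1-\theta}B^\theta$ and $2^{\gamma k}e^{-ct2^{2k}}\lesssim t^{-\gamma/2}$ with summable tails on either side of $2^k\sim t^{-1/2}$, we get
\begin{align*}
\sum_k m_k(t)\lesssim t^{-\frac s2-1+\frac1a}\min\left\{1,|\beta|^{-1+\frac2a}t^{-\frac32(1-\frac2a)}\right\}.
\end{align*}
Summing in $\ell^1$ therefore bounds the $\dot B^s_{a,1}$ norm. Finally use $\dot B^s_{a,1}\hookrightarrow\dot W^{s,a}$, which is valid for all $a\in[2,\infty]$ including $a=\infty$ with $\dot W^{s,\infty}$ understood through $(-\Delta)^{s/2}$.

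\textbf{Main obstacle.} The delicate point is the convergence of the dyadic sum at low frequencies in the dispersive regime, where the exponent of $2^k$ is $s+2-\frac2a+1-\frac2a$. This exponent is positive for $a\ge2$, so the low-frequency sum converges and the high-frequency sum is controlled by the Gaussian factor. At the endpoints, the interpolation choice $\theta\in\{0,1\}$ must be chosen according to whether $|\beta|t^{3/2}\ge1$, and the split point in $k$ chosen accordingly.
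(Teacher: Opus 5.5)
Your alternative route in Step 1, together with Step 2, is correct and is essentially the paper's proof. The paper applies \eqref{eq:2.6} with $p=a$ and then Bernstein $\|P_kf\|_{L^{a'}}\lesssim 2^{2k/a}\|P_kf\|_{L^1}$ to get the same multiplier $m_k(t)$, sums via $\sum_k\min\{A_k,B_k\}\le\min\{\sum_kA_k,\sum_kB_k\}$ to bound the $\dot B^s_{a,1}$ norm by $\|f\|_{\dot B^0_{1,\infty}}\lesssim\|f\|_{L^1}$, and concludes with $\dot B^s_{a,1}\hookrightarrow\dot W^{s,a}$. Drop your first route, however: the bound $\|T_\beta(t/2)P_kf\|_{L^1}\lesssim e^{-ct2^{2k}}\|f\|_{L^1}$ is false, since by stationary phase the $L^1$ operator norm of $e^{t\beta L_1}P_k$ grows like $|\beta t|2^{-k}$ (unboundedly as $k\to-\infty$, where the Gaussian gives no help), and no semigroup splitting is needed anyway because Bernstein on the input block $P_kf$ already converts $L^1$ to $L^{a'}$.
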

\proof
	It is enough to prove the following stronger estimate
		\begin{align}
			\left\|T_\beta(t)f\right\|_{\dot B^s_{a, 1}}
			\lesssim t^{-\frac s2-1+\frac1a}\min\left\{1, \left|\beta\right|^{-1+\frac2a}t^{-\frac32(1-\frac2a)}\right\}\left\|f\right\|_{\dot B^0_{1, \infty}}.\label{eq:2.70}
		\end{align}
	By \eqref{eq:2.6}, we have
		\begin{align}
			&\quad\sum_{k\in\mathbb Z}2^{sk}\left\|T_\beta(t)P_kf\right\|_{L^a}\label{eq:2.71}\\
			&\lesssim \sum_{k\in\mathbb Z}2^{sk}\min\left\{2^{2k(1-\frac 2a)}, 2^{3k(1-\frac2a)}\left|\beta t\right|^{-1+\frac 2a}\right\}e^{-c_at2^{2k}}\left\|P_kf\right\|_{L^{a'}}\label{eq:2.72}\\
			&\lesssim \sum_{k\in\mathbb Z}2^{sk}\min\left\{2^{2k(1-\frac 1a)}, 2^{2k(1-\frac 1a)+k(1-\frac2a)}\left|\beta t\right|^{-1+\frac 2a}\right\}e^{-c_at2^{2k}}\left\|P_kf\right\|_{L^{1}}\label{eq:2.73}\\
			&\leq \min\left\{\sum_{k\in\mathbb Z}2^{sk+2k(1-\frac 1a)}e^{-c_at2^{2k}}, \sum_{k\in\mathbb Z}2^{sk+2k(1-\frac 1a)+k(1-\frac2a)}\left|\beta t\right|^{-1+\frac 2a}e^{-c_at2^{2k}}\right\}\left\|f\right\|_{\dot B^0_{1, \infty}}.\notag
		\end{align}
	Since 
		$1-\frac1a>0$
	and
		$1-\frac 2a\geq0, $
	it is possible to check that
		\begin{align}
			\sup_{t>0}\sum_{k\in\mathbb Z}t^{\frac s2+1-\frac 1a}2^{sk+2k(1-\frac 1a)}e^{-c_at2^{2k}}<\infty\label{eq:2.75}
		\end{align}
	and
		\begin{align}
			\sup_{t>0}\sum_{k\in\mathbb Z}t^{\frac s2+1-\frac1a+\frac12(1-\frac2a)}2^{sk+2k(1-\frac 1a)+k(1-\frac2a)}e^{-c_at2^{2k}}<\infty.\label{eq:2.76}
		\end{align}
	Therefore we obtain
		\begin{align}
			\left\|T_\beta(t)f\right\|_{\dot B^s_{a, 1}}
			&\lesssim \min\left\{t^{-\frac s2-1+\frac 1a}, |\beta|^{-1+\frac 2a}t^{-\frac s2-1+\frac 1a-\frac32(1-\frac 2a)}\right\}\left\|f\right\|_{\dot B^0_{1, \infty}}\label{eq:2.77}.
		\end{align}
	This completes the proof.
\qed

\begin{proposition}\label{prop:2.4'}
	Let 
		$s\geq0, $
		$a\in [2, \infty]$
	be given and assume that
		$\omega_0\in L^1.$
	Then it holds that
		\begin{align}
			\lim_{t\to\infty}\left(t^{-\frac s2-1+\frac1a}\min\left\{1, |\beta|^{-1+\frac2a}t^{-\frac32(1-\frac2a)}\right\}\right)^{-1}\left\|T_\beta(t)\omega_0-K_{\beta, t}\int_{\mathbb R^2}\omega_0(y)\, dy\right\|_{\dot W^{s, a}}
			=0.
		\end{align}
\end{proposition}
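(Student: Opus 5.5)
The plan is the classical density argument used for the heat kernel, combined with the uniform linear bound of Proposition~\ref{cor:2.3}. Write $M\coloneqq\int_{\mathbb R^2}\omega_0(y)\,dy$ and
\[
\Phi(t)\coloneqq t^{-\frac s2-1+\frac1a}\min\left\{1, |\beta|^{-1+\frac2a}t^{-\frac32(1-\frac2a)}\right\}.
\]
Since $T_\beta(t)$ is a Fourier multiplier, $T_\beta(t)$ acting on a function $f$ equals the convolution $K_{\beta,t}*f$: indeed $e^{t\beta L_1}$ commutes with $e^{t\Delta}$ and $G_t*f=e^{t\Delta}f$. Hence
\[
T_\beta(t)\omega_0-MK_{\beta,t}=\int_{\mathbb R^2}\bigl(K_{\beta,t}(\cdot-y)-K_{\beta,t}(\cdot)\bigr)\omega_0(y)\,dy .
\]
We must show that the $\dot W^{s,a}$ norm of this expression is $o(\Phi(t))$.

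First, reduce to nice data. Given $\varepsilon>0$, choose $g\in C_c^\infty(\mathbb R^2)$ with $\left\|\omega_0-g\right\|_{L^1}<\varepsilon$; then $\left|\int g-M\right|<\varepsilon$. Apply Proposition~\ref{cor:2.3} to $\omega_0-g$. Also apply the same proposition to a fixed $L^1$-normalized bump $\chi$ approximating $\delta_0$, or simply note that $K_{\beta,t}=T_\beta(t/2)K_{\beta,t/2}$, so that $\left\|K_{\beta,t}\right\|_{\dot W^{s,a}}\lesssim\Phi(t)$ after controlling $\left\|K_{\beta,t/2}\right\|_{L^1}=\left\|G_{t/2}\right\|_{L^1}$ only if $e^{t\beta L_1}$ preserved $L^1$ — it does not. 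To avoid this issue, write instead $K_{\beta,t}=T_\beta(t-1)K_{\beta,1}$ is not helpful either. The cleanest route is to use $K_{\beta,t}=T_\beta(t/2)G_{t/2}$ with $\left\|G_{t/2}\right\|_{L^1}=1$, which gives $\left\|K_{\beta,t}\right\|_{\dot W^{s,a}}\lesssim\Phi(t/2)\lesssim\Phi(t)$. These bounds together show that the error from replacing $\omega_0$ by $g$ is $\lesssim\varepsilon\Phi(t)$ uniformly in $t$. So it suffices to treat $\omega_0=g\in C_c^\infty$.

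For such $g$, use the Taylor formula in $y$:
\[
K_{\beta,t}(x-y)-K_{\beta,t}(x)=-\int_0^1 y\cdot\nabla K_{\beta,t}(x-\theta y)\,d\theta .
\]
This gives
\[
T_\beta(t)g-\Bigl(\int g\Bigr)K_{\beta,t}=-\int_0^1\!\!\int g(y)\,y\cdot\nabla K_{\beta,t}(\cdot-\theta y)\,dy\,d\theta .
\]
By Minkowski's inequality and translation invariance of the $\dot W^{s,a}$ norm, its norm is at most $\left\|\,|y|g\right\|_{L^1}\left\|\nabla K_{\beta,t}\right\|_{\dot W^{s,a}}$. Using $\nabla K_{\beta,t}=T_\beta(t/2)\nabla G_{t/2}$ and $\left\|\nabla G_{t/2}\right\|_{L^1}\lesssim t^{-1/2}$, Proposition~\ref{cor:2.3} yields $\left\|\nabla K_{\beta,t}\right\|_{\dot W^{s,a}}\lesssim t^{-1/2}\Phi(t)$, since $\Phi(t/2)\simeq\Phi(t)$. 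Consequently
\[
\limsup_{t\to\infty}\Phi(t)^{-1}\left\|T_\beta(t)\omega_0-MK_{\beta,t}\right\|_{\dot W^{s,a}}\lesssim\varepsilon,
\]
and letting $\varepsilon\to0$ concludes the proof.

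The main technical point is the derivative bound on $\nabla K_{\beta,t}$, specifically justifying $\dot W^{s,a}$ estimates via the Besov version \eqref{eq:2.70}. The estimate $\left\|\nabla G_{t/2}\right\|_{\dot B^0_{1,\infty}}\lesssim\left\|\nabla G_{t/2}\right\|_{L^1}$ is fine, and $\dot B^s_{a,1}\hookrightarrow\dot W^{s,a}$ handles the endpoint $a=\infty$ as well.
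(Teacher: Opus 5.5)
Your argument is correct once one identity is fixed, and it takes a different route from the paper.

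\textbf{The identity to fix.} You wrote $K_{\beta,t}=T_\beta(t/2)G_{t/2}$, but in fact $T_\beta(t/2)G_{t/2}=e^{\frac t2\beta L_1}e^{\frac t2\Delta}G_{t/2}=e^{\frac t2\beta L_1}G_t=K_{\beta/2,t}$. The correct identities are $K_{\beta,t}=T_{2\beta}(t/2)G_{t/2}$ and $\nabla K_{\beta,t}=T_{2\beta}(t/2)\nabla G_{t/2}$; the paper uses the same device in the proof of Proposition~\ref{prop:2.5}. The constant in Proposition~\ref{cor:2.3} does not depend on $\beta$, since it comes from Lemma~\ref{lem:2.1}. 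Replacing $(\beta,t)$ by $(2\beta,t/2)$ changes $M_{s,a}$ only by a bounded factor. Hence your bounds $\|K_{\beta,t}\|_{\dot W^{s,a}}\lesssim M_{s,a}(t)$ and $\|\nabla K_{\beta,t}\|_{\dot W^{s,a}}\lesssim t^{-1/2}M_{s,a}(t)$ still hold. You should also delete the exploratory sentences about $T_\beta(t/2)K_{\beta,t/2}$ and $T_\beta(t-1)K_{\beta,1}$.

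\textbf{Comparison with the paper.} The paper does not approximate $\omega_0$. It rescales the kernel as $K_{\beta,t}(x)=t^{-1}(e^{t^{3/2}\beta L_1}G_1)(t^{-1/2}x)$. It then bounds $\|K_{\beta,t}(\cdot-y)-K_{\beta,t}\|_{\dot B^s_{a,1}}$ by $M_{s,a}(t)\|G_1(\cdot-t^{-1/2}y)-G_1\|_{W^{s+4-\frac4a,1}}$. The first branch of the minimum uses $L^2$-unitarity of $e^{\tau L_1}$ plus Bernstein, and the second uses Lemma~\ref{lem:2.1}. It concludes by dominated convergence in $y$, using continuity of translations in $W^{s+4-\frac4a,1}$. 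You instead apply Proposition~\ref{cor:2.3} as a black box three times, to $\omega_0-g$, $G_{t/2}$ and $\nabla G_{t/2}$, and combine $L^1$-density with a first-order Taylor formula. Your route avoids the scaling identity and the chain of Besov/Sobolev embeddings for the rescaled Gaussian. It also gives, as a byproduct, the quantitative rate $\|T_\beta(t)\omega_0-K_{\beta,t}\int\omega_0\|_{\dot W^{s,a}}\lesssim t^{-1/2}M_{s,a}(t)\||x|\omega_0\|_{L^1}$ whenever $|x|\omega_0\in L^1$. The paper's route handles arbitrary $L^1$ data in one step, without an $\varepsilon$-approximation, at the cost of computing directly with the rescaled kernel.
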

\proof
The proof can similarly be done as in \citep[Theorem~3.4]{ET23} (see also \citep[THEOREM~3.1]{FM01}).
Let
	$M_{s, a}$
denote the function on
	$(0, \infty)$
defined by
	\begin{align}
		M_{s, a}(t)\coloneqq t^{-\frac s2-1+\frac1a}\min\left\{1, |\beta|^{-1+\frac2a}t^{-\frac32(1-\frac2a)}\right\}.
	\end{align}
Since the operator
	$T_\beta(t)$
is given by
	\begin{align}
		\left(T_\beta(t)\omega_0\right)(x)
		=\int_{\mathbb R^2}K_{\beta, t}(x-y)\omega_0(y)\, dy, 
	\end{align}
we have
	\begin{align}
		\left\|T_\beta(t)\omega_0-K_{\beta, t}\int_{\mathbb R^2}\omega_0(y)\, dy\right\|_{\dot W^{s, a}}
		&=\left\|\int_{\mathbb R^2}\left( K_{\beta, t}(\cdot-y)-K_{\beta, t}(\cdot)\right)\omega_0(y)\, dy\right\|_{\dot W^{s, a}}\label{eq:2.78}\\
		&\lesssim \int_{\mathbb R^2}\left\| K_{\beta, t}(\cdot-y)-K_{\beta, t}\right\|_{\dot B^s_{a, 1}}|\omega_0(y)|\, dy.\label{eq:2.79}
	\end{align}
By the definition of
	$K_{\beta, t}$, 
a change of variables yields
	\begin{align}
		K_{\beta, t}(x)
		&=\mathcal F^{-1}(e^{-t|\xi|^2}e^{t\beta\frac{i\xi_1}{|\xi|^2}})(x)
		=t^{-1}\mathcal F^{-1}(e^{-|\xi|^2}e^{t^\frac32\beta\frac{i\xi_1}{|\xi|^2}})(t^{-\frac12}x)\label{eq:2.80}\\
		&=t^{-1}(e^{t^\frac32\beta L_1}G_1)(t^{-\frac12}x).\label{eq:2.81}
	\end{align}
Thus we obtain
	\begin{align}
		&\left\| K_{\beta, t}(\cdot-y)-K_{\beta, t}\right\|_{\dot B^s_{a, 1}}
		=t^{-1}\left\|\left(e^{t^\frac32\beta L_1}G_1\right)(t^{-\frac12}(\cdot -y))-\left(e^{t^\frac32\beta L_1}G_1\right)(t^{-\frac12}\cdot)\right\|_{\dot B^s_{a, 1}}\notag\\
		&\lesssim t^{-\frac s2-1+\frac1a}\left\|e^{t^\frac32\beta L_1}\left(G_1(\cdot -t^{-\frac12}y)-G_1\right)\right\|_{\dot B^s_{a, 1}}.
	\end{align}
Bernstein's inequality gives
	\begin{align}
		&\left\|e^{t^\frac32\beta L_1}\left(G_1(\cdot -t^{-\frac12}y)-G_1\right)\right\|_{\dot B^s_{a, 1}}\\
		&\lesssim \left\|e^{t^\frac32\beta L_1}\left(G_1(\cdot -t^{-\frac12}y)-G_1\right)\right\|_{\dot B^{s+1-\frac2a}_{2, 1}}\\
		&=\left\|G_1(\cdot -t^{-\frac12}y)-G_1\right\|_{\dot B^{s+1-\frac2a}_{2, 1}}
		\lesssim \left\|G_1(\cdot -t^{-\frac12}y)-G_1\right\|_{\dot B^{s+2-\frac2a}_{1, 1}}.
	\end{align}
Using the dispersive estimate obtained in Lemma~\ref{lem:2.1}, we also have
	\begin{align}
		&\left\|e^{t^\frac32\beta L_1}\left(G_1(\cdot -t^{-\frac12}y)-G_1\right)\right\|_{\dot B^s_{a, 1}}\\
		&\lesssim |\beta|^{-1+\frac2a}t^{-\frac32(1-\frac2a)}\left\|G_1(\cdot -t^{-\frac12}y)-G_1\right\|_{\dot B^{s+3(1-\frac2a)}_{a', 1}}\\
		&\lesssim |\beta|^{-1+\frac2a}t^{-\frac32(1-\frac2a)}\left\|G_1(\cdot -t^{-\frac12}y)-G_1\right\|_{\dot B^{s+3-\frac4a}_{1, 1}}.
	\end{align}
Thus, by the embeddings
	$W^{s+4-\frac4a, 1}
	\hookrightarrow B^{s+4-\frac4a}_{1, \infty}
	\hookrightarrow  B^{s+3-\frac4a}_{1, 1}
	\hookrightarrow \dot  B^{s+3-\frac4a}_{1, 1}
	\cap \dot  B^{s+2-\frac2a}_{1, 1}, $
we obtain
	\begin{align}
		&\left\|T_\beta(t)\omega_0-K_{\beta, t}\int_{\mathbb R^2}\omega_0(y)\, dy\right\|_{\dot W^{s, a}}\\
		&\lesssim M_{s, a}(t)\int_{\mathbb R^2}\left\|G_1(\cdot -t^{-\frac12}y)-G_1\right\|_{ B^{s+3-\frac4a}_{1, 1}}|\omega_0(y)|\, dy\\
		&\lesssim M_{s, a}(t)\int_{\mathbb R^2}\left\|G_1(\cdot -t^{-\frac12}y)-G_1\right\|_{ B^{s+4-\frac4a}_{1, \infty}}|\omega_0(y)|\, dy\\
		&\lesssim M_{s, a}(t)\int_{\mathbb R^2}\left\|G_1(\cdot -t^{-\frac12}y)-G_1\right\|_{ W^{s+4-\frac4a, 1}}|\omega_0(y)|\, dy.
	\end{align}
Since
	$\omega_0\in L^1$, 
the dominated convergence theorem ensures that
	\begin{align}
		\lim_{t\to\infty}M_{s, a}(t)^{-1}\left\|T_\beta(t)\omega_0-K_{\beta, t}\int_{\mathbb R^2}\omega_0(y)\, dy\right\|_{\dot W^{s, a}}=0.
	\end{align}
This completes the proof.
\qed

\begin{proposition}[Strichartz estimates]\label{prop:2.3}
	Assume that the exponents
		$2<p<\infty$, 
		$2\leq r<\infty$
	and 
		$s\geq 0$
	satisfy either
		\begin{align}
			r>2 
			\quad\text{and} \quad
			\frac{1}{2}\left(1-\frac{2}{p}\right)+\frac s2
			\leq\frac 1r
			\leq\frac54\left(1-\frac 2p\right)+\frac{s}{2}\label{eq:2.31}
		\end{align}
	or
		\begin{align}
			r=2 
			\quad\text{and} \quad
			\frac{1}{2}\left(1-\frac{2}{p}\right)+\frac s2
			<\frac 12
			<\frac54\left(1-\frac 2p\right)+\frac{s}{2}.\label{eq:2.32}
		\end{align}
	Then there is a constant
		$C>0$
	which satisfies
		\begin{align}\label{eq:2.8}
			\left\|T_\beta(t)f\right\|_{L^r([0, \infty); \dot W^{s, p})}
			\leq C\left|\beta\right|^{-\frac{1}{3}(\frac2r+\frac2p-1-s)}\left\|f\right\|_{L^2}
		\end{align}
	for all
		$f\in L^2$
	and
		$\beta\neq 0$.
\end{proposition}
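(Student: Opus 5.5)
The plan is the standard $TT^*$ argument, run frequency by frequency. The dispersive decay comes from Lemma~\ref{lem:2.1}, and the time integrability comes from a Hardy--Littlewood--Sobolev type estimate or a direct integration.

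\emph{Step 1: reduction to a single dyadic block.} Since $p\ge2$, the Littlewood--Paley square function estimate and Minkowski's inequality give
\begin{align*}
\|T_\beta(t)f\|_{L^r_t\dot W^{s,p}}\lesssim \Bigl\|\Bigl(\sum_k 2^{2sk}\|T_\beta(t)P_kf\|_{L^r_tL^p}^2\Bigr)^{1/2}\Bigr\|.
\end{align*}
Here I use $r\ge2$ and $p\ge2$ to move the $\ell^2$ sum outside. It therefore suffices to prove the block estimate
\begin{align*}
2^{sk}\|T_\beta(t)P_kf\|_{L^r_tL^p}\lesssim |\beta|^{-\frac13(\frac2r+\frac2p-1-s)}\|\widetilde P_kf\|_{L^2}
\end{align*}
uniformly in $k$, and then square-sum using almost orthogonality.

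\emph{Step 2: $TT^*$ for one block.} Set $A_k f(t)=e^{t\Delta}e^{t\beta L_1}P_kf$. Then
\begin{align*}
A_kA_k^*F(t)=\int_0^\infty e^{(t+\tau)\Delta}e^{(t-\tau)\beta L_1}P_k^2F(\tau)\,d\tau.
\end{align*}
Interpolating the $L^2$ bound with Lemma~\ref{lem:2.1}, as in \eqref{eq:2.5}, gives the kernel bound
\begin{align*}
\|e^{(t+\tau)\Delta}e^{(t-\tau)\beta L_1}P_k^2\|_{L^{p'}\to L^p}\lesssim \min\{2^{2k(1-\frac2p)},\,2^{3k(1-\frac2p)}|\beta(t-\tau)|^{-(1-\frac2p)}\}\,e^{-c(t+\tau)2^{2k}}.
\end{align*}
I then bound $\|A_kA_k^*F\|_{L^r_tL^p}$ by $\|F\|_{L^{r'}_tL^{p'}}$ via Young's (or HLS) inequality in time.

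Write $\gamma=1-\frac2p$. The time kernel is controlled by two pieces. The first is the heat factor $e^{-c(t+\tau)2^{2k}}$, whose $L^{r/2}$ norm in $t$ is $\sim 2^{-4k/r}$ (using $t+\tau\ge|t-\tau|$). The second is the dispersive part $\min\{1,(2^k|\beta||t-\tau|)^{-\gamma}\}$, multiplied by $2^{2k\gamma}$. Choosing the cutoff $|t-\tau|\sim 2^{-k}|\beta|^{-1}$ versus the heat scale $2^{-2k}$ produces the exponent in \eqref{eq:2.8}.

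\emph{Step 3: the frequency-$k$ constant and the case split.} The key computation gives, for the $TT^*$ operator, a bound of the form
\begin{align*}
2^{2sk}\,2^{2k\gamma}\int_0^\infty \min\{1,(2^k|\beta| \sigma)^{-\gamma}\}\,e^{-c\sigma 2^{2k}}\ldots,
\end{align*}
yielding a factor $C_k$ which is a product of powers of $2^k$ and $|\beta|$. Splitting according to $2^{k}\lessgtr|\beta|^{1/3}$ (heat-dominated versus dispersion-dominated frequencies), I expect
\begin{align*}
2^{sk}\|A_k\|_{L^2\to L^r_tL^p}\lesssim \min\{(2^k)^{a},(2^k)^{b}|\beta|^{c}\}.
\end{align*}
The conditions \eqref{eq:2.31} say precisely that the exponents $a$ and $b$ have opposite signs (or one vanishes), so the supremum over $k$ is attained at $2^k\sim|\beta|^{1/3}$. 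This gives exactly $|\beta|^{-\frac13(\frac2r+\frac2p-1-s)}$, which is consistent with the scaling in Remark~\ref{rem:1.3}. Concretely:
\begin{itemize}[label={}]
\item The lower bound $\frac1r\ge\frac12\gamma+\frac s2$ is what makes the high-frequency contribution decay.
\item The upper bound $\frac1r\le\frac54\gamma+\frac s2$ is what makes the dispersive time integral converge at low frequency.
\end{itemize}
Here the factor $\frac54$ arises from combining the decay rate $\gamma$ with the $2^{3k}$ loss in Lemma~\ref{lem:2.1}.

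\emph{Main obstacle.} The hard part is the endpoint/time-integrability issue. When $\gamma\ge\frac2r$ (fast dispersion) I use the truncated kernel directly. Otherwise the kernel $|t-\tau|^{-\gamma}$ is handled by HLS, which requires $r>2$, and at the endpoint equalities of \eqref{eq:2.31} a uniform $\sup_k$ bound is not enough. I would handle this as follows:
\begin{itemize}[label={}]
\item For $r>2$, use HLS in time, which allows the equality cases.
\item For $r=2$, avoid HLS: use Schur's test with the integrable kernel $\min\{1,(2^k|\beta|\sigma)^{-\gamma}\}e^{-c\sigma2^{2k}}$. This needs the strict inequalities \eqref{eq:2.32} so that both the $\sigma$-integral and the geometric sum over $k$ converge.
\end{itemize}
In both cases the square-summation over $k$ is done by orthogonality of the $P_k f$ in $L^2$.
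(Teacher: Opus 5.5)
Your argument is correct. Steps 1 and 2 are exactly the paper's: reduction to dyadic blocks with $\ell^2$ orthogonality, $TT^*$ on each block, and the $L^{p'}\to L^p$ bound obtained by interpolating Lemma~\ref{lem:2.1} with the $L^2$ bound. The difference lies only in how the block kernel is integrated in time. The paper discards the frequency. Using $e^{-c(t+\tau)2^{2k}}\le e^{-c|t-\tau|2^{2k}}$, it dominates the block kernel, uniformly in $k$, by the single kernel $|\sigma|^{-s-\gamma}\min\{1,|\beta|^{-\gamma}|\sigma|^{-\frac32\gamma}\}$ (with $\sigma=t-\tau$, $\gamma=1-\frac2p$). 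It then computes the $L^{r/2}(\mathbb R)$ norm of this kernel by splitting at $|\sigma|=|\beta|^{-2/3}$. The two inequalities in \eqref{eq:2.31} are exactly integrability at $0$ and at $\infty$. At equality this kernel lies only in weak $L^{r/2}$, which is why HLS, hence $r>2$, and the strictness in \eqref{eq:2.32} appear. You instead keep the heat factor and optimize over $k$. Substituting $\sigma=2^{-2k}u$ and setting $\mu=2^{3k}/|\beta|$, the block kernel satisfies $\|K_k\|_{L^{r/2}}\simeq 2^{2k(s+\gamma-\frac2r)}\|\min\{1,(\mu/|u|)^{\gamma}\}e^{-c|u|}\|_{L^{r/2}_u}$. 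The last factor is $\simeq\min\{1,\mu^{\gamma}\}$ if $\gamma<\frac2r$ and $\simeq\min\{1,\mu^{2/r}\}$ if $\gamma>\frac2r$. A harmless logarithm appears when $\gamma=\frac2r$, which forces $s=0$. The supremum over $k$ is attained at $2^{3k}\sim|\beta|$ and equals $|\beta|^{-\frac23(\frac2r+\frac2p-1-s)}$. Your route costs a small case analysis but is sharper. Each block kernel is bounded and exponentially decaying, so Young's inequality alone covers every case, including the equalities in \eqref{eq:2.31} and even equalities when $r=2$. Two points need fixing. First, the factorization should read $\min\{2^{2k\gamma},2^{3k\gamma}|\beta\sigma|^{-\gamma}\}=2^{2k\gamma}\min\{1,(2^{-k}|\beta|\sigma)^{-\gamma}\}$, so the dispersive cutoff is $|\sigma|\sim 2^{k}|\beta|^{-1}$. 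With your $2^{+k}$, the balance with the heat scale $2^{-2k}$ would occur at $2^k\sim|\beta|$, and the exponents would come out wrong. Second, your ``main obstacle'' paragraph misreads your own Step~1. A block bound uniform in $k$ is exactly what is needed, and there is no geometric sum over $k$. HLS is only required if one throws away the heat factor, as the paper does.
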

\proof
	The proof is similar to those of \citep[Lemma~3.3]{IT15} and \citep[Lemma~2]{AKL22}. We also refer to \cite{Caz03} as a general reference.
	Using the conditions
		$2<p<\infty$
	and
		$r\geq2$, 
	we have
		\begin{align}\label{eq:2.9}
			\left\|T_\beta(t)f\right\|_{L^r([0, \infty); \dot W^{s, p})}
			&\lesssim\left\|T_\beta(t)f\right\|_{L^r([0, \infty); \dot B^{s}_{p, 2})}\\
			&=\left\|\left\|\left(2^{sk}\left\|T_\beta(t)P_kf\right\|_{L^p}\right)_{k\in\mathbb Z}\right\|_{\ell^2}\right\|_{L^r([0, \infty))}\label{eq:2.10}\\
			&\lesssim\left\|\left(\left\|\left\|T_\beta(t)P_kf\right\|_{\dot W^{s,p}}\right\|_{L^r([0, \infty))}\right)_{k\in\mathbb Z}\right\|_{\ell^2}\label{eq:2.11}.
		\end{align}
	Thus,  we first consider the estimate of 
		$\left\|\left\|T_\beta(t)P_kf\right\|_{\dot W^{s,p}}\right\|_{L^r([0, \infty))}$ for 
		$k\in\mathbb Z$.
		
	Assume that
		$\varphi\in C^\infty_c((0, \infty)\times~\mathbb R^2)$.
	Then the Cauchy--Schwarz inequality yields
		\begin{align}
			&\quad\left|\int_0^\infty\int_{\mathbb R^2}\left(T_\beta(t)(-\Delta)^\frac s2P_kf\right)(x)\overline{\varphi(t, x)}\, dx\, dt\right|\label{eq:2.12}\\
			&=\left|\int_{\mathbb R^2}\int_0^\infty\left(P_kf\right)(x)\overline{\left(T_{-\beta}(t)(-\Delta)^\frac s2\widetilde P_k\varphi(t, \cdot)\right)(x)}\, dt\, dx\right|\label{eq:2.13}\\
			&\leq \left\|P_kf\right\|_{L^2}\left\|\int_0^\infty T_{-\beta}(t)(-\Delta)^\frac s2\widetilde P_k\varphi(t, \cdot)\, dt\right\|_{L^2}.\label{eq:2.14}
		\end{align}
	Moreover, we have
		\begin{align}
			&\quad\left\|\int_0^\infty T_{-\beta}(t)(-\Delta)^\frac s2\widetilde P_k\varphi(t, \cdot)\, dt\right\|_{L^2}^2\label{eq:2.15}\\
			&=\int_0^\infty\int_0^\infty\int_{\mathbb R^2} \left(T_{-\beta}(t)(-\Delta)^\frac s2\widetilde P_k\varphi(t, \cdot)\right)(x)\overline{\left(T_{-\beta}(\tau)(-\Delta)^\frac s2\widetilde P_k\varphi(\tau, \cdot)\right)(x)}\, dx\, dt\, d\tau\label{eq:2.16}\\
			&=\int_0^\infty\int_0^\infty\int_{\mathbb R^2}\left(\widetilde P_k\varphi(t, \cdot)\right)(x)\overline{\left(T_{\beta}(t)T_{-\beta}(\tau)(-\Delta)^s\widetilde P_k\varphi(\tau, \cdot)\right)(x)}\, dx\, dt\, d\tau\label{eq:2.17}\\
			&\leq\int_0^\infty\int_0^\infty \left\|\widetilde P_k\varphi(t, \cdot)\right\|_{L^{p'}}\left\|T_{\beta}(t)T_{-\beta}(\tau)(-\Delta)^s\widetilde P_k\varphi(\tau, \cdot)\right\|_{L^p}\, dt\, d\tau\label{eq:2.18}\\
			&\lesssim\left\|\varphi\right\|_{L^{r'}([0, \infty); L^{p'})}\left\|\int_0^\infty\left\|T_{\beta}(t)T_{-\beta}(\tau)(-\Delta)^s\widetilde P_k\varphi(\tau, \cdot)\right\|_{L^p}\, d\tau\right\|_{L^r([0, \infty))}.\label{eq:2.19}
		\end{align}
	By an argument similar to that of Proposition~\ref{prop:2.2}, we see that
		\begin{align}
			&\quad\left\|T_{\beta}(t)T_{-\beta}(\tau)(-\Delta)^s\widetilde P_k\varphi(\tau, \cdot)\right\|_{L^p}
			=\left\|e^{(t-\tau)\beta L_1}e^{(t+\tau)\Delta}(-\Delta)^s\widetilde P_k\varphi(\tau, \cdot)\right\|_{L^p}\label{eq:2.20}\\
			&\lesssim2^{2sk}\min\left\{2^{2k(1-\frac2p)}, \left|\beta(t-\tau)\right|^{-1+\frac2p}2^{3k(1-\frac 2p)}\right\}e^{-c(t+\tau)2^{2k}}\left\|\varphi(\tau, \cdot)\right\|_{L^{p'}}\label{eq:2.21}\\
			&\lesssim|t-\tau|^{-s-1+\frac2p}\min\left\{1, \left|\beta\right|^{-1+\frac2p}\left|t-\tau\right|^{-\frac32(1-\frac2p)}\right\}\left\|\varphi(\tau, \cdot)\right\|_{L^{p'}}.\label{eq:2.22}
		\end{align}
	Thus, we obtain the estimate
		\begin{align}
			&\left\|\int_0^\infty\left\|T_{\beta}(t)T_{-\beta}(\tau)(-\Delta)^s\widetilde P_k\varphi(\tau, \cdot)\right\|_{L^p}\, d\tau\right\|_{L^r([0, \infty))}\label{eq:2.23}\\
			&\lesssim \left\|\int_0^\infty|t-\tau|^{-s-1+\frac2p}\min\left\{1, \left|\beta\right|^{-1+\frac2p}\left|t-\tau\right|^{-\frac32(1-\frac2p)}\right\}\left\|\varphi(\tau, \cdot)\right\|_{L^{p'}}\, d\tau\right\|_{L^r([0, \infty))}.\quad\label{eq:2.24}
		\end{align}
	If
		$\frac{1}{2}(1-\frac{2}{p})+\frac s2
		<\frac 1r
		<\frac54(1-\frac 2p)+\frac{s}{2}$, 
	Young's convolution inequality yields
		\begin{align}
			&\quad\left\|\int_0^\infty|t-\tau|^{-s-1+\frac2p}\min\left\{1, \left|\beta\right|^{-1+\frac2p}\left|t-\tau\right|^{-\frac32(1-\frac2p)}\right\}\left\|\varphi(\tau, \cdot)\right\|_{L^{p'}}\, d\tau\right\|_{L^r([0, \infty))}\label{eq:2.25}\\
			&\leq \left\|\left|t\right|^{-s-1+\frac2p}\min\left\{1, \left|\beta\right|^{-1+\frac2p}\left|t\right|^{-\frac32(1-\frac2p)}\right\}\right\|_{L^{\frac r2}(\mathbb R)}\left\|\varphi\right\|_{L^{r'}([0, \infty); L^{p'})} \label{eq:2.26}
		\end{align}
	since
		$1+\frac1r=\frac2r+\frac1{r'}$.
For any
		$\lambda>0$,
	we split the integral as
		\begin{align}
			&\quad\left\|\left|t\right|^{-s-1+\frac2p}\min\left\{1, \left|\beta\right|^{-1+\frac2p}\left|t\right|^{-\frac32(1-\frac2p)}\right\}\right\|_{L^{\frac r2}(\mathbb R)}\label{eq:2.27}\\
			&\leq \left(\int_{|t|\leq \lambda}|t|^{-\frac r2(s+1-\frac2p)}\, dt+\int_{|t|>\lambda}\left(|\beta|^{-1+\frac2p}|t|^{-s-\frac52(1-\frac2p)}\right)^\frac r2\, dt\right)^\frac{2}{r}\label{eq:2.28}\\
			&\lesssim\lambda^{\frac 2r-s-1+\frac 2p}+|\beta|^{-1+\frac2p}\lambda^{\frac2r-s-\frac52(1-\frac2p)}.\label{eq:2.29}
		\end{align}
	If we choose 
		$\lambda>0$
	such that
		$\lambda^{\frac 2r-s-1+\frac 2p}=|\beta|^{-1+\frac2p}\lambda^{\frac2r-s-\frac52(1-\frac2p)}$, 
	i.e., 
		$\lambda=\left|\beta\right|^{-\frac23}$
	holds, 
	the above estimate leads to
		\begin{align}
			\left\|\left|t\right|^{-s-1+\frac2p}\min\left\{1, \left|\beta\right|^{-1+\frac2p}\left|t\right|^{-\frac32(1-\frac2p)}\right\}\right\|_{L^{\frac r2}(\mathbb R)}
			\lesssim \left|\beta\right|^{-\frac23(\frac 2r-s-1+\frac 2p)}.\label{eq:2.30}
		\end{align}
	Plugging this into \eqref{eq:2.26}, we obtain
		\begin{align}
			&\quad\left\|\int_0^\infty|t-\tau|^{-s-1+\frac2p}\min\left\{1, \left|\beta\right|^{-1+\frac2p}\left|t-\tau\right|^{-\frac32(1-\frac2p)}\right\}\left\|\varphi(\tau, \cdot)\right\|_{L^{p'}}\, d\tau\right\|_{L^r([0, \infty))}\label{eq:2.37}\\
			&\lesssim |\beta|^{-\frac{2}{3}(\frac2r-s-1+\frac2p)}\left\|\varphi\right\|_{L^{r'}([0, \infty); L^{p'})}.\label{eq:2.38}
		\end{align}
	If
		$r>2$
	and
		$\frac{1}{2}(1-\frac2p)+\frac s2=\frac1r$, 
	the Hardy--Littlewood--Sobolev inequality yields the same estimate
		\begin{align}
			&\quad\left\|\int_0^\infty|t-\tau|^{-s-1+\frac2p}\min\left\{1, \left|\beta\right|^{-1+\frac2p}\left|t-\tau\right|^{-\frac32(1-\frac2p)}\right\}\left\|\varphi(\tau, \cdot)\right\|_{L^{p'}}\, d\tau\right\|_{L^r([0, \infty))}\label{eq:2.35}\\
			&\lesssim \left\|\varphi\right\|_{L^{r'}([0, \infty); L^{p'})}
			=|\beta|^{-\frac{2}{3}(\frac2r-s-1+\frac2p)}\left\|\varphi\right\|_{L^{r'}([0, \infty); L^{p'})}.\label{eq:2.36}
		\end{align}
		
	In the case of
		$r>2$
	and
		$\frac 1r=\frac54(1-\frac2p)+\frac s2, $
	the Hardy--Littlewood--Sobolev inequality gives 
		\begin{align}
			&\quad\left\|\int_0^\infty|t-\tau|^{-s-1+\frac2p}\min\left\{1, \left|\beta\right|^{-1+\frac2p}\left|t-\tau\right|^{-\frac32(1-\frac2p)}\right\}\left\|\varphi(\tau, \cdot)\right\|_{L^{p'}}\, d\tau\right\|_{L^r([0, \infty))}\label{eq:2.39}\\
			&\lesssim \left\|\left|t\right|^{-s-1+\frac2p}\min\left\{1, \left|\beta\right|^{-1+\frac2p}\left|t\right|^{-\frac32(1-\frac2p)}\right\}\right\|_{L^{\frac r2, \infty}(\mathbb R)}\left\|\varphi\right\|_{L^{r'}([0, \infty); L^{p'})} \label{eq:2.40}\\
			&\lesssim \left|\beta\right|^{-1+\frac2p}\left\|\varphi\right\|_{L^{r'}([0, \infty); L^{p'})}. \label{eq:2.45}
		\end{align}
	Since we have
		$-\frac23(\frac2r-s-1+\frac2p)=-1+\frac2p, $
	\eqref{eq:2.45} is equivalent to \eqref{eq:2.38}. 
	
	In each case by combining \eqref{eq:2.14}, \eqref{eq:2.19}, \eqref{eq:2.24}, and \eqref{eq:2.38}, 
	we have
		\begin{align}
			\left\|\left\|T_\beta(t)P_kf\right\|_{\dot W^{s,p}}\right\|_{L^r([0, \infty))}
			\lesssim\left|\beta\right|^{-\frac13(\frac 2r-s-1+\frac 2p)}\left\|P_kf\right\|_{L^2}.\label{eq:2.33}
		\end{align}
	Summing up this estimate with respect to 
		$k\in\mathbb Z$
	and plugging the consequent estimate into \eqref{eq:2.11}, we obtain the desired estimate
		\begin{align}
			\left\|\left\|T_\beta(t)f\right\|_{\dot W^{s,p}}\right\|_{L^r([0, \infty))}
			\lesssim\left|\beta\right|^{-\frac13(\frac 2r-s-1+\frac 2p)}\left\|f\right\|_{L^2}.\label{eq:2.34}
		\end{align}
\qed

We also prepare some estimates necessary for dealing with the nonlinear term.
	\begin{proposition}[Fractional Leibniz rule]\label{prop:2.4}
		Let 
			$s\geq 0$ 
		and 
			$1<r<\infty$. 
		Assume also that 
			$1<p_1, p_2, q_1, q_2\leq\infty$
		satisfy
			$\frac1r=\frac{1}{p_j}+\frac{1}{q_j}$
		for
			$j=1, 2$.
		Then there exists a constant
			$C>0$
		such that
			\begin{align}
				\left\|fg\right\|_{\dot W^{s, r}}
				\leq C\left(\left\|f\right\|_{\dot W^{s, p_1}}\left\|g\right\|_{L^{q_1}}+\left\|f\right\|_{L^{q_2}}\left\|g\right\|_{\dot W^{s, p_2}}\right)\label{eq:2.46}
			\end{align}
		for all
			$f\in \dot W^{s, p_1}\cap L^{q_2}$
		and
			$g\in \dot W^{s, p_2}\cap L^{q_1}$.
	\end{proposition}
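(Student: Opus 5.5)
The plan is to reduce this fractional Leibniz rule to the classical Kato--Ponce / Coifman--Meyer machinery via Littlewood--Paley paraproducts, in the homogeneous setting. For $s=0$ the estimate is just Hölder's inequality, so assume $s>0$. Using the projections $P_k$ from Section~\ref{sect:2} and the partial sums $S_k\coloneqq\sum_{j\leq k-3}P_j$, I will decompose
\begin{align*}
fg=\sum_{k}S_kf\,P_kg+\sum_kP_kf\,S_kg+\sum_{|j-k|\leq2}P_jf\,P_kg\eqqcolon \Pi_1+\Pi_2+\Pi_3.
\end{align*}
The quantity $\|\cdot\|_{\dot W^{s,r}}$ is estimated, for $1<r<\infty$, through the Littlewood--Paley square function characterization $\|h\|_{\dot W^{s,r}}\simeq\|(\sum_k 2^{2sk}|P_kh|^2)^{1/2}\|_{L^r}$. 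This characterization is the reason $r>1$ is required.

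For $\Pi_1$, each summand $S_kf\,P_kg$ has Fourier support in an annulus of size $\sim2^k$. The square function estimate for such almost-orthogonal pieces therefore gives
\begin{align*}
\|\Pi_1\|_{\dot W^{s,r}}\lesssim\Big\|\Big(\sum_k2^{2sk}|S_kf|^2|P_kg|^2\Big)^{1/2}\Big\|_{L^r}\leq\Big\|\sup_k|S_kf|\Big\|_{L^{q_2}}\Big\|\Big(\sum_k2^{2sk}|P_kg|^2\Big)^{1/2}\Big\|_{L^{p_2}}.
\end{align*}
The supremum is bounded pointwise by the Hardy--Littlewood maximal function $Mf$, which is controlled in $L^{q_2}$ for $1<q_2\leq\infty$. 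The square function is $\simeq\|g\|_{\dot W^{s,p_2}}$ because $1<p_2<\infty$; note that $p_2<\infty$ is automatic when $q_2\leq\infty$ and $r<\infty$, except in the endpoint where $q_2$ is finite, which is handled the same way. Hence $\Pi_1$ is bounded by the second term in \eqref{eq:2.46}, and by symmetry $\Pi_2$ is bounded by the first term.

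The main obstacle is the high--high term $\Pi_3$, whose summands $P_jf\,P_kg$ have Fourier support only in a ball of radius $\sim2^k$ rather than an annulus. Here I would use $s>0$ crucially. The bound is
\begin{align*}
2^{s\ell}|P_\ell\Pi_3|\lesssim\sum_{k\geq\ell-4}2^{-s(k-\ell)}M\big(2^{sk}P_kf\,\widetilde P_kg\big).
\end{align*}
The geometric factor is summable since $s>0$. An $\ell^2$ Young inequality in the indices, combined with the Fefferman--Stein vector-valued maximal inequality (valid since $1<r<\infty$), then gives
\begin{align*}
\|\Pi_3\|_{\dot W^{s,r}}\lesssim\Big\|\Big(\sum_k2^{2sk}|P_kf|^2|\widetilde P_kg|^2\Big)^{1/2}\Big\|_{L^r}\leq\Big\|\Big(\sum_k2^{2sk}|P_kf|^2\Big)^{1/2}\Big\|_{L^{p_1}}\|Mg\|_{L^{q_1}},
\end{align*}
which is bounded by the first term.

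One delicacy remains when $q_1=\infty$ or $q_2=\infty$, since the maximal function is still bounded on $L^\infty$ and nothing breaks. A second delicacy is justifying the decomposition in homogeneous spaces, that is, convergence modulo polynomials. For this I would rely on the realization of $\dot B$ / $\dot W$ fixed in Section~\ref{sect:2} (low-frequency vanishing) and a density argument with Schwartz functions whose Fourier transforms vanish near the origin. Alternatively, since this is a standard result, I would simply cite Kato--Ponce, Grafakos--Oh, or \cite{BCD11} for the details.
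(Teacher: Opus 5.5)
\paragraph{Gap at the endpoint $p_j=\infty$.} Your paraproduct argument is the standard one, and it is correct whenever $1<p_1,p_2<\infty$ (including $q_j=\infty$). That is the only range the paper ever uses. However, the statement also allows $p_1=\infty$ (which forces $q_1=r$) and $p_2=\infty$ (which forces $q_2=r$), and there your proof breaks. Your remark that $p_2<\infty$ is essentially automatic is wrong: $q_2=r$ gives $p_2=\infty$, and that case is \emph{not} handled the same way. The step
\[
\Big\|\Big(\sum_k 2^{2sk}|P_kg|^2\Big)^{1/2}\Big\|_{L^{p_2}}\lesssim\|g\|_{\dot W^{s,p_2}}
\]
is the Littlewood--Paley inequality, and it fails at $p_2=\infty$: the square function is not bounded on $L^\infty$, it maps $L^\infty$ only into $\mathrm{BMO}$. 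Hence your bound for $\Pi_1$ does not give $\|f\|_{L^{r}}\|g\|_{\dot W^{s,\infty}}$, and symmetrically your bound for $\Pi_2$ does not give $\|f\|_{\dot W^{s,\infty}}\|g\|_{L^{r}}$. The endpoint you do discuss, $q_j=\infty$, is the harmless one, since only the maximal function sits there.

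\paragraph{What survives and what is missing.} The high--high term can be saved by swapping roles. Pointwise $|2^{sk}P_kf|\lesssim\|f\|_{\dot W^{s,\infty}}$, so after Fefferman--Stein you get $\|f\|_{\dot W^{s,\infty}}\|(\sum_k|\widetilde P_kg|^2)^{1/2}\|_{L^r}\lesssim\|f\|_{\dot W^{s,\infty}}\|g\|_{L^r}$.

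The low--high terms cannot be saved this way. In $\Pi_2$ the derivative must land on the high-frequency factor $f$. But $(S_kg)_k$ is not square-summable, and $(2^{sk}P_kf)_k$ is not in $L^\infty(\ell^2)$. The missing ingredient is the $\mathrm{BMO}$ (Carleson-measure) estimate for paraproducts,
\[
\Big\|\Big(\sum_k|Q_kF|^2|S_kg|^2\Big)^{1/2}\Big\|_{L^r}\lesssim\|F\|_{\mathrm{BMO}}\|g\|_{L^r},\qquad F=(-\Delta)^{\frac s2}f,\quad Q_kF=2^{sk}P_kf.
\]
Equivalently, you can use the Coifman--Meyer bilinear multiplier theorem with an $L^\infty$ input, applied to $(-\Delta)^{\frac s2}\Pi_2=T_\sigma\big((-\Delta)^{\frac s2}f,g\big)$ for a Coifman--Meyer symbol $\sigma$.

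The paper's proof is just the citation \citep[Theorem~1]{GO14}. That argument rests on Coifman--Meyer-type bounds, which allow $L^\infty$ inputs, so it covers these endpoints. You should either restrict your claim to $p_1,p_2<\infty$, which suffices for every use in the paper, or add this endpoint estimate.
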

\proof
	See \citep[Theorem~1]{GO14}, for example.
\qed

\begin{proposition}\label{prop:2.5}
	Let 
		$p_1, p_2\in(2, \infty), r_1, r_2\in[2, \infty)$ 
	and
		$0\leq \delta<\min\{\frac2{p_1}, \frac2{p_2}\}$
	satisfy
    		\begin{align}
    			&\max\left\{1-\frac1{p_1}, 1-\frac1{p_2}\right\}\leq \frac{1}{p_1}+\frac{1}{p_2}-\frac{\delta}2, \label{eq:2.47}\\
			&\frac12-\frac1{p_1}+\frac\delta2-\frac32\left(1-\frac2{p_2}\right)
			\leq\frac{1}{r_1}
			\leq\frac12-\frac1{p_1}+\frac\delta2, \label{eq:2.48}\\
			&1-\frac1{p_2}+\frac\delta2-\frac32\left(1-\frac2{p_1}\right)
			\leq\frac{1}{r_2}
			\leq1-\frac1{p_2}+\frac\delta2.\label{eq:2.49}
    		\end{align}
	Then there is a constant 
		$C>0$
	such that for all
		$\omega_1\in L^{r_1}([0, \infty); \dot W^{-1+\delta, p_1})$
	and
		$\omega_2\in L^{r_2}([0, \infty); \dot W^{\delta, p_2})$, 
	we have
		\begin{align}
			&\quad\left\|\int_0^tT_\beta(t-\tau)\operatorname{div}(u_1(\tau)\omega_2(\tau))\, d\tau\right\|_{L^{r_1}([0, \infty); \dot W^{-1+\delta,p_1})}\label{eq:2.50}\\
			&\leq C\left|\beta\right|^{-\frac23(1-\frac1{r_2}-\frac{1}{p_2}+\frac\delta2)}\left\|\omega_1\right\|_{L^{r_1}([0, \infty); \dot W^{-1+\delta, p_1})}\left\|\omega_2\right\|_{L^{r_2}([0, \infty); \dot W^{\delta, p_2})}\label{eq:2.51}
		\end{align}
	and
		\begin{align}
			&\quad\left\|\int_0^tT_\beta(t-\tau)\operatorname{div}(u_1(\tau)\omega_2(\tau))\, d\tau\right\|_{L^{r_2}([0, \infty); \dot W^{\delta,p_2})}\label{eq:2.52}\\
			&\leq C\left|\beta\right|^{-\frac23(\frac12-\frac1{r_1}-\frac{1}{p_1}+\frac\delta2)}\left\|\omega_1\right\|_{L^{r_1}([0, \infty); \dot W^{-1+\delta, p_1})}\left\|\omega_2\right\|_{L^{r_2}([0, \infty); \dot W^{\delta, p_2})}, \label{eq:2.53}
		\end{align}
	where 
		$u_j\coloneqq \nabla^\perp(-\Delta)^{-1}\omega_j$
	for
		$j=1, 2.$
\end{proposition}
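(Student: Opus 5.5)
We estimate the Duhamel term directly in the auxiliary norms: apply the dispersive smoothing estimate of Proposition~\ref{prop:2.2} pointwise in time, then a one-dimensional Hardy--Littlewood--Sobolev / Young inequality in time, with the same $\lambda$-splitting as in the proof of Proposition~\ref{prop:2.3}.

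\textbf{Step 1 (first estimate).} Fix $\tau<t$. To control $\|T_\beta(t-\tau)\operatorname{div}(u_1\omega_2)\|_{\dot W^{-1+\delta,p_1}}$, write $\operatorname{div}$ as one derivative. Since $p_1>2$, we may use $\dot B^{\delta}_{p_1,2}\hookrightarrow\dot W^{\delta,p_1}$-type embeddings. Then Proposition~\ref{prop:2.2} with $s'=\delta$, $p=p_1$, and source regularity $s=-\sigma\le\delta$ gives
\begin{align*}
\|T_\beta(t-\tau)\operatorname{div}(u_1\omega_2)\|_{\dot W^{-1+\delta,p_1}}
\lesssim (t-\tau)^{-\frac12(\delta+\sigma)-1+\frac2{p_1}}\min\bigl\{1,|\beta|^{-1+\frac2{p_1}}(t-\tau)^{-\frac32(1-\frac2{p_1})}\bigr\}\,\|u_1\omega_2\|_{\dot B^{-\sigma}_{p_1',2}}.
\end{align*}
We choose $\sigma\ge0$ so that the product can be bounded by Hölder and Sobolev. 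Here $u_1=\nabla^\perp(-\Delta)^{-1}\omega_1\in\dot W^{\delta,p_1}$ has $\|u_1\|_{\dot W^{\delta,p_1}}\simeq\|\omega_1\|_{\dot W^{-1+\delta,p_1}}$, and by Sobolev $u_1\in L^{q_1}$ with $\frac1{q_1}=\frac1{p_1}-\frac\delta2$ (using $\delta<\frac2{p_1}$). Likewise $\omega_2\in L^{q_2}$ with $\frac1{q_2}=\frac1{p_2}-\frac\delta2$. Taking $\sigma=0$ and Hölder requires $\frac1{p_1'}=\frac1{q_1}+\frac1{q_2}$, which is not automatic. We therefore use the dual Sobolev embedding $L^{m}\hookrightarrow\dot B^{-\sigma}_{p_1',2}$ with $\frac1m=\frac1{p_1}+\frac1{p_2}-\delta$ and $\sigma=2(\frac1m-\frac1{p_1'})\ge0$. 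Condition~\eqref{eq:2.47} ensures $m\le p_1'$ (we need $\frac1m\ge1-\frac1{p_1}$; with the $\delta$ bookkeeping one may instead keep $\frac\delta2$ of derivative on one factor via the fractional Leibniz rule, Proposition~\ref{prop:2.4}, which is where the $\frac\delta2$ in \eqref{eq:2.47} comes from). This yields a time kernel $k(t)=t^{-\gamma}\min\{1,|\beta|^{-1+\frac2{p_1}}t^{-\frac32(1-\frac2{p_1})}\}$ with $\gamma=\frac32-\frac1{p_2}-\frac\delta2+\ldots$ determined by the above bookkeeping.

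\textbf{Step 2 (time integration).} We need
\begin{align*}
\Bigl\|\int_0^t k(t-\tau)F(\tau)\,d\tau\Bigr\|_{L^{r_1}},\qquad F(\tau)=\|\omega_1(\tau)\|_{\dot W^{-1+\delta,p_1}}\|\omega_2(\tau)\|_{\dot W^{\delta,p_2}}\in L^{\rho},\quad \frac1\rho=\frac1{r_1}+\frac1{r_2}.
\end{align*}
By Young's inequality this is bounded by $\|k\|_{L^{\kappa}}\|F\|_{L^\rho}$ with $1+\frac1{r_1}=\frac1\kappa+\frac1\rho$, i.e.\ $\frac1\kappa=1-\frac1{r_2}$. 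We then split $\|k\|_{L^\kappa}$ at $\lambda=|\beta|^{-2/3}$ exactly as in \eqref{eq:2.27}--\eqref{eq:2.30}. Integrability near $0$ and at $\infty$ are precisely the two sides of \eqref{eq:2.49}, and the result is $|\beta|^{-\frac23(\frac1\kappa-\gamma)}$, which should equal $|\beta|^{-\frac23(1-\frac1{r_2}-\frac1{p_2}+\frac\delta2)}$ (a consistency check via the scaling of Remark~\ref{rem:1.3}). At the endpoints of \eqref{eq:2.49}, the kernel lies only in weak $L^{\kappa,\infty}$ and we use Hardy--Littlewood--Sobolev instead. This requires $1<\rho$ and $\kappa$ strictly inside $(1,\infty)$; at the lower endpoint this holds since $r_2<\infty$.

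\textbf{Step 3 (second estimate).} The second estimate is identical with target $\dot W^{\delta,p_2}$ and $L^{r_2}$ in time. Now Young's inequality uses $\frac1\kappa=1-\frac1{r_1}$, and the dispersive factor carries $p_2$; conditions~\eqref{eq:2.48} give the integrability and the power $|\beta|^{-\frac23(\frac12-\frac1{r_1}-\frac1{p_1}+\frac\delta2)}$.

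\textbf{Main obstacle.} The delicate part is Step~1: distributing $\delta$ derivatives and the Sobolev exponents so that the product $u_1\omega_2$ lands in a space of the form $\dot B^{-\sigma}_{p',2}$ with the correct exponents. This is where the failure of $\dot H^1\hookrightarrow L^\infty$ enters, and where \eqref{eq:2.47} and $\delta<\frac2{p_j}$ are used. I would first try putting $\omega_2$ in $L^{q_2}$ and $u_1$ in $L^{q_1}$ via Sobolev, with the full derivative loss absorbed by the smoothing of $T_\beta$. If the exponents do not close, I would apply Proposition~\ref{prop:2.4} with $\frac\delta2$ derivatives on the product and measure in $\dot W^{\delta/2,\cdot}$.
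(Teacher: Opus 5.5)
Your architecture is the paper's: a pointwise-in-time dispersive bound from Proposition~\ref{prop:2.2}, then Young or Hardy--Littlewood--Sobolev in time with the split at $|\beta|^{-2/3}$. Your Step~2 is fine once the time kernel is known. The gap is Step~1, which you leave open, and neither route you describe closes under \eqref{eq:2.47}.

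\textbf{Why your two routes fail.} Putting both factors in Lebesgue spaces gives $u_1\omega_2\in L^m$ with $\frac1m=\frac1{p_1}+\frac1{p_2}-\delta$. The embedding $L^m\hookrightarrow\dot B^{-\sigma}_{p_1',2}$ with $\sigma\ge0$ then needs $\frac1m\ge1-\frac1{p_1}$. Contrary to your first claim, \eqref{eq:2.47} does not give this: it is strictly weaker when $\delta>0$. Your fallback with $\frac\delta2$ derivatives on the product does not help either. Proposition~\ref{prop:2.4} plus Sobolev then gives integrability $\frac1{p_1}+\frac1{p_2}-\frac{3\delta}4$. In general, $s\in[0,\delta]$ derivatives on the product give $\frac1{p_1}+\frac1{p_2}-\delta+\frac s2$, so you are forced to take $s=\delta$. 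Your $\gamma=\frac32-\frac1{p_2}-\frac\delta2+\ldots$ is also wrong. The correct exponent, which scaling also forces, is $\gamma=\frac1{p_2}-\frac\delta2$.

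\textbf{The missing step (the paper's argument).}
\begin{enumerate}
\item Since $(-\Delta)^{\frac{-1+\delta}2}\operatorname{div}$ is $(-\Delta)^{\frac\delta2}$ composed with Riesz transforms, bound the integrand by $\|T_\beta(t-\tau)(-\Delta)^{\frac\delta2}(u_1\omega_2)\|_{L^{p_1}}$. All $\delta$ derivatives now sit on the product.
\item Set $\frac1q=\frac1{p_1}+\frac1{p_2}-\frac\delta2$ and $\frac1{q_j}=\frac1{p_j}-\frac\delta2$. Estimate $\|(-\Delta)^{\frac\delta2}(u_1\omega_2)\|_{L^q}$ by Proposition~\ref{prop:2.4} with the pairs $(p_1,q_2)$ and $(q_1,p_2)$. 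Each term keeps the $\delta$ derivatives on one factor in its natural space, namely $\|(-\Delta)^{\frac\delta2}u_1\|_{L^{p_1}}\lesssim\|\omega_1\|_{\dot W^{-1+\delta,p_1}}$ or $\|\omega_2\|_{\dot W^{\delta,p_2}}$. Only the other factor pays a Sobolev loss, which is where $\delta<\frac2{p_j}$ enters.
\item Condition \eqref{eq:2.47} says exactly $q\le\min\{p_1',p_2'\}$. Write $T_\beta(t-\tau)=T_{2\beta}(\frac{t-\tau}2)e^{\frac{t-\tau}2\Delta}$. Use Proposition~\ref{prop:2.2} with $s=s'=0$ and the heat bound $L^q\to L^{p_1'}$; your embedding $L^q\hookrightarrow\dot B^{-2(\frac1q-\frac1{p_1'})}_{p_1',2}$ would serve equally well. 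This yields the kernel $(t-\tau)^{-\frac1{p_2}+\frac\delta2}\min\{1,|\beta|^{-1+\frac2{p_1}}(t-\tau)^{-\frac32(1-\frac2{p_1})}\}$.
\item For the second estimate, the derivative in $\operatorname{div}$ costs an extra $(t-\tau)^{-\frac12}$. The kernel becomes $(t-\tau)^{-\frac12-\frac1{p_1}+\frac\delta2}$ with $p_2$ in the dispersive factor, and the condition used is $q\le p_2'$.
\end{enumerate}

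With these kernels your time integration goes through. Hardy--Littlewood--Sobolev is needed at the endpoints of \eqref{eq:2.48} and the lower endpoint of \eqref{eq:2.49}. There $\rho>1$ holds because $\frac1{r_1}\le\frac12-\frac1{p_1}+\frac\delta2<\frac12$. The upper endpoint of \eqref{eq:2.49} never occurs, since $1-\frac1{p_2}+\frac\delta2>\frac12$.
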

\begin{remark}\rm
		The condition 
			$\max\left\{1-\frac1{p_1}, 1-\frac1{p_2}\right\}\leq \frac{1}{p_1}+\frac{1}{p_2}-\frac{\delta}2$
		is enough to obtain
			$\delta<\min\{\frac2{p_1}, \frac2{p_2}\}.$
		Indeed, we have
			$1-\frac1{p_2}\leq\frac{1}{p_1}+\frac1{p_2}-\frac\delta2$
		and this is equivalent to
			$\delta\leq \frac2{p_1}+\frac4{p_2}-2.$
		This in turn yields
			$\delta<\frac2{p_1}, $
		since we have 
			$p_2>2.$
		we similarly obtain
			$\delta<\frac2{p_2}.$
		We also note that the equality in
			$\frac1{r_2}\leq 1-\frac1{p_2}+\frac\delta2$
		cannot be attained, since we have
			$1-\frac1{p_2}+\frac\delta2
			\geq 1-\frac1{p_2}
			>\frac12
			\geq\frac1{r_2}.$
\end{remark}
\proof[Proof of Proposition~\ref{prop:2.5}]
	Let 
		$q\in(1, \infty)$ 
	be defined by
		$\frac{1}{q}=\frac{1}{p_1}+\frac{1}{p_2}-\frac{\delta}2.$
	We first prove \eqref{eq:2.51}. By Proposition~\ref{prop:2.2} and Proposition~\ref{prop:2.4}, we have
		\begin{align}
			&\quad\left\|\int_0^tT_\beta(t-\tau)\operatorname{div}(u_1(\tau)\omega_2(\tau))\, d\tau\right\|_{\dot W^{-1+\delta,p_1}}\label{eq:2.54}\\
			&\lesssim\int_0^t\left\|T_\beta(t-\tau)(-\Delta)^\frac\delta2(u_1(\tau)\omega_2(\tau))\right\|_{L^{p_1}}\, d\tau\label{eq:2.55}\\
			&\lesssim \int_0^t\left\|T_{2\beta}\left(\frac{t-\tau}{2}\right)e^{\frac12(t-\tau)\Delta}(-\Delta)^\frac\delta2(u_1(\tau)\omega_2(\tau))\right\|_{\dot B^0_{p_1, 2}}\, d\tau\label{eq:2.56}\\
			&\lesssim \int_0^t(t-\tau)^{-1+\frac2{p_1}}\min\left\{1, |\beta|^{-1+\frac2{p_1}}(t-\tau)^{-\frac32(1-\frac2{p_1})}\right\}\left\|e^{\frac12(t-\tau)\Delta}(-\Delta)^\frac\delta2(u_1(\tau)\omega_2(\tau))\right\|_{\dot B^0_{p_1', 2}}\, d\tau\notag\\
			&\lesssim \int_0^t(t-\tau)^{-1+\frac2{p_1}-\frac1q+\frac1{p_1'}}\min\left\{1, |\beta|^{-1+\frac2{p_1}}(t-\tau)^{-\frac32(1-\frac2{p_1})}\right\}\left\|(-\Delta)^\frac\delta2(u_1(\tau)\omega_2(\tau))\right\|_{L^{q}}\, d\tau\notag\\
			&\lesssim \int_0^t(t-\tau)^{-\frac1{p_2}+\frac\delta2}\min\left\{1, |\beta|^{-1+\frac2{p_1}}(t-\tau)^{-\frac32(1-\frac2{p_1})}\right\}\left\|\omega_1(\tau)\right\|_{\dot W^{-1+\delta, p_1}}\left\|\omega_2(\tau)\right\|_{\dot W^{\delta, p_2}}\, d\tau, \notag
		\end{align}
	where we have also used the embeddings 
		$L^{p'}\hookrightarrow \dot B^{0}_{p', 2}, \dot B^{0}_{p, 2}\hookrightarrow L^p$
	for
		$2\leq p<\infty$
	and the Sobolev embeddings. 
	If
		$1-\frac1{p_2}+\frac\delta2-\frac32\left(1-\frac2{p_1}\right)
		<\frac{1}{r_2}
		<1-\frac1{p_2}+\frac\delta2$, 
	Young's convolution inequality leads to 
		\begin{align}
			&\quad\left\|\int_0^tT_\beta(t-\tau)\operatorname{div}(u_1(\tau)\omega_2(\tau))\, d\tau\right\|_{L^{r_1}([0, \infty); \dot W^{-1+\delta,p_1})}\label{eq:2.60}\\
			&\lesssim \left\|t^{-\frac1{p_2}+\frac\delta2}\min\left\{1, |\beta|^{-1+\frac2{p_1}}t^{-\frac32(1-\frac2{p_1})}\right\}\right\|_{L^{r_2'}([0, \infty))}\left\|\omega_1\right\|_{L^{r_1}([0, \infty); \dot W^{-1+\delta, p_1})}\left\|\omega_2\right\|_{L^{r_2}([0, \infty); \dot W^{\delta, p_2})}.\notag
		\end{align}
	By splitting the range of the integral at
		$t=\left|\beta\right|^{-\frac23}, $
	we have
		\begin{align}
			\left\|t^{-\frac1{p_2}+\frac\delta2}\min\left\{1, |\beta|^{-1+\frac2{p_1}}t^{-\frac32(1-\frac2{p_1})}\right\}\right\|_{L^{r_2'}([0, \infty))}
			\lesssim \left|\beta\right|^{-\frac23(1-\frac1{r_2}-\frac{1}{p_2}+\frac\delta2)}
			\label{eq:2.62}
		\end{align}
	so we obtain \eqref{eq:2.51}. If 
		$\frac{1}{r_2}
		=1-\frac1{p_2}+\frac\delta2-\frac32\left(1-\frac2{p_1}\right), $
	the Hardy--Littlewood--Sobolev inequality yields the same result.
	
	We can prove \eqref{eq:2.53} in a similar way. In fact, 
		\begin{align}
			&\quad\left\|\int_0^tT_\beta(t-\tau)\operatorname{div}(u_1(\tau)\omega_2(\tau))\, d\tau\right\|_{\dot W^{\delta,p_2}}\label{eq:2.63}\\
			&\leq\int_0^t\left\|T_\beta(t-\tau)(-\Delta)^\frac{\delta}2\operatorname{div}(u_1(\tau)\omega_2(\tau))\right\|_{L^{p_2}}\, d\tau\label{eq:2.64}\\
			&\lesssim \int_0^t\left\|T_{2\beta}\left(\frac{t-\tau}{2}\right)e^{\frac12(t-\tau)\Delta}(-\Delta)^\frac{\delta}2\operatorname{div}(u_1(\tau)\omega_2(\tau))\right\|_{\dot B^0_{p_2, 2}}\, d\tau\label{eq:2.65}\\
			&\lesssim \int_0^t(t-\tau)^{-1+\frac2{p_2}}\min\left\{1, |\beta|^{-1+\frac2{p_2}}(t-\tau)^{-\frac32(1-\frac2{p_2})}\right\}\left\|e^{\frac12(t-\tau)\Delta}(-\Delta)^\frac{\delta}2\operatorname{div}(u_1(\tau)\omega_2(\tau))\right\|_{\dot B^0_{p_2', 2}}\, d\tau\notag\\
			&\lesssim \int_0^t(t-\tau)^{-1+\frac2{p_2}-\frac1q+\frac1{p_2'}-\frac12}\min\left\{1, |\beta|^{-1+\frac2{p_2}}(t-\tau)^{-\frac32(1-\frac2{p_2})}\right\}\left\|(-\Delta)^\frac{\delta}2(u_1(\tau)\omega_2(\tau))\right\|_{L^{q}}\, d\tau\notag\\
			&\lesssim \int_0^t(t-\tau)^{-\frac12-\frac1{p_1}+\frac\delta2}\min\left\{1, |\beta|^{-1+\frac2{p_2}}(t-\tau)^{-\frac32(1-\frac2{p_2})}\right\}\left\|\omega_1(\tau)\right\|_{\dot W^{-1+\delta, p_1}}\left\|\omega_2(\tau)\right\|_{\dot W^{\delta, p_2}}\, d\tau, \notag
		\end{align}
	and applying Young's convolution inequality (or the Hardy--Littlewood--Sobolev inequality) leads to the desired estimate.
\qed

\section{Proof of Theorem~\ref{thm:1.1}}\label{sect:3}
In this section, we prove the global well-posedness of \eqref{eq:1.9}.
\proof[Proof of Theorem~\ref{thm:1.1}]
	Define the function spaces
		$Y_1$
	and
		$Y_2$
	by
		\begin{align*}
			Y_1\coloneqq L^{r_1}([0, \infty); \dot W^{-1+\delta, p_1})
			\quad\text{and}\quad
			Y_2\coloneqq L^{r_2}([0, \infty); \dot W^{\delta, p_2}), 
		\end{align*}
	respectively.
	Our strategy is based on the standard application of Banach's fixed point theorem similar to \cite{IT13}. Since 
		$(s, p, r)=(0, p_i, r_i)$, 
		$i=1, 2$
	satisfies the assumption of Proposition~\ref{prop:2.3}, there is a constant
		$C_1>0$
	such that
		\begin{align}
			\left\|T_\beta(t)\omega_0\right\|_{Y_1}
			\leq C_1|\beta|^{-\frac13(\frac2{r_1}+\frac2{p_1}-1)}\left\|\omega_0\right\|_{\dot H^{-1+\delta}}\label{eq:3.1}
		\end{align}
	and
		\begin{align}
			\left\|T_\beta(t)\omega_0\right\|_{Y_2}
			\leq C_1|\beta|^{-\frac13(\frac2{r_2}+\frac2{p_2}-1)}\left\|\omega_0\right\|_{\dot H^{\delta}}\label{eq:3.2}
		\end{align}
	for all
		$\omega_0\in\dot H^{-1+\delta}\cap \dot H^\delta.$
	For a fixed initial datum 
		$\omega_0\in\dot H^{-1+\delta}\cap \dot H^\delta, $
	define a function space
		$Y$
	by
		\begin{align}
			Y\coloneqq\left\{
			\omega \in Y_1\cap Y_2\;\middle|\;
				\begin{aligned}
					& \left\|\omega\right\|_{Y_1}
			\leq 2C_1|\beta|^{-\frac13(\frac2{r_1}+\frac2{p_1}-1)}\left\|\omega_0\right\|_{\dot H^{-1+\delta}},\\
					& \left\|\omega\right\|_{Y_2}
			\leq 2C_1|\beta|^{-\frac13(\frac2{r_2}+\frac2{p_2}-1)}\left\|\omega_0\right\|_{\dot H^{\delta}}
				\end{aligned}
			\right\},\label{eq:3.3}
		\end{align}
	and its metric function
		$d$
	by
		\begin{align}
			d(\omega_1, \omega_2)
			\coloneqq \left|\beta\right|^{-\frac13(1+\delta -\frac2{p_1}-\frac{2}{r_1})}\left\|\omega_1-\omega_2\right\|_{Y_1}
			+\left|\beta\right|^{-\frac13(2+\delta -\frac2{p_2}-\frac{2}{r_2})}\left\|\omega_1-\omega_2\right\|_{Y_2}.\label{eq:3.4}
		\end{align}
	We also define a map
		$\Phi=\Phi(\omega)$
	for 
		$\omega\in Y$
	by
		\begin{align}
			\Phi(\omega)(t)
			\coloneqq T_\beta(t)\omega_0-\int_0^tT_\beta(t-\tau)\operatorname{div}(u(\tau)\omega(\tau))\, d\tau.\label{eq:3.5}
		\end{align}
	We show that 
		$\Phi$
	is a contraction on the complete metric space
		$(Y, d)$
	under a suitable choice of 
		$\omega_0$
	and
		$\beta$.
	For 
		$\omega\in Y$, 
	we have
		\begin{align}
			\left\|\Phi(\omega)\right\|_{Y_1}
			&\leq \left\|T_\beta(t)\omega_0\right\|_{Y_1}
			+\left\|\int_0^tT_\beta(t-\tau)\operatorname{div}(u(\tau)\omega(\tau))\, d\tau\right\|_{Y_1}, \label{eq:3.17}\\
			\left\|\Phi(\omega)\right\|_{Y_2}
			&\leq \left\|T_\beta(t)\omega_0\right\|_{Y_2}
			+\left\|\int_0^tT_\beta(t-\tau)\operatorname{div}(u(\tau)\omega(\tau))\, d\tau\right\|_{Y_2}\label{eq:3.18}
		\end{align}
	and by Proposition~\ref{prop:2.5}, there is a constant 
		$C_2>0$
	satisfying
		\begin{align}
			\left\|\Phi(\omega)\right\|_{Y_1}
			&\leq C_1|\beta|^{-\frac13(\frac2{r_1}+\frac2{p_1}-1)}\left\|\omega_0\right\|_{\dot H^{-1+\delta}}
			+C_2\left|\beta\right|^{-\frac23(1-\frac1{r_2}-\frac{1}{p_2}+\frac\delta2)}\left\|\omega\right\|_{Y_1}\left\|\omega\right\|_{Y_2}\label{eq:3.19}\\
			&\leq C_1|\beta|^{-\frac13(\frac2{r_1}+\frac2{p_1}-1)}\left\|\omega_0\right\|_{\dot H^{-1+\delta}}\label{eq:3.20}\\
			&\quad+4{C_1}^2C_2|\beta|^{-\frac13(\frac2{r_1}+\frac2{p_1}-1)}\left\|\omega_0\right\|_{\dot H^{-1+\delta}}\left|\beta\right|^{-\frac{1+\delta}{3}}\left\|\omega_0\right\|_{\dot H^{\delta}}\label{eq:3.21}
		\end{align}
	and
		\begin{align}
			\left\|\Phi(\omega)\right\|_{Y_2}
			&\leq C_1|\beta|^{-\frac13(\frac2{r_2}+\frac2{p_2}-1)}\left\|\omega_0\right\|_{\dot H^{\delta}}
			+C_2\left|\beta\right|^{-\frac23(\frac12-\frac1{r_1}-\frac{1}{p_1}+\frac\delta2)}\left\|\omega\right\|_{Y_1}\left\|\omega\right\|_{Y_2}\label{eq:3.22}\\
			&\leq C_1|\beta|^{-\frac13(\frac2{r_2}+\frac2{p_2}-1)}\left\|\omega_0\right\|_{\dot H^{\delta}}\label{eq:3.23}\\
			&\quad+4{C_1}^2C_2|\beta|^{-\frac13(\frac2{r_2}+\frac2{p_2}-1)}\left\|\omega_0\right\|_{\dot H^{-1+\delta}}\left|\beta\right|^{-\frac\delta3}\left\|\omega_0\right\|_{\dot H^{\delta}}.\label{eq:3.24}
		\end{align}
	
	For $\omega_1, \omega_2\in Y$, we also have
		\begin{align}
			&\quad\Phi(\omega_1)-\Phi(\omega_2)\label{eq:3.6}\\
			&=-\int_0^tT_\beta(t-\tau)\operatorname{div}((u_1(\tau)-u_2(\tau))\omega_1(\tau))\, d\tau-\int_0^tT_\beta(t-\tau)\operatorname{div}(u_2(\tau)(\omega_1(\tau)-\omega_2(\tau)))\, d\tau\notag
		\end{align}
	so again by Proposition~\ref{prop:2.5}, it follows that
		\begin{align}
			&\quad\left\|\Phi(\omega_1)-\Phi(\omega_2)\right\|_{Y_1}\label{eq:3.8}\\
			&\leq C_2\left|\beta\right|^{-\frac23(1-\frac1{r_2}-\frac{1}{p_2}+\frac\delta2)}\left(\left\|\omega_1-\omega_2\right\|_{Y_1}\left\|\omega_1\right\|_{Y_2}+\left\|\omega_2\right\|_{Y_1}\left\|\omega_1-\omega_2\right\|_{Y_2}\right)\label{eq:3.9}\\
			&\leq 2C_1C_2\left(\left|\beta\right|^{-\frac13(1+\delta)}\left\|\omega_0\right\|_{\dot H^\delta}\left\|\omega_1-\omega_2\right\|_{Y_1}+\left|\beta\right|^{-\frac13(\frac2{p_1}+\frac{2}{r_1}-\frac{2}{p_2}-\frac{2}{r_2}+1+\delta)}\left\|\omega_0\right\|_{\dot H^{-1+\delta}}\left\|\omega_1-\omega_2\right\|_{Y_2}\right)\notag
		\end{align}
	and
		\begin{align}
			&\quad\left\|\Phi(\omega_1)-\Phi(\omega_2)\right\|_{Y_2}\label{eq:3.11}\\
			&\leq C_2\left|\beta\right|^{-\frac23(\frac12-\frac1{r_1}-\frac{1}{p_1}+\frac\delta2)}\left(\left\|\omega_1-\omega_2\right\|_{Y_1}\left\|\omega_1\right\|_{Y_2}+\left\|\omega_2\right\|_{Y_1}\left\|\omega_1-\omega_2\right\|_{Y_2}\right)\label{eq:3.12}\\
			&\leq 2C_1C_2\left(\left|\beta\right|^{-\frac13(\frac2{p_2}+\frac{2}{r_2}-\frac{2}{p_1}-\frac{2}{r_1}+\delta)}\left\|\omega_0\right\|_{\dot H^\delta}\left\|\omega_1-\omega_2\right\|_{Y_1}+\left|\beta\right|^{-\frac\delta3}\left\|\omega_0\right\|_{\dot H^{-1+\delta}}\left\|\omega_1-\omega_2\right\|_{Y_2}\right).\notag
		\end{align}
	Combining these estimates leads to
		\begin{align}
			&\quad d(\Phi(\omega_1), \Phi(\omega_2))\label{eq:3.14}\\
			&\leq 4C_1C_2\left(\left|\beta\right|^{-\frac13(1+\delta)}\left\|\omega_0\right\|_{\dot H^{\delta}}\left|\beta\right|^{-\frac13(1-\frac{2}{r_1}-\frac{2}{p_1}+\delta)}\left\|\omega_1-\omega_2\right\|_{Y_1}\right.\label{eq:3.15}\\
			&\phantom{\leq 2C_1C_2(|\beta}\left.+\left|\beta\right|^{-\frac\delta3}\left\|\omega_0\right\|_{\dot H^{-1+\delta}}\left|\beta\right|^{-\frac13(2-\frac2{r_2}-\frac2{p_2}+\delta)}\left\|\omega_1-\omega_2\right\|_{Y_2}\right).\label{eq:3.16}
		\end{align}
	Thus if the condition
		$ 4C_1C_2\max\left\{\left|\beta\right|^{-\frac13(1+\delta)}\left\|\omega_0\right\|_{\dot H^{\delta}}, \left|\beta\right|^{-\frac\delta3}\left\|\omega_0\right\|_{\dot H^{-1+\delta}}\right\}\leq \frac12$
	is fulfilled, the map
		$\Phi$
	defines a contraction on the complete metric space
		$(Y, d).$
	Now Banach's fixed point theorem ensures that 
		$\Phi$
	has a unique fixed point
		$\omega\in Y, $
	and this gives the desired solution of \eqref{eq:1.9}.
	
	Next we show the continuity of the solution 
		$\omega$. 
	Let us start by confirming that
		$\omega(t)\in \dot H^{-1+\delta}$
	for any
		$t\geq 0.$
	We have
		\begin{align}
			\left\|\omega(t)\right\|_{\dot H^{-1+\delta}}
			&\leq \left\|T_\beta(t)\omega_0\right\|_{\dot H^{-1+\delta}}
			+\int_0^t\left\|T_\beta(t-\tau)\operatorname{div}(u(\tau)\omega(\tau))\right\|_{\dot H^{-1+\delta}}\, d\tau\label{eq:3.25}
		\end{align}
	and obviously
		$\left\|T_\beta(t)\omega_0\right\|_{\dot H^{-1+\delta}}
		\leq \left\|\omega_0\right\|_{\dot H^{-1+\delta}}
		<\infty.$
	For the nonlinear term, since the assumption of Theorem~\ref{thm:1.1} ensures that
		\begin{align}
			\frac1q
			=\frac1{p_1}+\frac{1}{p_2}-\frac\delta2
			\geq \max\left\{1-\frac1{p_1}, 1-\frac1{p_2}\right\}
    			>\frac12,\label{eq:3.26}
		\end{align}
	we have
		\begin{align}
			&\quad\int_0^t\left\|T_\beta(t-\tau)\operatorname{div}(u(\tau)\omega(\tau))\right\|_{\dot H^{-1+\delta}}\, d\tau\label{eq:3.27}\lesssim\int_0^t\left\|e^{(t-\tau)\Delta}(-\Delta)^\frac\delta2(u(\tau)\omega(\tau))\right\|_{ L^{2}}\, d\tau\\
			&\lesssim\int_0^t(t-\tau)^{-\frac1q+\frac12}\left\|(-\Delta)^\frac\delta2(u(\tau)\omega(\tau))\right\|_{ L^{q}}\, d\tau
			\label{eq:3.28}\\
			&\lesssim \int_0^t(t-\tau)^{-\frac1q+\frac12}\left\|u(\tau)\right\|_{\dot W^{-1+\delta, p_1}}\left\|\omega(\tau)\right\|_{\dot W^{\delta, p_2}}\, d\tau
			\label{eq:3.29}\\
			&\leq \left\|\omega\right\|_{Y_1}\left\|\omega\right\|_{Y_2}\left(\int_0^t(t-\tau)^{-r_3(\frac1q-\frac12)}\, d\tau\right)^{\frac1{r_3}}
			\label{eq:3.30}, 
		\end{align}
	where we used Proposition~\ref{prop:2.4} and set
		$\frac{1}{r_3}=1-\frac1{r_1}-\frac1{r_2}$. 
	To obtain the finiteness of the last term, it is enough to show
		$\frac1q-\frac12<1-\frac1{r_1}-\frac1{r_2}, $
	which is equivalent to
		$\frac1{r_1}+\frac1{r_2}<\frac{1}{2}-\frac1{p_1}+\frac\delta 2+1-\frac1{p_2}.$
	The assumption of Theorem~\ref{thm:1.1} gives
		\begin{align}
			\frac{1}{r_1}\leq \frac12-\frac1{p_1}+\frac\delta2
			\quad\text{and}\quad
			\frac1{r_2}\leq \frac54\left(1-\frac2{p_2}\right), \label{eq:3.31}
		\end{align}
	so it suffices to show
		$\frac{5}{4}(1-\frac2{p_2})
		< 1-\frac1{p_2}, $
		i.e., 
			$p_2< 6$.
		This is true because we have
			\begin{align}
				1-\frac{1}{p_2}
				\leq \frac1{p_1}+\frac1{p_2}-\frac\delta2
				\leq \frac12+\frac1{p_2}\label{eq:3.32}
			\end{align}
		and this ensures that
			$p_2\leq 4.$
		We consequently have
			\begin{align}
				\int_0^t\left\|T_\beta(t-\tau)\operatorname{div}(u(\tau)\omega(\tau))\right\|_{\dot H^{-1+\delta}}\, d\tau
				\lesssim \left\|\omega\right\|_{Y_1}\left\|\omega\right\|_{Y_2}t^{1-\frac{1}{r_1}-\frac{1}{r_2}-\frac1q+\frac12}
				<\infty, \label{eq:3.33}
			\end{align}
		and this implies 
			$\omega(t)\in \dot H^{-1+\delta}$
		for all
			$t\geq 0$.
		It is clear that 
			$\left\|T_\beta(t)\omega_0-\omega_0\right\|_{\dot H^{-1+\delta}}\to 0$
		as
			$t\downarrow 0$
		so the above estimate also gives the continuity of
			$\omega$
		at
			$t=0$.
		Now we are left to prove that 
			$\omega(t)\in\dot H^{-1+\delta}$
		is continuous for
			$t\in (0, \infty).$
			
		Let 
			$M_0, s, t>0$
		satisfy
			$0<s<t<M_0.$
		It is sufficient to show
			$\left\|\omega(t)-\omega(s)\right\|_{\dot H^{-1+\delta}}\to 0$
		as
			$t-s\to 0.$
		We write
			\begin{align}
				&\quad\omega(t)-\omega(s)
				=T_\beta(t)\omega_0-T_\beta(s)\omega_0
				\quad\label{eq:3.34}\\
				&-\int_0^te^{(t-\tau)\Delta}e^{(t-\tau)\beta L_1}\operatorname{div}(u(\tau)\omega(\tau))\, d\tau
				+\int_0^se^{(t-\tau)\Delta}e^{(t-\tau)\beta L_1}\operatorname{div}(u(\tau)\omega(\tau))\, d\tau\quad
				\label{eq:3.35}\\
				&-\int_0^se^{(t-\tau)\Delta}e^{(t-\tau)\beta L_1}\operatorname{div}(u(\tau)\omega(\tau))\, d\tau
				+\int_0^se^{(s-\tau)\Delta}e^{(t-\tau)\beta L_1}\operatorname{div}(u(\tau)\omega(\tau))\, d\tau\quad
				\label{eq:3.36}\\
				&-\int_0^se^{(s-\tau)\Delta}e^{(t-\tau)\beta L_1}\operatorname{div}(u(\tau)\omega(\tau))\, d\tau
				+\int_0^se^{(s-\tau)\Delta}e^{(s-\tau)\beta L_1}\operatorname{div}(u(\tau)\omega(\tau))\, d\tau.\label{eq:3.37}
			\end{align}
		We show that each line tends to 0 as
			$t-s\to 0$.
			
		For \eqref{eq:3.34}, 
			\begin{align}
				&\quad\left\|T_\beta(t)\omega_0-T_\beta(s)\omega_0\right\|_{\dot H^{-1+\delta}}\label{eq:3.38}\\
				&\leq \left\|e^{t\Delta}e^{t\beta L_1}\omega_0-e^{s\Delta}e^{t\beta L_1}\omega_0\right\|_{\dot H^{-1+\delta}}
				+\left\|e^{s\Delta}e^{t\beta L_1}\omega_0-e^{s\Delta}e^{s\beta L_1}\omega_0\right\|_{\dot H^{-1+\delta}}\label{eq:3.39}\\
				&\leq\left\|e^{(t-s)\Delta}\omega_0-\omega_0\right\|_{\dot H^{-1+\delta}}
				+\left\|e^{(t-s)\beta L_1}\omega_0-\omega_0\right\|_{\dot H^{-1+\delta}}\label{eq:3.40}
			\end{align}
		and this goes to 0 as $t-s\to 0$, since
			$\omega_0\in \dot H^{-1+\delta}$.
			
		For \eqref{eq:3.35}, an argument similar to that of the proof of
			$\omega(t)\in\dot H^{-1+\delta}$
		for
			$t\geq 0$
		 gives
			\begin{align}
				\left\|\int_s^tT_\beta(t-\tau)\operatorname{div}(u(\tau)\omega(\tau))\, d\tau\right\|_{\dot H^{-1+\delta}}
				&\lesssim \left\|\omega\right\|_{Y_1}\left\|\omega\right\|_{Y_2}(t-s)^{1-\frac{1}{r_1}-\frac{1}{r_2}-\frac1q+\frac12}\label{eq:3.41}\\
				&\to 0\quad\text{as $t-s\to 0$.}\label{eq:3.42}
			\end{align}
		For \eqref{eq:3.36}, we see that
			\begin{align}
				&\quad\left\|\int_0^s\left(e^{(t-\tau)\Delta}-e^{(s-\tau)\Delta}\right)e^{(t-\tau)\beta L_1}\operatorname{div}(u(\tau)\omega(\tau))\, d\tau\right\|_{\dot H^{-1+\delta}}\label{eq:3.43}\\
				&\lesssim \int_0^s\left\|\left(e^{(t-s)\Delta}-1\right)e^{(s-\tau)\Delta}(-\Delta)^\frac\delta2\left(u(\tau)\omega(\tau)\right)\right\|_{L^2}\, d\tau.\label{eq:3.44}
			\end{align}
		By the Plancherel theorem, we have
			\begin{align}
				&\left\|\left(e^{(t-s)\Delta}-1\right)e^{(s-\tau)\Delta}(-\Delta)^\frac\delta2\left(u(\tau)\omega(\tau)\right)\right\|_{L^2}\label{eq:3.45}\\
				&=\frac1{2\pi}\left(\int_{\mathbb R^2}\left|e^{-(t-s)|\xi|^2}-1\right|^2\left|\mathcal F\left(e^{(s-\tau)\Delta}(-\Delta)^\frac\delta2\left(u(\tau)\omega(\tau)\right)\right)(\xi)\right|^2\, d\xi\right)^\frac12\label{eq:3.46}
			\end{align}
		for each
			$\tau\in (0, s).$
		Now take
			$\gamma_1\in (0, 1)$
		small enough to satisfy
			$\frac1q-\frac12+\gamma_1<1-\frac1{r_1}-\frac1{r_2}.$ 
		Then
		\begin{align}
			\left|e^{-(t-s)|\xi|^2}-1\right|
			\leq \left|(t-s)|\xi|^2\int_0^1e^{-\theta(t-s)|\xi|^2}\, d\theta\right|^{\gamma_1}
			\leq (t-s)^{\gamma_1}|\xi|^{2\gamma_1}\qquad
			\label{eq:3.47}
		\end{align}
	so we have
		\begin{align}
			&\left\|\left(e^{(t-s)\Delta}-1\right)e^{(s-\tau)\Delta}(-\Delta)^\frac\delta2\left(u(\tau)\omega(\tau)\right)\right\|_{L^2}\label{eq:3.48}\\
			&\lesssim (t-s)^{\gamma_1}\left(\int_{\mathbb R^2}\left|\mathcal F\left(e^{(s-\tau)\Delta}(-\Delta)^{\frac\delta2+\gamma_1}\left(u(\tau)\omega(\tau)\right)\right)(\xi)\right|^2\, d\xi\right)^\frac12.\label{eq:3.49}\\
			&\lesssim (t-s)^{\gamma_1}\left\|e^{(s-\tau)\Delta}(-\Delta)^{\frac\delta2+\gamma_1}\left(u(\tau)\omega(\tau)\right)\right\|_{L^2}.\label{eq:3.50}
		\end{align}
	Plugging this estimate into \eqref{eq:3.44}, 
		\begin{align}
			&\quad\left\|\int_0^s\left(e^{(t-\tau)\Delta}-e^{(s-\tau)\Delta}\right)e^{(t-\tau)\beta L_1}\operatorname{div}(u(\tau)\omega(\tau))\, d\tau\right\|_{\dot H^{-1+\delta}}\label{eq:3.51}\\
			&\lesssim (t-s)^{\gamma_1}\int_0^s\left\|e^{(s-\tau)\Delta}(-\Delta)^{\frac\delta2+\gamma_1}\left(u(\tau)\omega(\tau)\right)\right\|_{L^2}\, d\tau\label{eq:3.52}\\
			&\lesssim (t-s)^{\gamma_1}\int_0^s(s-\tau)^{-\frac1q+\frac12-\gamma_1}\left\|(-\Delta)^{\frac\delta2}\left(u(\tau)\omega(\tau)\right)\right\|_{L^q}\, d\tau\label{eq:3.53}\\
			&\lesssim (t-s)^{\gamma_1}\left\|\omega\right\|_{Y_1}\left\|\omega\right\|_{Y_2}s^{1-\frac1{r_1}-\frac1{r_2}-\frac1q+\frac12-\gamma_1}\label{eq:3.54}\\
			&\to 0 \quad\text{as $t-s\to 0$}.\label{eq:3.55}
		\end{align}
	
	A similar argument goes through for the estimate of \eqref{eq:3.37}. In fact, we have
		\begin{align}
			&\quad\left\|\int_0^se^{(s-\tau)\Delta}\left(e^{(t-\tau)\beta L_1}-e^{(s-\tau)\beta L_1}\right)\operatorname{div}(u(\tau)\omega(\tau))\, d\tau\right\|_{\dot H^{-1+\delta}}\label{eq:3.56}\\
			&\lesssim\int_0^s\left\|\left(e^{(t-s)\beta L_1}-1\right)e^{(s-\tau)\Delta}(-\Delta)^\frac\delta2(u(\tau)\omega(\tau))\right\|_{L^2}\, d\tau\label{eq:3.57}
		\end{align}
	and by the Planchrel theorem, 
		\begin{align}
			&\left\|\left(e^{(t-s)\beta L_1}-1\right)e^{(s-\tau)\Delta}(-\Delta)^\frac\delta2(u(\tau)\omega(\tau))\right\|_{L^2}\label{eq:3.58}\\
			&=\frac1{2\pi}\left(\int_{\mathbb R^2}\left|e^{(t-s)\beta\frac{i\xi_1}{|\xi|^2}}-1\right|^2\left|\mathcal F\left(e^{(s-\tau)\Delta}(-\Delta)^\frac\delta2\left(u(\tau)\omega(\tau)\right)\right)(\xi)\right|^2\, d\xi\right)^\frac12.\label{eq:3.59}
		\end{align}
	In this case, we take
		$\gamma_2\in(0, 1)$
	small enough to satisfy
		$\frac1q>\frac12+\frac{\gamma_2}2.$ 
	Then the Sobolev embedding and an estimate similar to that of \eqref{eq:3.47} yield
		\begin{align}
			&\int_0^s\left\|\left(e^{(t-s)\beta L_1}-1\right)e^{(s-\tau)\Delta}(-\Delta)^\frac\delta2(u(\tau)\omega(\tau))\right\|_{L^2}\, d\tau\label{eq:3.60}\\
			&\lesssim |\beta|^{\gamma_2}(t-s)^{\gamma_2}\int_0^s\left\|e^{(s-\tau)\Delta}(-\Delta)^\frac{\delta-\gamma_2}2(u(\tau)\omega(\tau))\right\|_{L^2}\, d\tau\label{eq:3.61}\\
			&\lesssim |\beta|^{\gamma_2}(t-s)^{\gamma_2}\int_0^s\left\|e^{(s-\tau)\Delta}(-\Delta)^\frac{\delta}2(u(\tau)\omega(\tau))\right\|_{L^\frac{2}{1+\gamma_2}}\, d\tau\label{eq:3.62}\\
			&\lesssim |\beta|^{\gamma_2}(t-s)^{\gamma_2}\int_0^s(s-\tau)^{-\frac{1}{q}+\frac{1+\gamma_2}{2}}\left\|(-\Delta)^\frac{\delta}2(u(\tau)\omega(\tau))\right\|_{L^q}\, d\tau\label{eq:3.63}\\
			&\lesssim |\beta|^{\gamma_2}(t-s)^{\gamma_2}\left\|\omega\right\|_{Y_1}\left\|\omega\right\|_{Y_2}s^{1-\frac1{r_1}-\frac1{r_2}-\frac{1}{q}+\frac{1+\gamma_2}{2}}
			\to 0\quad\text{as $t-s\to 0$}.\label{eq:3.64}
		\end{align}
	Putting these estimates altogether, we have the continuity of the solution
		$\omega(t)$
	for
		$t\in(0, \infty).$
	This completes the proof of Theorem~\ref{thm:1.1}.
\qed
\begin{remark}\label{rem:3.1}\rm
	The obtained solution 
		$\omega\in Y$
	satisfies
		\begin{align}
			& \left|\beta\right|^{-\frac13(1-\frac{2}{r_1}-\frac{2}{p_1}+\delta)}\left\|\omega\right\|_{Y_1}
			\leq 2C_1\left|\beta\right|^{-\frac\delta3}\left\|\omega_0\right\|_{\dot H^{-1+\delta}}\label{eq:3.65}
		\end{align}
	and
		\begin{align}
			& \left|\beta\right|^{-\frac13(2-\frac2{r_2}-\frac2{p_2}+\delta)}\left\|\omega\right\|_{Y_2}
			\leq 2C_1\left|\beta\right|^{-\frac13(1+\delta)}\left\|\omega_0\right\|_{\dot H^{\delta}}.\label{eq:3.66}
		\end{align}
	These estimates are invariant under the transformation given by \eqref{eq:1.10}
\end{remark}

\section{Proof of Theorem~\ref{thm:1.2}}\label{sect:4}
In this section, we always assume that the exponents 
	$\mathcal A
	=(\delta, p_1, r_1, p_2, r_2)$
satisfy the assumptions in Theorem~\ref{thm:1.1} and the condition~\eqref{eq:1.11}, and that
	$\omega$
denotes the solution of \eqref{eq:1.9} obtained in Theorem~\ref{thm:1.1}.
We begin our discussion by preparing a lemma which describes the smoothing effect in terms of the space-time norms.
\begin{lemma}\label{lem:4.1}
	For any
		$\varepsilon>0$
	and
		$m\geq 0$, 
	the solution
		$\omega$
	satisfies
		$\omega \in L^{r_1}([\varepsilon, \infty); \dot W^{-1+\delta+m, p_1})\cap L^{r_2}([\varepsilon, \infty); \dot W^{\delta+m, p_2}).$
		
	Moreover, 
	if
		$\omega_0\in \dot H^{-1+\delta}\cap \dot H^{\delta +m}$
	for some
		$m\geq0, $
	then we also have
		$\omega \in L^{r_1}([0, \infty); \dot W^{-1+\delta+m, p_1})\cap L^{r_2}([0, \infty); \dot W^{\delta+m, p_2}), $
			\begin{align*}
			&|\beta|^{-\frac13(1+\delta+m-\frac2{p_1}-\frac2{r_1})}\left\|\omega\right\|_{L^{r_1}([0, \infty); \dot W^{-1+\delta+m, p_1})}\\
			&\leq C\left(|\beta|^{-\frac\delta3}\left\|\omega_0\right\|_{\dot H^{-1+\delta}}
			+|\beta|^{-\frac{\delta+m}3}\left\|\omega_0\right\|_{\dot H^{-1+\delta+m}}
			+|\beta|^{-\frac\delta3}\left\|\omega_0\right\|_{\dot H^{-1+\delta}}|\beta|^{-\frac13(1+\delta+m)}\left\|\omega_0\right\|_{\dot H^{\delta+m}}\right)
		\end{align*}
	and
		\begin{align*}
			&|\beta|^{-\frac13(2+\delta+m-\frac2{p_2}-\frac2{r_2})}\left\|\omega\right\|_{L^{r_2}([0, \infty); \dot W^{\delta+m, p_2})}\\
			&\leq C\left(
			|\beta|^{-\frac{1+\delta}3}\left\|\omega_0\right\|_{\dot H^{\delta}}
			+|\beta|^{-\frac{1+\delta+m}3}\left\|\omega_0\right\|_{\dot H^{\delta+m}}
			+\left|\beta\right|^{-\frac{1+\delta}3}\left\|\omega_0\right\|_{\dot H^\delta}
		|\beta|^{-\frac13(\delta+m)}\left\|\omega_0\right\|_{\dot H^{-1+\delta+m}}\right), 
		\end{align*}
	where
		$C>0$
	is independent of
		$\beta$
	and
		$\omega_0.$
\end{lemma}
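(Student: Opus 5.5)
We first prove the second (a priori) statement and then derive the first from it by restarting the flow at a small positive time. Assume $\omega_0\in\dot H^{-1+\delta}\cap\dot H^{\delta+m}$, and hence, by interpolation, $\omega_0\in\dot H^{-1+\delta+m}$. Set
\[
Z_1\coloneqq L^{r_1}([0,\infty);\dot W^{-1+\delta+m,p_1}),\qquad Z_2\coloneqq L^{r_2}([0,\infty);\dot W^{\delta+m,p_2}).
\]
Apply Proposition~\ref{prop:2.3} with $(s,p,r)=(0,p_i,r_i)$ to $(-\Delta)^{\frac{-1+\delta+m}{2}}\omega_0$ and $(-\Delta)^{\frac{\delta+m}{2}}\omega_0$. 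This gives
\[
\|T_\beta\omega_0\|_{Z_1}\lesssim|\beta|^{-\frac13(\frac2{r_1}+\frac2{p_1}-1)}\|\omega_0\|_{\dot H^{-1+\delta+m}}
\]
and the analogous bound for $Z_2$. For the Duhamel term, rerun the proof of Proposition~\ref{prop:2.5} with the extra $(-\Delta)^{m/2}$ placed on the product. The fractional Leibniz rule (Proposition~\ref{prop:2.4}) gives
\[
\|(-\Delta)^{\frac{\delta+m}2}(u\omega)\|_{L^q}\lesssim\|\omega\|_{\dot W^{-1+\delta+m,p_1}}\|\omega\|_{\dot W^{\delta,p_2}}+\|\omega\|_{\dot W^{-1+\delta,p_1}}\|\omega\|_{\dot W^{\delta+m,p_2}}.
\]
This step needs the Sobolev embeddings that convert $\dot W^{-1+\delta,p_1}$ norms of $u$ into $L^{q_j}$ norms to be placed exactly as in Proposition~\ref{prop:2.5}, so that the same time-kernels and the same $\beta$-powers appear. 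The result is the bilinear bounds
\[
\|N\|_{Z_1}\lesssim|\beta|^{-\frac23(1-\frac1{r_2}-\frac1{p_2}+\frac\delta2)}\bigl(\|\omega\|_{Z_1}\|\omega\|_{Y_2}+\|\omega\|_{Y_1}\|\omega\|_{Z_2}\bigr),
\]
together with the analogous estimate for $Z_2$.

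The key point is that these estimates are \emph{linear} in the high-regularity norms, with coefficients controlled by $\|\omega\|_{Y_1}$ and $\|\omega\|_{Y_2}$. By Remark~\ref{rem:3.1}, the scale-invariant versions of those coefficients are bounded by $4C_1C_2\cdot 2C_1\cdot(\text{smallness})\le\frac12$ after possibly shrinking $C_0$. We do not argue a priori on the unknown $\omega$, since its $Z$ norms are not yet known to be finite. Instead we run the contraction of Theorem~\ref{thm:1.1} in $Y\cap\{\text{$Z_1,Z_2$ bounds}\}$. The $Z$-part of the estimate is linear, and the cross terms involve $\|\omega\|_{Y_1}\|\omega\|_{Z_2}$ with a scale-invariant coefficient, so the set is invariant and closed under the metric $d$. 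Uniqueness in $Y$ then identifies the fixed point with $\omega$. Absorbing the $\frac12$-terms and weighting by the scale-invariant powers of $|\beta|$ produces exactly the two displayed inequalities. The term $|\beta|^{-\frac\delta3}\|\omega_0\|_{\dot H^{-1+\delta}}\,|\beta|^{-\frac13(1+\delta+m)}\|\omega_0\|_{\dot H^{\delta+m}}$ arises from the cross term: it is the $Z_2$ bound fed into the $Z_1$ estimate. The bounds $\lesssim$ on $\|\omega\|_{Y_i}$ give the first two terms.

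For the first statement (data only in $\dot H^{-1+\delta}\cap\dot H^\delta$), we bootstrap in $m$ by small increments $h>0$. The strict inequalities \eqref{eq:1.11} leave room to gain a derivative. In the Duhamel estimate of Proposition~\ref{prop:2.5}, put an additional $(-\Delta)^{h/2}$ on the kernel. This costs a factor $(t-\tau)^{-h/2}$, and the time-kernel remains integrable in $L^{r_2'}$, respectively $L^{r_1'}$, locally, precisely when $h$ is small relative to the gaps in \eqref{eq:1.11}. On $[\varepsilon,\infty)$, split $\int_0^t=\int_0^{t-\varepsilon/2}+\int_{t-\varepsilon/2}^t$. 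On the far part the heat factor $e^{\frac12(t-\tau)\Delta}$ absorbs any number of derivatives, at the cost of $\varepsilon^{-h/2}$, and the linear part $T_\beta(t)\omega_0$ is smooth for $t\ge\varepsilon$ by Proposition~\ref{prop:2.2}. On the near part we use the gain $h$ with the gap. This gives $\omega\in L^{r_i}([\varepsilon,\infty);\dots^{+h})$. Iterating with $\varepsilon/2^j$ shifts reaches any $m$.

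The main obstacle is this first step. Uniform control near $t=0$ is unavailable, so the splitting must be arranged to produce a Young/HLS inequality with a truncated kernel, with the exponents from \eqref{eq:1.11} guaranteeing integrability at both $\tau\to t$ and $t\to\infty$. A cleaner alternative, which I would try first, avoids this: show that $\omega(\varepsilon')\in\dot H^{\delta+h}$ for almost every small $\varepsilon'$. This follows from the time-integrability of the increased norm on $[0,\infty)$, which the same estimate gives with the weight $t^{h/2}$. One then restarts from $\omega(\varepsilon')$ using the second statement and uniqueness. The required smallness of $|\beta|^{-\frac\delta3}\|\omega(\varepsilon')\|_{\dot H^{-1+\delta}}$ and $|\beta|^{-\frac{1+\delta}3}\|\omega(\varepsilon')\|_{\dot H^{\delta}}$ would come from the continuity estimates of Theorem~\ref{thm:1.1}.
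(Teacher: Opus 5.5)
There is a genuine gap in your treatment of the second statement. To make the $Z$-ball invariant you must absorb $\|\omega\|_{Z_1}\|\omega\|_{Y_2}+\|\omega\|_{Y_1}\|\omega\|_{Z_2}$. That requires the bilinear constant at regularity $\delta+m$, times the scale-invariant size of the data, to be below $\frac12$. This constant comes from the Leibniz rule at order $\delta+m$, and it is not bounded uniformly in $m$: high--high interactions at frequency $\sim2^j$ produce output at frequency up to $2^{j+1}$, which costs a factor of order $2^m$. So ``after possibly shrinking $C_0$'' really means $C_0=C_0(m)\to0$. The lemma, however, concerns the solution obtained under the single $C_0$ of Theorem~\ref{thm:1.1}, and Theorem~\ref{thm:1.2} uses it for every $m$.

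The missing idea is to exploit the strict inequalities \eqref{eq:1.11} in the second part too. Take $\alpha=m/N$ satisfying \eqref{eq:4.1}. Then the Duhamel term gains $\alpha$ derivatives, so its level-$(k+1)$ norms are bounded by level-$k$ norms of $\omega$ times $\|\omega\|_{Y_1}$ and $\|\omega\|_{Y_2}$ (see \eqref{eq:4.26}, \eqref{eq:4.28}). This gives a \emph{finite} recursion $a_{k+1}\lesssim x+b_0a_k+a_0b_k$, $b_{k+1}\lesssim y+b_0a_k+a_0b_k$, whose right-hand sides are finite by the previous step. Nothing is absorbed, no a priori finiteness of the $Z$ norms is needed, and no smallness beyond Theorem~\ref{thm:1.1} is used; the $m$-dependence enters only through $C$. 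Your route would not need \eqref{eq:1.11}, but the price is exactly this $m$-dependent smallness.

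For the first statement, the restart you would ``try first'' does not work. Theorem~\ref{thm:1.1} gives continuity only in $\dot H^{-1+\delta}$, so nothing controls $|\beta|^{-\frac{1+\delta}3}\|\omega(\varepsilon')\|_{\dot H^\delta}$ as $\varepsilon'\to0$. In the paper, $\dot H^\delta$-continuity at $t=0$ is Corollary~\ref{cor:4.3} with $m=1$; it needs $\omega_0\in\dot H^{1+\delta}$ and itself rests on this lemma. Moreover, restarting near $t=0$ can at best reproduce the original smallness, not the $m$-dependent one your second part requires.

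The bootstrap you sketch is the correct argument and is essentially the paper's proof. The paper splits at $t/2$ instead of $t-\varepsilon/2$ and uses only the finiteness of the $Y_i$ norms plus the induction hypothesis on $[\varepsilon/2,\infty)$. One correction is needed for the linear term: Proposition~\ref{prop:2.2} is the wrong tool, since it needs $L^{p'}$-based data, and pointwise smoothing alone does not give $L^{r_i}$-integrability up to $t=\infty$. Instead write $T_\beta(t)=e^{\frac t2\Delta}T_{2\beta}(t/2)$. Heat smoothing then costs $\varepsilon^{-m/2}$, and the Strichartz estimate (Proposition~\ref{prop:2.3}) controls $T_{2\beta}(t/2)\omega_0$ in $L^{r_i}$.
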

\proof
	Take a small number
		$\alpha>0$
	satisfying 
		\begin{align}
			\frac{1}{r_1}<\frac12-\frac1{p_1}+\frac\delta2-\frac\alpha2
			\quad\text{and}\quad
			\frac{1}{r_2}<1-\frac1{p_2}+\frac\delta2-\frac\alpha2.\label{eq:4.1}
		\end{align}
	This is possible due to the assumption \eqref{eq:1.11}. We  show that
		$\omega$
	satisfies
		\begin{align}
			\omega\in L^{r_1}([\varepsilon, \infty); \dot W^{-1+\delta+k\alpha, p_1})\cap L^{r_2}([\varepsilon, \infty); \dot W^{\delta+k\alpha, p_2})\label{eq:4.2}
		\end{align}
	for all
		$\varepsilon>0$
	and
		$k\in\mathbb N\cup\{0\}$
	by the induction of
		$k$.
	The case of 	
		$k=0$
	is trivial as
		$\omega\in Y_1\cap Y_2.$
	
	For
		$t\geq \varepsilon$,  
	we write
		\begin{align*}
			\omega(t)
			=T_\beta(t)\omega_0
			-\int_0^{\frac t2}T_\beta(t-\tau)\operatorname{div}(u(\tau)\omega(\tau))\, d\tau
			-\int_{\frac t2}^tT_\beta(t-\tau)\operatorname{div}(u(\tau)\omega(\tau))\, d\tau
		\end{align*}
	and estimate each term on the right-hand side.
	
	For the first linear term, we have 
		\begin{align*}
			\left\|T_\beta(t)\omega_0\right\|_{\dot W^{-1+\delta+m, p_1}}
			&=\left\|e^{\frac t2\Delta}T_{2\beta}( t/2)\omega_0\right\|_{\dot W^{-1+\delta+m, p_1}}
			\lesssim t^{-\frac m2}\left\|T_{2\beta}( t/2)\omega_0\right\|_{\dot W^{-1+\delta, p_1}}
		\end{align*}
	for any
		$m\geq 0.$
	Therefore, we obtain 
		\begin{align}
			\left\|T_\beta(t)\omega_0\right\|_{L^{r_1}([\varepsilon, \infty); \dot W^{-1+\delta+m, p_1})}
			&\lesssim \varepsilon^{-\frac m2}\left\|T_{2\beta}( t/2)\omega_0\right\|_{L^{r_1}([0, \infty); \dot W^{-1+\delta, p_1})}
			<\infty.\label{eq:4.5}
		\end{align}
	Similarly, we have
		\begin{align}
			\left\|T_\beta(t)\omega_0\right\|_{L^{r_2}([\varepsilon, \infty); \dot W^{\delta+m, p_2})}
			&\lesssim \varepsilon^{-\frac m2}\left\|T_{2\beta}( t/2)\omega_0\right\|_{L^{r_2}([0, \infty); \dot W^{\delta, p_2})}
			<\infty.\label{eq:4.9}
		\end{align}
	For the second term, recall that
		$\frac1q=\frac1{p_1}+\frac1{p_2}-\frac\delta2, $
	and by Proposition~\ref{prop:2.2} we have
		\begin{align}
			&\quad\left\|\int_0^{\frac t2}T_\beta(t-\tau)\operatorname{div}(u(\tau)\omega(\tau))\, d\tau\right\|_{\dot W^{-1+\delta+m, p_1}}\label{eq:4.6}\\
			&\lesssim \int_0^\frac t2 (t-\tau)^{-\frac m2-\frac 1q+\frac 1{p_1'}-1+\frac2{p_1}}\min\left\{1, |\beta|^{-1+\frac2{p_1}}(t-\tau)^{-\frac32(1-\frac2{p_1})}\right\}\left\|(-\Delta)^\frac\delta2(u(\tau)\omega(\tau))\right\|_{L^q}\, d\tau\notag\\
			&\lesssim  \varepsilon^{-\frac m2}\int_0^\frac t2 (t-\tau)^{-\frac 1{p_2}+\frac \delta2}\min\left\{1, |\beta|^{-1+\frac2{p_1}}(t-\tau)^{-\frac32(1-\frac2{p_1})}\right\}\left\|u(\tau)\right\|_{\dot W^{-1+\delta, p_1}}\left\|\omega(\tau)\right\|_{\dot W^{\delta, p_2}}\, d\tau.\notag
		\end{align}
	Thus Young's inequality (or the Hardy--Littlewood--Sobolev inequality) yields
		\begin{align}
			&\quad\left\|\int_0^{\frac t2}T_\beta(t-\tau)\operatorname{div}(u(\tau)\omega(\tau))\, d\tau\right\|_{L^{r_1}([\varepsilon, \infty); \dot W^{-1+\delta+m, p_1})}\label{eq:4.10}\\
			&\lesssim\varepsilon^{-\frac m2}\left|\beta\right|^{-\frac23(1-\frac1{r_2}-\frac1{p_2}+\frac\delta2)}\left\|\omega\right\|_{Y_1}\left\|\omega\right\|_{Y_2}<\infty.\label{eq:4.11}
		\end{align}
	The repetition of a similar argument leads to
		\begin{align}
			&\quad\left\|\int_0^{\frac t2}T_\beta(t-\tau)\operatorname{div}(u(\tau)\omega(\tau))\, d\tau\right\|_{L^{r_2}([\varepsilon, \infty); \dot W^{\delta+m, p_2})}\label{eq:4.12}\\
			&\lesssim\varepsilon^{-\frac m2}\left|\beta\right|^{-\frac23(\frac12-\frac1{r_1}-\frac1{p_1}+\frac\delta2)}\left\|\omega\right\|_{Y_1}\left\|\omega\right\|_{Y_2}<\infty.\label{eq:4.13}
		\end{align}
	
	For the estimate of the last term, we use the hypothesis of the induction. Let 
		$k\in \mathbb N\cup\{0\}$, and
	we have
		\begin{align}
			&\quad\left\|\int_{\frac t2}^tT_\beta(t-\tau)\operatorname{div}(u(\tau)\omega(\tau))\, d\tau\right\|_{\dot W^{-1+\delta+(k+1)\alpha, p_1}}\label{eq:4.14}\\
			&\lesssim \int_\frac t2^t (t-\tau)^{-1+\frac2{p_1}}\min\left\{1, |\beta|^{-1+\frac2{p_1}}(t-\tau)^{-\frac32(1-\frac2{p_1})}\right\}\left\|e^{\frac12(t-\tau)\Delta}(-\Delta)^\frac{\delta+(k+1)\alpha}2(u(\tau)\omega(\tau))\right\|_{L^{p_1'}}\, d\tau\notag\\
			&\lesssim \int^t_\frac t2 (t-\tau)^{-\frac1{p_2}+\frac\delta2-\frac\alpha2}\min\left\{1, |\beta|^{-1+\frac2{p_1}}(t-\tau)^{-\frac32(1-\frac2{p_1})}\right\}\left\|(-\Delta)^\frac{\delta+k\alpha}2(u(\tau)\omega(\tau))\right\|_{L^{q}}\, d\tau.\notag
		\end{align}
	By Proposition~\ref{prop:2.4}, we have
		\begin{align}
			\left\|(-\Delta)^\frac{\delta+k\alpha}2(u(\tau)\omega(\tau))\right\|_{L^{q}}
			\lesssim \left\|(-\Delta)^\frac{\delta+k\alpha}{2}u(\tau)\right\|_{L^{p_1}}\left\|\omega(\tau)\right\|_{\dot W^{\delta, p_2}}
			+\left\|u(\tau)\right\|_{\dot W^{\delta, p_1}}\left\|(-\Delta)^\frac{\delta+k\alpha}{2}\omega(\tau)\right\|_{L^{p_2}}.\notag
		\end{align}
	Since 
	$\frac t 2\geq \frac \varepsilon2, $
	Young's inequality leads to
		\begin{align}
			&\quad\left\|\int_{\frac t2}^tT_\beta(t-\tau)\operatorname{div}(u(\tau)\omega(\tau))\, d\tau\right\|_{L^{r_1}([\varepsilon, \infty); \dot W^{-1+\delta+(k+1)\alpha, p_1})}\label{eq:4.18}\\
			&\lesssim\left\|t^{-\frac1{p_2}+\frac\delta2-\frac\alpha2}\min\left\{1, |\beta|^{-1+\frac2{p_1}}t^{-\frac32(1-\frac2{p_1})}\right\}\right\|_{L^{r_2'}([0, \infty))}\label{eq:4.19}\\
			&\phantom{\lesssim}\cdot\left(\left\|\omega\right\|_{L^{r_1}([\frac\varepsilon2, \infty); \dot W^{-1+\delta+k\alpha, p_1})}\left\|\omega\right\|_{Y_2}
			+\left\|\omega\right\|_{L^{r_2}([\frac\varepsilon2, \infty); \dot W^{\delta+k\alpha, p_2})}\left\|\omega\right\|_{Y_1}\right).\label{eq:4.20}
		\end{align}
	The second factor of the right-hand side is finite from the hypothesis of the induction. Due to \eqref{eq:4.1}, we also have
		\begin{align}
			\left\|t^{-\frac1{p_2}+\frac\delta2-\frac\alpha2}\min\left\{1, |\beta|^{-1+\frac2{p_1}}t^{-\frac32(1-\frac2{p_1})}\right\}\right\|_{L^{r_2'}([0, \infty))}
			&\lesssim\left|\beta\right|^{-\frac23(1-\frac1{r_2}-\frac1{p_2}+\frac\delta2-\frac\alpha2)}<\infty. \notag
		\end{align}
	Similarly we have
		\begin{align}
			&\quad\left\|\int_{\frac t2}^tT_\beta(t-\tau)\operatorname{div}(u(\tau)\omega(\tau))\, d\tau\right\|_{L^{r_2}([\varepsilon, \infty); \dot W^{\delta+(k+1)\alpha, p_2})}\label{eq:4.22}\\
			&\lesssim\left|\beta\right|^{-\frac23(\frac12-\frac1{r_1}-\frac1{p_1}+\frac\delta2-\frac\alpha2)}\left(\left\|\omega\right\|_{L^{r_1}([\frac\varepsilon2, \infty); \dot W^{-1+\delta+k\alpha, p_1})}\left\|\omega\right\|_{Y_2}
			+\left\|\omega\right\|_{L^{r_2}([\frac\varepsilon2, \infty); \dot W^{\delta+k\alpha, p_2})}\left\|\omega\right\|_{Y_1}\right).\notag
		\end{align}
	The right-hand side is finite by the hypothesis of the induction. These are enough for the proof of the first half of the lemma.
	
	We next treat the case of 
		$\omega_0\in \dot H^{-1+\delta}\cap \dot H^{\delta+m}$
	for some 
		$m\geq 0$.
	Take 
		$N\in\mathbb N$
	large enough that
		$\alpha \coloneqq \frac mN$
	satisfies \eqref{eq:4.1}.
	Then by Proposition~\ref{prop:2.3}, for 
		$k\in\{0, 1, \dots, N\}$
	we have
		\begin{align}
			\left\|T_\beta(t)\omega_0\right\|_{L^{r_1}([0, \infty); \dot W^{-1+\delta+k\alpha, p_1})}
			\lesssim\left|\beta\right|^{-\frac13(\frac2{r_1}+\frac2{p_1}-1)}\left\|\omega_0\right\|_{\dot H^{-1+\delta+k\alpha}}\label{eq:4.24}
		\end{align}
	and
		\begin{align}
			\left\|T_\beta(t)\omega_0\right\|_{L^{r_2}([0, \infty); \dot W^{\delta+k\alpha, p_2})}
			\lesssim\left|\beta\right|^{-\frac13(\frac2{r_2}+\frac2{p_2}-1)}\left\|\omega_0\right\|_{\dot H^{\delta+k\alpha}}.\label{eq:4.25}
		\end{align}
	An argument similar to the above one is sufficient to show 
		\begin{align}
			&\quad\left\|\int_0^tT_\beta(t-\tau)\operatorname{div}(u(\tau)\omega(\tau))\, d\tau\right\|_{L^{r_1}([0, \infty); \dot W^{-1+\delta+(k+1)\alpha, p_1})}\label{eq:4.26}\\
			&\lesssim\left|\beta\right|^{-\frac23(1-\frac1{r_2}-\frac1{p_2}+\frac\delta2-\frac\alpha2)}\left(\left\|\omega\right\|_{L^{r_1}([0, \infty); \dot W^{-1+\delta+k\alpha, p_1})}\left\|\omega\right\|_{Y_2}
			+\left\|\omega\right\|_{L^{r_2}([0, \infty); \dot W^{\delta+k\alpha, p_2})}\left\|\omega\right\|_{Y_1}\right)\notag
		\end{align}
	and
		\begin{align}
			&\quad\left\|\int_0^tT_\beta(t-\tau)\operatorname{div}(u(\tau)\omega(\tau))\, d\tau\right\|_{L^{r_2}([0, \infty); \dot W^{\delta+(k+1)\alpha, p_2})}\label{eq:4.28}\\
			&\lesssim\left|\beta\right|^{-\frac23(\frac12-\frac1{r_1}-\frac1{p_1}+\frac\delta2-\frac\alpha2)}\left(\left\|\omega\right\|_{L^{r_1}([0, \infty); \dot W^{-1+\delta+k\alpha, p_1})}\left\|\omega\right\|_{Y_2}
			+\left\|\omega\right\|_{L^{r_2}([0, \infty); \dot W^{\delta+k\alpha, p_2})}\left\|\omega\right\|_{Y_1}\right)\notag
		\end{align}
	for 
		$k\in\{0, 1, \dots, N-1\}.$
	Now we define constants
	$a_k=a_k(\beta, \omega_0)$
and
	$b_k=b_k(\beta, \omega_0)$
such that they satisfy
	\begin{align*}
		&|\beta|^{-\frac13(1+\delta+k\alpha-\frac2{p_1}-\frac2{r_1})}\left\|\omega\right\|_{L^{r_1}([0, \infty); \dot W^{-1+\delta+k\alpha, p_1})}
		\lesssim a_k
	\end{align*}
and
	\begin{align*}
		&|\beta|^{-\frac13(2+\delta+k\alpha-\frac2{p_2}-\frac2{r_2})}\left\|\omega\right\|_{L^{r_2}([0, \infty); \dot W^{\delta+k\alpha, p_2})}
		\lesssim b_k
	\end{align*}
for 
	$k\in\{0, 1, \dots, N\}, $
where the implicit constants are independent of
	$\beta$
and
	$\omega_0$.
By Remark~\ref{rem:3.1}, it is possible to set
	\begin{align*}
		a_0
		=(2C_0)^{-1}|\beta|^{-\frac\delta3}\left\|\omega_0\right\|_{\dot H^{-1+\delta}}
		\quad\text{and}\quad
		b_0
		=(2C_0)^{-1}|\beta|^{-\frac{1+\delta}3}\left\|\omega_0\right\|_{\dot H^{\delta}}, 
	\end{align*}
where
	$C_0>0$
is the constant appearing in Theorem~\ref{thm:1.1}.
We next derive the recurrence relations for 
	$(a_k)_k$
and
	$(b_k)_k.$
From the estimates stated avobe, we have
	\begin{align*}
		&|\beta|^{-\frac13(1+\delta+(k+1)\alpha-\frac2{p_1}-\frac2{r_1})}\left\|\omega\right\|_{L^{r_1}([0, \infty); \dot W^{-1+\delta+(k+1)\alpha, p_1})}\\
		&\lesssim |\beta|^{-\frac13(\delta+(k+1)\alpha)}\left\|\omega_0\right\|_{\dot H^{-1+\delta+(k+1)\alpha}}\\
		&+|\beta|^{-\frac13(3+2\delta+k\alpha-\frac2{p_1}-\frac2{r_1}-\frac2{r_2}-\frac2{p_2})}
		\left(\left\|\omega\right\|_{L^{r_1}([0, \infty); \dot W^{-1+\delta+k\alpha, p_1})}
		\left\|\omega\right\|_{Y_2}
		+\left\|\omega\right\|_{L^{r_2}([0, \infty); \dot W^{\delta+k\alpha, p_2})}
		\left\|\omega\right\|_{Y_1}\right)
	\end{align*}
for
	$k\in\{0, 1, \dots, N-1\}.$
For the first term, the interpolation inequality gives
	\begin{align*}
		&|\beta|^{-\frac13(\delta+(k+1)\alpha)}\left\|\omega_0\right\|_{\dot H^{-1+\delta+(k+1)\alpha}}\\
		&\leq 
		\left(|\beta|^{-\frac\delta3}\left\|\omega_0\right\|_{\dot H^{-1+\delta}}\right)^\frac{m-(k+1)\alpha}{m}
		\left(|\beta|^{-\frac13(\delta+m)}\left\|\omega_0\right\|_{\dot H^{-1+\delta+m}}\right)^\frac{(k+1)\alpha}{m}\\
		&\leq |\beta|^{-\frac\delta3}\left\|\omega_0\right\|_{\dot H^{-1+\delta}}
		+|\beta|^{-\frac13(\delta+m)}\left\|\omega_0\right\|_{\dot H^{-1+\delta+m}}.
	\end{align*}
For the second term, using the estimate in Remark~\ref{rem:3.1} and the definition of
	$(a_k)_k$
and
	$(b_k)_k$, 
we have
	\begin{align*}
		&|\beta|^{-\frac13(3+2\delta+k\alpha-\frac2{p_1}-\frac2{r_1}-\frac2{r_2}-\frac2{p_2})}
		\left(\left\|\omega\right\|_{L^{r_1}([0, \infty); \dot W^{-1+\delta+k\alpha, p_1})}
		\left\|\omega\right\|_{Y_2}
		+\left\|\omega\right\|_{L^{r_2}([0, \infty); \dot W^{\delta+k\alpha, p_2})}
		\left\|\omega\right\|_{Y_1}\right)\\
		&\leq |\beta|^{-\frac13(1+\delta+k\alpha-\frac2{p_1}-\frac2{r_1})}
		\left\|\omega\right\|_{L^{r_1}([0, \infty); \dot W^{-1+\delta+k\alpha, p_1})}
		|\beta|^{-\frac13(2+\delta-\frac2{r_2}-\frac2{p_2})}\left\|\omega\right\|_{Y_2}\\
		&+|\beta|^{-\frac13(2+\delta+k\alpha-\frac2{r_2}-\frac2{p_2})}
		\left\|\omega\right\|_{L^{r_2}([0, \infty); \dot W^{\delta+k\alpha, p_2})}
		|\beta|^{-\frac13(1+\delta-\frac2{p_1}-\frac2{r_1})}\left\|\omega\right\|_{Y_1}\\
		&\lesssim b_0a_k+a_0b_k.
	\end{align*}
Thus, we see that it is possible to take the sequence
	$(a_k)_k$
such that it satisfies
	\begin{align*}
		a_{k+1}
		=(2C_0)^{-1}|\beta|^{-\frac\delta3}\left\|\omega_0\right\|_{\dot H^{-1+\delta}}
		+|\beta|^{-\frac13(\delta+m)}\left\|\omega_0\right\|_{\dot H^{-1+\delta+m}}
		+b_0a_k+a_0b_k
	\end{align*}
for
	$k\in\{0, 1, \dots, N-1\}.$
We can derive the recurrence relation for
	$(b_k)_k$
in a similar way. In fact, we have
	\begin{align*}
		&|\beta|^{-\frac13({2+\delta+(k+1)\alpha-\frac2{p_2}-\frac2{r_2})}}\left\|\omega\right\|_{L^{r_2}([0, \infty); \dot W^{\delta+(k+1)\alpha, p_2})}\\
		&\lesssim |\beta|^{-\frac13({1+\delta+(k+1)\alpha)}}
		\left\|\omega_0\right\|_{\dot H^{\delta+(k+1)\alpha}}\\
		&+|\beta|^{-\frac13({3+2\delta+k\alpha-\frac2{p_1}-\frac2{r_1}-\frac2{p_2}-\frac2{r_2})}}
		\left(\left\|\omega\right\|_{L^{r_1}([0, \infty); \dot W^{-1+\delta+k\alpha, p_1})}
		\left\|\omega\right\|_{Y_2}
		+\left\|\omega\right\|_{L^{r_2}([0, \infty); \dot W^{\delta+k\alpha, p_2})}
		\left\|\omega\right\|_{Y_1}\right).
	\end{align*}
For the first term, we have
	\begin{align*}
		&|\beta|^{-\frac13({1+\delta+(k+1)\alpha)}}
		\left\|\omega_0\right\|_{\dot H^{\delta+(k+1)\alpha}}\\
		&\leq \left(|\beta|^{-\frac13(1+\delta)}\left\|\omega_0\right\|_{\dot H^\delta}\right)^\frac{m-(k+1)\alpha}{m}
		\left(|\beta|^{-\frac13(1+\delta+m)}\left\|\omega_0\right\|_{\dot H^{\delta+m}}\right)^\frac{(k+1)\alpha}{m}\\
		&\leq |\beta|^{-\frac13(1+\delta)}\left\|\omega_0\right\|_{\dot H^\delta}
		+|\beta|^{-\frac13(1+\delta+m)}\left\|\omega_0\right\|_{\dot H^{\delta+m}}.
	\end{align*}
The estimate for the second term is the same as before, and we have
	\begin{align*}
		&|\beta|^{-\frac13({3+2\delta+k\alpha-\frac2{p_1}-\frac2{r_1}-\frac2{p_2}-\frac2{r_2})}}
		\left(\left\|\omega\right\|_{L^{r_1}([0, \infty); \dot W^{-1+\delta+k\alpha, p_1})}
		\left\|\omega\right\|_{Y_2}
		+\left\|\omega\right\|_{L^{r_2}([0, \infty); \dot W^{\delta+k\alpha, p_2})}
		\left\|\omega\right\|_{Y_1}\right)\\
		&\lesssim b_0a_k+a_0b_k.
	\end{align*}
Thus, we may assume that
	$(b_k)_k$
satisfies
	\begin{align*}
		b_{k+1}
		=(2C_0)^{-1}|\beta|^{-\frac13(1+\delta)}\left\|\omega_0\right\|_{\dot H^\delta}
		+|\beta|^{-\frac13(1+\delta+m)}\left\|\omega_0\right\|_{\dot H^{\delta+m}}
		+b_0a_k+a_0b_k
	\end{align*}
for
	$k\in\{0, 1, \dots, N-1\}.$
Now, let us set
	\begin{align*}
		x
		\coloneqq (2C_0)^{-1}|\beta|^{-\frac\delta3}\left\|\omega_0\right\|_{\dot H^{-1+\delta}}
		+|\beta|^{-\frac13(\delta+m)}\left\|\omega_0\right\|_{\dot H^{-1+\delta+m}}
	\end{align*}
and
	\begin{align*}
		y
		\coloneqq (2C_0)^{-1}|\beta|^{-\frac13(1+\delta)}\left\|\omega_0\right\|_{\dot H^\delta}
		+|\beta|^{-\frac13(1+\delta+m)}\left\|\omega_0\right\|_{\dot H^{\delta+m}}.
	\end{align*}
Then the recurrence relation of
	$(a_k)_k$
and
	$(b_k)_k$
is given by
	\begin{align*}
		\begin{cases}
			a_{k+1}=b_0a_k+a_0b_k+x, \\
			b_{k+1}=b_0a_k+a_0b_k+y
		\end{cases}
	\end{align*}
for
	$k\in\{0, 1, \dots, N-1\}.$
Let us also define
	$z$
and
	$w$
by the equations
	\begin{align*}
		\begin{cases}
			z=b_0z+a_0w+x, \\
			w=b_0z+a_0w+y.
		\end{cases}
	\end{align*}
Since this is equivalent to
	\begin{align*}
		\begin{cases}
			(1-b_0)z-a_0w=x, \\
			-b_0z+(1-a_0)w=y, 
		\end{cases}
	\end{align*}
and the coefficient matrix of the left-hand side is regular by the condition
	$a_0+b_0\leq\frac12, $
we can explicitly write
	\begin{align*}
		\begin{pmatrix}
			z\\
			w
		\end{pmatrix}
		=\frac1{(1-a_0)(1-b_0)-a_0b_0}
		\begin{pmatrix}
			1-a_0 & a_0\\
			b_0 & 1-b_0
		\end{pmatrix}
		\begin{pmatrix}
			x\\
			y
		\end{pmatrix}.
	\end{align*}
Solving the recurrence relations of 
	$(a_k)_k$
and
	$(b_k)_k, $
we obtain
	\begin{align*}
		a_N
		=z+(a_0+b_0)^{N-1}(b_0(a_0-z)+a_0(b_0-w))
	\end{align*}
and
	\begin{align*}
		b_N
		=w+(a_0+b_0)^{N-1}(b_0(a_0-z)+a_0(b_0-w)).
	\end{align*}
Since the conditions
	$a_0\leq x$
and
	$b_0\leq y$
ensure that
	\begin{align*}
		a_0-z
		&=a_0-\frac{(1-a_0)x+a_0y}{1-a_0-b_0}
		=\frac{(1-a_0)(a_0-x)}{1-a_0-b_0}-\frac{a_0(b_0+y)}{1-a_0-b_0}
		\leq0
	\end{align*}
and
	\begin{align*}
		b_0-w
		&=b_0-\frac{b_0x+(1-b_0)y}{1-a_0-b_0}
		=\frac{(1-b_0)(b_0-y)}{1-a_0-b_0}-\frac{b_0(a_0+x)}{1-a_0-b_0}
		\leq0, 
	\end{align*}
we obtain
	\begin{align*}
		a_N\leq z
		\quad\text{and}\quad
		b_N\leq w.
	\end{align*}
Now, we have
	\begin{align*}
		&z
		=\frac{(1-a_0)x+a_0y}{1-a_0-b_0}
		\lesssim x+a_0y\\
		&=|\beta|^{-\frac\delta3}\left\|\omega_0\right\|_{\dot H^{-1+\delta}}
		+|\beta|^{-\frac13(\delta+m)}\left\|\omega_0\right\|_{\dot H^{-1+\delta+m}}\\
		&\quad+|\beta|^{-\frac\delta3}\left\|\omega_0\right\|_{\dot H^{-1+\delta}}\left(|\beta|^{-\frac13(1+\delta)}\left\|\omega_0\right\|_{\dot H^\delta}
		+|\beta|^{-\frac13(1+\delta+m)}\left\|\omega_0\right\|_{\dot H^{\delta+m}}\right)\\
		&\lesssim |\beta|^{-\frac\delta3}\left\|\omega_0\right\|_{\dot H^{-1+\delta}}
		+|\beta|^{-\frac13(\delta+m)}\left\|\omega_0\right\|_{\dot H^{-1+\delta+m}}
		+|\beta|^{-\frac\delta3}\left\|\omega_0\right\|_{\dot H^{-1+\delta}}|\beta|^{-\frac13(1+\delta+m)}\left\|\omega_0\right\|_{\dot H^{\delta+m}}.
	\end{align*}
Similarly, we obtain
	\begin{align*}
		&w
		=\frac{b_0x+(1-b_0)y}{1-a_0-b_0}
		\lesssim b_0x+y\\
		&=\left|\beta\right|^{-\frac{1+\delta}3}\left\|\omega_0\right\|_{\dot H^\delta}
		\left(|\beta|^{-\frac\delta3}\left\|\omega_0\right\|_{\dot H^{-1+\delta}}
		+|\beta|^{-\frac13(\delta+m)}\left\|\omega_0\right\|_{\dot H^{-1+\delta+m}}\right)\\
		&\quad +|\beta|^{-\frac13(1+\delta)}\left\|\omega_0\right\|_{\dot H^\delta}
		+|\beta|^{-\frac13(1+\delta+m)}\left\|\omega_0\right\|_{\dot H^{\delta+m}}\\
		&\lesssim |\beta|^{-\frac13(1+\delta)}\left\|\omega_0\right\|_{\dot H^\delta}
		+|\beta|^{-\frac13(1+\delta+m)}\left\|\omega_0\right\|_{\dot H^{\delta+m}}
		+\left|\beta\right|^{-\frac{1+\delta}3}\left\|\omega_0\right\|_{\dot H^\delta}
		|\beta|^{-\frac13(\delta+m)}\left\|\omega_0\right\|_{\dot H^{-1+\delta+m}}.
	\end{align*}
As a result, we obtain the bounds
		\begin{align*}
			&|\beta|^{-\frac13(1+\delta+m-\frac2{p_1}-\frac2{r_1})}\left\|\omega\right\|_{L^{r_1}([0, \infty); \dot W^{-1+\delta+m, p_1})}\\
			&\lesssim |\beta|^{-\frac\delta3}\left\|\omega_0\right\|_{\dot H^{-1+\delta}}
		+|\beta|^{-\frac13(\delta+m)}\left\|\omega_0\right\|_{\dot H^{-1+\delta+m}}
		+|\beta|^{-\frac\delta3}\left\|\omega_0\right\|_{\dot H^{-1+\delta}}|\beta|^{-\frac13(1+\delta+m)}\left\|\omega_0\right\|_{\dot H^{\delta+m}}
		\end{align*}
	and
		\begin{align*}
			&|\beta|^{-\frac13(2+\delta+m-\frac2{p_2}-\frac2{r_2})}\left\|\omega\right\|_{L^{r_2}([0, \infty); \dot W^{\delta+m, p_2})}\\
			&\lesssim |\beta|^{-\frac13(1+\delta)}\left\|\omega_0\right\|_{\dot H^\delta}
		+|\beta|^{-\frac13(1+\delta+m)}\left\|\omega_0\right\|_{\dot H^{\delta+m}}
		+\left|\beta\right|^{-\frac{1+\delta}3}\left\|\omega_0\right\|_{\dot H^\delta}
		|\beta|^{-\frac13(\delta+m)}\left\|\omega_0\right\|_{\dot H^{-1+\delta+m}}.
		\end{align*}
	This completes the proof.
\qed

We divide the proof of Theorem~\ref{thm:1.2} into several propositions stated below.

\begin{proposition}\label{prop:4.2}
	The solution
		$\omega$
	satisfies
		$\omega\in C((0, \infty); \dot H^{-1+\delta+m})$
	for any
		$m\geq 0.$
\end{proposition}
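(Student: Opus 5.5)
To prove Proposition~\ref{prop:4.2}, we fix $\varepsilon>0$ and $m\geq0$, and show that $\omega(t)\in\dot H^{-1+\delta+m}$ depends continuously on $t\in(\varepsilon,\infty)$. The idea is to restart the integral equation at time $\varepsilon/2$, using the smoothing of Lemma~\ref{lem:4.1}, and then repeat the continuity argument from the end of the proof of Theorem~\ref{thm:1.1} one level of regularity higher. By uniqueness in the class of Theorem~\ref{thm:1.1} (equivalently, by the semigroup property of $T_\beta$ applied to \eqref{eq:1.9}), for $t\geq t_0\coloneqq\varepsilon/2$ we have
\begin{align*}
	\omega(t)=T_\beta(t-t_0)\omega(t_0)-\int_{t_0}^tT_\beta(t-\tau)\operatorname{div}(u(\tau)\omega(\tau))\, d\tau.
\end{align*}
In fact we do not need to restart at an exact point. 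It suffices to write the Duhamel formula on $[0,t]$, split the integral at $\tau=t_0$, and treat the part over $[0,t_0]$ together with $T_\beta(t)\omega_0$ as a term that is smooth for $t\geq\varepsilon$.

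\emph{Linear term and the integral over $[0,t_0]$.} For $t\geq\varepsilon$ we write
\begin{align*}
	T_\beta(t)\omega_0=e^{\frac\varepsilon2\Delta}T_\beta(t-\tfrac\varepsilon2)e^{\frac\varepsilon2\beta L_1}\omega_0 .
\end{align*}
Since $\omega_0\in\dot H^{-1+\delta}$ and $e^{\frac\varepsilon2\Delta}$ maps $\dot H^{-1+\delta}$ into $\dot H^{-1+\delta+m}$ with norm $\lesssim\varepsilon^{-m/2}$, continuity in $t$ follows from the strong continuity of $T_\beta$ on $\dot H^{-1+\delta}$. This is the same Plancherel argument as in \eqref{eq:3.38}--\eqref{eq:3.40}. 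For $\int_0^{t_0}T_\beta(t-\tau)\operatorname{div}(u\omega)\,d\tau$ with $t\geq\varepsilon$, we have $t-\tau\geq\varepsilon/2$. By the heat kernel bound of \eqref{eq:3.27}--\eqref{eq:3.30} with the extra factor $(t-\tau)^{-m/2}\leq(\varepsilon/2)^{-m/2}$, this term lies in $\dot H^{-1+\delta+m}$ and is bounded by $\|\omega\|_{Y_1}\|\omega\|_{Y_2}$. Its continuity in $t$ follows by the three-way splitting \eqref{eq:3.34}--\eqref{eq:3.37}, using the factors $(t-s)^{\gamma_1}$ and $(t-s)^{\gamma_2}$ exactly as in \eqref{eq:3.47} and \eqref{eq:3.60}. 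The extra derivatives are absorbed by the heat factor.

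\emph{Integral over $[t_0,t]$.} Here the singularity at $\tau=t$ cannot absorb $m$ derivatives. Instead we put the derivatives on the nonlinearity. By Proposition~\ref{prop:2.4},
\begin{align*}
	\|(-\Delta)^{\frac{\delta+m}2}(u\omega)\|_{L^q}\lesssim\|\omega\|_{\dot W^{-1+\delta+m,p_1}}\|\omega\|_{\dot W^{\delta,p_2}}+\|\omega\|_{\dot W^{-1+\delta,p_1}}\|\omega\|_{\dot W^{\delta+m,p_2}}.
\end{align*}
By Lemma~\ref{lem:4.1}, the right-hand side lies in $L^{1/(1/r_1+1/r_2)}([t_0,\infty))$. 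Hence the estimates \eqref{eq:3.27}--\eqref{eq:3.33}, and the three-piece continuity argument \eqref{eq:3.41}--\eqref{eq:3.64}, apply verbatim with $\delta$ replaced by $\delta+m$ in the outer derivative. The norms $\|\omega\|_{Y_1}\|\omega\|_{Y_2}$ are replaced by the corresponding norms over $[\varepsilon/2,\infty)$ at regularity $+m$. The exponent condition $\frac1q-\frac12<1-\frac1{r_1}-\frac1{r_2}$ and the choice of $\gamma_1$ and $\gamma_2$ are unchanged, since $q$ is the same.

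\emph{Main obstacle.} The main subtlety is the bookkeeping in the continuity splitting. For $s<t$ both near some $t_*>\varepsilon$, the lower limit $t_0$ must be fixed and independent of $s$ and $t$; this is why we split at the fixed time $\varepsilon/2$ rather than at $t/2$. Each of the four pieces then tends to $0$ as $t-s\to0$, uniformly for $s,t$ in compact subsets of $(\varepsilon,\infty)$. Since $\varepsilon$ is arbitrary, $\omega\in C((0,\infty);\dot H^{-1+\delta+m})$.
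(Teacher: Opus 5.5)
Your proposal is correct and follows the paper's argument. Like the paper, you split the Duhamel formula, use heat smoothing with the factor $(t-\tau)^{-m/2}$ on the early part, apply Lemma~\ref{lem:4.1} with the fractional Leibniz rule on the late part, and reuse the $(t-s)^{\gamma_1}$, $(t-s)^{\gamma_2}$ continuity estimates from the proof of Theorem~\ref{thm:1.1}. The only difference is that you split at the fixed time $\varepsilon/2$ instead of at $t/2$ and $s/2$, which spares you moving-endpoint pieces such as \eqref{eq:4.41} and \eqref{eq:4.45}; this is a convenience rather than the necessity your last paragraph suggests, since the paper handles the moving split directly.
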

\proof
	We first prove that
		$\omega(t)\in \dot H^{-1+\delta +m}$
	for all
		$t>0$
	and
		$m\geq 0$.
	This is done by combining the estimates obtained in Lemma~\ref{lem:4.1} and the idea which already appeared in the proof of Theorem~\ref{thm:1.1}. Let
		$\varepsilon>0$
	and assume that
		$t\geq\varepsilon.$
	As in the proof of Lemma~\ref{lem:4.1}, we write
		\begin{align*}
			\omega(t)
			=T_\beta(t)\omega_0
			-\int_0^{\frac t2}T_\beta(t-\tau)\operatorname{div}(u(\tau)\omega(\tau))\, d\tau
			-\int_{\frac t2}^tT_\beta(t-\tau)\operatorname{div}(u(\tau)\omega(\tau))\, d\tau
		\end{align*}
	and show that each term belongs to
		$\dot H^{-1+\delta+m}.$
	The treatment of the first and second terms are relatively easy. In fact, for the first term we see that
		\begin{align}
			\left\|T_\beta(t)\omega_0\right\|_{\dot H^{-1+\delta +m}}
			=\left\|e^{t\Delta}\omega_0\right\|_{\dot H^{-1+\delta +m}}
			\lesssim t^{-\frac m2}\left\|\omega_0\right\|_{\dot H^{-1+\delta}}<\infty, \label{eq:4.31}
		\end{align}
	and for the second term we have
		\begin{align}
			&\left\|\int_0^{\frac t2}T_\beta(t-\tau)\operatorname{div}(u(\tau)\omega(\tau))\, d\tau\right\|_{\dot H^{-1+\delta+m}}\label{eq:4.32}\\
			&\lesssim \int_0^{\frac t2}\left\|e^{(t-\tau)\Delta}(-\Delta)^\frac{\delta+m}{2}(u(\tau)\omega(\tau))\right\|_{L^2}\, d\tau\label{eq:4.33}\\
			&\lesssim \int_0^{\frac t2}(t-\tau)^{-\frac m2-\frac1q+\frac1{2}}\left\|(-\Delta)^\frac{\delta}{2}(u(\tau)\omega(\tau))\right\|_{L^q}\, d\tau\label{eq:4.34}\\
			&\lesssim \varepsilon^{-\frac m2}t^{1-\frac1{r_1}-\frac{1}{r_2}-\frac1q+\frac12}\left\|\omega\right\|_{Y_1}\left\|\omega\right\|_{Y_2}<\infty.\label{eq:4.35}
		\end{align}
	For the last term, by Lemma~\ref{lem:4.1} we have
		\begin{align*}
			&\left\|\int^t_{\frac t2}T_\beta(t-\tau)\operatorname{div}(u(\tau)\omega(\tau))\, d\tau\right\|_{\dot H^{-1+\delta+m}}
			\\
			&\lesssim \int^t_{\frac \varepsilon2}(t-\tau)^{-\frac1q+\frac1{2}}\left\|(-\Delta)^\frac{\delta+m}{2}(u(\tau)\omega(\tau))\right\|_{L^q}\, d\tau
			\\
			&\lesssim \int^t_{\frac \varepsilon2}(t-\tau)^{-\frac1q+\frac1{2}}\left(\left\|(-\Delta)^\frac{\delta+m}{2}u(\tau)\right\|_{L^{p_1}}\left\|\omega(\tau)\right\|_{\dot W^{\delta, p_2}}
			+\left\|u(\tau)\right\|_{\dot W^{\delta, p_1}}\left\|(-\Delta)^\frac{\delta+m}{2}\omega(\tau)\right\|_{L^{p_2}}\right)\, d\tau
			\\
			&\lesssim t^{1-\frac1{r_1}-\frac1{r_2}-\frac1q+\frac12}\left(\left\|\omega\right\|_{L^{r_1}([\frac\varepsilon2, \infty); \dot W^{-1+\delta+m, p_1})}\left\|\omega\right\|_{Y_2}
			+\left\|\omega\right\|_{L^{r_2}([\frac\varepsilon2, \infty); \dot W^{\delta+m, p_2})}\left\|\omega\right\|_{Y_1}\right)<\infty.
		\end{align*}
	These estimates ensure that
		$\omega(t)\in \dot H^{-1+\delta+m}$
	for any
		$t>0$
	and
		$m\geq 0$.
	
	Let us now show the continuity of
		$\omega(t)\in \dot H^{-1+\delta+m}$
	for
		$t>0.$
	The strategy is almost the same as in the proof of Theorem~\ref{thm:1.1}. Let
		$\varepsilon_0, M_0, s, t$
	satisfy
		$0<\varepsilon_0<\frac s2<\frac t2<s<t<M_0.$
	We write
		\begin{align}
				&\quad\omega(t)-\omega(s)
				=T_\beta(t)\omega_0-T_\beta(s)\omega_0\quad
				\label{eq:4.40}\\
				&-\int_0^\frac{t}{2}e^{(t-\tau)\Delta}e^{(t-\tau)\beta L_1}\operatorname{div}(u(\tau)\omega(\tau))\, d\tau
				+\int_0^\frac{s}{2}e^{(t-\tau)\Delta}e^{(t-\tau)\beta L_1}\operatorname{div}(u(\tau)\omega(\tau))\, d\tau
				\label{eq:4.41}\\
				&-\int_0^\frac{s}{2}e^{(t-\tau)\Delta}e^{(t-\tau)\beta L_1}\operatorname{div}(u(\tau)\omega(\tau))\, d\tau
				+\int_0^\frac{s}{2}e^{(s-\tau)\Delta}e^{(t-\tau)\beta L_1}\operatorname{div}(u(\tau)\omega(\tau))\, d\tau
				\label{eq:4.42}\\
				&-\int_0^\frac{s}{2}e^{(s-\tau)\Delta}e^{(t-\tau)\beta L_1}\operatorname{div}(u(\tau)\omega(\tau))\, d\tau
				+\int_0^\frac{s}{2}e^{(s-\tau)\Delta}e^{(s-\tau)\beta L_1}\operatorname{div}(u(\tau)\omega(\tau))\, d\tau\label{eq:4.43}\\
				&-\int^t_\frac{t}{2}e^{(t-\tau)\Delta}e^{(t-\tau)\beta L_1}\operatorname{div}(u(\tau)\omega(\tau))\, d\tau
				+\int^s_\frac{t}{2}e^{(t-\tau)\Delta}e^{(t-\tau)\beta L_1}\operatorname{div}(u(\tau)\omega(\tau))\, d\tau\quad
				\label{eq:4.44}\\
				&-\int^s_\frac{t}{2}e^{(t-\tau)\Delta}e^{(t-\tau)\beta L_1}\operatorname{div}(u(\tau)\omega(\tau))\, d\tau
				+\int^s_\frac{s}{2}e^{(t-\tau)\Delta}e^{(t-\tau)\beta L_1}\operatorname{div}(u(\tau)\omega(\tau))\, d\tau\quad
				\label{eq:4.45}\\
				&-\int^s_\frac{s}{2}e^{(t-\tau)\Delta}e^{(t-\tau)\beta L_1}\operatorname{div}(u(\tau)\omega(\tau))\, d\tau
				+\int^s_\frac{s}{2}e^{(s-\tau)\Delta}e^{(t-\tau)\beta L_1}\operatorname{div}(u(\tau)\omega(\tau))\, d\tau\quad
				\label{eq:4.46}\\
				&-\int^s_\frac{s}{2}e^{(s-\tau)\Delta}e^{(t-\tau)\beta L_1}\operatorname{div}(u(\tau)\omega(\tau))\, d\tau
				+\int^s_\frac{s}{2}e^{(s-\tau)\Delta}e^{(s-\tau)\beta L_1}\operatorname{div}(u(\tau)\omega(\tau))\, d\tau\quad
				\label{eq:4.47}
			\end{align}
		and show that each line tends to 0 as
			$t-s\to 0.$
		For \eqref{eq:4.40}, we have
			\begin{align}
				&\quad\left\|T_\beta(t)\omega_0-T_\beta(s)\omega_0\right\|_{\dot H^{-1+\delta+m}}\label{eq:4.48}\\
				&\leq \left\|e^{t\Delta}e^{t\beta L_1}\omega_0-e^{s\Delta}e^{t\beta L_1}\omega_0\right\|_{\dot H^{-1+\delta+m}}
				+\left\|e^{s\Delta}e^{t\beta L_1}\omega_0-e^{s\Delta}e^{s\beta L_1}\omega_0\right\|_{\dot H^{-1+\delta+m}}\label{eq:4.49}\\
				&\lesssim s^{-\frac m2}\left\|e^{(t-s)\Delta}\omega_0-\omega_0\right\|_{\dot H^{-1+\delta}}
				+s^{-\frac m2}\left\|e^{(t-s)\beta L_1}\omega_0-\omega_0\right\|_{\dot H^{-1+\delta}}\label{eq:4.50}\\
				&\to 0 \quad\text{as $t-s\to0.$}\label{eq:4.51}
			\end{align}
		For \eqref{eq:4.41}, as in the estimate of \eqref{eq:3.35}, we have
			\begin{align}
				&\quad\left\|\int_\frac{s}{2}^\frac{t}{2}e^{(t-\tau)\Delta}e^{(t-\tau)\beta L_1}\operatorname{div}(u(\tau)\omega(\tau))\, d\tau\right\|_{\dot H^{-1+\delta+m}}\label{eq:4.52}\\
				&\lesssim \int_\frac{s}{2}^\frac{t}{2}\left\|e^{(t-\tau)\Delta}(-\Delta)^\frac{\delta+m}{2}(u(\tau)\omega(\tau))\right\|_{L^2}\, d\tau\label{eq:4.53}\\
				&\lesssim \int_\frac{s}{2}^\frac{t}{2}(t-\tau)^{-\frac m2-\frac1q+\frac12}\left\|(-\Delta)^\frac{\delta}{2}(u(\tau)\omega(\tau))\right\|_{L^q}\, d\tau\label{eq:4.54}\\
				&\lesssim {\varepsilon_0}^{-\frac{m}{2}}(t-s)^{1-\frac1{r_1}-\frac1{r_2}-\frac1q+\frac12}\left\|\omega\right\|_{Y_1}\left\|\omega\right\|_{Y_2}
				\to 0\quad\text{as $t-s\to 0.$}\label{eq:4.55}
			\end{align}
		For \eqref{eq:4.42}, we have
			\begin{align}
				&\quad\left\|\int_0^\frac{s}{2}\left(e^{(t-s)\Delta}-1\right)e^{(s-\tau)\Delta}e^{(t-\tau)\beta L_1}\operatorname{div}(u(\tau)\omega(\tau))\, d\tau\right\|_{\dot H^{-1+\delta+m}}\label{eq:4.56}\\
				&\lesssim \int_0^\frac{s}{2}\left\|\left(e^{(t-s)\Delta}-1\right)e^{(s-\tau)\Delta}(-\Delta)^\frac{\delta+m}{2}(u(\tau)\omega(\tau))\right\|_{L^2}\, d\tau\label{eq:4.57}
			\end{align}
			The repetition of the argument in the proof of Theorem~\ref{thm:1.1} yields
				\begin{align}
					&\left\|\left(e^{(t-s)\Delta}-1\right)e^{(s-\tau)\Delta}(-\Delta)^\frac{\delta+m}{2}(u(\tau)\omega(\tau))\right\|_{L^2}\label{eq:4.58}\\
					&\lesssim (t-s)^{\gamma_1}\left\|e^{(s-\tau)\Delta}(-\Delta)^{\frac{\delta+m}{2}+\gamma_1}(u(\tau)\omega(\tau))\right\|_{L^2}\label{eq:4.59}
				\end{align}
			for 
				$\gamma_1\in(0, 1)$
			with
				$\frac1q-\frac12+\gamma_1<1-\frac1{r_1}-\frac1{r_2}.$
			Then we have
				\begin{align}
					&\quad\left\|\int_0^\frac{s}{2}\left(e^{(t-s)\Delta}-1\right)e^{(s-\tau)\Delta}e^{(t-\tau)\beta L_1}\operatorname{div}(u(\tau)\omega(\tau))\, d\tau\right\|_{\dot H^{-1+\delta+m}}\label{eq:4.60}\\
					&\lesssim (t-s)^{\gamma_1}\int_{0}^{\frac s2}\left\|e^{(s-\tau)\Delta}(-\Delta)^{\frac{\delta+m}{2}+\gamma_1}(u(\tau)\omega(\tau))\right\|_{L^2}\, d\tau
					\label{eq:4.61}\\
					&\lesssim (t-s)^{\gamma_1}\int_{0}^{\frac s2}(s-\tau)^{-\frac m2-\frac1q+\frac12-\gamma_1}\left\|(-\Delta)^{\frac{\delta}{2}}(u(\tau)\omega(\tau))\right\|_{L^q}\, d\tau\label{eq:4.62}\\
					&\lesssim (t-s)^{\gamma _1}{\varepsilon_0}^{-\frac m2}s^{1-\frac1{r_1}-\frac1{r_2}-\frac1q+\frac12-\gamma_1}\left\|\omega\right\|_{Y_1}\left\|\omega\right\|_{Y_2}\to 0 \quad\text{as $t-s\to0.$}
					\label{eq:4.63}
				\end{align}
			For \eqref{eq:4.43}, we similarly have
				\begin{align}
					&\quad\left\|\int_0^\frac{s}{2}\left(e^{(t-s)\beta L_1}-1\right)e^{(s-\tau)\Delta}e^{(t-\tau)\beta L_1}\operatorname{div}(u(\tau)\omega(\tau))\, d\tau\right\|_{\dot H^{-1+\delta+m}}\label{eq:4.64}\\
					&\lesssim |\beta|^{\gamma_2}(t-s)^{\gamma_2}\int_{0}^{\frac s2}\left\|e^{(s-\tau)\Delta}(-\Delta)^{\frac{\delta+m-\gamma_2}{2}}(u(\tau)\omega(\tau))\right\|_{L^2}\, d\tau
					\label{eq:4.65}\\
					&\lesssim |\beta|^{\gamma_2}(t-s)^{\gamma_2}\int_{0}^{\frac s2}(s-\tau)^{-\frac m2-\frac1q+\frac{1+\gamma_2}2}\left\|(-\Delta)^{\frac{\delta}{2}}(u(\tau)\omega(\tau))\right\|_{L^q}\, d\tau\label{eq:4.66}\\
					&\lesssim |\beta|^{\gamma_2}(t-s)^{\gamma _2}{\varepsilon_0}^{-\frac m2}s^{1-\frac1{r_1}-\frac1{r_2}-\frac1q+\frac{1+\gamma_2}{2}}\left\|\omega\right\|_{Y_1}\left\|\omega\right\|_{Y_2}\to 0 \quad\text{as $t-s\to0$}
					\label{eq:4.67}
				\end{align}
			for
				$\gamma_2\in(0, 1)$
			with
				$\frac1q>\frac12+\frac{\gamma_2}2.$
			
			For \eqref{eq:4.44}, we have
				\begin{align*}
					&\quad\left\|\int^t_se^{(t-\tau)\Delta}e^{(t-\tau)\beta L_1}\operatorname{div}(u(\tau)\omega(\tau))\, d\tau\right\|_{\dot H^{-1+\delta+m}}
					\\
					&\lesssim\int^t_s\left\|e^{(t-\tau)\Delta}(-\Delta)^{\frac{\delta+m}2}(u(\tau)\omega(\tau))\right\|_{L^{2}}\, d\tau
					\\
					&\lesssim\int^t_s(t-\tau)^{-\frac1q+\frac12}\left\|(-\Delta)^{\frac{\delta+m}2}(u(\tau)\omega(\tau))\right\|_{L^{q}}\, d\tau
					\\
					&\lesssim (t-s)^{1-\frac1{r_1}-\frac1{r_2}-\frac1q+\frac12}\left(\left\|\omega\right\|_{L^{r_1}([\varepsilon_0, \infty); \dot W^{-1+\delta+m, p_1})}\left\|\omega\right\|_{Y_2}
					+\left\|\omega\right\|_{L^{r_2}([\varepsilon_0, \infty); \dot W^{\delta+m, p_2})}\left\|\omega\right\|_{Y_1}\right)
					\\
					&\to 0 \quad\text{as $t-s\to0$}.
				\end{align*}
			The estimate for \eqref{eq:4.45} can be treated in a similar way.
			
		For \eqref{eq:4.46}, we have
			\begin{align*}
				&\quad\left\|\int^s_\frac{s}{2}\left(e^{(t-s)\Delta}-1\right)e^{(s-\tau)\Delta}e^{(t-\tau)\beta L_1}\operatorname{div}(u(\tau)\omega(\tau))\, d\tau\right\|_{\dot H^{-1+\delta+m}}
				\\
				&\lesssim (t-s)^{\gamma _1}s^{1-\frac1{r_1}-\frac1{r_2}-\frac1q+\frac12-\gamma_1}\left(\left\|\omega\right\|_{L^{r_1}([\varepsilon_0, \infty); \dot W^{-1+\delta+m, p_1})}\left\|\omega\right\|_{Y_2}
					+\left\|\omega\right\|_{L^{r_2}([\varepsilon_0, \infty); \dot W^{\delta+m, p_2})}\left\|\omega\right\|_{Y_1}\right)
					\\
					&\to 0 \quad\text{as $t-s\to0$}.
			\end{align*}
		
		Finally for \eqref{eq:4.47}, we have
			\begin{align*}
				&\quad\left\|\int^s_\frac{s}{2}\left(e^{(t-s)\beta L_1}-1\right)e^{(s-\tau)\Delta}e^{(s-\tau)\beta L_1}\operatorname{div}(u(\tau)\omega(\tau))\, d\tau\right\|_{\dot H^{-1+\delta+m}}
				\\
				&\lesssim |\beta|^{\gamma_2}(t-s)^{\gamma _2}s^{1-\frac1{r_1}-\frac1{r_2}-\frac1q+\frac12+\frac{\gamma_2}{2}}\left(\left\|\omega\right\|_{L^{r_1}([\varepsilon_0, \infty); \dot W^{-1+\delta+m, p_1})}\left\|\omega\right\|_{Y_2}
					+\left\|\omega\right\|_{L^{r_2}([\varepsilon_0, \infty); \dot W^{\delta+m, p_2})}\left\|\omega\right\|_{Y_1}\right)
					\\
					&\to 0 \quad\text{as $t-s\to0$}.
			\end{align*}
		This completes the proof of the continuity.
\qed

\begin{corollary}\label{cor:4.3}
	If 
		$\omega_0\in \dot H^{-1+\delta}\cap \dot H^{\delta+m}$
	for some
		$m\geq 0$, 
	Then 
		$\omega\in C([0, \infty); \dot H^{-1+\delta+m}).$
\end{corollary}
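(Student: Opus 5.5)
By Proposition~\ref{prop:4.2}, $\omega\in C((0,\infty);\dot H^{-1+\delta+m})$ already holds. It therefore suffices to prove two things: that $\omega(t)\in\dot H^{-1+\delta+m}$ with a bound uniform near $t=0$, and that $\|\omega(t)-\omega_0\|_{\dot H^{-1+\delta+m}}\to0$ as $t\downarrow0$. The plan is to repeat the argument used for continuity at $t=0$ in the proof of Theorem~\ref{thm:1.1} (estimates \eqref{eq:3.25}--\eqref{eq:3.33}), now at the higher regularity level. The additional ingredient is the global-in-time bound from the second half of Lemma~\ref{lem:4.1}: since $\omega_0\in\dot H^{-1+\delta}\cap\dot H^{\delta+m}$, we have $\omega\in L^{r_1}([0,\infty);\dot W^{-1+\delta+m,p_1})\cap L^{r_2}([0,\infty);\dot W^{\delta+m,p_2})$. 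Note also that $\omega_0\in\dot H^{-1+\delta+m}$ by interpolation between $\dot H^{-1+\delta}$ and $\dot H^{\delta+m}$.

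For the linear part I would write $T_\beta(t)\omega_0-\omega_0=(e^{t\Delta}-1)e^{t\beta L_1}\omega_0+(e^{t\beta L_1}-1)\omega_0$. Both operators are unitary or contractive Fourier multipliers on $\dot H^{-1+\delta+m}$, and each symbol tends to $1$ pointwise, so dominated convergence gives $\|T_\beta(t)\omega_0-\omega_0\|_{\dot H^{-1+\delta+m}}\to0$.

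For the Duhamel term I would bound, as in \eqref{eq:3.27}--\eqref{eq:3.30} but with $(-\Delta)^{\frac{\delta+m}2}$ in place of $(-\Delta)^{\frac\delta2}$,
\begin{align*}
\Big\|\int_0^tT_\beta(t-\tau)\operatorname{div}(u\omega)\,d\tau\Big\|_{\dot H^{-1+\delta+m}}
\lesssim\int_0^t(t-\tau)^{-\frac1q+\frac12}\big\|(-\Delta)^{\frac{\delta+m}2}(u(\tau)\omega(\tau))\big\|_{L^q}\,d\tau .
\end{align*}
I would then apply the fractional Leibniz rule (Proposition~\ref{prop:2.4}) exactly as in the proof of Proposition~\ref{prop:4.2}. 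This splits the integrand into $\|\omega\|_{\dot W^{-1+\delta+m,p_1}}\|\omega\|_{\dot W^{\delta,p_2}}+\|\omega\|_{\dot W^{-1+\delta,p_1}}\|\omega\|_{\dot W^{\delta+m,p_2}}$, where the Sobolev embedding and the boundedness of the Riesz transforms are used for $u$. Hölder in time, with $\frac1{r_3}=1-\frac1{r_1}-\frac1{r_2}$, then gives the bound
\begin{align*}
t^{1-\frac1{r_1}-\frac1{r_2}-\frac1q+\frac12}\Big(\|\omega\|_{L^{r_1}([0,t];\dot W^{-1+\delta+m,p_1})}\|\omega\|_{Y_2}+\|\omega\|_{L^{r_2}([0,t];\dot W^{\delta+m,p_2})}\|\omega\|_{Y_1}\Big).
\end{align*}
The exponent of $t$ is positive by the computation already carried out in \eqref{eq:3.31}--\eqref{eq:3.32}. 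Hence this term tends to $0$ as $t\downarrow0$, and it is finite for every $t$.

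The main obstacle is the finiteness of the global higher-order space-time norms, and this is exactly what Lemma~\ref{lem:4.1} provides. That is also why we need $\omega_0\in\dot H^{\delta+m}$ rather than only $\dot H^{-1+\delta+m}$: the $Y_2$-type norm at level $\delta+m$ is controlled by the Strichartz estimate applied to $\|\omega_0\|_{\dot H^{\delta+m}}$. There is a minor technical point about applying the Leibniz rule when some exponents are infinite or equal to $1$, but the exponents here are the same as in Proposition~\ref{prop:4.2}, so no new issue arises. Combining the linear and nonlinear estimates gives continuity at $t=0$, and together with Proposition~\ref{prop:4.2} this yields $\omega\in C([0,\infty);\dot H^{-1+\delta+m})$.
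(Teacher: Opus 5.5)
Your proposal is correct and follows essentially the same route as the paper's proof. You reduce to continuity at $t=0$ using Proposition~\ref{prop:4.2}, and you handle the linear term by strong continuity of $T_\beta(t)$ on $\dot H^{-1+\delta+m}$. You then bound the Duhamel term by $t^{1-\frac1{r_1}-\frac1{r_2}-\frac1q+\frac12}$ times the global higher-order space-time norms supplied by the second half of Lemma~\ref{lem:4.1}, via heat smoothing, the fractional Leibniz rule and H\"older in time, exactly as the paper does.
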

\proof
	We only need to prove the continuity of
		$\omega(t)$
	at
		$t=0.$
	We have
		\begin{align}
		&\quad\left\|\omega(t)-\omega_0\right\|_{\dot H^{-1+\delta+m}}\label{eq:4.85}\\
		&\leq\left\|T_\beta(t)\omega_0-\omega_0\right\|_{\dot H^{-1+\delta+m}}
		+\int_0^t\left\|T_\beta(t-\tau)\operatorname{div}(u(\tau)\omega(\tau))\right\|_{\dot H^{-1+\delta+m}}\, d\tau.\label{eq:4.86}
	\end{align}
	For the first term, since
		$\omega_0\in \dot H^{-1+\delta+m}, $ 
	it is easy to deduce
		\begin{align}
			&\quad\left\|T_\beta(t)\omega_0-\omega_0\right\|_{\dot H^{-1+\delta+m}}
			\to 0
			\quad\text{as $t\to0$.}\label{eq:4.88}
		\end{align}
	For the second term, we have
		\begin{align*}
			&\quad\int_0^t\left\|T_\beta(t-\tau)\operatorname{div}(u(\tau)\omega(\tau))\right\|_{\dot H^{-1+\delta+m}}\, d\tau
			\\
			&\lesssim\int_0^t\left\|e^{(t-\tau)\Delta}(-\Delta)^{\frac{\delta+m}{2}}(u(\tau)\omega(\tau))\right\|_{L^2}\, d\tau
			\\
			&\lesssim\int_0^t(t-\tau)^{-\frac1q+\frac12}\left(\left\|(-\Delta)^\frac{\delta+m}{2}u(\tau)\right\|_{L^{p_1}}\left\|\omega(\tau)\right\|_{\dot W^{\delta, p_2}}
			+\left\|u(\tau)\right\|_{\dot W^{\delta, p_1}}\left\|(-\Delta)^\frac{\delta+m}{2}\omega(\tau)\right\|_{L^{p_2}}\right)\, d\tau
			\\
			&\lesssim t^{1-\frac1{r_1}-\frac1{r_2}-\frac1q+\frac12}\left(\left\|\omega\right\|_{L^{r_1}([0, \infty); \dot W^{-1+\delta+m, p_1})}\left\|\omega\right\|_{Y_2}
					+\left\|\omega\right\|_{L^{r_2}([0, \infty); \dot W^{\delta+m, p_2})}\left\|\omega\right\|_{Y_1}\right)
					\\
					&\to 0 \quad\text{as $t\to0$}.
		\end{align*}
	This completes the proof.
\qed

\begin{proposition}\label{prop:4.4}
	For any
		$m\geq0$, 
	The solution
		$\omega$
	satisfies
		$\omega\in C^1((0, \infty); \dot H^{\delta+m})$
	and
		$\partial_t\omega+(u\cdot\nabla)\omega=\Delta\omega+\beta L_1\omega$
	for
		$t>0$
	in
		$\dot H^{\delta+m}.$
\end{proposition}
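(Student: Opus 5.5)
The plan is to differentiate the Duhamel formula \eqref{eq:1.9} on a time interval $[\varepsilon,\infty)$ after restarting the flow at time $\varepsilon/2$. Fix $\varepsilon>0$ and set $t_0=\varepsilon/2$. By Proposition~\ref{prop:4.2}, $\omega(t_0)\in\dot H^{-1+\delta+m}$ for every $m\ge0$. By Lemma~\ref{lem:4.1}, $\omega\in L^{r_1}([t_0,\infty);\dot W^{-1+\delta+m,p_1})\cap L^{r_2}([t_0,\infty);\dot W^{\delta+m,p_2})$ for every $m$. By the semigroup property, for $t\ge t_0$ we have
\[
\omega(t)=T_\beta(t-t_0)\omega(t_0)-\int_{t_0}^tT_\beta(t-\tau)F(\tau)\,d\tau,
\qquad F\coloneqq\operatorname{div}(u\omega).
\]
This identity follows from \eqref{eq:1.9} by writing $T_\beta(t)=T_\beta(t-t_0)T_\beta(t_0)$ and splitting the integral at $t_0$.

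First I will show that $F\in C([t_0,\infty);\dot H^{\delta+m})$ for every $m$. Since $u=\nabla^\perp(-\Delta)^{-1}\omega$, Proposition~\ref{prop:4.2} gives $u\in C((0,\infty);\dot H^{\delta+m'})$ for all $m'\ge0$, and also $\omega\in C((0,\infty);\dot H^{s})$ for all $s\ge-1+\delta$. In two dimensions, $\dot H^{s_1}\cap\dot H^{s_2}$ with $s_1<1<s_2$ is contained in $L^\infty$, and the Kato--Ponce estimate (Proposition~\ref{prop:2.4} with $p=2$ and $L^\infty$ factors) then gives
\[
\|u\omega\|_{\dot H^{s}}\lesssim \|u\|_{\dot H^s}\|\omega\|_{L^\infty}+\|u\|_{L^\infty}\|\omega\|_{\dot H^s}.
\]
The $L^\infty$ control of $u$ requires $u\in\dot H^{s_1}$ for some $s_1<1$; for this I use $u\in\dot H^{\delta}$, which holds because $\omega\in\dot H^{-1+\delta}$. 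Bilinearity then yields continuity in time of $u\omega$, and hence of $F$, in every $\dot H^{\delta+m}$ on compact subsets of $(0,\infty)$.

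Next comes the standard semigroup argument. The generator $A=\Delta+\beta L_1$ is a Fourier multiplier with symbol $-|\xi|^2+i\beta\xi_1/|\xi|^2$, and on $\dot H^{s}$ it maps $\dot H^{s+2}\cap\dot H^{s-1}$ boundedly, the $L_1$ part losing one derivative at low frequencies. So for $g=\omega(t_0)\in\dot H^{-1+\delta}\cap\dot H^{\delta+m+2}$, the map $t\mapsto T_\beta(t-t_0)g$ is $C^1$ into $\dot H^{\delta+m}$ with derivative $AT_\beta(t-t_0)g$; this is checked by dominated convergence on the Fourier side, as in the estimates \eqref{eq:3.47}. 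For the Duhamel term $D(t)=\int_{t_0}^tT_\beta(t-\tau)F(\tau)\,d\tau$, the plan is to write
\[
\frac{D(t+h)-D(t)}{h}=\frac1h\int_t^{t+h}T_\beta(t+h-\tau)F(\tau)\,d\tau+\frac{T_\beta(h)-1}{h}D(t).
\]
The first piece tends to $F(t)$ in $\dot H^{\delta+m}$ by continuity of $F$ and strong continuity of $T_\beta$. The second piece tends to $AD(t)$, provided $D(t)\in\dot H^{\delta+m+2}\cap\dot H^{\delta+m-1}$ with values continuous in $t$. This regularity holds because $D(t)=T_\beta(t-t_0)\omega(t_0)-\omega(t)$, and both terms lie in every $\dot H^{s}$ with $s\ge-1+\delta$, continuously in $t$ by Proposition~\ref{prop:4.2}; here I need $\delta+m-1\ge-1+\delta$, which is fine. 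Hence $\partial_t\omega=A\omega-F$. Since $F=(u\cdot\nabla)\omega$ by $\operatorname{div}u=0$, this is exactly the equation in $\dot H^{\delta+m}$, and continuity of $\partial_t\omega$ follows from the continuity of $A\omega$ and $F$.

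The main obstacle I expect is the low-frequency behaviour of $L_1$, which is homogeneous of degree $-1$. Bounding $\beta L_1\omega$ in $\dot H^{\delta+m}$ requires $\omega\in\dot H^{\delta+m-1}$, and this is available only because $m\ge0$ and $\omega$ is continuous into $\dot H^{-1+\delta}$. So I will carefully track that every homogeneous index used stays at or above $-1+\delta$, and handle $\Delta$ and $L_1$ separately as was done in \eqref{eq:3.36}--\eqref{eq:3.37}. A secondary point to check is that the $L^\infty$ bound on $u$ needs $\delta<1$, which holds since $\delta\le\frac15$.
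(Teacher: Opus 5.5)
Your proof is correct, and it handles the key term differently from the paper. Both arguments start from the same splitting of the forward difference quotient: the linear part, $\frac1h\int_t^{t+h}T_\beta(t+h-\tau)F(\tau)\,d\tau$, and $\frac{T_\beta(h)-1}{h}D(t)$. The paper keeps the last term in integral form, $\frac1h\int_0^t(T_\beta(h)-1)T_\beta(t-\tau)F(\tau)\,d\tau$. It splits this at $\tau=t/2$ and into the $e^{h\Delta}$ and $e^{h\beta L_1}$ parts, then passes to the limit by dominated convergence. Heat smoothing controls $(0,t/2)$, and the higher-order space--time bounds of Lemma~\ref{lem:4.1} control $(t/2,t)$. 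The case $h\uparrow0$ is redone in Appendix~\ref{sect:b}.

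You instead observe that $D(t)=T_\beta(t-t_0)\omega(t_0)-\omega(t)$ already lies in $\dot H^{\delta+m+2}\cap\dot H^{\delta+m-1}$, continuously in $t$, by Proposition~\ref{prop:4.2}. Hence $\frac{T_\beta(h)-1}{h}D(t)\to(\Delta+\beta L_1)D(t)$ is just the generator property applied to one fixed vector. This is the classical fact that a mild solution whose Duhamel term lies in the domain, with $A$ of it continuous, is a strong solution. Your route pushes all the analysis into Proposition~\ref{prop:4.2} and is much shorter. Your $L^\infty$/Kato--Ponce argument also justifies $\operatorname{div}(u\omega)\in C((0,\infty);\dot H^{\delta+m})$, which the paper only asserts. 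The paper's route, in exchange, is explicit and self-contained at the level of estimates.

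One step should be made explicit: your decomposition is valid only for $h>0$, so it yields the right derivative. You can either cite the elementary fact that a continuous Banach-valued function whose right derivative exists everywhere and is continuous is $C^1$, or treat the backward quotient directly. For the latter, write it as $\frac{T_\beta(h)-1}{h}D(t-h)+\frac1h\int_{t-h}^tT_\beta(t-\tau)F(\tau)\,d\tau$ and use $\bigl\|\frac{T_\beta(h)-1}{h}(D(t-h)-D(t))\bigr\|_{\dot H^{\delta+m}}\le\|D(t-h)-D(t)\|_{\dot H^{\delta+m+2}}+|\beta|\,\|D(t-h)-D(t)\|_{\dot H^{\delta+m-1}}\to0$. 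Either option replaces the whole of Appendix~\ref{sect:b}.
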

\proof
	Take
		$t>0$
	and
		$h>0.$
	We write
		\begin{align}
			&\frac{\omega(t+h)-\omega(t)}{h}
			=\frac{T_\beta(t+h)\omega_0-T_\beta(t)\omega_0}{h}\label{eq:4.94}\\
			&-\frac1h\left(\int_0^{t+h}T_\beta(t+h-\tau)\operatorname{div}(u(\tau)\omega(\tau))\, d\tau-\int_0^{t}T_\beta(t+h-\tau)\operatorname{div}(u(\tau)\omega(\tau))\, d\tau\right)\label{eq:4.95}\\
			&-\frac1h\left(\int_0^{t}T_\beta(t+h-\tau)\operatorname{div}(u(\tau)\omega(\tau))\, d\tau
			-\int_0^{t}T_\beta(t-\tau)\operatorname{div}(u(\tau)\omega(\tau))\, d\tau\right)\label{eq:4.96}
		\end{align}
	and calculate the limits of each line as 
		$h$
	tends to 0.
	
	For \eqref{eq:4.94}, we have
		\begin{align}
			&\quad\left\|\frac{T_\beta(t+h)\omega_0-T_\beta(t)\omega_0}{h}-\Delta T_\beta(t)\omega_0-\beta L_1 T_\beta(t)\omega_0\right\|_{\dot H^{\delta+m}}\label{eq:4.97}\\
			&\leq\left\|\frac{e^{h\Delta}e^{h\beta L_1}T_\beta(t)\omega_0-e^{h\beta L_1}T_\beta(t)\omega_0}{h}-e^{h\beta L_1}\Delta T_\beta(t)\omega_0\right\|_{\dot H^{\delta+m}}\label{eq:4.98}\\
			&+\left\|e^{h\beta L_1}\Delta T_\beta(t)\omega_0-\Delta T_\beta(t)\omega_0\right\|_{\dot H^{\delta+m}}\label{eq:4.99}\\
			&+\left\|\frac{e^{h\beta L_1}T_\beta(t)\omega_0-T_\beta(t)\omega_0}{h}-\beta L_1 T_\beta(t)\omega_0\right\|_{\dot H^{\delta+m}}.\label{eq:4.100}
		\end{align}
	For \eqref{eq:4.98}, by the mean value theorem we can deduce
		\begin{align}
			&\quad\left\|\frac{e^{h\Delta}e^{h\beta L_1}T_\beta(t)\omega_0-e^{h\beta L_1}T_\beta(t)\omega_0}{h}-e^{h\beta L_1}\Delta T_\beta(t)\omega_0\right\|_{\dot H^{\delta+m}}\label{eq:4.101}\\
			&\leq\left(\int_{\mathbb R^2}\left|\left|\xi\right|^2\int_0^1\left(1-e^{-h\theta|\xi|^2}\right)\, d\theta\right|^2|\xi|^{2\delta+2m}\left|\mathcal F\left(T_\beta(t)\omega_0\right)(\xi)\right|^2\, d\xi\right)^\frac12\label{eq:4.102}
		\end{align}
	and the limit of the right-hand side is 0 due to the dominated convergence theorem and that
		$T_\beta(t)\omega_0\in \dot H^{\delta+m+2}.$
	We see that \eqref{eq:4.99} also tends to 0 for the same reason. It can similarly be shown that the limit of \eqref{eq:4.100} is 0 by
		$T_\beta(t)\omega_0\in \dot H^{-1+\delta+m}.$
		
		We next treat \eqref{eq:4.95}. For this term, we have
			\begin{align}
				&\quad\left\|\frac1h\int_t^{t+h}T_\beta(t+h-\tau)\operatorname{div}(u(\tau)\omega(\tau))\, d\tau-\operatorname{div}(u(t)\omega(t))\right\|_{\dot H^{\delta+m}}\label{eq:4.103}\\
				&\leq\left\|\frac1h\int_t^{t+h}T_\beta(t+h-\tau)\left(\operatorname{div}(u(\tau)\omega(\tau))-\operatorname{div}(u(t)\omega(t))\right)\, d\tau\right\|_{\dot H^{\delta+m}}\label{eq:4.104}\\
				&+\left\|\frac1h\int_t^{t+h}\left(T_\beta(t+h-\tau)-1\right)\operatorname{div}(u(t)\omega(t))\, d\tau\right\|_{\dot H^{\delta+m}}\label{eq:4.105}\\
				&\leq \sup_{t<\tau<t+h}\left\|\operatorname{div}(u(\tau)\omega(\tau))-\operatorname{div}(u(t)\omega(t))\right\|_{\dot H^{\delta+m}}\label{eq:4.108}\\
				&+\sup_{t<\tau<t+h}\left\|\left(T_\beta(t+h-\tau)-1\right)\operatorname{div}(u(t)\omega(t))\right\|_{\dot H^{\delta+m}}.\label{eq:4.107}
			\end{align}
		Since 
			$\omega\in C((0, \infty); \dot H^{-1+\delta+m})$
		for any
			$m\geq 0, $
		we see that
			$\operatorname{div}(u\omega)\in C((0, \infty); \dot H^{\delta+m})$
		so the right-hand side of the inequality tends to 0 as 
			$h\to0$.
		
		For \eqref{eq:4.96}, we divide the estimate into several parts. More precisely, we have
			\begin{align}
				&\quad\frac1h\int_0^{t}\left(T_\beta(h)-1\right)T_\beta(t-\tau)\operatorname{div}(u(\tau)\omega(\tau))\, d\tau
				\\
				&=\frac1h\int_0^{\frac t2}\left(e^{h\Delta}-1\right)e^{h\beta L_1}T_\beta(t-\tau)\operatorname{div}(u(\tau)\omega(\tau))\, d\tau
				\label{eq:4.109}\\
				&+\frac1h\int_0^{\frac t2}\left(e^{h\beta L_1}-1\right)T_\beta(t-\tau)\operatorname{div}(u(\tau)\omega(\tau))\, d\tau
				\label{eq:4.110}\\
				&+\frac1h\int_\frac t2^{t}\left(e^{h\Delta}-1\right)e^{h\beta L_1}T_\beta(t-\tau)\operatorname{div}(u(\tau)\omega(\tau))\, d\tau
				\label{eq:4.111}\\
				&+\frac1h\int_\frac t2^{t}\left(e^{h\beta L_1}-1\right)T_\beta(t-\tau)\operatorname{div}(u(\tau)\omega(\tau))\, d\tau
				\label{eq:4.112}
			\end{align}
		and calculate the limits of each term.
		
		For \eqref{eq:4.109}, we have
			\begin{align*}
				&\quad\left\|\frac1h\int_0^{\frac t2}\left(e^{h\Delta}-1\right)e^{h\beta L_1}T_\beta(t-\tau)\operatorname{div}(u(\tau)\omega(\tau))\, d\tau
				-\int_0^{\frac t2}\Delta T_\beta(t-\tau)\operatorname{div}(u(\tau)\omega(\tau))\, d\tau
				\right\|_{\dot H^{\delta +m}}
				\\
				&\leq \left\|\frac1h\int_0^{\frac t2}\left(e^{h\Delta}-1\right)e^{h\beta L_1}T_\beta(t-\tau)\operatorname{div}(u(\tau)\omega(\tau))\, d\tau-\frac1h\int_0^{\frac t2}\left(e^{h\Delta}-1\right)T_\beta(t-\tau)\operatorname{div}(u(\tau)\omega(\tau))\, d\tau\right\|_{\dot H^{\delta+m}}
				\\
				&+\left\|\frac1h\int_0^{\frac t2}\left(e^{h\Delta}-1\right)T_\beta(t-\tau)\operatorname{div}(u(\tau)\omega(\tau))\, d\tau
				-\int_0^{\frac t2}\Delta T_\beta(t-\tau)\operatorname{div}(u(\tau)\omega(\tau))\, d\tau\right\|_{\dot H^{\delta+m}}.
			\end{align*}
		The estimate of the first term can be treated as
			\begin{align}
				&\left\|\frac1h\int_0^{\frac t2}\left(e^{h\Delta}-1\right)\left(e^{h\beta L_1}-1\right)T_\beta(t-\tau)\operatorname{div}(u(\tau)\omega(\tau))\, d\tau\right\|_{\dot H^{\delta+m}}
				\label{eq:4.116}\\
				&\lesssim h\int_0^{\frac t2}\left\|\Delta\beta L_1e^{(t-\tau)\Delta}(-\Delta)^\frac{1+\delta+m}{2}(u(\tau)\omega(\tau))\right\|_{L^2}\, d\tau
				\label{eq:4.117}\\
				&\lesssim h|\beta|\int_0^{\frac t2}(t-\tau)^{-\frac{2+m}{2}-\frac 1q+\frac12}\left\|(-\Delta)^\frac{\delta}{2}(u(\tau)\omega(\tau))\right\|_{L^q}\, d\tau
				\label{eq:4.118}\\
				&\lesssim h|\beta|t^{1-\frac1{r_1}-\frac1{r_2}-\frac{2+m}{2}-\frac 1q+\frac12}\left\|\omega\right\|_{Y_1}\left\|\omega\right\|_{Y_2}, 
				\label{eq:4.119}
			\end{align}
		and the bound goes to 0 as
			$h\to0.$
		For the second term, we have
			\begin{align}
				&\quad\left\|\int_0^{\frac t2}\left(\frac{e^{h\Delta}-1}{h}-\Delta\right)T_\beta(t-\tau)\operatorname{div}(u(\tau)\omega(\tau))\, d\tau\right\|_{\dot H^{\delta+m}}
				\label{eq:4.120}\\
				&\lesssim\int_0^{\frac t2}\left\|\left(\frac{e^{h\Delta}-1}{h}-\Delta\right)e^{(t-\tau)\Delta}(-\Delta)^{\frac{1+\delta+m}{2}}(u(\tau)\omega(\tau))\right\|_{L^2}\, d\tau.
				\label{eq:4.121}
			\end{align}
		Here, the Plancherel theorem yields
			\begin{align}
				&\quad\left\|\left(\frac{e^{h\Delta}-1}{h}-\Delta\right)e^{(t-\tau)\Delta}(-\Delta)^{\frac{1+\delta+m}{2}}(u(\tau)\omega(\tau))\right\|_{L^2}\label{eq:4.122}\\
				&=\frac1{2\pi}\left(\int_{\mathbb R^2}\left||\xi|^2\int_0^1\left(e^{-h\theta |\xi|^2}-1\right)\, d\theta\right|^2\left|\mathcal F\left(e^{(t-\tau)\Delta}(-\Delta)^{\frac{1+\delta+m}{2}}(u(\tau)\omega(\tau))\right)(\xi)\right|^2\, d\xi\right)^\frac12\label{eq:4.123}
			\end{align}
		and by the dominated convergence theorem, this tends to 0 for each
			$\tau\in(0, \frac t2), $
		as
			$h\to0.$
		We also have
			\begin{align}
				&\quad\left\|\left(\frac{e^{h\Delta}-1}{h}-\Delta\right)e^{(t-\tau)\Delta}(-\Delta)^{\frac{1+\delta+m}{2}}(u(\tau)\omega(\tau))\right\|_{L^2}\label{eq:4.124}\\
				&\lesssim\left\|e^{(t-\tau)\Delta}(-\Delta)^{\frac{3+\delta+m}{2}}(u(\tau)\omega(\tau))\right\|_{L^2}\label{eq:4.125}\\
				&\lesssim(t-\tau)^{-\frac{3+m}{2}-\frac1q+\frac12}\left\|(-\Delta)^{\frac{\delta}{2}}(u(\tau)\omega(\tau))\right\|_{L^q}\label{eq:4.126}
			\end{align}
		and this is integrable on
			$(0, \frac t2).$
		Then again by the dominated convergence theorem, we have
			\begin{align}
				\int_0^{\frac t2}\left\|\left(\frac{e^{h\Delta}-1}{h}-\Delta\right)e^{(t-\tau)\Delta}(-\Delta)^{\frac{1+\delta+m}{2}}(u(\tau)\omega(\tau))\right\|_{L^2}\, d\tau
				\to 0\quad\text{as $h\to0$, }
			\end{align}
		and these are sufficient for the estimate of \eqref{eq:4.109}.
		
		For \eqref{eq:4.110}, similarly we have
			\begin{align*}
				&\left\|\frac1h\int_0^{\frac t2}\left(e^{h\beta L_1}-1\right)T_\beta(t-\tau)\operatorname{div}(u(\tau)\omega(\tau))\, d\tau
				-\int_0^{\frac t2}\beta L_1T_\beta(t-\tau)\operatorname{div}(u(\tau)\omega(\tau))\, d\tau\right\|_{\dot H^{\delta+m}}
				\\
				&\lesssim\int_0^{\frac t2}\left\|\left(\frac{e^{h\beta L_1}-1}{h}-\beta L_1\right)e^{(t-\tau)\Delta}(-\Delta)^\frac{1+\delta+m}{2}(u(\tau)\omega(\tau))\right\|_{L^{2}}\, d\tau.
			\end{align*}
		For each
			$\tau\in (0,\frac t2), $
		by the Plancherel theorem we deduce
			\begin{align}
				&\left\|\left(\frac{e^{h\beta L_1}-1}{h}-\beta L_1\right)e^{(t-\tau)\Delta}(-\Delta)^\frac{1+\delta+m}{2}(u(\tau)\omega(\tau))\right\|_{L^{2}}\label{eq:4.130}\\
				&=\frac1{2\pi}\left(\int_{\mathbb R^2}\left|\beta\frac{i\xi_1}{|\xi|^2}\int_0^1\left(e^{h\theta \beta\frac{i\xi_1}{|\xi|^2}}-1\right)\, d\theta\right|^2\left|\mathcal F\left(e^{(t-\tau)\Delta}(-\Delta)^{\frac{1+\delta+m}{2}}(u(\tau)\omega(\tau))\right)(\xi)\right|^2\, d\xi\right)^\frac12\label{eq:4.131}
			\end{align}
		and this tends to as
			$h\to0.$
		Also we have
			\begin{align}
				&\left\|\left(\frac{e^{h\beta L_1}-1}{h}-\beta L_1\right)e^{(t-\tau)\Delta}(-\Delta)^\frac{1+\delta+m}{2}(u(\tau)\omega(\tau))\right\|_{L^{2}}\label{eq:4.132}\\
				&\lesssim|\beta|\left\|e^{(t-\tau)\Delta}(-\Delta)^\frac{\delta+m}{2}(u(\tau)\omega(\tau))\right\|_{L^{2}}\label{eq:4.133}\\
				&\lesssim|\beta|(t-\tau)^{-\frac{m}{2}-\frac1q+\frac12}\left\|(-\Delta)^\frac{\delta}{2}(u(\tau)\omega(\tau))\right\|_{L^{q}}\label{eq:4.134} 
			\end{align}
		and this is integrable on
			$(0, \frac t2).$
		As a result we see that the limit of \eqref{eq:4.110} as 
			$h\to0$ 
		is 0.
		
		Similarly, we can deal with the estimates of \eqref{eq:4.111} and \eqref{eq:4.112}. For \eqref{eq:4.111}, we have
			\begin{align}
				&\quad\left\|\frac1h\int_\frac t2^{t}\left(e^{h\Delta}-1\right)e^{h\beta L_1}T_\beta(t-\tau)\operatorname{div}(u(\tau)\omega(\tau))\, d\tau
				-\int_\frac t2^{t}\Delta T_\beta(t-\tau)\operatorname{div}(u(\tau)\omega(\tau))\, d\tau\right\|_{\dot H^{\delta+m}}\notag\\
				&\leq\left\|\frac1h\int_\frac t2^{t}\left(e^{h\Delta}-1\right)e^{h\beta L_1}T_\beta(t-\tau)\operatorname{div}(u(\tau)\omega(\tau))\, d\tau
				-\frac1h\int_\frac t2^{t}\left(e^{h\Delta}-1\right) T_\beta(t-\tau)\operatorname{div}(u(\tau)\omega(\tau))\, d\tau\right\|_{\dot H^{\delta+m}}
				\label{eq:4.136}\\
				&+\left\|\frac1h\int_\frac t2^{t}\left(e^{h\Delta}-1\right) T_\beta(t-\tau)\operatorname{div}(u(\tau)\omega(\tau))\, d\tau
				-\int_\frac t2^{t}\Delta T_\beta(t-\tau)\operatorname{div}(u(\tau)\omega(\tau))\, d\tau\right\|_{\dot H^{\delta+m}}.
				\label{eq:4.137}
			\end{align}
		For \eqref{eq:4.136}, we have
			\begin{align}
				&\quad\left\|\frac1h\int_\frac t2^{t}\left(e^{h\Delta}-1\right)\left(e^{h\beta L_1}-1\right)T_\beta(t-\tau)\operatorname{div}(u(\tau)\omega(\tau))\, d\tau\right\|_{\dot H^{\delta+m}}
				\label{eq:4.138}\\
				&\lesssim\frac1h\int_\frac t2^{t}\left\|\left(e^{h\Delta}-1\right)\left(e^{h\beta L_1}-1\right)e^{(t-\tau)\Delta}(-\Delta)^\frac{1+\delta+m}{2}(u(\tau)\omega(\tau))\right\|_{L^2}\, d\tau
				\label{eq:4.139}\\
				&\lesssim h\int_\frac t2^{t}\left\|\Delta\beta L_1 e^{(t-\tau)\Delta}(-\Delta)^\frac{1+\delta+m}{2}(u(\tau)\omega(\tau))\right\|_{L^2}\, d\tau
				\label{eq:4.140}\\
				&\lesssim  h|\beta|\int_\frac t2^{t}(t-\tau)^{-\frac1q+\frac12}\left\|(-\Delta)^\frac{2+\delta+m}{2}(u(\tau)\omega(\tau))\right\|_{L^q}\, d\tau
				\label{eq:4.141}\\
				&\lesssim h|\beta|t^{1-\frac1{r_1}-\frac1{r_2}-\frac1q+\frac12}\left(\left\|\omega\right\|_{L^{r_1}([\frac t2, \infty); \dot W^{-1+\delta+m+2, p_1})}\left\|\omega\right\|_{Y_2}
					+\left\|\omega\right\|_{L^{r_2}([\frac t2, \infty); \dot W^{\delta+m+2, p_2})}\left\|\omega\right\|_{Y_1}\right)\notag
			\end{align}
		and the right-hand side goes to 0 as 
			$h\to0.$
		
		For \eqref{eq:4.137}, we have
			\begin{align}
				&\quad\left\|\int_\frac t2^{t}\left(\frac{e^{h\Delta}-1}{h}-\Delta\right) T_\beta(t-\tau)\operatorname{div}(u(\tau)\omega(\tau))\, d\tau\right\|_{\dot H^{\delta+m}}\label{eq:4.143}\\
				&\lesssim \int_\frac t2^{t}\left\|\left(\frac{e^{h\Delta}-1}{h}-\Delta\right) e^{(t-\tau)\Delta}(-\Delta)^\frac{1+\delta+m}{2}(u(\tau)\omega(\tau))\right\|_{L^2}\, d\tau.
				\label{eq:4.144}
			\end{align}
		Here, using the equality \eqref{eq:4.123} is sufficient to show that the integrand tends to 0 as 
			$h\to0$
		for each 
			$\tau\in(\frac t2, t)$.
		We also have
			\begin{align}
				&\quad\left\|\left(\frac{e^{h\Delta}-1}{h}-\Delta\right) e^{(t-\tau)\Delta}(-\Delta)^\frac{1+\delta+m}{2}(u(\tau)\omega(\tau))\right\|_{L^2}\label{eq:4.145}\\
				&\lesssim(t-\tau)^{-\frac1q+\frac12}\left\| (-\Delta)^\frac{3+\delta+m}{2}(u(\tau)\omega(\tau))\right\|_{L^q}\label{eq:4.146}
			\end{align}
		and this is integrable on
			$(\frac t2, t)$.
		Therefore \eqref{eq:4.144} tends to 0 as
			$h\to0.$
		
		Finally for \eqref{eq:4.112}, we have
			\begin{align}
				&\quad\left\|\frac1h\int_\frac t2^{t}\left(e^{h\beta L_1}-1\right)T_\beta(t-\tau)\operatorname{div}(u(\tau)\omega(\tau))\, d\tau
				-\int_\frac t2^{t}\beta L_1T_\beta(t-\tau)\operatorname{div}(u(\tau)\omega(\tau))\, d\tau\right\|_{\dot H^{\delta+m}}
				\notag\\
				&\lesssim\int_\frac t2^{t}\left\|\left(\frac{e^{h\beta L_1}-1}{h}-\beta L_1\right)e^{(t-\tau)\Delta}(-\Delta)^\frac{1+\delta+m}{2}(u(\tau)\omega(\tau))\right\|_{L^2}\, d\tau.\label{eq:4.148}
			\end{align}
		Now \eqref{eq:4.131} ensures that the integrand tends to 0 as 
			$h\to 0$
		for all
			$\tau\in(\frac t2, t)$.
		We have
			\begin{align}
				&\quad\left\|\left(\frac{e^{h\beta L_1}-1}{h}-\beta L_1\right)e^{(t-\tau)\Delta}(-\Delta)^\frac{1+\delta+m}{2}(u(\tau)\omega(\tau))\right\|_{L^2}\\
				&\lesssim |\beta|(t-\tau)^{-\frac1q+\frac12}\left\|(-\Delta)^\frac{\delta+m}{2}(u(\tau)\omega(\tau))\right\|_{L^q}
			\end{align}
		and the bound is integrable on
			$(\frac t2, t).$
		Consequently, we see that \eqref{eq:4.148} tends to 0 as
			$h\to0.$
		
		Combining the above estimates, we see that
			\begin{align}
				\lim_{h\downarrow0}\frac{\omega(t+h)-\omega(t)}{h}
				&=-\operatorname{div}(u(t)\omega(t))+\Delta \omega(t)+\beta L_1\omega(t)\label{eq:4.151}
			\end{align}
		for
			$t>0$
		in
			$\dot H^{\delta+m}.$
		Repeating almost the same argument, we can deduce that this is true when 
			$h$
		tends to 0 from below (see Appendix~{\ref{sect:b}}).
		As a result we have
			$\partial_t\omega=-\operatorname{div}(u\omega)+\Delta \omega+\beta L_1\omega$
		for 
			$t>0.$
		Since the right-hand side is included in
			$C((0, \infty); \dot H^{\delta+m})$, 
		this concludes the proof.
\qed

\proof[Proof of Theorem~\ref{thm:1.2}]
	Combining the results of Proposition~\ref{prop:4.2}, Corollary~\ref{cor:4.3}, and Proposition~\ref{prop:4.4} leads to the conclusion.
\qed

\section{Proof of Theorem~\ref{thm:1.3}}\label{sect:5}
In this section, we assume that 	
	$0\leq \delta\leq\frac15$
and
	$\mathcal A
	=(\delta, p, r_1, p, r_2)$
satisfy the conditions in Theorem~\ref{thm:1.1} and Theorem~\ref{thm:1.2}, where
	$\frac1p\coloneqq \frac13+\frac\delta6\in[\frac13, \frac{11}{30}].$
Let 
	$\omega$
denote the solution obtained in Theorem~\ref{thm:1.1}.

\subsection{Temporal decays of $\dot W^{s, p}$-norms}
We start our argument by a standard energy estimate of 
	$\omega$.

\begin{lemma}\label{lem:5.1}
	For any
		$0<s<t, $
	The equality
		\begin{align}
			\left\|\omega(t)\right\|_{L^2}^2
			+2\int_s^t\left\|\nabla\omega(\tau)\right\|_{L^2}^2\, d\tau
			=\left\|\omega(s)\right\|_{L^2}^2\label{eq:5.1}
		\end{align}
	holds. In particular, if
		$\omega_0\in \dot H^{-1+\delta}\cap \dot H^1$, 
	we can take
		$s=0.$
\end{lemma}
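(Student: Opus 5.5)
We will obtain \eqref{eq:5.1} by testing the equation of Proposition~\ref{prop:4.4} against $\omega$ itself, using the regularity from Theorem~\ref{thm:1.2} to justify every step. Fix $0<s<t$. By Proposition~\ref{prop:4.2} and Proposition~\ref{prop:4.4}, for every $m\geq0$ we have $\omega\in C([s,t];\dot H^{-1+\delta+m})\cap C^1([s,t];\dot H^{\delta+m})$. Interpolating between $\dot H^{-1+\delta}$ and $\dot H^{\delta+m}$ for large $m$ gives $\omega\in C^1([s,t];L^2)\cap C([s,t];\dot H^1)$, which is what we need. In particular $\tau\mapsto\|\omega(\tau)\|_{L^2}^2$ is $C^1$ on $[s,t]$, with
\begin{align*}
	\frac{d}{d\tau}\left\|\omega(\tau)\right\|_{L^2}^2
	=2\left(\partial_\tau\omega,\omega\right)_{L^2}
	=2\left(\Delta\omega+\beta L_1\omega-(u\cdot\nabla)\omega,\omega\right)_{L^2}.
\end{align*}

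We will then evaluate the three terms separately.
\begin{itemize}
\item[(i)] The dissipative term gives $(\Delta\omega,\omega)_{L^2}=-\|\nabla\omega\|_{L^2}^2$ by Plancherel, since $\omega(\tau)\in\dot H^1\cap \dot H^{-1}$-type spaces.
\item[(ii)] The dispersive term vanishes. The symbol of $L_1$ is $i\xi_1/|\xi|^2$, which is purely imaginary and odd, so $(L_1\omega,\omega)_{L^2}=\int i\xi_1|\xi|^{-2}|\widehat\omega|^2\,d\xi=0$ for real $\omega$. Absolute integrability holds because $\omega\in\dot H^{-1+\delta}\cap\dot H^{\delta}$ controls $|\xi|^{-1}|\widehat\omega|^2$ near the origin, as $\delta<1$.
\item[(iii)] The transport term vanishes by $\operatorname{div}u=0$: $((u\cdot\nabla)\omega,\omega)_{L^2}=\frac12\int u\cdot\nabla(\omega^2)\,dx=0$.
\end{itemize}

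Integrating the resulting identity $\frac{d}{d\tau}\|\omega\|_{L^2}^2=-2\|\nabla\omega\|_{L^2}^2$ from $s$ to $t$ yields \eqref{eq:5.1}.

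The main obstacle is making step (iii) rigorous. This requires $u\omega^2$ to be integrable, or a cutoff argument. We plan to write $(u\cdot\nabla\omega,\omega)=(\operatorname{div}(u\omega),\omega)$. We first approximate $\omega(\tau)$ by Schwartz functions $\omega_n\to\omega(\tau)$ in $\dot H^{-1+\delta}\cap\dot H^{1}$, for which the identity is exact. We then pass to the limit using $u_n\to u$ in $\dot W^{\delta,2}\cap\dot H^1\hookrightarrow L^{2/(1-\delta)}\cap\dot H^1$ and $\omega_n\to\omega$ in $L^2\cap\dot H^1\subset L^4$. For the limit, we use Hölder to bound the trilinear form by $\|u\|_{L^4}\|\nabla\omega\|_{L^2}\|\omega\|_{L^4}$, and note $u\in L^4$ because $u\in\dot H^{\delta}\cap\dot H^{1}$ with $\delta<\frac12$.

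For the case $s=0$ under $\omega_0\in\dot H^{-1+\delta}\cap\dot H^1$, we proceed as follows. Corollary~\ref{cor:4.3}, applied with $m=1-\delta$ (permitted since $\dot H^1=\dot H^{\delta+m}$ with $m=1-\delta\geq0$), gives $\omega\in C([0,\infty);L^2)$, because $-1+\delta+m=0$. We let $s\downarrow0$ in \eqref{eq:5.1}: the left-hand integral increases monotonically, and $\|\omega(s)\|_{L^2}\to\|\omega_0\|_{L^2}$, so monotone convergence gives the identity with $s=0$.
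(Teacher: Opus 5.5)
Your strategy is the paper's: differentiate $\|\omega(\tau)\|_{L^2}^2$ using Proposition~\ref{prop:4.4}, drop the $L_1$ term by skew-symmetry and the transport term by $\operatorname{div}u=0$, integrate over $(s,t)$, and reach $s=0$ through Corollary~\ref{cor:4.3} with $m=1-\delta$. Your Plancherel, density and monotone-convergence details are more careful than the paper's.

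\textbf{The gap.} The regularity claim at the start is false for $\delta>0$: interpolation does not give $\omega\in C^1([s,t];L^2)$. Proposition~\ref{prop:4.4} gives time-differentiability only in $\dot H^{\delta+m}$, $m\geq0$. Mere continuity in $\dot H^{-1+\delta}$ cannot be interpolated into $C^1$ information below $\dot H^{\delta}$, since that would need $C^1$ at both endpoints. Concretely, $\partial_\tau\omega$ contains $\beta L_1\omega$, and $\|L_1\omega\|_{L^2}$ is controlled only by $\|\omega\|_{\dot H^{-1}}$. That norm is not available here; the paper bounds it only later, in Lemma~\ref{lem:5.7}, under extra assumptions on $\omega_0$. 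Already the linear part fails. Take $\widehat{\omega_0}$ radial, compactly supported and $\approx|\xi|^{-\delta/2}$ near $\xi=0$. Then $\omega_0\in\dot H^{-1+\delta}\cap\dot H^{\delta}$, but $L_1T_\beta(\tau)\omega_0\notin L^2$ for every $\tau>0$, because the factor $e^{-\tau|\xi|^2}$ does nothing at low frequencies. So you cannot use $\partial_\tau\omega(\tau)\in L^2$, nor compute the derivative as an $L^2$ inner product with $\partial_\tau\omega$ in $L^2$.

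\textbf{The repair.} The conclusion you need still holds by a duality argument.
\begin{itemize}
\item Proposition~\ref{prop:4.2} with $m=1-2\delta\geq0$ gives $\omega\in C([s,t];\dot H^{-\delta})$, and Proposition~\ref{prop:4.4} gives $\omega\in C^1([s,t];\dot H^{\delta})$.
\item Write $\|\omega\|_{L^2}^2=\int|\xi|^{\delta}\widehat\omega\,\overline{|\xi|^{-\delta}\widehat\omega}\,d\xi$, i.e.\ as the $\dot H^{\delta}$--$\dot H^{-\delta}$ pairing. The product rule then gives $\frac{d}{d\tau}\|\omega\|_{L^2}^2=2\langle\partial_\tau\omega,\omega\rangle_{\dot H^{\delta}\times\dot H^{-\delta}}$, continuous in $\tau$.
\item Each of $\Delta\omega$, $\beta L_1\omega$ and $\operatorname{div}(u\omega)$ lies in $\dot H^{\delta}$. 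So your split into (i)--(iii) is legitimate, and each pairing is the absolutely convergent Plancherel integral you evaluate. For (iii) this pairing coincides with $\int(u\cdot\nabla\omega)\omega\,dx$, since $u\cdot\nabla\omega\in L^2$; your density argument then shows it vanishes.
\end{itemize}

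A minor slip in (ii): bounding $|\xi|^{-1}|\widehat\omega|^2$ near the origin by $|\xi|^{-2+2\delta}|\widehat\omega|^2$ requires $\delta\leq\frac12$, not merely $\delta<1$. This is harmless here since $\delta\leq\frac15$.
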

\proof
	Since
		$\omega\in C((0, \infty); \dot H^{-1+\delta+m})\cap C^1((0, \infty); \dot H^{\delta+m})$
	for any
		$m\geq 0, $
	we have
		\begin{align}
			&\quad\frac{d}{d\tau}\left\|\omega(\tau)\right\|_{L^2}^2
			=2\left\langle \partial_{t}\omega(\tau), \omega(\tau)\right\rangle_{L^2}\label{eq:5.2}\\
			&=2\left\langle \Delta\omega(\tau), \omega(\tau)\right\rangle_{L^2}
			+2\left\langle \beta L_1\omega(\tau), \omega(\tau)\right\rangle_{L^2}
			-2\left\langle (u(\tau)\cdot\nabla)\omega(\tau), \omega(\tau)\right\rangle_{L^2}\label{eq:5.3}
		\end{align}
	for $\tau>0$. The second and third term vanish due to the skew-symmetry of the operator
		$L_1$
	and the divergence free condition of
		$u(\tau), $
	respectively.
	Integration by parts gives
		$\left\langle \Delta\omega(\tau), \omega(\tau)\right\rangle_{L^2}
		=-\left\|\nabla\omega(\tau)\right\|_{L^2}^2$.
	Thus we obtain
		\begin{align}
			\frac{d}{d\tau}\left\|\omega(\tau)\right\|_{L^2}^2+2\left\|\nabla\omega(\tau)\right\|_{L^2}^2=0.\label{eq:5.4}
		\end{align}
	By integrating this formula over $(s, t)$, we have \eqref{eq:5.1}. If 
		$\omega_0\in \dot H^{-1+\delta}\cap \dot H^1$, 
	by Corollary~\ref{cor:4.3}, 
		$\omega\in C([0, \infty); L^2)$
	and therefore, we can set
		$s=0.$
\qed

For 
	$s\geq0, a\in[2, \infty]$
and
	$t>0, $
we define
	\begin{align}
		M_{s, a}(t)\coloneqq t^{-\frac s2-1+\frac1a}\min\left\{1, \left|\beta\right|^{-1+\frac 2a}t^{-\frac32(1-\frac2a)}\right\},\label{eq:5.5}
	\end{align}
and
	\begin{align}
		\Omega_{s, a}(t)
		\coloneqq \sup_{0<\tau\leq t}M_{s, a}(\tau)^{-1}\left\|\omega(\tau)\right\|_{\dot W^{s, a}}.\label{eq:5.6}
	\end{align}
We also abbreviate 
	$\Omega_{s, p}(t)$
and
	$M_{s, p}(t)$
as
	$\Omega_s(t)$
and
	$M_s(t), $
respectively.
We show the temporal decay estimates of
	$\left\|\omega(t)\right\|_{L^p}.$	
\begin{proposition}\label{prop:5.2}
	Set
		$\theta_0\coloneqq \frac{\frac2p-\frac1{r_1}}{2-\frac3p}\in(0, 1].$
	There are constants
		$C_1, C_2>0$
	such that for any
		$\omega_0\in \dot H^{-1+\delta}\cap \dot H^{2(1-\frac1p)}\cap L^1$
	satisfying
		$\left|\beta\right|^{-\frac\delta3}\left\|\omega_0\right\|_{\dot H^{-1+\delta}}
		\leq C_1, $
	we have
		\begin{align*}
			\Omega_0(t)\leq
			\begin{cases}
				C_2\left\|\omega_0\right\|_{L^1} & \text{if $\theta_0=1, $}\\
				C_2 \left(\left\|\omega_0\right\|_{L^1}
				+\left\|\omega_0\right\|_{L^1}^{\frac{\delta}{1+\delta}}\left(|\beta|^{-\frac{1+\delta}{3}}\left\|\omega_0\right\|_{\dot H^\delta}\right)^\frac{1}{1+\delta}\left(|\beta|^{-\frac\delta3}\left\|\omega_0\right\|_{\dot H^{-1+\delta}}\right)^{\frac1{1-\theta_0}}\right) & \text{if $\theta_0<1, $}
			\end{cases}
		\end{align*}
	for all
		$t>0.$
\end{proposition}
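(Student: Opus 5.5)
We run a bootstrap on the weighted quantity $\Omega_0(t)=\sup_{0<\tau\le t}M_{0,p}(\tau)^{-1}\|\omega(\tau)\|_{L^p}$, using the Duhamel formula \eqref{eq:1.9}. The linear part $T_\beta(t)\omega_0$ is controlled by Proposition~\ref{cor:2.3} with $s=0$, $a=p$, which gives $\|T_\beta(t)\omega_0\|_{L^p}\le C M_{0,p}(t)\|\omega_0\|_{L^1}$. The finiteness of $\Omega_0(t)$ for each fixed $t$ is needed before bootstrapping. It follows from Corollary~\ref{cor:4.3}, since $\omega_0\in\dot H^{2(1-\frac1p)}\subset\dot H^{\delta}$-type spaces give $\omega\in C([0,\infty);\dot H^{1-\frac2p})\hookrightarrow L^p$. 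Near $\tau=0$ the weight $M_{0,p}(\tau)^{-1}=\tau^{1-\frac1p}$ vanishes, so the supremum is finite.

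For the nonlinear part we write $\operatorname{div}(u\omega)$ and estimate $\|T_\beta(t-\tau)\operatorname{div}(u\omega)\|_{L^p}$ by Proposition~\ref{prop:2.2} (from $L^{p'}$, or more generally via Bernstein from $L^q$ with heat smoothing). We place $u$ in a Lebesgue space controlled by $\|\omega\|_{\dot W^{-1+\delta,p}}$ (Sobolev embedding $\dot W^{\delta,p}\hookrightarrow L^{\tilde p}$) and $\omega$ in $L^p$, measured by $\Omega_0$. This yields
\[
\|\omega(t)\|_{L^p}\le CM_0(t)\|\omega_0\|_{L^1}+C\,\Omega_0(t)\int_0^t k_\beta(t-\tau)\,M_0(\tau)\,\|\omega(\tau)\|_{\dot W^{-1+\delta,p}}\,d\tau,
\]
with a kernel $k_\beta(t)=t^{-\gamma}\min\{1,|\beta|^{-1+\frac2p}t^{-\frac32(1-\frac2p)}\}$. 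We then apply Hölder in time with the $Y_1=L^{r_1}\dot W^{-1+\delta,p}$ norm, bounded via Remark~\ref{rem:3.1} by $|\beta|^{\cdots}\cdot|\beta|^{-\delta/3}\|\omega_0\|_{\dot H^{-1+\delta}}$. The goal is $\int\le C\,(|\beta|^{-\delta/3}\|\omega_0\|_{\dot H^{-1+\delta}})\,M_0(t)$ after splitting $(0,t/2)$ and $(t/2,t)$. Scaling invariance (Remark~\ref{rem:1.3}) reduces us to $|\beta|=1$, which removes bookkeeping of $\beta$-powers.

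The main obstacle is that Hölder in time with exponent $r_1$ fails to close the $t$-power exactly, because the weight $M_0(\tau)$ decays too fast near infinity. This is where $\theta_0=\frac{2/p-1/r_1}{2-3/p}$ enters. When $\theta_0=1$ the exponents match and we get $\Omega_0(t)\le C\|\omega_0\|_{L^1}+CC_1\Omega_0(t)$, and absorbing with $C_1$ small closes the estimate.

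When $\theta_0<1$, we instead interpolate, bounding $\|\omega(\tau)\|_{L^p}$ in the integral by $\Omega_0^{\theta_0}$ times a power of a non-decaying quantity. For that quantity we use the $L^2$ energy bound from Lemma~\ref{lem:5.1}, $\|\omega(\tau)\|_{L^2}\le\|\omega(0)\|_{L^2}$, together with the $\dot H^\delta$, $\dot H^{-1+\delta}$ interpolation $\|\omega_0\|_{L^2}\lesssim\|\omega_0\|_{L^1}^{\delta/(1+\delta)}\|\omega_0\|_{\dot H^\delta}^{1/(1+\delta)}$ (via $\dot B^{-1}_{2,\infty}\supset L^1$). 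This gives an inequality of the form $\Omega_0\le C\|\omega_0\|_{L^1}+C\epsilon\,\Omega_0^{\theta_0}A^{1-\theta_0}+\ldots$, where $\epsilon=|\beta|^{-\delta/3}\|\omega_0\|_{\dot H^{-1+\delta}}$. Young's inequality $\epsilon\,\Omega_0^{\theta_0}A^{1-\theta_0}\le\frac12\Omega_0+C\epsilon^{1/(1-\theta_0)}A$ then produces exactly the stated second term. The delicate point is verifying that the time integrals converge at both endpoints under the exponent constraints of Theorem~\ref{thm:1.1}, for which we use \eqref{eq:1.11} and $\delta\le\frac15$.
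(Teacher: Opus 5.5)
Your outline has the same structure as the paper's proof. The common pieces are the Duhamel split at $t/2$ and the linear term via Proposition~\ref{cor:2.3}. On $(t/2,t)$ both use $\|u\omega\|_{L^{p'}}\lesssim\|\omega\|_{\dot W^{-1+\delta,p}}\|\omega\|_{L^p}$, H\"older against $Y_1$, and absorption. On $(0,t/2)$ both use an $L^2$--$L^p$ interpolation with exponent $\theta_0$, fed by Lemma~\ref{lem:5.1} and $\|\omega_0\|_{L^2}\lesssim\|\omega_0\|_{L^1}^{\frac{\delta}{1+\delta}}\|\omega_0\|_{\dot H^\delta}^{\frac1{1+\delta}}$. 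Finiteness of $\Omega_0$ comes from Corollary~\ref{cor:4.3}, and the argument closes with Young's inequality.

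However, the key interpolation step as you wrote it is false: $\|\omega(\tau)\|_{L^p}$ cannot be bounded by $\|\omega(\tau)\|_{L^2}^{1-\theta_0}\|\omega(\tau)\|_{L^p}^{\theta_0}$. Your parenthetical about $L^q$ and heat smoothing points the right way, but you have to change the H\"older split. The paper estimates $u\omega$ in $L^c$ with $\frac1c=\frac1p-\frac\delta2+\frac{\theta_0}{p}+\frac{1-\theta_0}{2}$. Then $\omega$ is measured in $L^r$ with $\frac1r=\frac{\theta_0}p+\frac{1-\theta_0}2$, where $\|\omega\|_{L^r}\le\|\omega\|_{L^2}^{1-\theta_0}\|\omega\|_{L^p}^{\theta_0}$ is legitimate. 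The heat factor $e^{\frac12(t-\tau)\Delta}$ maps $L^c$ to $L^{p'}$ at cost $(t-\tau)^{-(\frac1c-\frac1{p'})}$, so the kernel on $(0,t/2)$ is $\lesssim t^{\frac12-\frac1c}M_0(t)$. Next, $M_0(\tau)^{\theta_0}$ is replaced by its large-time branch $(|\beta|^{-1+\frac2p}\tau^{-\frac52+\frac4p})^{\theta_0}$. The value $\theta_0$ is exactly the one for which $\frac12-\frac1c+1-\frac1{r_1}-\theta_0(\frac52-\frac4p)=0$, and $\theta_0r_1'(\frac52-\frac4p)<1$ gives convergence at $\tau=0$.

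Your explanation of why $\theta_0$ is needed is also off, and the interpolation can in fact be avoided. In the paper's scheme it is forced only because the large-time branch of $M_0$ is used near $\tau=0$, where it is too singular; with $\theta_0$ replaced by $1$ the $\tau$-integral diverges at $0$. Decay at infinity is not the issue. If you keep the $\min$ in $M_0$ and use the plain $L^{p'}$ bound on $(0,t/2)$, the kernel is $\lesssim t^{\frac1p-\frac12}M_0(t)$, and
\[
\|M_0\|_{L^{r_1'}(0,\frac t2)}\lesssim\min\Bigl\{t^{\frac1p-\frac1{r_1}},\,|\beta|^{\frac23(\frac1{r_1}-\frac1p)}\Bigr\}.
\]
This is finite because $\frac1{r_1}<\frac1p$ and $\frac4p<\frac32$. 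Combine it with $\|\omega\|_{Y_1}\lesssim|\beta|^{\frac23\kappa}|\beta|^{-\frac\delta3}\|\omega_0\|_{\dot H^{-1+\delta}}$ from Remark~\ref{rem:3.1}, where $\kappa=\frac2p-\frac12-\frac1{r_1}>0$ by \eqref{eq:1.11}. Setting $\sigma=|\beta|^{\frac23}t$, the first half is bounded by $\min\{\sigma^{\kappa},\sigma^{\frac1p-\frac12}\}\le1$ times $M_0(t)\Omega_0(t)|\beta|^{-\frac\delta3}\|\omega_0\|_{\dot H^{-1+\delta}}$. It is therefore absorbed like the second half, giving $\Omega_0(t)\le C\|\omega_0\|_{L^1}$, which implies the stated bound without the energy identity. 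Note also that $\theta_0<1$ always here, since $\theta_0=1$ would force $\frac1{r_1}=\frac5p-2<0$; your $\theta_0=1$ case is vacuous.
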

\proof
	We first confirm that
		$\theta_0\in(0, 1].$
	We have
		$2-\frac3p=\frac12+3(\frac12-\frac1p)>0, $
	and thus the denominator of
		$\theta_0$
	is positive. By the assumption of Theorem~\ref{thm:1.1}, we have
		$\frac2p-\frac1{r_1}\geq \frac2p-(\frac12-\frac1p+\frac\delta2)=\frac12>0.$
	This ensures that
		$\theta_0>0.$
	We also have
		$(2-\frac3p)-(\frac2p-\frac1{r_1})
		\geq 2-\frac5p+\frac12-\frac1p+\frac\delta2-\frac32(1-\frac2p)
		=0,$
	and therefore	
		$\theta_0\leq1.$
	Now we define
		$c\in(1, p']$
	by
		$\frac1c\coloneqq \frac1p-\frac\delta2+\frac{\theta _0}p+\frac{1-\theta_0}2(\geq\frac1p-\frac\delta2+\frac1p=\frac1{p'}).$
	
	Since
		$\omega$
	is a solution of the integral equation \eqref{eq:1.9}, 
	we have
		\begin{align}
			\left\|\omega(t)\right\|_{L^p}\label{eq:5.9}
			&\leq \left\|T_\beta(t)\omega_0\right\|_{L^p}\\
			&+\int_0^\frac t2\left\|T_\beta(t-\tau)\operatorname{div}(u(\tau)\omega(\tau))\right\|_{L^p}\, d\tau
			+\int^t_\frac t2\left\|T_\beta(t-\tau)\operatorname{div}(u(\tau)\omega(\tau))\right\|_{L^p}\, d\tau\label{eq:5.10}
		\end{align}
	for
		$t>0$.
	We deduce the estimates of each term. For the first term, Proposition~\ref{cor:2.3} gives
		\begin{align}
			\left\|T_\beta(t)\omega_0\right\|_{L^p}
			&\lesssim t^{-1+\frac1p}\min\left\{1, \left|\beta\right|^{-1+\frac2p}t^{-\frac32(1-\frac2p)}\right\}\left\|\omega_0\right\|_{L^{1}}.\label{eq:5.11}
		\end{align}
		
	For the second term, we have
		\begin{align}
			&\quad \int_0^\frac t2\left\|T_\beta(t-\tau)\operatorname{div}(u(\tau)\omega(\tau))\right\|_{L^p}\, d\tau\label{eq:5.12}\\
			&\lesssim \int_0^\frac t2\left\|T_\beta(t-\tau)\operatorname{div}(u(\tau)\omega(\tau))\right\|_{\dot B^0_{p, 2}}\, d\tau\label{eq:5.13}\\
			&\lesssim \int_0^\frac t2(t-\tau)^{-\frac12-1+\frac2p}\min\left\{1, |\beta|^{-1+\frac2p}(t-\tau)^{-\frac32(1-\frac2p)}\right\}\left\|e^{\frac12(t-\tau)\Delta}(u(\tau)\omega(\tau))\right\|_{\dot B^0_{p', 2}}\, d\tau\notag\\
			&\lesssim \int_0^\frac t2(t-\tau)^{-\frac12-1+\frac2p-\frac1c+\frac1{p'}}\min\left\{1, |\beta|^{-1+\frac2p}(t-\tau)^{-\frac32(1-\frac2p)}\right\}\left\|u(\tau)\omega(\tau)\right\|_{L^c}\, d\tau\label{eq:5.15}\\
			&\lesssim M_{0}(t)t^{\frac12-\frac1c}\int_0^\frac t2\left\|\omega(\tau)\right\|_{\dot W^{-1+\delta, p}}\left\|\omega(\tau)\right\|_{L^2}^{1-\theta_0}\left\|\omega(\tau)\right\|_{L^p}^{\theta_0}\, d\tau\label{eq:5.16}\\
			&\leq M_{0}(t)t^{\frac12-\frac1c}\left\|\omega_0\right\|_{L^2}^{1-\theta_0}\Omega_0(t)^{\theta_0}\int_0^\frac t2\left\|\omega(\tau)\right\|_{\dot W^{-1+\delta, p}}M_{0}(\tau)^{\theta_0}\, d\tau\label{eq:5.17}\\
			&\leq M_{0}(t)t^{\frac12-\frac1c}\left\|\omega_0\right\|_{L^2}^{1-\theta_0}\Omega_0(t)^{\theta_0}\left\|\omega\right\|_{Y_1}\left(\int_0^\frac t2\left(|\beta|^{-1+\frac2p}\tau^{-1+\frac{1}{p}-\frac32(1-\frac2p)}\right)^{\theta_0 r_1'}\, d\tau\right)^\frac{1}{r_1'}.\label{eq:5.18}
		\end{align}
	The integral on the right-hand side is finite if and only if
		\begin{align}
			\theta_0 r_1'\left(1-\frac1p+\frac32\left(1-\frac2p\right)\right)<1.\label{eq:5.19}
		\end{align}
	The condition is equivalent to
		$-2(1-\frac2p)^2<\frac1{r_1}(\frac12-\frac1p)$, 
	and this is trivially true. We can also check that
		$\frac12-\frac1c+1-\frac1{r_1}-\theta_0(1-\frac1p+\frac32(1-\frac2p))=0.$
	Therefore, we obtain
		\begin{align}
			\int_0^\frac t2\left\|T_\beta(t-\tau)\operatorname{div}(u(\tau)\omega(\tau))\right\|_{L^p}\, d\tau\lesssim M_{0}(t)\left\|\omega_0\right\|_{L^2}^{1-\theta_0}\Omega_0(t)^{\theta_0}\left\|\omega\right\|_{Y_1}|\beta|^{-\theta_0(1-\frac2p)}.\label{eq:5.20}
		\end{align}
	The interpolation inequality and the Sobolev embedding yield 
		\begin{align}
			|\beta|^{-\frac13}\left\|\omega_0\right\|_{L^2}
			\leq |\beta|^{-\frac13}\left\|\omega_0\right\|_{L^1}^\frac{\delta}{1+\delta}\left\|\omega_0\right\|_{L^{\frac2{1-\delta}}}^\frac{1}{1+\delta}
			\lesssim \left\|\omega_0\right\|_{L^1}^\frac{\delta}{1+\delta}\left(|\beta|^{-\frac{1+\delta}{3}} \left\|\omega_0\right\|_{\dot H^\delta}\right)^\frac{1}{1+\delta}.\label{eq:5.21}
		\end{align}
	Since 
		$\omega$
	is the solution given by Theorem~\ref{thm:1.1}, this satisfies
		\begin{align}
			\left|\beta\right|^{-\frac13(1-\frac{2}{r_1}-\frac{2}{p}+\delta)}\left\|\omega\right\|_{Y_1}
			\lesssim |\beta|^{-\frac{\delta}{3}}\left\|\omega_0\right\|_{\dot H^{-1+\delta}}.\label{eq:5.22}
		\end{align}
	By the definition of 
		$\theta_0$, 
	we have
		$-\frac{1-\theta_0}{3}-\frac13(1-\frac{2}{r_1}-\frac{2}{p}+\delta)+\theta_0(1-\frac2p)=0.$
	Consequently, we obtain
		\begin{align}
			&\quad\int_0^\frac t2\left\|T_\beta(t-\tau)\operatorname{div}(u(\tau)\omega(\tau))\right\|_{L^p}\, d\tau\label{eq:5.23}\\
			&\lesssim M_{0}(t)\Omega_0(t)^{\theta_0}\left\|\omega_0\right\|_{L^1}^\frac{\delta(1-\theta_0)}{1+\delta}\left(|\beta|^{-\frac{1+\delta}{3}} \left\|\omega_0\right\|_{\dot H^{\delta}}\right)^\frac{1-\theta_0}{1+\delta}|\beta|^{-\frac{\delta}{3}}\left\|\omega_0\right\|_{\dot H^{-1+\delta}}.\label{eq:5.24}
		\end{align}
	
	For the third term in \eqref{eq:5.10}, we have
		\begin{align}
			&\quad\int^t_\frac t2\left\|T_\beta(t-\tau)\operatorname{div}(u(\tau)\omega(\tau))\right\|_{L^p}\, d\tau\label{eq:5.25}\\
			&\lesssim \int^t_\frac t2(t-\tau)^{-\frac12-1+\frac2p}\min\left\{1, |\beta|^{-1+\frac2p}(t-\tau)^{-\frac32(1-\frac2p)}\right\}\left\|u(\tau)\omega(\tau)\right\|_{L^{p'}}\, d\tau.\label{eq:5.26}
		\end{align}
	Since 
		$p$ 
	is defined by
		$\frac1p=\frac13+\frac\delta6, $
	we have
		$\frac1p+\frac1p-\frac\delta2=1-\frac1p.$
	Thus by H\"older's inequality and the Sobolev embedding, we obtain
		\begin{align}
			\left\|u(\tau)\omega(\tau)\right\|_{L^{p'}}
			\lesssim \left\|\omega(\tau)\right\|_{\dot W^{-1+\delta, p}}\left\|\omega(\tau)\right\|_{L^p}.\label{eq:5.27}
		\end{align}
	Inserting this estimate to \eqref{eq:5.26} and using H\"older's inequality, we have	
		\begin{align}
			&\quad\int^t_\frac t2\left\|T_\beta(t-\tau)\operatorname{div}(u(\tau)\omega(\tau))\right\|_{L^p}\, d\tau\label{eq:5.28}\\
			&\lesssim M_{0}(t)\Omega_0(t)\left\|\omega\right\|_{Y_1}
			\left(\int^t_\frac t2\left((t-\tau)^{-\frac12-\frac1p+\frac\delta2}\min\left\{1, |\beta|^{-1+\frac2p}(t-\tau)^{-\frac32(1-\frac2p)}\right\}\right)^{r_1'}\, d\tau\right)^\frac{1}{r_1'}.\notag\\
			&\lesssim M_{0}(t)\Omega_0(t)\left\|\omega\right\|_{Y_1}|\beta|^{-\frac23(\frac12-\frac1{r_1}-\frac1p+\frac\delta2)}\label{eq:5.30}\\
			&\lesssim M_{0}(t)\Omega_0(t)|\beta|^{-\frac{\delta}{3}}\left\|\omega_0\right\|_{\dot H^{-1+\delta}}.\label{eq:5.31}
		\end{align}
	Combining the above estimates, there is a constant
		$C_*>0$
	which satisfies
		\begin{align}
			\Omega_0(t)
			&\leq C_*\bigg(\left\|\omega_0\right\|_{L^1}
			+\Omega_0(t)|\beta|^{-\frac{\delta}{3}}\left\|\omega_0\right\|_{\dot H^{-1+\delta}}\label{eq:5.32}\\
			&+\Omega_0(t)^{\theta_0}\left\|\omega_0\right\|_{L^1}^\frac{\delta(1-\theta_0)}{1+\delta}\left(|\beta|^{-\frac{1+\delta}{3}} \left\|\omega_0\right\|_{\dot H^{\delta}}\right)^\frac{1-\theta_0}{1+\delta}|\beta|^{-\frac{\delta}{3}}\left\|\omega_0\right\|_{\dot H^{-1+\delta}}			\bigg).\label{eq:5.33}
		\end{align}
	By using Corollary~\ref{cor:4.3}, the assumption 
		$\omega_0\in \dot H^{2(1-\frac1p)}$
	ensures that
		$\omega\in C([0, \infty); \dot H^{1-\frac2p}).$
	By the embedding
		$\dot H^{1-\frac2p}\hookrightarrow L^p$, 
	we see that 
		$\Omega_0(t)$
	is finite and continuous for all
		$t>0.$
	Therefore, if it holds that
		$C_*|\beta|^{-\frac{\delta}{3}}\left\|\omega_0\right\|_{\dot H^{-1+\delta}}
		\leq\frac12, $
	we obtain
		\begin{align}
			\Omega_0(t)
			&\leq C_{**}\bigg(\left\|\omega_0\right\|_{L^1}
			+\Omega_0(t)^{\theta_0}\left\|\omega_0\right\|_{L^1}^\frac{\delta(1-\theta_0)}{1+\delta}\left(|\beta|^{-\frac{1+\delta}{3}} \left\|\omega_0\right\|_{\dot H^{\delta}}\right)^\frac{1-\theta_0}{1+\delta}|\beta|^{-\frac{\delta}{3}}\left\|\omega_0\right\|_{\dot H^{-1+\delta}}	\bigg)\label{eq:5.34}
		\end{align}
	for 
		$C_{**}=2C_*.$
	If
		$\theta_0=1, $
	by taking 
		$\omega_0$
	small enough to satisfy
		$C_{**}|\beta|^{-\frac\delta3}\left\|\omega_0\right\|_{\dot H^{-1+\delta}}
		\leq\frac12, $
	we obtain the desired estimate
		$\Omega_0(t)\leq 2C_{**}\left\|\omega_0\right\|_{L^1}.$
	Now we assume that
		$\theta_0<1, $
	and show the estimate
		\begin{align}
			\Omega_0(t)
			\leq C_{***} \left(\left\|\omega_0\right\|_{L^1}
				+\left\|\omega_0\right\|_{L^1}^{\frac{\delta}{1+\delta}}\left(|\beta|^{-\frac{1+\delta}{3}}\left\|\omega_0\right\|_{\dot H^\delta}\right)^\frac{1}{1+\delta}\left(|\beta|^{-\frac\delta3}\left\|\omega_0\right\|_{\dot H^{-1+\delta}}\right)^{\frac1{1-\theta_0}}\right), \quad\label{eq:5.34'}
		\end{align}
	where the constant
		$C_{***}>0$
	is defined by 
		$C_{***}=\max\left\{2C_{**}, (2C_{**})^{\frac1{1-\theta_0}}\right\}.$
	Assume that \eqref{eq:5.34'} does not hold for some 
		$t=t_0>0$. 
	Then we obtain
		\begin{align*}
			\Omega_0(t_0)
			&=\frac12\left(\Omega_0(t_0)+\Omega_0(t_0)\right)\\
			&> \frac12\left(C_{***}\left\|\omega_0\right\|_{L^1}
			+\Omega_0(t_0)^{\theta_0}
			\left(C_{***}\left\|\omega_0\right\|_{L^1}^{\frac{\delta}{1+\delta}}\left(|\beta|^{-\frac{1+\delta}{3}}\left\|\omega_0\right\|_{\dot H^\delta}\right)^\frac{1}{1+\delta}\left(|\beta|^{-\frac\delta3}\left\|\omega_0\right\|_{\dot H^{-1+\delta}}\right)^{\frac1{1-\theta_0}}\right)^{1-\theta_0}\right)\\
			&\geq C_{**}\bigg(\left\|\omega_0\right\|_{L^1}
			+\Omega_0(t_0)^{\theta_0}\left\|\omega_0\right\|_{L^1}^\frac{\delta(1-\theta_0)}{1+\delta}\left(|\beta|^{-\frac{1+\delta}{3}} \left\|\omega_0\right\|_{\dot H^{\delta}}\right)^\frac{1-\theta_0}{1+\delta}|\beta|^{-\frac{\delta}{3}}\left\|\omega_0\right\|_{\dot H^{-1+\delta}}	\bigg), 
		\end{align*}
	which contradicts \eqref{eq:5.34}. Thus we necessarily have \eqref{eq:5.34'}. This completes the proof.
\qed

\begin{proposition}\label{prop:5.3}
	Let
		$\omega_0$
	satisfy the assumption of Proposition~\ref{prop:5.2}.
Then there is a positive constant
			$C_1>0$
		such that for
			$\omega_0\in \dot H^{-1+\delta}\cap \dot H^{3-\frac2p}\cap L^1$
		satisfying
			$\left|\beta\right|^{-\frac{\delta}3}\left\|\omega_0\right\|_{\dot H^{-1+\delta}}
			\leq C_1, $
		there is a constant
			$C_2>0$
		dependent only on 
			$\mathcal A, $
			$\left\|\omega_0\right\|_{L^1}, $
			$|\beta|^{-\frac{\delta}3}\left\|\omega_0\right\|_{\dot H^{-1+\delta}}, $
		and
			$|\beta|^{-\frac{1+\delta}3}\left\|\omega_0\right\|_{\dot H^{\delta}}, $
		satisfying
			$\Omega_1(t)\leq C_2$
		for all
			$t>0.$
\end{proposition}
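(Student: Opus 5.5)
We aim to show $\left\|\omega(t)\right\|_{\dot W^{1,p}}\le C_2M_1(t)$ by following the scheme of Proposition~\ref{prop:5.2}, with one more derivative, and closing a bootstrap inequality for $\Omega_1(t)$. First we check that $\Omega_1(t)$ is finite and continuous in $t$. Since $\omega_0\in\dot H^{-1+\delta}\cap\dot H^{3-\frac2p}$, Corollary~\ref{cor:4.3} gives $\omega\in C([0,\infty);\dot H^{2-\frac2p})$. Combined with $\dot H^{2-\frac2p}\hookrightarrow\dot W^{1,p}$, this shows $\Omega_1$ is finite and continuous, so absorbing arguments are legitimate. We then write Duhamel's formula and split the time integral at $t/2$, as in \eqref{eq:5.9}--\eqref{eq:5.10}.

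\textbf{Linear term and early part.} The linear term is handled directly by Proposition~\ref{cor:2.3} with $s=1$, $a=p$, giving $\lesssim M_1(t)\left\|\omega_0\right\|_{L^1}$. For the early part $\int_0^{t/2}$, we put the extra derivative on the kernel. Exactly as in \eqref{eq:5.12}--\eqref{eq:5.18} (via Proposition~\ref{prop:2.2} and the heat smoothing on $[0,t/2]$), the kernel gains a factor $(t-\tau)^{-\frac12}\sim t^{-\frac12}$, so $M_0(t)$ becomes $M_1(t)$. The product is estimated as before by $\left\|\omega\right\|_{\dot W^{-1+\delta,p}}\left\|\omega\right\|_{L^2}^{1-\theta_0}\left\|\omega\right\|_{L^p}^{\theta_0}$. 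The factor $\left\|\omega\right\|_{L^p}\le\Omega_0(\tau)M_0(\tau)$ is now already bounded by Proposition~\ref{prop:5.2}, and $\left\|\omega\right\|_{L^2}\le\left\|\omega_0\right\|_{L^2}$ by Lemma~\ref{lem:5.1}. This term therefore contributes $\lesssim M_1(t)\,C(\left\|\omega_0\right\|_{L^1},|\beta|^{-\frac\delta3}\left\|\omega_0\right\|_{\dot H^{-1+\delta}},|\beta|^{-\frac{1+\delta}3}\left\|\omega_0\right\|_{\dot H^\delta})$, with no $\Omega_1$ appearing.

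\textbf{Late part.} For $\int_{t/2}^t$, we keep the divergence on the kernel (one factor $(t-\tau)^{-\frac12}$, as in \eqref{eq:5.26}) and put the $\dot W^{1,\cdot}$ derivative on the product $u\omega$ in $L^{p'}$. By the Leibniz rule (Proposition~\ref{prop:2.4}),
\begin{align*}
\left\|u\omega\right\|_{\dot W^{1,p'}}\lesssim\left\|\omega\right\|_{\dot W^{-1+\delta,p}}\left\|\omega\right\|_{\dot W^{1,p}}+\left\|\nabla u\right\|_{L^{x}}\left\|\omega\right\|_{L^p},\qquad \tfrac1x=1-\tfrac2p .
\end{align*}
For the first piece we use $\left\|\omega(\tau)\right\|_{\dot W^{1,p}}\le\Omega_1(t)M_1(\tau)$ and $M_1(\tau)\sim M_1(t)$ on $[t/2,t]$. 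Hölder in time against $\left\|\omega\right\|_{Y_1}$, exactly as in \eqref{eq:5.28}--\eqref{eq:5.31}, then gives $\lesssim M_1(t)\Omega_1(t)|\beta|^{-\frac\delta3}\left\|\omega_0\right\|_{\dot H^{-1+\delta}}$, which is absorbed into the left-hand side by the smallness $C_1$. For the second piece we have $x>p$ because $p<3$, so $\left\|\nabla u\right\|_{L^x}\lesssim\left\|\omega\right\|_{L^x}$ by Calder\'on--Zygmund. We then bound $\left\|\omega\right\|_{L^x}$ by Gagliardo--Nirenberg between $L^p$ and $\dot W^{1,p}$, which gives $\lesssim\left\|\omega\right\|_{L^p}^{1-\eta}\left\|\omega\right\|_{\dot W^{1,p}}^{\eta}$ with $\eta=2(\frac1p-\frac1x)\in(0,1)$. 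After inserting the $\Omega_0$ bound, this piece is $\lesssim M_1(t)\,C\,\Omega_1(t)^{\eta}$ times a time integral. The kernel there is integrable near $\tau=t$, since its exponent is $\ge -\frac32+\frac2p>-1$, and its large-time decay matches $M_1$ by the same bookkeeping as in Proposition~\ref{prop:5.2}.

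\textbf{Closing and main obstacle.} We arrive at
\begin{align*}
\Omega_1(t)\le C\bigl(1+\Omega_1(t)^{\eta}\bigr)+\tfrac12\Omega_1(t).
\end{align*}
Since $\eta<1$, Young's inequality combined with the contradiction argument at the end of Proposition~\ref{prop:5.2} yields $\Omega_1(t)\le C_2$ for all $t>0$. The step I expect to be hardest is this last Leibniz term. One must check that the $\tau$-powers coming from $M_0(\tau)^{1-\eta}M_1(\tau)^{\eta}$ and from the kernel combine to exactly $M_1(t)$ in both regimes of the $\min$, and uniformly in $\beta$ via the scaling of Remark~\ref{rem:1.3}. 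If the powers fail to match in the dispersive regime, I would instead split $\left\|\nabla u\right\|_{L^x}$ through the $L^2$ bound and $\left\|\nabla\omega\right\|_{L^2}$, controlled by the energy identity of Lemma~\ref{lem:5.1}.
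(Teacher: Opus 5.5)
Your proof is correct, and outside the late Duhamel piece it follows the paper: the same splitting at $t/2$, the same linear and early-time estimates reusing Proposition~\ref{prop:5.2}, and the same finiteness of $\Omega_1$ via Corollary~\ref{cor:4.3}. The difference is in $\int_{t/2}^t$.

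\textbf{How the paper handles the late piece.} The paper writes $\operatorname{div}(u\omega)=u\cdot\nabla\omega$ and lets the semigroup absorb the $\dot W^{1,p}$ derivative. The only product that appears is then $\|u\cdot\nabla\omega\|_{L^{p'}}\lesssim\|\omega\|_{\dot W^{-1+\delta,p}}\|\omega\|_{\dot W^{1,p}}$. This is exactly your first piece: linear in $\Omega_1$ with the small factor $|\beta|^{-\frac\delta3}\|\omega_0\|_{\dot H^{-1+\delta}}$, and it is absorbed directly.

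\textbf{What your route adds.} Putting the divergence on the kernel and the gradient on $u\omega$ creates an extra $(\nabla u)\omega$ term that the paper never has to treat. That term does close as you intend, and the fallback via the energy identity is unnecessary. Indeed $\eta=2(\frac3p-1)=\delta$ exactly. Set $K(s)=s^{-\frac32+\frac2p}\min\{1,|\beta|^{-1+\frac2p}s^{-\frac32(1-\frac2p)}\}$. Then $\frac2p-\frac12=\frac16+\frac\delta3$ and $-\frac12-\frac\delta2+\frac1p=-\frac16-\frac\delta3$, which give
\[
t^{\frac12-\frac\delta2}M_0(t)\int_0^{t/2}K(s)\,ds\lesssim\min\Bigl\{1,\bigl(|\beta|^{\frac13}t^{\frac12}\bigr)^{-\frac{1+2\delta}{3}}\Bigr\}\le 1 .
\]
So the extra contribution is $\lesssim\Omega_0(t)^{2-\delta}\Omega_1(t)^{\delta}M_1(t)$ in both regimes of the minimum. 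Since $\delta\le\frac15$, Young's inequality finishes the argument. One small correction: when $\delta=0$ you have $p=x=3$ and $\eta=0$, so you should write $x\ge p$ and $\eta\in[0,1)$; this case only helps.

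\textbf{Comparison.} The paper's route is strictly shorter and gives an inequality for $\Omega_1$ that is purely linear with a small coefficient. Yours is a valid alternative, but it costs a Calder\'on--Zygmund step, a Gagliardo--Nirenberg step, a nonlinear use of the $\Omega_0$ bound, and a sublinear closing argument, none of which the identity $\operatorname{div}(u\omega)=u\cdot\nabla\omega$ requires.
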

\proof
	As in the proof of Proposition~\ref{prop:5.2}, we have
		\begin{align}
			&\quad\left\|\omega(t)\right\|_{\dot W^{1,p}}\label{eq:5.43}
			\leq \left\|T_\beta(t)\omega_0\right\|_{\dot W^{1,p}}\\
			&+\int_0^\frac t2\left\|T_\beta(t-\tau)\operatorname{div}(u(\tau)\omega(\tau))\right\|_{\dot W^{1,p}}\, d\tau
			+\int^t_\frac t2\left\|T_\beta(t-\tau)\operatorname{div}(u(\tau)\omega(\tau))\right\|_{\dot W^{1, p}}\, d\tau.\quad\label{eq:5.44}
		\end{align}
	We can estimate the first and the second terms by using the smoothing property of the heat kernel to obtain
		\begin{align}
			\left\|T_\beta(t)\omega_0\right\|_{\dot W^{1,p}}
			\lesssim M_1(t)\left\|\omega_0\right\|_{L^1}
			\label{eq:5.45}
		\end{align}
	and
		\begin{align}
			&\quad\int_0^\frac t2\left\|T_\beta(t-\tau)\operatorname{div}(u(\tau)\omega(\tau))\right\|_{\dot W^{1,p}}\, d\tau\label{eq:5.46}\\
			&\lesssim M_1(t)\Omega_0(t)^{\theta_0}\left\|\omega_0\right\|_{L^1}^\frac{\delta(1-\theta_0)}{1+\delta}\left(|\beta|^{-\frac{1+\delta}{3}} \left\|\omega_0\right\|_{\dot H^{\delta}}\right)^\frac{1-\theta_0}{1+\delta}|\beta|^{-\frac{\delta}{3}}\left\|\omega_0\right\|_{\dot H^{-1+\delta}}\label{eq:5.47}.
		\end{align}
		
	For the third term of \eqref{eq:5.44}, we have
		\begin{align}
			&\quad\int^t_\frac t2\left\|T_\beta(t-\tau)\operatorname{div}(u(\tau)\omega(\tau))\right\|_{\dot W^{1, p}}\, d\tau\label{eq:5.50}\\
			&\lesssim \int^t_\frac t2(t-\tau)^{-\frac12-1+\frac2p}\min\left\{1, |\beta|^{-1+\frac2p}(t-\tau)^{-\frac32(1-\frac2p)}\right\}\left\|u(\tau)\cdot\nabla\omega(\tau)\right\|_{L^{p'}}\, d\tau.\label{eq:5.51}
		\end{align}
	Since H\"older's inequality yields
			$\left\|u(\tau)\cdot\nabla\omega(\tau)\right\|_{L^{p'}}
			\lesssim
			\left\|\omega(\tau)\right\|_{\dot W^{-1+\delta, p}}\left\|\omega(\tau)\right\|_{\dot W^{1, p}}, $
	similarly to the estimate of the third term of \eqref{eq:5.10}, we have
		\begin{align}
			&\quad\int^t_\frac t2\left\|T_\beta(t-\tau)\operatorname{div}(u(\tau)\omega(\tau))\right\|_{\dot W^{1, p}}\, d\tau\\
			&\lesssim\int^t_\frac t2(t-\tau)^{-\frac12-\frac1p+\frac\delta2}\min\left\{1, |\beta|^{-1+\frac2p}(t-\tau)^{-\frac32(1-\frac2p)}\right\}\left\|\omega(\tau)\right\|_{\dot W^{-1+\delta, p}}\left\|\omega(\tau)\right\|_{\dot W^{1, p}}\, d\tau\notag\\
			&\lesssim M_1(t)\Omega_1(t)\left|\beta\right|^{-\frac23(\frac12-\frac1{r_1}-\frac1p+\frac\delta2)}\left\|\omega\right\|_{Y_1}\label{eq:5.69}\\
			&\lesssim M_1(t)\Omega_1(t)\left|\beta\right|^{-\frac\delta3}\left\|\omega_0\right\|_{\dot H^{-1+\delta}}.\label{eq:5.70}
		\end{align}
	Combining the estimates \eqref{eq:5.45}, \eqref{eq:5.47} and \eqref{eq:5.70}, we obtain 
	\begin{align}
		\Omega_1(t)&\lesssim \left\|\omega_0\right\|_{L^1}+\left|\beta\right|^{-\frac\delta3}\left\|\omega_0\right\|_{\dot H^{-1+\delta}}\Omega_1(t)\label{eq:5.71}
		\\&+\Omega_0(t)^{\theta_0}
		\left\|\omega_0\right\|_{L^1}^\frac{\delta(1-\theta_0)}{1+\delta}\left(|\beta|^{-\frac{1+\delta}{3}} \left\|\omega_0\right\|_{\dot H^{\delta}}\right)^\frac{1-\theta_0}{1+\delta}|\beta|^{-\frac{\delta}{3}}\left\|\omega_0\right\|_{\dot H^{-1+\delta}}\label{eq:5.72}.
	\end{align}
	
	Noting that the assumption
		$\omega_0\in \dot H^{-1+\delta}\cap \dot H^{3-\frac2p}$
	ensures that
		$\omega\in C([0, \infty); \dot W^{1, p}), $
	the second term on the right-hand side is finite and therefore can be absorbed into the left-hand side by taking 
		$\left|\beta\right|^{-\frac\delta3}\left\|\omega_0\right\|_{\dot H^{-1+\delta}}$
	sufficiently small.
This completes the proof.
\qed

\begin{lemma}\label{lem:5.4}
	Suppose that
		$\omega_0\in \dot H^{-1+\delta}\cap\dot H^{3-\frac2p}\cap \dot W^{-1+\delta, p'}\cap L^1$
	satisfies the conditions in the statements of Proposition~\ref{prop:5.2} and Proposition~\ref{prop:5.3}. Then there is a constant
		$C_1>0$
	independent of
		$\beta$
	and
		$\omega_0$
	such that if
		$|\beta|^{-\frac{1+\delta}{3}}\left\|\omega_0\right\|_{\dot H^{\delta}}\leq C_1$
	holds, then there is a constant
		$C_2>0$
	dependent only on
		$\mathcal A, $
		$\left\|\omega_0\right\|_{L^1}, $
		$|\beta|^{-\frac\delta3}\left\|\omega_0\right\|_{\dot H^{-1+\delta}}, $
		$|\beta|^{-\frac{1+\delta}3}\left\|\omega_0\right\|_{\dot H^{\delta}}, $
	and
		$|\beta|^{-\frac13(1+\delta-\frac2{p'})}\left\|\omega_0\right\|_{\dot W^{-1+\delta, p'}}$
	satisfying 
		\begin{align}
			|\beta|^{-\frac13(1+\delta-\frac2{p'})}\left\|\omega(t)\right\|_{\dot W^{-1+\delta, p}}
			\leq C_2t^{-1+\frac2p}\min\left\{1, |\beta|^{-1+\frac2p}t^{-\frac32(1-\frac2p)}\right\} 
		\end{align}
	for all
		$t>0.$
\end{lemma}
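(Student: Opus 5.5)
We run the Duhamel-based bootstrap of Proposition~\ref{prop:5.2} once more, now for the quantity
\begin{align*}
	\Lambda(t)\coloneqq\sup_{0<\tau\leq t}\left(\tau^{-1+\frac2p}\min\left\{1, |\beta|^{-1+\frac2p}\tau^{-\frac32(1-\frac2p)}\right\}\right)^{-1}\left\|\omega(\tau)\right\|_{\dot W^{-1+\delta, p}}.
\end{align*}
We write $\omega(t)$ via \eqref{eq:1.9} and split the Duhamel integral at $\tau=t/2$.

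\emph{Linear term.} Proposition~\ref{prop:2.2} with $s=s'=-1+\delta$ and $r=2$, together with $\dot B^{-1+\delta}_{p,2}\hookrightarrow\dot W^{-1+\delta,p}$ and $\dot W^{-1+\delta,p'}\hookrightarrow\dot B^{-1+\delta}_{p',2}$ (valid since $p'\le2\le p$), gives $\|T_\beta(t)\omega_0\|_{\dot W^{-1+\delta,p}}\lesssim t^{-1+\frac2p}\min\{1,|\beta|^{-1+\frac2p}t^{-\frac32(1-\frac2p)}\}\|\omega_0\|_{\dot W^{-1+\delta,p'}}$. This is exactly the claimed profile, so the weight $|\beta|^{-\frac13(1+\delta-\frac2{p'})}$ is the scaling-invariant normalization suggested by Remark~\ref{rem:1.3}.

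\emph{Recent-time piece $\int_{t/2}^t$.} Proceed as in \eqref{eq:5.28}. Proposition~\ref{prop:2.2} gives the kernel $(t-\tau)^{-1+\frac2p-\frac1q+\frac1{p'}}\min\{\cdots\}=(t-\tau)^{-\frac1p+\frac\delta2}\min\{\cdots\}$ with $\frac1q=\frac2p-\frac\delta2$, acting on $\|(-\Delta)^{\frac\delta2}(u\omega)\|_{L^q}$. We bound this by $\|\omega\|_{\dot W^{-1+\delta,p}}\|\omega\|_{\dot W^{\delta,p}}$ using Proposition~\ref{prop:2.4} and Sobolev, or more simply by putting $\|\omega(\tau)\|_{\dot W^{-1+\delta,p}}\le \Lambda(t)M(\tau)$ with $M(\tau)\sim M(t)$ on $[t/2,t]$, and keeping the other factor in $Y_2$. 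Hölder in time against $\|\omega\|_{Y_2}$ then yields $M(t)\Lambda(t)\,|\beta|^{-\frac23(1-\frac1{r_2}-\frac1p+\frac\delta2)}\|\omega\|_{Y_2}$ by the computation in \eqref{eq:2.62}. By Remark~\ref{rem:3.1} this is bounded by $M(t)\Lambda(t)\,|\beta|^{-\frac{1+\delta}3}\|\omega_0\|_{\dot H^\delta}$, which explains why the smallness assumption here is on $|\beta|^{-\frac{1+\delta}3}\|\omega_0\|_{\dot H^\delta}$. We take $C_1$ small so that this term is absorbed into the left-hand side.

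\emph{Early-time piece $\int_0^{t/2}$.} Here $t-\tau\sim t$, so Proposition~\ref{prop:2.2} with the heat smoothing $e^{\frac12(t-\tau)\Delta}$ produces $M(t)\,t^{\gamma}$ times $\int_0^{t/2}\|u\omega\|_{L^{c}}\,d\tau$ for a suitable $c$ and compensating power $\gamma$. We bound $\|u\omega\|_{L^c}$ by $\|\omega\|_{\dot W^{-1+\delta,p}}\|\omega\|_{L^a}$ and use the decay of $\|\omega(\tau)\|_{L^p}$ from Proposition~\ref{prop:5.2} and $\|\omega(\tau)\|_{L^2}\le\|\omega_0\|_{L^2}$ from Lemma~\ref{lem:5.1}, in the same $\theta_0$-interpolation as \eqref{eq:5.16}. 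The factor $\|\omega\|_{\dot W^{-1+\delta,p}}$ is then either taken in $Y_1$ or bounded by $\Lambda(t)^{\theta}M(\tau)^\theta$. The $\tau$-integral converges at $0$ by the same exponent check as \eqref{eq:5.19}, and the powers of $t$ cancel exactly by scaling. The result is $M(t)$ times a constant depending on $\|\omega_0\|_{L^1}$, the two $\dot H$ quantities, and possibly $\Lambda(t)^{\theta}$ with $\theta<1$; such a term is handled by Young's inequality or by the contradiction argument at the end of Proposition~\ref{prop:5.2}.

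\emph{Closing the argument.} Combining the three pieces gives $\Lambda(t)\le C(\cdots)+\frac12\Lambda(t)+C\Lambda(t)^\theta$. To absorb, we need $\Lambda(t)<\infty$ for each $t$. Near $\tau=0$, $M(\tau)^{-1}\sim\tau^{1-\frac2p}\to0$, while $\|\omega(\tau)\|_{\dot W^{-1+\delta,p}}$ is bounded near $0$ because $\omega\in C([0,\infty);\dot H^{2-\frac2p+\delta})\hookrightarrow C([0,\infty);\dot W^{1+\delta,p})$ (Corollary~\ref{cor:4.3}) and $\omega\in C([0,\infty);\dot H^{-1+\delta})$; interpolating between these controls $\dot W^{-1+\delta,p}$ via $\dot H^{-\frac2p+\delta}$. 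Hence $\Lambda$ is finite and continuous, and the absorption is justified.

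The main obstacle is the exponent bookkeeping in the early-time piece: one must choose $c$ and $\theta$ so that the $\tau$-integral converges at $0$ and the $\beta$-powers match the normalization. If the direct product estimate fails, the fallback is to put the whole nonlinearity in $L^1$-type norms, writing $\|u\omega\|_{L^{1}}\lesssim\|u\|_{L^{2/\delta}}\|\omega\|_{L^{2/(2-\delta)}}$, and to use $\omega_0\in L^1$ decay together with $L^2$ bounds.
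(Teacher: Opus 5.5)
\paragraph{Verdict.} Your linear term and your $\int_{t/2}^t$ piece are the paper's. Your finiteness check for $\Lambda$ is also sound, with one small slip: Corollary~\ref{cor:4.3} with $\omega_0\in\dot H^{3-\frac2p}$ gives $\omega\in C([0,\infty);\dot H^{2-\frac2p})$, not $\dot H^{2-\frac2p+\delta}$. Interpolating with $\dot H^{-1+\delta}$ still reaches $\dot H^{\delta-\frac2p}\hookrightarrow\dot W^{-1+\delta,p}$, so nothing breaks there.

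\paragraph{The gap: the early piece.} The route you propose for $\int_0^{t/2}$ does not close. Run the $\theta_0$-interpolation of \eqref{eq:5.16} with $\|\omega(\tau)\|_{L^2}\le\|\omega_0\|_{L^2}$. The only change from Proposition~\ref{prop:5.2} is the target norm: $\operatorname{div}$ followed by $\dot W^{-1+\delta,p}$ costs $\delta$ derivatives instead of one. So on $[0,t/2]$ the kernel gains a factor $t^{\frac12-\frac\delta2}$, and you get $t^{\frac12-\frac\delta2}M_0(t)$ times the same scale-invariant constants as in \eqref{eq:5.24}. That is the $L^1$-data decay rate of $\dot W^{-1+\delta,p}$, which is too slow.

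To see this, write $N_0(t)=t^{-1+\frac2p}\min\{1,|\beta|^{-1+\frac2p}t^{-\frac32(1-\frac2p)}\}$ (your $M$). Then $t^{\frac12-\frac\delta2}M_0(t)=N_0(t)\,t^{\frac12-\frac1p-\frac\delta2}=N_0(t)\,t^{\frac16-\frac{2\delta}3}$. After multiplying by $|\beta|^{-\frac13(1+\delta-\frac2{p'})}$, the early piece is of size $N_0(t)\,(|\beta|^{\frac23}t)^{\frac16-\frac{2\delta}3}$. This is unbounded relative to $N_0(t)$, since $\delta\le\frac15$.

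The cause is that with $\theta_0<1$ the $\tau$-integral in \eqref{eq:5.18} grows like a positive power of $t$. In Proposition~\ref{prop:5.2} that growth is exactly compensated by the kernel; here it is not. So the claim that the powers of $t$ cancel by scaling is false: scaling only reduces the bound to a function of $|\beta|^{\frac23}t$, it does not make that function bounded. Your $L^1$ fallback is not a way out either. It requires $\|\omega(\tau)\|_{L^{2/(2-\delta)}}$, an $L^a$ norm with $a<2$, which nothing established so far controls.

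\paragraph{What is missing.} To inherit the rate $N_0$ of $\dot W^{-1+\delta,p'}$ data, the forcing must be integrable over all of $(0,\infty)$ in a norm with that scaling, so that no power of $t$ is created. You already wrote down the right kernel in your late piece: since $\frac1p=\frac13+\frac\delta6$, the kernel $(t-\tau)^{-\frac1p+\frac\delta2}\min\{\cdots\}$ is exactly $N_0(t-\tau)$.

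The paper uses this kernel on $[0,t/2]$ as well, with no heat smoothing, where $N_0(t-\tau)\lesssim N_0(t)$. It acts on $\|(-\Delta)^{\frac\delta2}(u\omega)\|_{L^{p'}}\lesssim\|\omega\|_{\dot W^{-1+\delta,p}}\|\omega\|_{\dot W^{\delta,p}}$, giving $\int_0^{t/2}\lesssim N_0(t)\,\Omega_\delta(t)\,\|\omega\|_{Y_1}\|M_\delta\|_{L^{r_1'}(0,\infty)}$. Here $M_\delta\in L^{r_1'}(0,\infty)$ because $\frac1{r_1}<1-\frac2p$. The bound $\Omega_\delta\lesssim\Omega_0^{1-\delta}\Omega_1^{\delta}$ is where Proposition~\ref{prop:5.3} enters; your plan never uses it.

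\paragraph{A fix within your framework.} Taking $\theta=1$ also works. Drop the $L^2$ factor, use \eqref{eq:5.27}, and apply Proposition~\ref{prop:2.2} from $\dot B^{-1}_{p',2}$ to $\dot B^{-1+\delta}_{p,2}$; this gives the kernel $(t-\tau)^{-\frac\delta2}N_0(t-\tau)$. Since $M_0\in L^{r_1'}(0,\infty)$, the early piece is then at most $N_0(t)\,|\beta|^{\frac13(1+\delta-\frac2{p'})}\,\Omega_0(t)\,|\beta|^{-\frac\delta3}\|\omega_0\|_{\dot H^{-1+\delta}}$ times $\min\{x^{1-\frac2p-\frac1{r_1}},x^{-\frac\delta2}\}\le1$, where $x=|\beta|^{\frac23}t$.
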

\proof 
	Define the functions 
		$N_0(t)$
	and
		$\widetilde\Omega_0(t)$
	on
		$(0, \infty)$
	by
		\begin{align}
			N_0(t)\coloneqq t^{-1+\frac2p}\min\left\{1, |\beta|^{-1+\frac2p}t^{-\frac32(1-\frac2p)}\right\}\label{eq:5.80}
		\end{align}
	and
		\begin{align}
			\widetilde\Omega_0(t)\coloneqq \sup_{0<\tau\leq t}N_0(\tau)^{-1}\left\|\omega(\tau)\right\|_{\dot W^{-1+\delta, p}}, \label{eq:5.81}
		\end{align}
	respectively. We show that 
		$\widetilde \Omega_0$
	is bounded on 
		$(0, \infty)$
	if
		$|\beta|^{-\frac{1+\delta}{3}}\left\|\omega_0\right\|_{\dot H^{\delta}}$
	is small enough.
	
	As before, we write
		\begin{align}
			&\quad\left\|\omega(t)\right\|_{\dot W^{-1+\delta,p}}\label{eq:5.82}
			\leq \left\|T_\beta(t)\omega_0\right\|_{\dot W^{-1+\delta,p}}\\
			&+\int_0^\frac t2\left\|T_\beta(t-\tau)\operatorname{div}(u(\tau)\omega(\tau))\right\|_{\dot W^{-1+\delta,p}}\, d\tau
			+\int^t_\frac t2\left\|T_\beta(t-\tau)\operatorname{div}(u(\tau)\omega(\tau))\right\|_{\dot W^{-1+\delta, p}}\, d\tau.\label{eq:5.83}
		\end{align}
	and estimate each term.
	
	For the first term, we easily have
		\begin{align}
			\left\|T_\beta(t)\omega_0\right\|_{\dot W^{-1+\delta,p}}
			\lesssim N_0(t)\left\|\omega_0\right\|_{\dot W^{-1+\delta,p'}}.\label{eq:5.84}
		\end{align}
	
	For the second term, we use the estimate in the proof of Proposition~\ref{prop:2.5} and H\"older's inequality to obtain
		\begin{align}
			&\quad\int_0^\frac t2\left\|T_\beta(t-\tau)\operatorname{div}(u(\tau)\omega(\tau))\right\|_{\dot W^{-1+\delta,p}}\, d\tau\label{eq:5.85}\\
			&\lesssim \int_0^\frac t2N_0(t-\tau)\left\|\omega(\tau)\right\|_{\dot W^{-1+\delta, p}}\left\|\omega(\tau)\right\|_{\dot W^{\delta, p}}\, d\tau\label{eq:5.86}\\
			&\lesssim N_0(t)\Omega_\delta(t)\int_0^\frac t2\left\|\omega(\tau)\right\|_{\dot W^{-1+\delta, p}}M_\delta(\tau)\, d\tau\label{eq:5.87}\\
			&\leq N_0(t)\Omega_\delta(t)\left\|\omega\right\|_{Y_1}\left\|M_\delta\right\|_{L^{r_1'}([0, \infty))}.\label{eq:5.88}
		\end{align}
	The term
		$\left\|M_\delta\right\|_{L^{r_1'}([0, \infty))}$
	is finite if and only if it holds that
		$1-\frac1p+\frac\delta2
		<1-\frac1{r_1}
		<1-\frac1p+\frac32(1-\frac2p)+\frac\delta2.$ The right-hand inequality is obviously true as we have 
		$1-\frac1p+\frac32(1-\frac2p)+\frac\delta2=1+\frac12(1-\frac2p)>1.$
	The left-hand inequality is equivalent to
		$\frac1{r_1}<1-\frac2p.$
	By the assumption on	
		$r_1$, 
	we have
		$\frac1{r_1}
		\leq \frac12-\frac1p+\frac\delta2
		=-\frac12+\frac2p.$
	We see that
		$-\frac12+\frac2p<1-\frac2p$
	is true, since this is equivalent to
		$p>\frac83, $
	which is the case in our setting
		$p\in[\frac{30}{11}, 3].$
	As a result, we obtain
		$\left\|M_\delta\right\|_{L^{r_1'}([0, \infty))}
		\lesssim|\beta|^{-\frac23(\frac1p-\frac\delta2-\frac1{r_1})}$
	and
		\begin{align}
			&\quad\int_0^\frac t2\left\|T_\beta(t-\tau)\operatorname{div}(u(\tau)\omega(\tau))\right\|_{\dot W^{-1+\delta,p}}\, d\tau
			\lesssim N_0(t)\Omega_\delta(t)\left\|\omega\right\|_{Y_1}|\beta|^{-\frac23(\frac1p-\frac\delta2-\frac1{r_1})}\label{eq:5.89}\\
			&\lesssim N_0(t)\Omega_\delta(t)|\beta|^{-\frac\delta3}\left\|\omega_0\right\|_{\dot H^{-1+\delta}}|\beta|^{\frac13(1+\delta-\frac2{p'})}.\label{eq:5.90}
		\end{align}
	For the third term in \eqref{eq:5.83}, similarly we have
		\begin{align}
			&\quad\int^t_\frac t2\left\|T_\beta(t-\tau)\operatorname{div}(u(\tau)\omega(\tau))\right\|_{\dot W^{-1+\delta, p}}\, d\tau\label{eq:5.91}\\
			&\lesssim \int_\frac t2^tN_0(t-\tau)\left\|\omega(\tau)\right\|_{\dot W^{-1+\delta, p}}\left\|\omega(\tau)\right\|_{\dot W^{\delta, p}}\, d\tau\label{eq:5.92}\\
			&\lesssim N_0(t)\widetilde \Omega_0(t)\left\|\omega\right\|_{Y_2}\left\|N_0\right\|_{L^{r_2'}}
			\lesssim N_0(t)\widetilde \Omega_0(t)|\beta|^{-\frac{1+\delta}3}\left\|\omega_0\right\|_{\dot H^\delta}.\label{eq:5.93}
		\end{align}
		
	Therefore, under the assumption that
		$|\beta|^{-\frac{1+\delta}3}\left\|\omega_0\right\|_{\dot H^\delta}$
	is sufficiently small, from the above estimates we obtain
		\begin{align*}
			|\beta|^{-\frac13(1+\delta-\frac2{p'})}\widetilde\Omega_0(t)
			\lesssim |\beta|^{-\frac13(1+\delta-\frac2{p'})}\left\|\omega_0\right\|_{\dot W^{-1+\delta, p'}}
			+|\beta|^{-\frac{\delta}3}\left\|\omega_0\right\|_{\dot H^{-1+\delta}}\Omega_0(t)^{1-\delta}\Omega_1(t)^\delta.
		\end{align*}
	This completes the proof.
\qed

The following proposition gives temporal decay estimates for
	$\dot W^{s, p}$
in the case of higher regularity 
	$s$.
\begin{proposition}\label{prop:5.5}
	Assume that
		$\omega_0\in\dot H^{-1+\delta}\cap\dot H^{3-\frac2p}\cap \dot W^{-1+\delta, p'}\cap L^1$
	satisfies the conditions in the statements of Proposition~\ref{prop:5.2}, Proposition~\ref{prop:5.3} and Lemma~\ref{lem:5.4}. Then for any
		$m\geq0$, 
	there is a positive constant
		\begin{align*}
			C=C(m, \mathcal A, \left\|\omega_0\right\|_{L^1}, |\beta|^{-\frac\delta3}\left\|\omega_0\right\|_{\dot H^{-1+\delta}}, |\beta|^{-\frac{1+\delta}3}\left\|\omega_0\right\|_{\dot H^{\delta}}, |\beta|^{-\frac13(1+\delta-\frac2{p'})}\left\|\omega_0\right\|_{\dot W^{-1+\delta, p'}})
		\end{align*}
	satisfying
		$\Omega_{1+m}(t)\leq C$
	for all
		$t>0.$
\end{proposition}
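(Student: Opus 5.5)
We argue by induction on the regularity, proving $\Omega_{1+m}(t)\le C$ first for $m\in[0,\alpha]$ and then in steps of a fixed small $\alpha>0$. The base cases are already available: Proposition~\ref{prop:5.2} gives $\Omega_0$, Proposition~\ref{prop:5.3} gives $\Omega_1$, and Lemma~\ref{lem:5.4} gives the decay of $\|\omega(t)\|_{\dot W^{-1+\delta,p}}$ at rate $N_0(t)$. Interpolation then also controls $\Omega_s$ for $0\le s\le 1$.

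As in the preceding proofs, we split
\begin{align*}
\|\omega(t)\|_{\dot W^{1+m,p}}
&\le \|T_\beta(t)\omega_0\|_{\dot W^{1+m,p}}
+\int_0^{t/2}\|T_\beta(t-\tau)\operatorname{div}(u\omega)\|_{\dot W^{1+m,p}}\,d\tau\\
&\quad+\int_{t/2}^{t}\|T_\beta(t-\tau)\operatorname{div}(u\omega)\|_{\dot W^{1+m,p}}\,d\tau .
\end{align*}

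\emph{Linear term.} Proposition~\ref{cor:2.3} gives the bound $M_{1+m}(t)\|\omega_0\|_{L^1}$.

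\emph{Term on $(0,t/2)$.} Since $t-\tau\sim t$, all the extra derivatives can be placed on the heat semigroup. This costs a factor $(t-\tau)^{-m/2}\sim t^{-m/2}$, which converts $M_1(t)$ into $M_{1+m}(t)$. The remaining integral is exactly the one estimated in \eqref{eq:5.46}--\eqref{eq:5.47}, so this term is bounded by $M_{1+m}(t)$ times data-dependent constants. This is routine.

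\emph{Term on $(t/2,t)$.} Here only $\alpha$ derivatives may be taken from the semigroup, giving the kernel $(t-\tau)^{-\frac12-\frac1p+\frac\delta2-\frac\alpha2}\min\{1,\dots\}$. The remaining $1+m-\alpha$ derivatives fall on $u\cdot\nabla\omega$, estimated in $L^{p'}$ by the fractional Leibniz rule (Proposition~\ref{prop:2.4}). Using $\frac1{p'}=\frac2p-\frac\delta2$ and the Sobolev embeddings, we obtain
\[
\|u\omega\|_{\dot W^{1+m-\alpha,p'}}\lesssim
\|\omega\|_{\dot W^{-1+\delta,p}}\|\omega\|_{\dot W^{1+m-\alpha,p}}
+\|\omega\|_{\dot W^{-1+\delta+1+m-\alpha,p}}\|\omega\|_{\dot W^{0,p}} .
\]
Both products involve only $\Omega_s$ with $s\le 1+m-\alpha$, plus $\Omega_0$, the $\dot W^{-1+\delta,p}$ decay, and the $\dot W^{\delta+m-\alpha,p}$ norm (by interpolation). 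All of these are bounded by the induction hypothesis. We bound $\|\omega(\tau)\|_{\dot W^{-1+\delta,p}}\lesssim N_0(\tau)$ via Lemma~\ref{lem:5.4}, and the other factor by $M_{s}(\tau)\sim M_s(t)$ for $\tau\in(t/2,t)$.

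It remains to verify the time exponents. The product of decay factors should be at least $M_{1+m}(t)$ times a quantity that integrates against the kernel over $(t/2,t)$ to a bounded function. Roughly, $N_0(t)M_{1+m-\alpha}(t)\,t^{\frac12+\frac\alpha2-\frac1p+\frac\delta2}$ compared with $M_{1+m}(t)$, treated separately in the regimes $t\lessgtr|\beta|^{-2/3}$ so that the min factors match up. The kernel is integrable precisely because of the choice of $\alpha$ with $\frac12+\frac1p-\frac\delta2+\frac\alpha2<1$. Unlike Propositions~\ref{prop:5.2} and \ref{prop:5.3}, no absorption into the left-hand side is needed, because the top-order norm $\Omega_{1+m}$ never appears on the right. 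This avoids having to check a priori finiteness of $\Omega_{1+m}$, though Corollary~\ref{cor:4.3} could supply that if an absorption variant were needed.

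The \textbf{main obstacle} is this exponent bookkeeping: the short-time and long-time regimes carry different min-factors, and at small $t$ the factor $N_0(\tau)=\tau^{-1+2/p}$ must be compensated. If the naive count fails for small $t$, we will instead use the local smoothing bounds (Lemma~\ref{lem:4.1}, Corollary~\ref{cor:4.3}) near $t=0$ and run the bookkeeping only for $t\ge|\beta|^{-2/3}$, using scaling invariance (Remark~\ref{rem:1.3}) to normalize $|\beta|=1$.
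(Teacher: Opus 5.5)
Your overall scheme matches the paper: induction in steps of $\alpha$, the three-term split, the linear and $(0,t/2)$ terms, and the observation that no absorption is needed. The gap is in the $(t/2,t)$ term, exactly where you anticipate trouble, and your fallback does not repair it.

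\textbf{Where the estimate fails.} The problem is the Leibniz product in which $u$ carries no derivative, $\|\omega(\tau)\|_{\dot W^{-1+\delta,p}}\|\omega(\tau)\|_{\dot W^{1+m-\alpha,p}}$. The other product is harmless if you bound it by $\Omega_0\Omega_{\delta+m-\alpha}$. If you use the pointwise bound of Lemma~\ref{lem:5.4} for the first factor, then for $t\le|\beta|^{-2/3}$ the contribution is comparable to
\[
M_{1+m}(t)\,\Omega_{1+m-\alpha}(t)\,|\beta|^{\frac13(1+\delta-\frac2{p'})}t^{\frac12-\frac1p+\frac\delta2}N_0(t)\simeq M_{1+m}(t)\,\Omega_{1+m-\alpha}(t)\bigl(|\beta|^{\frac13}t^{\frac12}\bigr)^{-1+\frac2p+\delta}.
\]
Here $-1+\frac2p+\delta=-\frac13+\frac{4\delta}3<0$, so the bound blows up as $t\to0$. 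The reason is that $N_0(\tau)=\tau^{-1+\frac2p}$ is too singular at $\tau=0$, because Lemma~\ref{lem:5.4} is driven by $\dot W^{-1+\delta,p'}$ data rather than $L^1$ data.

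\textbf{Why the fallback does not work.}
\begin{itemize}
\item Corollary~\ref{cor:4.3} and the second half of Lemma~\ref{lem:4.1} need $\omega_0\in\dot H^{\delta+m}$, but only $\dot H^{3-\frac2p}$ is assumed, and neither gives a rate.
\item The first half of Lemma~\ref{lem:4.1} gives only qualitative finiteness of $L^{r_1}([\varepsilon,\infty);\cdot)$ norms, with untracked $\varepsilon$-dependence.
\item None of these yields $\|\omega(\tau)\|_{\dot W^{1+m,p}}\lesssim M_{1+m}(\tau)$ near $\tau=0$ with a constant depending only on the quantities in the statement.
\item Normalizing $|\beta|=1$ by scaling does not remove the small-time regime, because $\Omega_{1+m}(t)$ is a supremum over all $\tau\in(0,t]$.
\end{itemize}

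\textbf{The missing idea.} At small times, control the low-regularity factor by the space-time norm rather than by pointwise decay. On $(t/2,t)$, bound $\|\omega(\tau)\|_{\dot W^{1+m-\alpha,p}}\lesssim t^{\frac\alpha2}M_{1+m}(t)\Omega_{1+m-\alpha}(t)$, and apply H\"older in $\tau$ with $\|\omega\|_{Y_1}$. This requires the kernel $(t-\tau)^{-\frac12-\frac1p+\frac\delta2-\frac\alpha2}\min\{1,|\beta|^{-1+\frac2p}(t-\tau)^{-\frac32(1-\frac2p)}\}$ to lie in $L^{r_1'}$. That is why $\alpha$ must satisfy $\frac1{r_1}<\frac12-\frac1p+\frac\delta2-\frac\alpha2$, which is stronger than the plain integrability you impose.

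Combined with \eqref{eq:5.22}, this gives the factor $(|\beta|^{\frac13}t^{\frac12})^{\alpha}|\beta|^{-\frac\delta3}\|\omega_0\|_{\dot H^{-1+\delta}}$, which is harmless for small $t$. Your $N_0$-based bound gives $(|\beta|^{\frac13}t^{\frac12})^{-4+\frac8p+\delta}$ with a negative exponent, which is fine for large $t$. Taking the minimum of the two bounds the $(t/2,t)$ contribution by a constant times $M_{1+m}(t)$ uniformly in $t>0$, and this closes the induction step.
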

\proof 
	Take
		$\alpha \in(0, 1)$
	sufficiently small to satisfy
		$\frac1{r_1}<\frac12-\frac1p+\frac\delta2-\frac\alpha2.$
	We show that the desired estimate 
		\begin{align*}
			\Omega_{1+k\alpha}(t)\leq C(\left\|\omega_0\right\|_{L^1}, |\beta|^{-\frac\delta3}\left\|\omega_0\right\|_{\dot H^{-1+\delta}}, |\beta|^{-\frac{1+\delta}3}\left\|\omega_0\right\|_{\dot H^{\delta}}, |\beta|^{-\frac13(1+\delta-\frac2{p'})}\left\|\omega_0\right\|_{\dot W^{-1+\delta, p'}})
		\end{align*}
	holds for all
		$k\in\mathbb N\cup\{0\}$
	and
		$t>0$,
	by the induction of
		$k$.
	The base case
		$k=0$
	is already proved in Proposition~\ref{prop:5.3}.
		
	As usual, we write
		\begin{align}
			&\quad\left\|\omega(t)\right\|_{\dot W^{1+(k+1)\alpha,p}}\label{eq:5.103}
			\leq \left\|T_\beta(t)\omega_0\right\|_{\dot W^{1+(k+1)\alpha,p}}\\
			&+\int_0^\frac t2\left\|T_\beta(t-\tau)\operatorname{div}(u(\tau)\omega(\tau))\right\|_{\dot W^{1+(k+1)\alpha,p}}\, d\tau
			+\int^t_\frac t2\left\|T_\beta(t-\tau)\operatorname{div}(u(\tau)\omega(\tau))\right\|_{\dot W^{1+(k+1)\alpha, p}}\, d\tau.\notag
		\end{align}
	The estimates for the first and second terms can be done similarly to Proposition~\ref{prop:5.3} to obtain
		\begin{align}
			\left\|T_\beta(t)\omega_0\right\|_{\dot W^{1+(k+1)\alpha,p}}
			&\lesssim M_{1+(k+1)\alpha}(t)\left\|\omega_0\right\|_{L^1}\label{eq:5.105}
		\end{align}
	and
		\begin{align}
			&\quad\int_0^\frac t2\left\|T_\beta(t-\tau)\operatorname{div}(u(\tau)\omega(\tau))\right\|_{\dot W^{1+(k+1)\alpha,p}}\, d\tau\label{eq:5.106}\\
			&\lesssim M_{1+(k+1)\alpha}(t)
			\Omega_0(t)^{\theta_0}
			\left\|\omega_0\right\|_{L^1}^\frac{\delta(1-\theta_0)}{1+\delta}\left(|\beta|^{-\frac{1+\delta}{3}} \left\|\omega_0\right\|_{\dot H^{\delta}}\right)^\frac{1-\theta_0}{1+\delta}|\beta|^{-\frac{\delta}{3}}\left\|\omega_0\right\|_{\dot H^{-1+\delta}}, \label{eq:5.108}
		\end{align}
	respectively. 
	
	For the third term, we have
		\begin{align}
			&\quad\int^t_\frac t2\left\|T_\beta(t-\tau)\operatorname{div}(u(\tau)\omega(\tau))\right\|_{\dot W^{1+(k+1)\alpha, p}}\, d\tau\label{eq:5.109}\\
			&\lesssim \int^t_\frac t2(t-\tau)^{-\frac12-\frac1p+\frac\delta2-\frac\alpha2}\min\left\{1, |\beta|^{-1+\frac2p}(t-\tau)^{-\frac32(1-\frac2p)}\right\}\left\|(-\Delta)^\frac{1+k\alpha}2(u(\tau)\omega(\tau))\right\|_{L^{p'}}\, d\tau\notag\\
			&\lesssim \int^t_\frac t2(t-\tau)^{-\frac12-\frac1p+\frac\delta2-\frac\alpha2}\min\left\{1, |\beta|^{-1+\frac2p}(t-\tau)^{-\frac32(1-\frac2p)}\right\}\left\|(-\Delta)^\frac{k\alpha}2\omega(\tau)\right\|_{L^p}\left\|\omega(\tau)\right\|_{\dot W^{\delta, p}}\, d\tau\label{eq:5.111}\\
			&+\int^t_\frac t2(t-\tau)^{-\frac12-\frac1p+\frac\delta2-\frac\alpha2}\min\left\{1, |\beta|^{-1+\frac2p}(t-\tau)^{-\frac32(1-\frac2p)}\right\}\left\|\omega(\tau)\right\|_{\dot W^{-1+\delta, p}}\left\|(-\Delta)^\frac{1+k\alpha}2\omega(\tau)\right\|_{L^{p}}\, d\tau\label{eq:5.112}.
		\end{align}
	Since the interpolation inequality yields
		\begin{align*}
			\left\|(-\Delta)^\frac{k\alpha}2\omega(\tau)\right\|_{L^p}
			\lesssim \left\|\omega(\tau)\right\|_{\dot W^{-1+\delta, p}}^{\frac{1}{2-\delta+k\alpha}}
			\left\|(-\Delta)^\frac{1+k\alpha}2\omega(\tau)\right\|_{L^{p}}^{\frac{1-\delta+k\alpha}{2-\delta+k\alpha}}
		\end{align*}
	and
		\begin{align*}
			\left\|\omega(\tau)\right\|_{\dot W^{\delta, p}}
			\lesssim \left\|\omega(\tau)\right\|_{\dot W^{-1+\delta, p}}^{\frac{1-\delta+k\alpha}{2-\delta+k\alpha}}
			\left\|(-\Delta)^\frac{1+k\alpha}2\omega(\tau)\right\|_{L^{p}}^{\frac{1}{2-\delta+k\alpha}}, 
		\end{align*}
	we have
		\begin{align*}
			\left\|(-\Delta)^\frac{k\alpha}2\omega(\tau)\right\|_{L^p}
			\left\|\omega(\tau)\right\|_{\dot W^{\delta, p}}
			\lesssim \left\|\omega(\tau)\right\|_{\dot W^{-1+\delta, p}}
			\left\|(-\Delta)^\frac{1+k\alpha}2\omega(\tau)\right\|_{L^{p}}. 
		\end{align*}
	Therefore, the estimate of \eqref{eq:5.111} can be reduced to that of \eqref{eq:5.112}.
	We can deduce
		\begin{align}
			&\int^t_\frac t2(t-\tau)^{-\frac12-\frac1p+\frac\delta2-\frac\alpha2}\min\left\{1, |\beta|^{-1+\frac2p}(t-\tau)^{-\frac32(1-\frac2p)}\right\}\left\|\omega(\tau)\right\|_{\dot W^{-1+\delta, p}}\left\|(-\Delta)^\frac{1+k\alpha}2\omega(\tau)\right\|_{L^{p}}\, d\tau\notag\\
			&\lesssim t^\frac\alpha2M_{1+(k+1)\alpha}(t)\Omega_{1+k\alpha}(t)\int^t_\frac t2(t-\tau)^{-\frac12-\frac1p+\frac\delta2-\frac\alpha2}\min\left\{1, |\beta|^{-1+\frac2p}(t-\tau)^{-\frac32(1-\frac2p)}\right\}\left\|\omega(\tau)\right\|_{\dot W^{-1+\delta, p}}\, d\tau\notag\\
			&\lesssim t^\frac\alpha2M_{1+(k+1)\alpha}(t)\Omega_{1+k\alpha}(t)|\beta|^{-\frac23(\frac12-\frac1{r_1}-\frac1p+\frac\delta2-\frac\alpha2)}\left\|\omega\right\|_{Y_1}\\
			&\lesssim \left(|\beta|^{\frac13}t^{\frac12}\right)^\alpha M_{1+(k+1)\alpha}(t)\Omega_{1+k\alpha}(t)\left|\beta\right|^{-\frac{\delta}3}\left\|\omega_0\right\|_{\dot H^{-1+\delta}}\label{eq:5.126}
		\end{align}
	and
		\begin{align}
			&\int^t_\frac t2(t-\tau)^{-\frac12-\frac1p+\frac\delta2-\frac\alpha2}\min\left\{1, |\beta|^{-1+\frac2p}(t-\tau)^{-\frac32(1-\frac2p)}\right\}\left\|\omega(\tau)\right\|_{\dot W^{-1+\delta, p}}\left\|(-\Delta)^\frac{1+k\alpha}2\omega(\tau)\right\|_{L^{p}}\, d\tau\notag\\
			&\lesssim t^{\frac12-\frac1p+\frac\delta2-\frac\alpha2}N_0(t)\widetilde\Omega_0(t)M_{1+k\alpha}(t)\Omega_{1+k\alpha}(t)\\
			&= |\beta|^{\frac13(1+\delta-\frac2{p'})}t^{\frac12-\frac1p+\frac\delta2}N_0(t) \cdot M_{1+(k+1)\alpha}(t)\Omega_{1+k\alpha}(t)\left(|\beta|^{-\frac13(1+\delta-\frac2{p'})}\widetilde \Omega_0(t)\right)\label{eq:5.128} \\
			&\leq \left(|\beta|^{\frac13}t^{\frac12} \right)^{-4+\frac8p+\delta} M_{1+(k+1)\alpha}(t)\Omega_{1+k\alpha}(t)\left(|\beta|^{-\frac13(1+\delta-\frac2{p'})}\widetilde \Omega_0(t)\right).\label{eq:5.129}
		\end{align}
	Note that
		$-4+\frac8p+\delta<0$
	is equivalent to
		$\frac1p<\frac37$
	and this is true in the present setting. Therefore, the interpolation of the above two estimates yields
		\begin{align}
			&\int^t_\frac t2(t-\tau)^{-\frac12-\frac1p+\frac\delta2-\frac\alpha2}\min\left\{1, |\beta|^{-1+\frac2p}(t-\tau)^{-\frac32(1-\frac2p)}\right\}\left\|\omega(\tau)\right\|_{\dot W^{-1+\delta, p}}\left\|(-\Delta)^\frac{1+k\alpha}2\omega(\tau)\right\|_{L^{p}}\, d\tau\notag\\
			&\lesssim  M_{1+(k+1)\alpha}(t)\Omega_{1+k\alpha}(t)
			\left(\left|\beta\right|^{-\frac{\delta}3}\left\|\omega_0\right\|_{\dot H^{-1+\delta}}\right)^{\frac{4-\frac8p-\delta}{\alpha+4-\frac8p-\delta}}
			\left(|\beta|^{-\frac13(1+\delta-\frac2{p'})}\widetilde \Omega_0(t)\right)^{\frac{\alpha}{\alpha+4-\frac8p-\delta}}\notag
		\end{align}
	This gives the desired estimate of \eqref{eq:5.112}.
	
	Combining the above estimates, we obtain the boundedness of
		$\Omega_{1+(k+1)\alpha}$
	on
		$(0, \infty).$
	This completes the proof.
\qed
	
\subsection{Temporal decays of $\dot H^{s}$-norms}
	In this subsection, we show the temporal decays of the
		$L^2$-based 
	Sobolev spaces
		$\dot H^s$
	for
		$s\geq-1.$
\begin{proposition}\label{prop:5.6}
	Assume that
		$\omega_0\in \dot H^{-1+\delta}\cap \dot H^{3-\frac2p}\cap \dot W^{-1+\delta, p'}\cap L^1$
	satisfies the conditions in Proposition~\ref{prop:5.2}, Proposition~\ref{prop:5.3}, and Lemma~\ref{lem:5.4}. Then, for any
		$s\geq0$
	there is a positive constant
		$C$
	dependent only on
		$s, $
		$\mathcal A$, 
		$\left\|\omega_0\right\|_{L^1}, |\beta|^{-\frac\delta3}\left\|\omega_0\right\|_{\dot H^{-1+\delta}},$
		$|\beta|^{-\frac{1+\delta}3}\left\|\omega_0\right\|_{\dot H^{\delta}},$
	and
		$|\beta|^{-\frac13(1+\delta-\frac2{p'})}\left\|\omega_0\right\|_{\dot W^{-1+\delta, p'}}$
	satisfying
		$\Omega_{s, 2}(t)\leq C$
	for all
		$t>0$.
\end{proposition}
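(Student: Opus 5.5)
We bound $\|\omega(t)\|_{\dot H^s}$ through the Duhamel formula \eqref{eq:1.9}, split at $\tau=t/2$ as in Propositions~\ref{prop:5.2}--\ref{prop:5.5}:
\begin{align*}
\|\omega(t)\|_{\dot H^s}\le \|T_\beta(t)\omega_0\|_{\dot H^s}+\int_0^{\frac t2}\|T_\beta(t-\tau)\operatorname{div}(u\omega)(\tau)\|_{\dot H^s}\,d\tau+\int_{\frac t2}^{t}\|T_\beta(t-\tau)\operatorname{div}(u\omega)(\tau)\|_{\dot H^s}\,d\tau .
\end{align*}
Note that $M_{s,2}(t)=t^{-\frac s2-\frac12}$; the dispersive factor is trivial when $a=2$. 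The linear term is controlled by Proposition~\ref{cor:2.3} with $a=2$, giving $M_{s,2}(t)\|\omega_0\|_{L^1}$.

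For the interior piece on $(0,t/2)$, we put all derivatives on the heat kernel. Since $\operatorname{div}(u\omega)\in L^1$-type spaces are needed, we use $\|u\omega\|_{L^1}$, or more conveniently $\|u\omega\|_{L^c}$ with $c$ close to $1$. We estimate $\|e^{(t-\tau)\Delta}\nabla(u\omega)\|_{\dot H^s}\lesssim (t-\tau)^{-\frac s2-\frac12-\frac1c+\frac12}\|u\omega\|_{L^c}$, and $(t-\tau)\sim t$ there. Taking $\frac1c=\frac1p-\frac\delta2+\frac1p=1-\frac1p$ (so $c=p'$), Hölder gives $\|u\omega\|_{L^{p'}}\lesssim\|\omega\|_{\dot W^{-1+\delta,p}}\|\omega\|_{L^p}$. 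We then insert the already established decay $\|\omega(\tau)\|_{L^p}\le \Omega_0 M_0(\tau)$ (Proposition~\ref{prop:5.2}) together with Lemma~\ref{lem:5.4} for $\|\omega(\tau)\|_{\dot W^{-1+\delta,p}}\lesssim N_0(\tau)$. This leaves a time integral $\int_0^{t/2}N_0(\tau)M_0(\tau)\,d\tau$, whose integrand behaves like $\tau^{-2+\frac3p}$ near $0$ (integrable since $p<3$) and decays fast for large $\tau$ because of the $\beta$-factors. Combining the powers of $t$ should produce $M_{s,2}(t)$, possibly with an excess factor $t^{\frac1c-1}$; if the exponents do not close, we instead take $c$ nearer to $1$ via the interpolation $\|\omega\|_{L^2}\le\|\omega\|_{L^1}^{1-\theta}\cdots$, as in Proposition~\ref{prop:5.2}. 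Alternatively, and more cleanly, we can bound this piece by the $\dot W^{s,p}$ decays plus Bernstein.

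For the piece near $\tau=t$, we expand the derivatives using the Leibniz rule (Proposition~\ref{prop:2.4}), e.g. $\|(-\Delta)^{\frac s2}(u\omega)\|_{L^2}\lesssim\|(-\Delta)^{\frac s2}u\|_{L^{p_a}}\|\omega\|_{L^{q_a}}+\|u\|_{L^\infty}\|\omega\|_{\dot H^s}$. Every factor is controlled by the $\dot W^{\sigma,p}$ decay of Proposition~\ref{prop:5.5} (available for all $\sigma\ge 0$, $\sigma=1+m$, and interpolated down to $\Omega_0$), with Sobolev embedding $\dot W^{\sigma,p}\hookrightarrow L^\infty$ for $\sigma>\frac2p$. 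The rate bookkeeping uses $(t-\tau)^{-\frac12}$ integrated over $(t/2,t)$, which gives $t^{1/2}$, times products of $M_{\cdot,p}(\tau)$, which decay at least like $\tau^{-\frac s2-2+\cdots}$. This leaves a surplus, so no absorption or smallness is needed and no circular dependence on $\Omega_{s,2}$ arises. To avoid the sum $\sigma$-derivative issue we directly estimate $\|\omega\|_{\dot H^{s}}$ by interpolating Bernstein-type $\dot W^{\sigma,p}\hookrightarrow\dot H^{\sigma-1+\frac2p}$-type bounds. This is only valid in Besov-form: $\dot B^{\sigma}_{p,2}$ does not embed into $\dot H^{\sigma'}$ for $p>2$ because of low frequencies. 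So instead we use the nonlinear Duhamel estimate above rather than an embedding.

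\textbf{Main obstacle.} The main difficulty is the exponent bookkeeping in the $(0,t/2)$ piece: we must check that the gain from the $L^1$-like bound of $u\omega$ matches $t^{-\frac s2-\frac12}$ exactly, with a convergent $\tau$-integral near $0$, uniformly in $\beta$ via the scaling-invariant quantities. I would first try $c=p'$ with the bounds $N_0,M_0$. If the small-$\tau$ behaviour fails, I would interpolate with the energy bound $\|\omega(\tau)\|_{L^2}\le\|\omega_0\|_{L^2}$ from Lemma~\ref{lem:5.1}.
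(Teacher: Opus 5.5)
Your splitting at $\tau=\frac t2$ and your treatment of the linear term match the paper. Neither Duhamel piece closes as you describe, however, and the obstruction is the same in both. A Lebesgue bound on $u=\nabla^\perp(-\Delta)^{-1}\omega$ needs a negative-order norm of $\omega$; nonnegative-order $L^a$-based norms with $a\ge2$, the energy included, do not control $u$. The pointwise-in-time bound of this type available here is Lemma~\ref{lem:5.4}, $\|\omega(\tau)\|_{\dot W^{-1+\delta,p}}\lesssim|\beta|^{\frac13(1+\delta-\frac2{p'})}N_0(\tau)$. For $\tau\le|\beta|^{-2/3}$ this exceeds the rate $\tau^{-\frac12-\frac\delta2+\frac1p}$ that $L^1$ data would give by the factor $(|\beta|^{\frac13}\tau^{\frac12})^{-\frac13+\frac{4\delta}3}$, which is unbounded as $\tau\to0$.

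Consequently, your first attempt on $(0,\frac t2)$ (with $c=p'$, $N_0$ and $M_0$) only gives $(|\beta|^{\frac13}t^{\frac12})^{-\frac13+\frac{4\delta}3}t^{-\frac s2-\frac12}$ for small $t$. For $\delta=0$ one has $p=3$, not $p<3$, so $N_0M_0\sim\tau^{-1}$ is not even integrable near $0$. Your energy fallback does rescue this piece. Using $\|u\omega\|_{L^c}\lesssim\|\omega\|_{\dot W^{-1+\delta,p}}\|\omega_0\|_{L^2}^{1/2}\|\omega\|_{L^p}^{1/2}$ together with \eqref{eq:5.21} gives a factor $(|\beta|^{\frac13}t^{\frac12})^{\frac16+\frac{4\delta}3}$ for small $t$ and a decaying factor for large $t$. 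You still have to carry this check out. The paper instead uses H\"older in time against $\|\omega\|_{Y_1}$, together with $\min\{1,X\}\le X^\theta$ inside $\|M_0\|_{L^{r_1'}((0,\frac t2))}$, which produces exactly $t^{-\frac s2-\frac12}$.

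The genuine gap is the piece over $(\frac t2,t)$. You claim every factor is controlled by the $\dot W^{\sigma,p}$, $\sigma\ge0$, bounds of Propositions~\ref{prop:5.2}--\ref{prop:5.5}, with a surplus to spare. The $u$-factor cannot be handled this way, and once it is estimated through $N_0$ the defect above returns. At small $t$ all other factors are exactly heat-critical, so you have a deficit rather than a surplus. For example, with your $L^2$ Leibniz rule and $\|u\|_{L^{\frac{2p}{p-2}}}\lesssim\|\omega\|_{L^p}^{1/3}\|\omega\|_{\dot W^{-1+\delta,p}}^{2/3}$, one gets $(|\beta|^{\frac13}t^{\frac12})^{-\frac29+\frac{8\delta}9}t^{-\frac s2-\frac12}$. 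The paper's $N_0$-based bound \eqref{eq:5.184} likewise carries the negative exponent $-7+\delta+\frac{14}p$. The energy bound is of no use for this factor. Your displayed split with $\|u\|_{L^\infty}\|\omega\|_{\dot H^s}$ is moreover circular in $\Omega_{s,2}$.

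The missing idea is to bound the $\dot W^{-1+\delta,p}$ factor, when $|\beta|^{\frac13}t^{\frac12}$ is small, by H\"older in time against the global space-time bound $\|\omega\|_{Y_1}$ from Remark~\ref{rem:3.1}. First reduce $\|\omega\|_{\dot W^{-1+\delta+s,p}}\|\omega\|_{\dot W^{1,p}}\lesssim\|\omega\|_{\dot W^{-1+\delta,p}}\|\omega\|_{\dot W^{1+s,p}}$ by interpolation, and use $\Omega_{1+s}$ from Proposition~\ref{prop:5.5}. This gives the factor $(|\beta|^{\frac13}t^{\frac12})^{1+\delta-\frac2p-\frac2{r_1}}$, whose exponent is positive by \eqref{eq:1.11}. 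Interpolating it with the $N_0\widetilde\Omega_0$ bound, which is good for large $t$, yields a bound uniform in $t>0$.
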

\proof
	We divide the estimates into three parts as usual:
		\begin{align}
			&\quad\left\|\omega(t)\right\|_{\dot H^{s}}\label{eq:5.133}
			\leq \left\|T_\beta(t)\omega_0\right\|_{\dot H^{s}}\\
			&+\int_0^\frac t2\left\|T_\beta(t-\tau)\operatorname{div}(u(\tau)\omega(\tau))\right\|_{\dot H^{s}}\, d\tau
			+\int^t_\frac t2\left\|T_\beta(t-\tau)\operatorname{div}(u(\tau)\omega(\tau))\right\|_{\dot H^{s}}\, d\tau.\label{eq:5.134}
		\end{align}
	
	For the linear term, we easily obtain
		\begin{align}
			\left\|T_\beta(t)\omega_0\right\|_{\dot H^s}
			=\left\|e^{t\Delta}\omega_0\right\|_{\dot H^s}
			\lesssim t^{-\frac s2-\frac12}\left\|\omega_0\right\|_{L^1}.\label{eq:5.135}
		\end{align}
	
	For the first half of the integral terms, we have
		\begin{align}
			&\int_0^\frac t2\left\|T_\beta(t-\tau)\operatorname{div}(u(\tau)\omega(\tau))\right\|_{\dot H^{s}}\, d\tau\label{eq:5.136}\\
			&\lesssim \int_0^\frac t2(t-\tau)^{-\frac s2-\frac12-(\frac1{p'}-\frac12)}\left\|u(\tau)\omega(\tau)\right\|_{L^{p'}}\, d\tau\label{eq:5.137}\\
			&\lesssim t^{-\frac s2-\frac12-(\frac1{p'}-\frac12)}\int_0^\frac t2\left\|\omega(\tau)\right\|_{\dot W^{-1+\delta, p}}\left\|\omega(\tau)\right\|_{L^p}\, d\tau\label{eq:5.138}\\
			&\leq t^{-\frac s2-\frac12-(\frac1{p'}-\frac12)}\Omega_0(t)\int_0^\frac t2\left\|\omega(\tau)\right\|_{\dot W^{-1+\delta, p}}M_0(\tau)\, d\tau\label{eq:5.139}\\
			&\leq t^{-\frac s2-\frac12-(\frac1{p'}-\frac12)}\Omega_0(t)\left\|\omega\right\|_{Y_1}\left\|M_0\right\|_{L^{r_1'}((0, \frac t2))}.\label{eq:5.140}
		\end{align}
	Now, define
		$\theta\in(0, 1]$
	so that
		$-\frac1{p'}+\frac12-\frac1{r_1}+\frac1p-\frac32\theta(1-\frac2p)=0$
	holds. This choice is possible, since in our setting we have
		$-\frac1{p'}+\frac12-\frac1{r_1}+\frac1p
		=-\frac1{p}+\frac\delta2+\frac12-\frac1{r_1}>0$
	and
		$-\frac1{p'}+\frac12-\frac1{r_1}+\frac1p-\frac32(1-\frac2p)
		=-\frac1{p}+\frac\delta2+\frac12-\frac1{r_1}-\frac32(1-\frac2p)
		\leq0.$
	Then we obtain
		\begin{align}
			\left\|M_0\right\|_{L^{r_1'}((0, \frac t2))}
			&\leq\left(\int_0^\frac t2 \left(\tau^{-1+\frac1p} \left(|\beta|^{-1+\frac2p}\tau^{-\frac32(1-\frac2p)}\right)^\theta\right)^{r_1'}\, d\tau\right)^{\frac1{r_1'}}\label{eq:5.141}\\
			&\lesssim |\beta|^{-\theta(1-\frac2p)}t^{-\frac1{r_1}+\frac1p-\frac32\theta(1-\frac2p)}\label{eq:5.142}
		\end{align}
	and thus
		\begin{align}
			\int_0^\frac t2\left\|T_\beta(t-\tau)\operatorname{div}(u(\tau)\omega(\tau))\right\|_{\dot H^{s}}\, d\tau
			&\lesssim |\beta|^{-\theta(1-\frac2p)}t^{-\frac s2-\frac12}\Omega_0(t)\left\|\omega(\tau)\right\|_{Y_1}\label{eq:5.143}\\
			&\lesssim t^{-\frac s2-\frac12}\Omega_0(t)|\beta|^{-\frac\delta3}\left\|\omega(\tau)\right\|_{\dot H^{-1+\delta}}.\label{eq:5.144}
		\end{align}
	
	For the second half of the integral terms, we have
		\begin{align}
			&\int^t_\frac t2\left\|T_\beta(t-\tau)\operatorname{div}(u(\tau)\omega(\tau))\right\|_{\dot H^{s}}\, d\tau
			\lesssim \int^t_\frac t2(t-\tau)^{-\frac1{p'}+\frac12}\left\|(-\Delta)^\frac s2(u(\tau)\cdot\nabla\omega(\tau))\right\|_{L^{p'}}\, d\tau.\notag\\
			&\lesssim \int^t_\frac t2(t-\tau)^{-\frac1{p'}+\frac12}\left\|\omega(\tau)\right\|_{\dot W^{-1+\delta+s, p}}\left\|\omega(\tau)\right\|_{\dot W^{1,p}}\, d\tau\label{eq:5.162}\\
			&+\int^t_\frac t2(t-\tau)^{-\frac1{p'}+\frac12}\left\|\omega(\tau)\right\|_{\dot W^{-1+\delta, p}}\left\|\omega(\tau)\right\|_{\dot W^{1+s,p}}\, d\tau\label{eq:5.163}.
		\end{align}
		
	For \eqref{eq:5.162}, the interpolation inequality yields
		\begin{align*}
			\left\|\omega(\tau)\right\|_{\dot W^{-1+\delta+s}}
			&\lesssim
			\left\|\omega(\tau)\right\|_{\dot W^{-1+\delta, p}}^\frac{2-\delta}{2-\delta+s}
			\left\|\omega(\tau)\right\|_{\dot W^{1+s, p}}^\frac{s}{2-\delta+s}
		\end{align*}
	and
		\begin{align*}
			\left\|\omega(\tau)\right\|_{\dot W^{1, p}}
			&\lesssim
			\left\|\omega(\tau)\right\|_{\dot W^{-1+\delta, p}}^\frac{s}{2-\delta+s}
			\left\|\omega(\tau)\right\|_{\dot W^{1+s, p}}^\frac{2-\delta}{2-\delta+s}.
		\end{align*}
	Combining these inequalities, we obtain
		\begin{align*}
			\left\|\omega(\tau)\right\|_{\dot W^{-1+\delta+s, p}}\left\|\omega(\tau)\right\|_{\dot W^{1,p}}
			\lesssim \left\|\omega(\tau)\right\|_{\dot W^{-1+\delta, p}}\left\|\omega(\tau)\right\|_{\dot W^{1+s,p}}.
		\end{align*}
	Therefore, we only have to estimate \eqref{eq:5.163}.
	We have
		\begin{align}
			&\int^t_\frac t2(t-\tau)^{-\frac1{p'}+\frac12}\left\|\omega(\tau)\right\|_{\dot W^{-1+\delta, p}}\left\|\omega(\tau)\right\|_{\dot W^{1+s,p}}\, d\tau\label{eq:5.177}\\
			&\leq M_{1+s}(t)\Omega_{1+s}(t)\left\|\omega\right\|_{Y_1}\left(\int_\frac t2^t(t-\tau)^{-r_1'(\frac 1{p'}-\frac12)}\, d\tau\right)^{\frac1{r_1'}}\label{eq:5.178}.
		\end{align}
	The integral on the right-hand side is finite since the condition
		$\frac1{p'}-\frac12<1-\frac1{r_1}$
	is equivalent to
		$\frac1{r_1}<\frac12-\frac1p+\frac\delta2+(1-\frac1p)$, 
	and this is true. Thus we obtain
		\begin{align}
			&\int^t_\frac t2(t-\tau)^{-\frac1{p'}+\frac12}\left\|\omega(\tau)\right\|_{\dot W^{-1+\delta, p}}\left\|\omega(\tau)\right\|_{\dot W^{1+s,p}}\, d\tau\\
			&\lesssim t^{1-\frac1{r_1}-\frac1{p'}+\frac12}M_{1+s}(t)\Omega_{1+s}(t)\left\|\omega\right\|_{Y_1}\\
			&\lesssim |\beta|^{\frac13(1+\delta-\frac2p-\frac2{r_1})}t^{-\frac s2-\frac12-\frac1{r_1}-\frac1{p}+\frac\delta2+\frac12}\Omega_{1+s}(t)|\beta|^{-\frac\delta3}\left\|\omega_0\right\|_{\dot H^{-1+\delta}}\label{eq:5.179}\\
			&=\left(|\beta|^\frac13t^\frac12\right)^{1+\delta-\frac2p-\frac2{r_1}}t^{-\frac s2-\frac12}\Omega_{1+s}(t)|\beta|^{-\frac\delta3}\left\|\omega_0\right\|_{\dot H^{-1+\delta}}\label{eq:5.180}.
		\end{align}
	We also have
		\begin{align}
			&\int^t_\frac t2(t-\tau)^{-\frac1{p'}+\frac12}\left\|\omega(\tau)\right\|_{\dot W^{-1+\delta, p}}\left\|\omega(\tau)\right\|_{\dot W^{1+s,p}}\, d\tau\label{eq:5.181}\\
			&\lesssim t^{\frac32-\frac1{p'}}N_0(t)\widetilde \Omega_0(t)M_{1+s}(t)\Omega_{1+s}(t)\label{eq:5.182}\\
			&\leq|\beta|^{\frac13(-7+\delta+\frac{14}{p})}t^{-\frac s2-4+\frac\delta2+\frac{7}p}|\beta|^{-\frac13(1+\delta-\frac2{p'})}\widetilde \Omega_0(t)\Omega_{1+s}(t)\label{eq:5.183}\\
			&=\left(|\beta|^\frac13t^\frac12\right)^{-7+\delta+\frac{14}{p}}t^{-\frac s2-\frac12}|\beta|^{-\frac13(1+\delta-\frac2{p'})}\widetilde \Omega_0(t)\Omega_{1+s}(t).\label{eq:5.184}
		\end{align}
	Since the conditions 
		$\frac1{r_1}
		<\frac12-\frac1p+\frac\delta2$
	and
		$-7+\delta+\frac{14}p<0$
	hold, interpolating these estimates yields
		\begin{align}
			&\int^t_\frac t2(t-\tau)^{-\frac1{p'}+\frac12}\left\|\omega(\tau)\right\|_{\dot W^{-1+\delta, p}}\left\|\omega(\tau)\right\|_{\dot W^{1+s,p}}\, d\tau\label{eq:5.185}\\
			&\lesssim t^{-\frac s2-\frac12}\Omega_{1+s}(t)\left(|\beta|^{-\frac\delta3}\left\|\omega_0\right\|_{\dot H^{-1+\delta}}\right)^\frac{7-\delta-\frac{14}p}{8-\frac{16}p-\frac2{r_1}}\left(|\beta|^{-\frac13(1+\delta-\frac2{p'})}\widetilde \Omega_0(t)\right)^\frac{1+\delta-\frac2p-\frac2{r_1}}{8-\frac{16}p-\frac2{r_1}}.\label{eq:5.186}
		\end{align}
	Combining the above estimates, we obtain the boundedness of
		$\Omega_{s, 2}$
	on
		$(0, \infty).$
	This completes the proof.
\qed

\begin{lemma}\label{lem:5.7}
		Assume that
		$\omega_0\in \dot H^{-1+\delta}\cap \dot H^{3-\frac2p}\cap \dot W^{-1+\delta, p'}\cap L^1$
	satisfies the conditions in Proposition~\ref{prop:5.2}, Proposition~\ref{prop:5.3}, and Lemma~\ref{lem:5.4}. Then, there is a constant
		$C>0$
	dependent only on 
		$\mathcal A, $
		$\left\|\omega_0\right\|_{L^1}, $
		$|\beta|^{-\frac\delta3}\left\|\omega_0\right\|_{\dot H^{-1+\delta}}, $
		$|\beta|^{-\frac{1+\delta}3}\left\|\omega_0\right\|_{\dot H^{\delta}}, $
	and 
		$|\beta|^{-\frac13(1+\delta-\frac2{p'})}\left\|\omega_0\right\|_{\dot W^{-1+\delta, p'}}$
	satisfying
		$\left\|\omega(t)\right\|_{\dot H^{-1}}\leq C$
	for all
		$t>0$.
\end{lemma}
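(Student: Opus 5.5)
We will bound $\|\omega(t)\|_{\dot H^{-1}}$ through the Duhamel formula \eqref{eq:1.9}, splitting as usual:
\begin{align*}
\|\omega(t)\|_{\dot H^{-1}}
\le \|T_\beta(t)\omega_0\|_{\dot H^{-1}}
+\int_0^{t}\left\|T_\beta(t-\tau)\operatorname{div}(u(\tau)\omega(\tau))\right\|_{\dot H^{-1}}\, d\tau .
\end{align*}
Since $T_\beta(t)$ acts in $L^2$-based spaces only through $e^{t\Delta}$, and $\|\operatorname{div} F\|_{\dot H^{-1}}\lesssim\|F\|_{L^2}$, the integrand is bounded by $\|e^{(t-\tau)\Delta}(u(\tau)\omega(\tau))\|_{L^2}$. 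The problem is therefore a uniform-in-time bound for an $L^2$ quantity. The linear part cannot be handled by $\|\omega_0\|_{\dot H^{-1}}$, which need not be finite, so we split differently according to whether $t$ is small or large.

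\textbf{Linear term.} For $t\ge1$ (or relative to a fixed scale such as $t\gtrsim|\beta|^{-2/3}$; the constant may depend on $\beta$ only through the scale-invariant quantities, so we use $|\beta|^{-2/3}$), we use
$$\|e^{t\Delta}\omega_0\|_{\dot H^{-1}}\lesssim t^{-\frac\delta2}\|\omega_0\|_{\dot H^{-1+\delta}}.$$
This is bounded by $|\beta|^{-\delta/3}\|\omega_0\|_{\dot H^{-1+\delta}}$ when $t\ge|\beta|^{-2/3}$. For small $t$ we instead use the $L^1$ bound $\|e^{t\Delta}\omega_0\|_{\dot H^{-1}}$. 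This fails at low frequency (log-divergent), so a better route is to interpolate between $\dot H^{-1+\delta}$ and a negative-index space controlled by $L^1$. Low frequencies $|\xi|\le1$ of $\omega_0\in L^1$ give $\int_{|\xi|\le1}|\xi|^{-2}|\hat\omega_0|^2$, which diverges unless $\int\omega_0=0$. This shows that the true route is to use the $\dot H^{-1+\delta}$ bound for all $t$, giving $t^{-\delta/2}$, together with the smallness of $\|\omega_0\|_{\dot H^{-1+\delta}}$ relative to $|\beta|$. Large $t$ is fine. For small $t$ we use $\omega(t)\in\dot H^{-1+\delta}\cap L^2$ and interpolation
$$\|f\|_{\dot H^{-1}}\le \|f\|_{\dot H^{-1+\delta}}\cdot(\text{low-frequency control}).$$
Low-frequency control comes from the decay of $\hat\omega(t,\xi)$ near $\xi=0$. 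Since $\hat\omega(t,0)=\int\omega_0$ is conserved, $\dot H^{-1}$ finiteness must come from the hypothesis $\omega_0\in\dot W^{-1+\delta,p'}$ with $p'<2$. Via Sobolev/Hardy–Littlewood–Sobolev, $\dot W^{-1+\delta,p'}\hookrightarrow \dot H^{-1+\delta-2(\frac1{p'}-\frac12)}=\dot H^{-1+\delta-1+\frac2p}$. Since $-2+\delta+\frac2p<-1$ (because $\frac2p<1-\delta$ for $\frac1p=\frac13+\frac\delta6$), interpolating this space with $\dot H^{-1+\delta}$ gives $\omega_0\in\dot H^{-1}$. Then $\|T_\beta(t)\omega_0\|_{\dot H^{-1}}\le\|\omega_0\|_{\dot H^{-1}}$, which handles the linear term for all $t$.

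\textbf{Nonlinear term.} Write $\frac1c=\frac1p+\frac12-\frac\delta2$, so that by Hölder and Sobolev $\|u\omega\|_{L^c}\lesssim\|\omega\|_{\dot W^{-1+\delta,p}}\|\omega\|_{L^2}$, with $c\in(1,2)$. The heat smoothing gives
$$\|e^{(t-\tau)\Delta}(u\omega)\|_{L^2}\lesssim (t-\tau)^{-(\frac1c-\frac12)}\|u\omega\|_{L^c},$$
where the exponent $\frac1c-\frac12=\frac1p-\frac\delta2<1$ is integrable. On $(t/2,t)$ we insert the decays $\|\omega(\tau)\|_{\dot W^{-1+\delta,p}}\lesssim N_0(\tau)$ from Lemma~\ref{lem:5.4} and $\|\omega(\tau)\|_{L^2}\lesssim\tau^{-1/2}$ from Proposition~\ref{prop:5.6}. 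This yields a contribution of size $t^{1-\frac1p+\frac\delta2}\,t^{-\frac12}N_0(t)$. For $t\ge|\beta|^{-2/3}$ this is at most $|\beta|^{\cdots}t^{-\frac12-\frac1p+\frac\delta2-\frac32(1-\frac2p)+\frac2p}$, with a negative power of $t$ because $p<3$, hence bounded. For small $t$ the exponent of $t$ is $\frac12-\frac1p+\frac\delta2>0$, hence also bounded. On $(0,t/2)$ we use $(t-\tau)\sim t$ and the same pointwise decays; alternatively we use $\|\omega\|_{Y_1}$ together with Hölder in time and $\|\omega(\tau)\|_{L^2}\le\|\omega_0\|_{L^2}$ from Lemma~\ref{lem:5.1}.

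\textbf{Main obstacle.} The main difficulty is bookkeeping the powers of $|\beta|$ so that the constant depends only on the listed scale-invariant quantities. We will express every bound as $(|\beta|^{1/3}t^{1/2})^{\gamma}$ times such quantities. Each integral is then split at $t\sim|\beta|^{-2/3}$, using $\gamma>0$ on the small-time side and the min-factor decay on the large-time side, exactly as in \eqref{eq:5.126}–\eqref{eq:5.129}.
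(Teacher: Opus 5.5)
\textbf{Linear term.} Once the false starts are removed, your argument is correct and uses the paper's idea. Both get $\omega_0\in\dot H^{-1}$ by interpolating what $\dot W^{-1+\delta,p'}$ and $\dot H^{-1+\delta}$ provide. The paper interpolates $\dot W^{-1,\frac{p}{2(p-2)}}$ with $\dot W^{-1,\frac2{1-\delta}}$; you interpolate $\dot H^{-2+\delta+\frac2p}$ with $\dot H^{-1+\delta}$. Do delete the claim $\|e^{t\Delta}\omega_0\|_{\dot H^{-1}}\lesssim t^{-\frac\delta2}\|\omega_0\|_{\dot H^{-1+\delta}}$: it is false, since the multiplier $|\xi|^{-\delta}e^{-t|\xi|^2}$ is unbounded at $\xi=0$.

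\textbf{The gap: the nonlinear term at small times.} For $t\le|\beta|^{-2/3}$, the bound you get on $(t/2,t)$ from Lemma~\ref{lem:5.4} and the $L^2$ decay is
\[
\int_{t/2}^t(t-\tau)^{-(\frac1p-\frac\delta2)}\,|\beta|^{\frac13(1+\delta-\frac2{p'})}\,\tau^{-1+\frac2p}\,\tau^{-\frac12}\,d\tau=c\,\bigl(|\beta|^{\frac13}t^{\frac12}\bigr)^{-1+\delta+\frac2p}.
\]
The power of $t$ is therefore $-\frac12+\frac1p+\frac\delta2=-\frac16+\frac{2\delta}3<0$, not $\frac12-\frac1p+\frac\delta2$; you dropped the factor $N_0(t)=t^{-1+\frac2p}$. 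This bound blows up as $t\to0$. The same computation shows that the pointwise decays also fail on $(0,t/2)$ for small $t$.

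The singularity is intrinsic. Lemma~\ref{lem:5.4} controls $\|\omega(\tau)\|_{\dot W^{-1+\delta,p}}$ only by $N_0(\tau)\sim\tau^{-1+\frac2p}$, because the data lie in $\dot W^{-1+\delta,p'}$ rather than $\dot W^{-1+\delta,p}$. Pairing this with $\|\omega(\tau)\|_{L^2}\lesssim\tau^{-\frac12}$ only makes it worse.

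\textbf{The fix.} Your ``alternative'' is the fix, but you must apply it on the whole interval $(0,t)$, and only for $t\le|\beta|^{-2/3}$ (for large $t$ it grows). Use $\|\omega(\tau)\|_{L^2}\le\|\omega_0\|_{L^2}$ from Lemma~\ref{lem:5.1} and H\"older in time against $\|\omega\|_{Y_1}$:
\[
\int_0^t(t-\tau)^{-(\frac1p-\frac\delta2)}\|\omega(\tau)\|_{\dot W^{-1+\delta,p}}\|\omega_0\|_{L^2}\,d\tau\lesssim\bigl(|\beta|^{\frac13}t^{\frac12}\bigr)^{2+\delta-\frac2p-\frac2{r_1}}\bigl(|\beta|^{-\frac13}\|\omega_0\|_{L^2}\bigr)|\beta|^{-\frac\delta3}\|\omega_0\|_{\dot H^{-1+\delta}}.
\]
The exponent here is at least $1$, because $\frac1{r_1}\le\frac12-\frac1p+\frac\delta2$. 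The factor $|\beta|^{-\frac13}\|\omega_0\|_{L^2}$ is controlled by \eqref{eq:5.21}, so the constant depends only on admissible quantities. For $t\ge|\beta|^{-2/3}$, your pointwise decays on both halves do work.

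\textbf{Comparison with the paper.} The paper gets the same small-time control differently. On $(t/2,t)$ it interpolates two bounds: the $Y_1$-H\"older bound \eqref{eq:5.180} with $s=-1$, which carries the positive power $(|\beta|^{\frac13}t^{\frac12})^{1+\delta-\frac2p-\frac2{r_1}}$, and the pointwise bound \eqref{eq:5.184}. On $(0,t/2)$ it uses $\min\{1,a\}\le a^\theta$ to obtain a single estimate that is uniform in $t$.
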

\proof
	As usual, we divide the estimates into three parts:
		\begin{align}
			&\quad\left\|\omega(t)\right\|_{\dot H^{-1}}\label{eq:5.187}
			\leq \left\|T_\beta(t)\omega_0\right\|_{\dot H^{-1}}\\
			&+\int_0^\frac t2\left\|T_\beta(t-\tau)\operatorname{div}(u(\tau)\omega(\tau))\right\|_{\dot H^{-1}}\, d\tau
			+\int^t_\frac t2\left\|T_\beta(t-\tau)\operatorname{div}(u(\tau)\omega(\tau))\right\|_{\dot H^{-1}}\, d\tau.\label{eq:5.188}
		\end{align}
		
	For the first term, it suffices to use the embedding
		$\dot W^{-1+\delta, p'}\cap \dot H^{-1+\delta}\hookrightarrow\dot H^{-1}$
	to obtain the estimate. In fact, the Sobolev embedding ensures that
		$\dot W^{-1+\delta, p'}\hookrightarrow\dot W^{-1, \frac{p}{2(p-2)}}$
	and 
		$\dot H^{-1+\delta}\hookrightarrow \dot W^{-1, \frac2{1-\delta}}.$
	We can also check the condition
		$\frac{p}{2(p-2)}<2.$
	Thus we have
		\begin{align}
			&\left\|T_\beta(t)\omega_0\right\|_{\dot H^{-1}}
			\leq \left\|\omega_0\right\|_{\dot H^{-1}}
			\leq \left\|\omega_0\right\|_{\dot W^{-1, \frac{p}{2(p-2)}}}^{\frac\delta{3+\delta-\frac8p}}
			\left\|\omega_0\right\|_{\dot W^{-1, \frac2{1-\delta}}}^{\frac{3-\frac8p}{3+\delta-\frac8p}}\label{eq:5.189}\\
			&\lesssim \left(|\beta|^{-\frac13(1+\delta-\frac2{p'})}\left\|\omega_0\right\|_{\dot W^{-1+\delta, p'}}\right)^{\frac\delta{3+\delta-\frac8p}}
			\left(|\beta|^{-\frac\delta3}\left\|\omega_0\right\|_{\dot H^{-1+\delta}}\right)^{\frac{3-\frac8p}{3+\delta-\frac8p}}
			<\infty.\label{eq:5.190}
		\end{align}
	
	For the second term, it is possible to set
		$s=-1$
	in the estimate of the corresponding term in Proposition~\ref{prop:5.6}. Thus we obtain
		\begin{align}
			\int_0^\frac t2\left\|T_\beta(t-\tau)\operatorname{div}(u(\tau)\omega(\tau))\right\|_{\dot H^{-1}}\, d\tau
			\lesssim \Omega_0(t)|\beta|^{-\frac\delta3}\left\|\omega(\tau)\right\|_{\dot H^{-1+\delta}}.\label{eq:5.191}
		\end{align}
		
	For the last term, we write
		\begin{align}
			&\int^t_\frac t2\left\|T_\beta(t-\tau)\operatorname{div}(u(\tau)\omega(\tau))\right\|_{\dot H^{-1}}\, d\tau
			\lesssim \int^t_\frac t2(t-\tau)^{-\frac1{p'}+\frac12}\left\|u(\tau)\omega(\tau)\right\|_{L^{p'}}\, d\tau\label{eq:5.192}\\
			&\lesssim \int^t_\frac t2(t-\tau)^{-\frac1{p'}+\frac12}\left\|\omega(\tau)\right\|_{\dot W^{-1+\delta, p}}\left\|\omega(\tau)\right\|_{L^{p}}\, d\tau,\label{eq:5.193}
		\end{align}
	Then it is possible to set
		$s=-1$
	in the estimate of \eqref{eq:5.163} to obtain
		\begin{align}
			&\int^t_\frac t2(t-\tau)^{-\frac1{p'}+\frac12}\left\|\omega(\tau)\right\|_{\dot W^{-1+\delta, p}}\left\|\omega(\tau)\right\|_{L^p}\, d\tau.\label{eq:5.194}\\
			&\lesssim \Omega_{0}(t)\left(|\beta|^{-\frac\delta3}\left\|\omega_0\right\|_{\dot H^{-1+\delta}}\right)^\frac{7-\delta-\frac{14}p}{8-\frac{16}p-\frac2{r_1}}\left(|\beta|^{-\frac13(1+\delta-\frac2{p'})}\widetilde \Omega_0(t)\right)^\frac{1+\delta-\frac2p-\frac2{r_1}}{8-\frac{16}p-\frac2{r_1}}.\label{eq:5.195}
		\end{align}
	This completes the proof.
\qed

\subsection{Temporal decays of $\dot W^{s, \infty}$-norms}
	We divide the proof of the decay estimates of
		$\dot W^{s, \infty}$-norms
	into several steps.
\begin{lemma}\label{lem:5.8}
		Assume that
		$\omega_0\in \dot H^{-1+\delta}\cap \dot H^{3-\frac2p}\cap \dot W^{-1+\delta, p'}\cap L^1$
	satisfies the conditions in Proposition~\ref{prop:5.2}, Proposition~\ref{prop:5.3}, and Lemma~\ref{lem:5.4}. Then, for any 
		$s\geq0$
	there is a constant
		$C>0$
	dependent only on 
		$s$, 
		$\mathcal A$, 
		$\left\|\omega_0\right\|_{L^1}$, 
		$|\beta|^{-\frac\delta3}\left\|\omega_0\right\|_{\dot H^{-1+\delta}}$, 
		$|\beta|^{-\frac{1+\delta}3}\left\|\omega_0\right\|_{\dot H^{\delta}}$, 
	and 
		$|\beta|^{-\frac13(1+\delta-\frac2{p'})}\left\|\omega_0\right\|_{\dot W^{-1+\delta, p'}}$
	satisfying
		\begin{align}
			\left\|T_\beta(t)\omega_0\right\|_{\dot W^{s, \infty}}
			+\int^\frac t2_0\left\|T_\beta(t-\tau)\operatorname{div}(u(\tau)\omega(\tau))\right\|_{\dot W^{s, \infty}}\, d\tau
			\leq CM_{s, \infty}(t)\label{eq:5.196}
		\end{align}
	for all
		$t>0$.
\end{lemma}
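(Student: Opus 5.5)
The linear term is handled directly by Proposition~\ref{cor:2.3} with $a=\infty$:
\begin{align*}
\left\|T_\beta(t)\omega_0\right\|_{\dot W^{s,\infty}}\lesssim M_{s,\infty}(t)\left\|\omega_0\right\|_{L^1}.
\end{align*}
The work is in the first half of the Duhamel integral. I plan to follow the template of the second term in the proof of Proposition~\ref{prop:5.2}, but to replace $L^p$ by $\dot W^{s,\infty}$ and to use an $L^1$-type bound on the nonlinearity in place of $L^c$.

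For $0<\tau<\frac t2$, the time difference satisfies $t-\tau\sim t$. I will write $\operatorname{div}(u\omega)$, let the derivative fall on the kernel, and apply the $L^1\to\dot B^{s+1}_{\infty,1}$ bound (the estimate \eqref{eq:2.70} of Proposition~\ref{cor:2.3}). This gives
\begin{align*}
\left\|T_\beta(t-\tau)\operatorname{div}(u\omega)(\tau)\right\|_{\dot W^{s,\infty}}\lesssim M_{s+1,\infty}(t-\tau)\left\|u(\tau)\omega(\tau)\right\|_{L^1}\lesssim t^{-\frac12}M_{s,\infty}(t)\left\|u(\tau)\omega(\tau)\right\|_{L^1}.
\end{align*}
The factor $\min\{1,|\beta|^{-1}(t-\tau)^{-3/2}\}$ is comparable to its value at $t$ because $t-\tau\in[\frac t2,t]$.

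It then remains to show that
\begin{align*}
t^{-\frac12}\int_0^{t/2}\left\|u(\tau)\omega(\tau)\right\|_{L^1}d\tau
\end{align*}
is bounded uniformly in $t$. An $L^1$ bound on $u\omega$ is not directly available, so I will instead stay at the $L^c$ level, as in \eqref{eq:5.15}. Concretely, I will first apply Bernstein/heat smoothing $e^{\frac12(t-\tau)\Delta}\colon L^c\to L^{p'}$, and only then apply the $L^{p'}\to L^\infty$ part via the dispersive estimate of Proposition~\ref{prop:2.2} with $p=\infty$ replaced by the pair $(p',p)$, followed by Bernstein from $L^p$ to $L^\infty$. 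The $L^c$ bound on $u\omega$ comes from H\"older, Sobolev and interpolation:
\begin{align*}
\left\|u\omega\right\|_{L^c}\lesssim \left\|\omega\right\|_{\dot W^{-1+\delta,p}}\left\|\omega\right\|_{L^2}^{1-\theta_0}\left\|\omega\right\|_{L^p}^{\theta_0}.
\end{align*}
I will bound $\left\|\omega\right\|_{L^2}$ by $\left\|\omega_0\right\|_{L^2}$ using Lemma~\ref{lem:5.1}, bound $\left\|\omega\right\|_{L^p}\le\Omega_0 M_0$ using Proposition~\ref{prop:5.2}, and integrate $\left\|\omega\right\|_{\dot W^{-1+\delta,p}}$ against $M_0^{\theta_0}$ by H\"older in time with the $Y_1$ norm. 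That integral was already shown finite and scale-correct in \eqref{eq:5.18}–\eqref{eq:5.20}. The powers of $t$ and $|\beta|$ are forced to match $M_{s,\infty}(t)$ by the scaling invariance in Remark~\ref{rem:1.3}. I will verify this exponent identity, analogous to the check $\frac12-\frac1c+1-\frac1{r_1}-\theta_0(\cdots)=0$, and absorb $\left\|\omega_0\right\|_{L^2}$ via \eqref{eq:5.21}.

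As a fallback for the time integrability near $\tau=0$, I will use the decay of $\left\|\omega(\tau)\right\|_{\dot W^{-1+\delta,p}}$ from Lemma~\ref{lem:5.4} together with $\Omega_0$, exactly as in \eqref{eq:5.87}–\eqref{eq:5.88}. That route gives $\left\|u\omega\right\|_{L^{p'}}\lesssim N_0\widetilde\Omega_0\,M_0\Omega_0$, whose time integral on $(0,\frac t2)$ is controlled by splitting at $|\beta|^{-2/3}$.

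The main obstacle I expect is the exponent bookkeeping in the dispersive factor. When the $L^{p'}$-to-$L^\infty$ passage goes through $L^p$, the decay obtained is $\min\{1,|\beta|^{-1+2/p}\cdots\}$ times Bernstein factors, not the full $|\beta|^{-1}t^{-3/2}$ of $M_{s,\infty}$. To recover the full $a=\infty$ dispersive rate, I will instead use Lemma~\ref{lem:2.1} directly on each Littlewood–Paley block, with $L^1$ input obtained from $\left\|P_k(u\omega)\right\|_{L^1}\le\left\|u\omega\right\|_{L^1}$. The remaining difficulty is then that $\left\|u\omega\right\|_{L^1}$ requires $u\in L^{2,\infty}$-type control. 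I will obtain this from $\left\|u\right\|_{L^{p''}}\left\|\omega\right\|_{L^{p''{}'}}$ with $\left\|u\right\|_{L^{q}}\lesssim\left\|\omega\right\|_{\dot W^{-1+\delta,p}}$ and $\omega\in L^1\cap L^p$ interpolated. Checking that this choice keeps all the exponent conditions in the admissible range is the step I am least sure of.
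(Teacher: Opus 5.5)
Your treatment of the linear term is exactly what the paper does. So is your first reduction of the Duhamel term: moving the divergence onto the kernel and using the $L^1\to\dot B^{s+1}_{\infty,1}$ bound from the proof of Proposition~\ref{cor:2.3} gives $t^{-\frac12}M_{s,\infty}(t)\|u(\tau)\omega(\tau)\|_{L^1}$ for $0<\tau<\frac t2$. The gap is in the remaining step, $\int_0^{t/2}\|u(\tau)\omega(\tau)\|_{L^1}\,d\tau\lesssim t^{\frac12}$.

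Your suspicion about the $L^c$ route and the $L^{p'}$ fallback is correct. With input in $L^{p'}$, the dispersive gain into $L^\infty$ is only $(|\beta|t^{\frac32})^{-(1-\frac2p)}$. Relative to $M_{s,\infty}(t)$, this is a loss of $(|\beta|^{\frac13}t^{\frac12})^{\frac6p}$ for large $t$. The integral over $(0,\frac t2)$ cannot recover this, because it does not decay in $t$. At most the explicit prefactor $t^{\frac12-\frac1c}$ helps, and that is less than one power of $|\beta|^{\frac13}t^{\frac12}$. Scaling (Remark~\ref{rem:1.3}) only tells you the bound is $M_{s,\infty}(t)$ times some function of $|\beta|^{\frac13}t^{\frac12}$; it does not make that function bounded.

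Your final route does not close either. Writing $\|u\omega\|_{L^1}\le\|u\|_{L^q}\|\omega\|_{L^{q'}}$ with $\|u\|_{L^q}\lesssim\|\omega\|_{\dot W^{-1+\delta,p}}$ forces $\frac1q=\frac1p-\frac\delta2$, hence $\frac1{q'}=\frac23+\frac\delta3>\frac12$. So you need $\omega(\tau)\in L^{q'}$ with $q'<2$, which you want to get by interpolating between $L^1$ and $L^p$. The paper controls $\omega(\tau)$ only in $L^r$ with $r\geq2$, and an $L^1$ bound is false. The Fourier transform $e^{-\tau|\xi|^2+i\tau\beta\xi_1/|\xi|^2}\widehat{\omega_0}(\xi)$ of $T_\beta(\tau)\omega_0$ has no limit as $\xi\to0$ when $\int_{\mathbb R^2}\omega_0(y)\,dy\neq0$. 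The Fourier transform of the Duhamel term, by contrast, is $O(|\xi|)$ there. Hence $\omega(\tau)\notin L^1$, which is consistent with Theorem~\ref{thm:1.3} covering only $a\geq2$.

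Contrary to your remark, an $L^1$ bound on $u\omega$ is available; the missing idea is to put the low integrability on $u$ rather than on $\omega$:
\[
\|u\omega\|_{L^1}\le\|u\|_{L^2}\|\omega\|_{L^2}=\|\omega\|_{\dot H^{-1}}\|\omega\|_{L^2}.
\]
Here $\sup_{\tau>0}\|\omega(\tau)\|_{\dot H^{-1}}<\infty$ by Lemma~\ref{lem:5.7}, and $\|\omega(\tau)\|_{L^2}\le\Omega_{0,2}(t)\tau^{-\frac12}$ by Proposition~\ref{prop:5.6}. Then $\int_0^{t/2}\tau^{-\frac12}\,d\tau\sim t^{\frac12}$ exactly cancels the prefactor $t^{-\frac12}$. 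The energy bound of Lemma~\ref{lem:5.1} would not be enough here: a merely bounded $\|\omega(\tau)\|_{L^2}$ leaves an extra factor $t^{\frac12}$.
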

\proof
	For the first term on the left-hand side of the inequality, it suffices to apply Proposition~\ref{cor:2.3} to obtain
			$\left\|T_\beta(t)\omega_0\right\|_{\dot W^{s, \infty}}
			\lesssim M_{s, \infty}(t)\left\|\omega_0\right\|_{L^1}.$

	For the second term Proposition~\ref{cor:2.3} yields
		\begin{align}
			&\int^\frac t2_0\left\|T_\beta(t-\tau)\operatorname{div}(u(\tau)\omega(\tau))\right\|_{\dot W^{s, \infty}}\, d\tau\label{eq:5.197}\\
			&\lesssim \int^\frac t2_0 (t-\tau)^{-\frac12}M_{s, \infty}(t-\tau)\left\|u(\tau)\omega(\tau)\right\|_{L^1}\, d\tau\label{eq:5.198}\\
			&\lesssim t^{-\frac12}M_{s, \infty}(t)\int^\frac t2_0 \left\|u(\tau)\right\|_{L^2}\left\|\omega(\tau)\right\|_{L^2}\, d\tau\label{eq:5.199}\\
			&\lesssim t^{-\frac12}M_{s, \infty}(t)\left\|\omega\right\|_{L^\infty([0, \infty); \dot H^{-1})}\Omega_{0, 2}(t)\int^\frac t2_0 \tau^{-\frac12}\, d\tau\label{eq:5.200}\\
			&\lesssim M_{s, \infty}(t)\left\|\omega\right\|_{L^\infty([0, \infty); \dot H^{-1})}\Omega_{0, 2}(t).\label{eq:5.201}
		\end{align}
	This completes the proof.
\qed

We prepare the estimates of the auxiliary 
	$\dot W^{s, 4}$-norms
for
	$s\geq0$
in order to obtain the estimate for 
	$\displaystyle \int_\frac t2^t\left\|T_\beta(t-\tau)\operatorname{div}(u(\tau)\omega(\tau))\right\|_{\dot W^{s, \infty}}\, d\tau.$
\begin{lemma}\label{lem:5.9}
	Assume that
		$\omega_0\in \dot H^{-1+\delta}\cap \dot H^{3-\frac2p}\cap \dot W^{-1+\delta, p'}\cap L^1$
	satisfies the conditions in Proposition~\ref{prop:5.2}, Proposition~\ref{prop:5.3}, and Lemma~\ref{lem:5.4}. Then, for any 
		$s\geq0$
	there is a constant
		$C>0$
	dependent only on 
		$s$, 
		$\mathcal A$, 
		$\left\|\omega_0\right\|_{L^1}, $
		$|\beta|^{-\frac\delta3}\left\|\omega_0\right\|_{\dot H^{-1+\delta}}, $
		$|\beta|^{-\frac{1+2\delta}{9}}\left\|\omega_0\right\|_{\dot H^{\frac23(-1+\delta)}}, $
		$|\beta|^{-\frac{1+\delta}3}\left\|\omega_0\right\|_{\dot H^{\delta}}, $
		$|\beta|^{-\frac{4+2\delta}9}\left\|\omega_0\right\|_{\dot H^{\frac{1+2\delta}{3}}}, $
		$|\beta|^{-\frac{2}3}\left\|\omega_0\right\|_{\dot H^{1}},$
		$|\beta|^{-1}\left\|\omega_0\right\|_{\dot H^{2}},$
	and 
		$|\beta|^{-\frac13(1+\delta-\frac2{p'})}\left\|\omega_0\right\|_{\dot W^{-1+\delta, p'}}$
	satisfying
		$\Omega_{s, 4}(t)\leq C$
	for all
		$t>0$.
\end{lemma}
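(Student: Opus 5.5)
We bound $\left\|\omega(t)\right\|_{\dot W^{s,4}}$ by the same three-term Duhamel decomposition used throughout this section:
\begin{align*}
\left\|\omega(t)\right\|_{\dot W^{s,4}}
\leq \left\|T_\beta(t)\omega_0\right\|_{\dot W^{s,4}}
+\int_0^{\frac t2}\left\|T_\beta(t-\tau)\operatorname{div}(u\omega)\right\|_{\dot W^{s,4}}\, d\tau
+\int_{\frac t2}^t\left\|T_\beta(t-\tau)\operatorname{div}(u\omega)\right\|_{\dot W^{s,4}}\, d\tau .
\end{align*}
We estimate each term in units of $M_{s,4}(t)$.

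\textbf{Linear term.} Proposition~\ref{cor:2.3} with $a=4$ gives $\left\|T_\beta(t)\omega_0\right\|_{\dot W^{s,4}}\lesssim M_{s,4}(t)\left\|\omega_0\right\|_{L^1}$ directly.

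\textbf{Early-time integral.} We apply Proposition~\ref{cor:2.3} to $\operatorname{div}(u\omega)$. Since $t-\tau\sim t$ on $(0,\frac t2)$, this term is bounded by $t^{-\frac12}M_{s,4}(t)\int_0^{t/2}\left\|u\right\|_{L^2}\left\|\omega\right\|_{L^2}\,d\tau$. By Lemma~\ref{lem:5.7} and Proposition~\ref{prop:5.6} with $s=0$, we have $\left\|u(\tau)\right\|_{L^2}\lesssim 1$ and $\left\|\omega(\tau)\right\|_{L^2}\lesssim\tau^{-\frac12}$. The integral is therefore $O(t^{1/2})$, exactly as in Lemma~\ref{lem:5.8}.

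\textbf{Late-time integral.} This is the main obstacle. Here we need smoothing, and we exploit the dispersive gain without losing integrability at $\tau=t$. We first use $\dot B^0_{4,2}$ and Proposition~\ref{prop:2.2} with $p=4$, which gives the kernel $(t-\tau)^{-\frac12-\frac12}\min\{1,|\beta|^{-\frac12}(t-\tau)^{-\frac34}\}$ acting on $\left\|(-\Delta)^{\frac s2}(u\omega)\right\|_{L^{4/3}}$. This kernel is not integrable at $\tau=t$. We therefore trade some derivatives, putting $(-\Delta)^{\frac{s+1-\alpha}{2}}$ on the nonlinearity for small $\alpha>0$, so that the kernel becomes $(t-\tau)^{-1+\frac\alpha2}$. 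Next we use the fractional Leibniz rule (Proposition~\ref{prop:2.4}) with $L^{4/3}=L^4\cdot L^2$ or $L^{4/3}=L^{p}\cdot L^{q}$, and put the high derivative on a factor controlled by the already established $\dot W^{\sigma,p}$ and $\dot H^\sigma$ decay (Propositions~\ref{prop:5.5}, \ref{prop:5.6}, Lemma~\ref{lem:5.4}). Examples are $\left\|u\right\|_{L^4}\left\|\omega\right\|_{\dot H^{s+1-\alpha}}$ and $\left\|u\right\|_{\dot W^{s+1-\alpha,4}}\left\|\omega\right\|_{L^2}$, where $u\in L^4$ comes from $\omega\in\dot W^{-1,4}$, interpolated between $\dot H^{-1}$ and $\dot W^{-1+\delta,p}$ bounds. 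This yields the algebraic rate $t^{-\frac s2-\frac34}\cdot t^{-\frac12\cdot\frac12}$-type bounds, i.e. the heat part of $M_{s,4}$.

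To recover the dispersive factor $|\beta|^{-\frac12}t^{-\frac34}$ when $|\beta|^{\frac13}t^{\frac12}\ge1$, we use the second branch of the min, with the decay of the $\dot W^{\cdot,p}$ factors, which already carry $|\beta|^{-1+\frac2p}t^{-\frac32(1-\frac2p)}$. This forces us to measure some factors in $\dot H^\sigma$ at the specific indices $\frac23(-1+\delta)$, $\frac{1+2\delta}3$, $1$ and $2$ that appear in the constant. These are the places where Strichartz/interpolation bounds on $\omega_0$ (via Lemma~\ref{lem:4.1} and Proposition~\ref{prop:2.3} with $p=4$) enter, giving scale-invariant constants of the form $|\beta|^{-\frac{1+\sigma}3}\left\|\omega_0\right\|_{\dot H^\sigma}$.

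\textbf{Absorption.} Any remaining term that is linear in $\Omega_{s,4}$, for instance when $s$ derivatives fall on a $\dot W^{s,4}$ factor, is multiplied by a small quantity. That quantity is either $|\beta|^{-\frac\delta3}\left\|\omega_0\right\|_{\dot H^{-1+\delta}}$ or a positive power of it, obtained by interpolating the two competing bounds as in \eqref{eq:5.126}--\eqref{eq:5.129}. We absorb it into the left-hand side, using that $\Omega_{s,4}$ is finite and continuous by Theorem~\ref{thm:1.2} and the Sobolev embedding. For large $s$ we induct on $s$ in steps of $\alpha$, as in Proposition~\ref{prop:5.5}.
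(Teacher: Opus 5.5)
Your linear term and your integral over $(0,\frac t2)$ are fine. The latter is the argument of Lemma~\ref{lem:5.8} run at $a=4$, equivalent to the paper's interpolation of Proposition~\ref{prop:5.6} with Lemma~\ref{lem:5.8}.

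\textbf{The gap is the integral over $(\frac t2,t)$.} Set $X=|\beta|^{\frac13}t^{\frac12}$, so the target is $t^{-\frac s2-\frac34}\min\{1,X^{-\frac32}\}$. The heat branch is immediate from $\dot W^{s+\frac2p-\frac12,p}\hookrightarrow\dot W^{s,4}$ and Propositions~\ref{prop:5.2}, \ref{prop:5.3}, \ref{prop:5.5}. Everything therefore hinges on the factor $X^{-\frac32}$, which you assert but never derive, and your ingredients do not give it.
\begin{itemize}
\item With Proposition~\ref{prop:2.2} at $p=4$ and $s+1-\alpha$ derivatives on $u\omega$, the kernel is $(t-\tau)^{-\frac12-\frac\alpha2}\min\{1,|\beta|^{-\frac12}(t-\tau)^{-\frac34}\}$, not $(t-\tau)^{-1+\frac\alpha2}$. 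Its second branch alone is not integrable at $\tau=t$. Integrating the minimum gains only $X^{-1+\alpha}$ over the heat bound.
\item In $\|u\|_{L^4}\|\omega\|_{\dot H^{s+1-\alpha}}$ the $L^2$-based factor has no dispersive decay. So $\|u\|_{L^4}$ must supply a further $X^{-\frac12-\alpha}$.
\item Your source for $u\in L^4$ does not exist. $\dot H^{-1}$ and $\dot W^{-1+\delta,p}$ have Sobolev index $\sigma-\frac2q$ equal to $-2$ and $-\frac53+\frac{2\delta}3$. Both are below the index $-\frac32$ of $\dot W^{-1,4}$, so no interpolation between them followed by embedding reaches $\dot W^{-1,4}$.
\item Interpolating $\dot H^{-1}$ (Lemma~\ref{lem:5.7}) with $L^p$ does reach $\dot W^{-1,4}$. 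But it gives only $X^{-3(1-\frac2p)\frac{p}{4(p-1)}}$, e.g.\ $X^{-\frac38}$ at $p=3$, for a total of $X^{-\frac{11}8+\alpha}$, which falls short of $X^{-\frac32}$.
\item Interpolating $\dot W^{-1+\delta,p}$ with $L^p$ through $\widetilde\Omega_0\lesssim|\beta|^{\frac13(1+\delta-\frac2{p'})}$ (Lemma~\ref{lem:5.4}) decays fast enough when $X\ge1$. However, relative to the heat rate it carries a factor $X^{-\theta(\frac13-\frac{4\delta}3)}$, with $\theta=(\frac32-\frac2p)/(1-\delta)$, and this blows up as $X\to0$.
\end{itemize}
So no single estimate of this kind produces $M_{s,4}$.

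\textbf{The missing idea is to prove two estimates of opposite character and combine them.} This is how the paper argues.
\begin{itemize}
\item It uses only heat smoothing $L^2\to L^4$, i.e.\ the kernel $(t-\tau)^{-\frac14}$.
\item It splits $L^2=L^{\frac{2p}{p-2}}\cdot L^p$ and applies Sobolev plus interpolation, e.g.\ $\|(-\Delta)^{\frac s2}u\|_{L^{\frac{2p}{p-2}}}\lesssim\|\omega\|_{\dot W^{3s,p}}^{\frac13}\|\omega\|_{\dot W^{-1+\delta,p}}^{\frac23}$.
\item Each resulting term is first bounded by H\"older in time against $\|\omega\|_{L^{r_1}([0,\infty);\dot W^{1,p})}$ or $\|\omega\|_{L^{r_1}([0,\infty);\dot W^{\frac23(-1+\delta),p})}$ from Lemma~\ref{lem:4.1}. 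This gives a positive power of $X$. It is exactly where $\|\omega_0\|_{\dot H^1}$, $\|\omega_0\|_{\dot H^2}$, $\|\omega_0\|_{\dot H^{\frac23(-1+\delta)}}$ and $\|\omega_0\|_{\dot H^{\frac{1+2\delta}3}}$ enter; nothing in your sketch produces these indices.
\item Each term is also bounded pointwise through $\widetilde\Omega_0,\Omega_0,\Omega_1,\Omega_{1+s},\Omega_{3s}$. This gives $X^{-8+\frac{52}{3p}}$, and $-8+\frac{52}{3p}<-\frac32$.
\item Geometric means of the two bounds give both branches of $M_{s,4}$.
\end{itemize}
Since $\Omega_{s,4}$ never appears on the right, neither absorption nor induction in $s$ is needed. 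Absorption would in any case require an $s$-dependent smallness condition that the lemma does not assume.

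Your route could instead be closed by a regime split. Use the embedding above when $X\le1$, and the $\widetilde\Omega_0$ bound for $\|u\|_{L^4}$ when $X\ge1$. The $\dot W^{s-\alpha,4}$ term then follows from Sobolev and $\Omega_{s-\alpha+\frac2p-\frac12}$. That version would not even need the extra $\dot H^\sigma$ norms. As written, however, the late-time estimate is not established.
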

\proof
	Proposition~\ref{cor:2.3} gives the estimate
		$\left\|T_\beta(t)\omega_0\right\|_{\dot W^{s, 4}}\lesssim M_{s, 4}(t)\left\|\omega_0\right\|_{L^1}.$
	By interpolating the estimates obtained by Proposition~\ref{prop:5.6} and Lemma~\ref{lem:5.8}, we also see that
	\begin{align}
		&\int^\frac t2_0\left\|T_\beta(t-\tau)\operatorname{div}(u(\tau)\omega(\tau))\right\|_{\dot W^{s, 4}}\, d\tau\label{eq:5.202}\\
		&\lesssim M_{s, 4}(t)\left(\Omega_0(t)|\beta|^{-\frac\delta3}\left\|\omega(\tau)\right\|_{\dot H^{-1+\delta}}\right)^\frac12\left(\left\|\omega\right\|_{L^\infty([0, \infty); \dot H^{-1})}\Omega_{0, 2}(t)\right)^\frac12\label{eq:5.203}.
	\end{align}
	Therefore, it suffices to show the estimate of
		\begin{align}
			\int^t_\frac t2\left\|T_\beta(t-\tau)\operatorname{div}(u(\tau)\omega(\tau))\right\|_{\dot W^{s, 4}}\, d\tau\label{eq:5.204}.
		\end{align}
	By Proposition~\ref{prop:2.4}, we have
		\begin{align}
			&\int^t_\frac t2\left\|T_\beta(t-\tau)\operatorname{div}(u(\tau)\omega(\tau))\right\|_{\dot W^{s, 4}}\, d\tau\label{eq:5.205}\\
			&\lesssim \int^t_\frac t2(t-\tau)^{-(\frac12-\frac14)}\left\|(-\Delta)^{\frac s2}(u(\tau)\cdot\nabla\omega(\tau))\right\|_{L^2}\, d\tau\label{eq:5.206}\\
			&\lesssim \int^t_\frac t2(t-\tau)^{-\frac14}\left\|(-\Delta)^{\frac s2}u(\tau)\right\|_{L^{\frac{2p}{p-2}}}\left\|\nabla\omega(\tau)\right\|_{L^p}\, d\tau\label{eq:5.207}\\
			&+  \int^t_\frac t2(t-\tau)^{-\frac14}\left\|u(\tau)\right\|_{L^{\frac{2p}{p-2}}}\left\|(-\Delta)^{\frac s2}\nabla\omega(\tau)\right\|_{L^p}\, d\tau.\label{eq:5.208}
		\end{align}
	Since we can write
		$\frac{p-2}{2p}=\frac1p-\frac12(\frac4p-1)$
	and the condition
		$\frac4p-1>0$
	holds, the Sobolev embedding gives the estimate
		\begin{align}
			\left\|(-\Delta)^{\frac s2}u(\tau)\right\|_{L^{\frac{2p}{p-2}}}
			\lesssim \left\|(-\Delta)^{\frac s2}u(\tau)\right\|_{\dot W^{\frac4p-1, p}}
			\lesssim \left\|\omega(\tau)\right\|_{\dot W^{s+\frac4p-2, p}}.\label{eq:5.209}
		\end{align}
	Noting the equality
		$s+\frac4p-2
		=\frac{3s}3+\frac23(-1+\delta)$, 
	we can also use the interpolation inequality to obtain
		\begin{align}
			\left\|(-\Delta)^{\frac s2}u(\tau)\right\|_{L^{\frac{2p}{p-2}}}
			\lesssim \left\|\omega(\tau)\right\|_{\dot W^{3s, p}}^\frac13\left\|\omega(\tau)\right\|_{\dot W^{-1+\delta, p}}^\frac23.\label{eq:5.210}
		\end{align}
	Therefore, for \eqref{eq:5.207}, we have
		\begin{align}
			& \int^t_\frac t2(t-\tau)^{-\frac14}\left\|(-\Delta)^{\frac s2}u(\tau)\right\|_{L^{\frac{2p}{p-2}}}\left\|\nabla\omega(\tau)\right\|_{L^p}\, d\tau\label{eq:5.211}\\
			&\lesssim \int^t_\frac t2(t-\tau)^{-\frac14}\left\|\omega(\tau)\right\|_{\dot W^{3s, p}}^\frac13\left\|\omega(\tau)\right\|_{\dot W^{-1+\delta, p}}^\frac23\left\|\nabla\omega(\tau)\right\|_{L^p}\, d\tau\label{eq:5.212}\\
			&\lesssim M_{3s}(t)^\frac13\Omega_{3s}(t)^\frac13 N_0(t)^\frac23\widetilde\Omega_0(t)^\frac23 t^{1-\frac1{r_1}-\frac14}\left\|\omega\right\|_{L^{r_1}([0, \infty); \dot W^{1, p})}\label{eq:5.213}\\
			&\lesssim |\beta|^{\frac29(1+\delta-\frac2{p'})+\frac13(3-\frac2p-\frac2{r_1})}t^{\frac34-\frac1{r_1}-\frac{s}{2}-\frac1{3p}-1+\frac2p}\Omega_{3s}(t)^\frac13\left(|\beta|^{-\frac13(1+\delta-\frac2{p'})}\widetilde\Omega_0(t)\right)^\frac23\label{eq:5.214}\\
			&\quad\cdot
			\left(|\beta|^{-\frac{\delta}{3}}\left\|\omega_0\right\|_{\dot H^{-1+\delta}}
			+|\beta|^{-\frac{2}{3}}\left\|\omega_0\right\|_{\dot H^{1}}
			+|\beta|^{-\frac{\delta}{3}}\left\|\omega_0\right\|_{\dot H^{-1+\delta}}
			|\beta|^{-1}\left\|\omega_0\right\|_{\dot H^2}\right)\label{eq:5.215}\\
			&=\left(|\beta|^\frac13t^\frac12\right)^{\frac{10}{3p}+1-\frac2{r_1}}t^{-\frac s2-\frac34}\Omega_{3s}(t)^\frac13\left(|\beta|^{-\frac13(1+\delta-\frac2{p'})}\widetilde\Omega_0(t)\right)^\frac23\label{eq:5.216}\\
			&\quad\cdot
			\left(|\beta|^{-\frac{\delta}{3}}\left\|\omega_0\right\|_{\dot H^{-1+\delta}}
			+|\beta|^{-\frac{2}{3}}\left\|\omega_0\right\|_{\dot H^{1}}
			+|\beta|^{-\frac{\delta}{3}}\left\|\omega_0\right\|_{\dot H^{-1+\delta}}
			|\beta|^{-1}\left\|\omega_0\right\|_{\dot H^2}\right)\label{eq:5.217}.
		\end{align}
	We can also deduce 
		\begin{align}
			& \int^t_\frac t2(t-\tau)^{-\frac14}\left\|(-\Delta)^{\frac s2}u(\tau)\right\|_{L^{\frac{2p}{p-2}}}\left\|\nabla\omega(\tau)\right\|_{L^p}\, d\tau\label{eq:5.218}\\
			&\lesssim \int^t_\frac t2(t-\tau)^{-\frac14}\left\|\omega(\tau)\right\|_{\dot W^{3s, p}}^\frac13\left\|\omega(\tau)\right\|_{\dot W^{-1+\delta, p}}^\frac23\left\|\nabla\omega(\tau)\right\|_{L^p}\, d\tau\label{eq:5.219}\\
			&\lesssim M_{3s}(t)^\frac13\Omega_{3s}(t)^\frac13 N_0(t)^\frac23\widetilde\Omega_0(t)^\frac23 M_1(t)\Omega_1(t)t^{1-\frac14}\label{eq:5.220}\\
			&\lesssim |\beta|^{\frac29(1+\delta-\frac2{p'})-2+\frac4p}t^{\frac34-\frac s2-\frac1{3p}-\frac52(1-\frac2p)-\frac12-1+\frac1p-\frac32(1-\frac2p)}\label{eq:5.221}\\
			&\quad\cdot\Omega_{3s}(t)^\frac13\left(|\beta|^{-\frac13(1+\delta-\frac2{p'})}\widetilde\Omega_0(t)\right)^\frac23\Omega_1(t)\label{eq:5.222}\\
			&=\left(|\beta|^\frac13t^\frac12\right)^{-8+\frac{52}{3p}}t^{-\frac s2-\frac34}
			\Omega_{3s}(t)^\frac13\left(|\beta|^{-\frac13(1+\delta-\frac2{p'})}\widetilde\Omega_0(t)\right)^\frac23\Omega_1(t).\label{eq:5.223}
		\end{align}
	In our setting, the inequality
		$-8+\frac{52}{3p}<-\frac32$
	holds. Thus by interpolating these estimates, we obtain
		\begin{align}
			& \int^t_\frac t2(t-\tau)^{-\frac14}\left\|(-\Delta)^{\frac s2}u(\tau)\right\|_{L^{\frac{2p}{p-2}}}\left\|\nabla\omega(\tau)\right\|_{L^p}\, d\tau\label{eq:5.224}\\
			&\lesssim t^{-\frac s2-\frac34}\Omega_{3s}(t)^\frac13\left(|\beta|^{-\frac13(1+\delta-\frac2{p'})}\widetilde\Omega_0(t)\right)^\frac23\Omega_1(t)^\frac{1+\frac{10}{3p}-\frac2{r_1}}{9-\frac{14}{p}-\frac2{r_1}}\label{eq:5.225}\\
			&\quad\cdot
			\left(|\beta|^{-\frac{\delta}{3}}\left\|\omega_0\right\|_{\dot H^{-1+\delta}}
			+|\beta|^{-\frac{2}{3}}\left\|\omega_0\right\|_{\dot H^{1}}
			+|\beta|^{-\frac{\delta}{3}}\left\|\omega_0\right\|_{\dot H^{-1+\delta}}
			|\beta|^{-1}\left\|\omega_0\right\|_{\dot H^2}\right)^\frac{8-\frac{52}{3p}}{9-\frac{14}{p}-\frac2{r_1}}\label{eq:5.226}
		\end{align}
	and
		\begin{align}
			& \int^t_\frac t2(t-\tau)^{-\frac14}\left\|(-\Delta)^{\frac s2}u(\tau)\right\|_{L^{\frac{2p}{p-2}}}\left\|\nabla\omega(\tau)\right\|_{L^p}\, d\tau\label{eq:5.227}\\
			&\lesssim |\beta|^{-\frac12}t^{-\frac s2-\frac32}\Omega_{3s}(t)^\frac13\left(|\beta|^{-\frac13(1+\delta-\frac2{p'})}\widetilde\Omega_0(t)\right)^\frac23\Omega_1(t)^\frac{\frac{5}2+\frac{10}{3p}-\frac2{r_1}}{9-\frac{14}{p}-\frac2{r_1}}\label{eq:5.228}\\
			&\quad\cdot
			\left(|\beta|^{-\frac{\delta}{3}}\left\|\omega_0\right\|_{\dot H^{-1+\delta}}
			+|\beta|^{-\frac{2}{3}}\left\|\omega_0\right\|_{\dot H^{1}}
			+|\beta|^{-\frac{\delta}{3}}\left\|\omega_0\right\|_{\dot H^{-1+\delta}}
			|\beta|^{-1}\left\|\omega_0\right\|_{\dot H^2}\right)^\frac{\frac{13}2-\frac{52}{3p}}{9-\frac{14}{p}-\frac2{r_1}}\label{eq:5.229}.
		\end{align}
	Combining them, we have the estimate for \eqref{eq:5.207}.
	
	For \eqref{eq:5.208}, we have
		\begin{align}
			&\int^t_\frac t2(t-\tau)^{-\frac14}\left\|u(\tau)\right\|_{L^{\frac{2p}{p-2}}}\left\|(-\Delta)^{\frac s2}\nabla\omega(\tau)\right\|_{L^p}\, d\tau\label{eq:5.230}\\
			&\lesssim M_{1+s}(t)\Omega_{1+s}(t)\left\|\omega\right\|_{L^{r_1}([0, \infty); \dot W^{\frac23(-1+\delta), p})}t^{1-\frac1{r_1}-\frac14}\label{eq:5.231}\\
			&\lesssim |\beta|^{\frac13(1+\delta+\frac13(1-\delta)-\frac2p-\frac2{r_1})}t^{-\frac s2-\frac12-1+\frac1p+\frac34-\frac1{r_1}}\Omega_{1+s}(t)\label{eq:5.232}\\
			&\quad\cdot\bigg(|\beta|^{-\frac{\delta}{3}}\left\|\omega_0\right\|_{\dot H^{-1+\delta}}
			+|\beta|^{-\frac{\delta+\frac13(1-\delta)}{3}}\left\|\omega_0\right\|_{\dot H^{-1+\delta+\frac13(1-\delta)}}\\
			&\qquad\qquad+|\beta|^{-\frac{\delta}{3}}\left\|\omega_0\right\|_{\dot H^{-1+\delta}}
			|\beta|^{-\frac13(1+\delta+\frac13(1-\delta))}\left\|\omega_0\right\|_{\dot H^{\delta+\frac13(1-\delta)}}\bigg)\label{eq:5.233}\\
			&= \left(|\beta|^\frac13t^\frac12\right)^{\frac2p-\frac2{r_1}}t^{-\frac s2-\frac34}\Omega_{1+s}(t)\label{eq:5.234}\\
			&\quad\cdot
			\bigg(|\beta|^{-\frac{\delta}{3}}\left\|\omega_0\right\|_{\dot H^{-1+\delta}}
			+|\beta|^{-\frac{1+2\delta}{9}}\left\|\omega_0\right\|_{\dot H^{\frac23(-1+\delta)}}+|\beta|^{-\frac{\delta}{3}}\left\|\omega_0\right\|_{\dot H^{-1+\delta}}
			|\beta|^{-\frac{4+2\delta}9}\left\|\omega_0\right\|_{\dot H^{\frac{1+2\delta}{3}}}\bigg).\notag
		\end{align}
	Note that the condition
		$\frac1{r_1}<\frac12-\frac1p+\frac\delta2$
	implies 
		$\frac1p>\frac1{r_1}.$
	We also have
		\begin{align}
			&\int^t_\frac t2(t-\tau)^{-\frac14}\left\|u(\tau)\right\|_{L^{\frac{2p}{p-2}}}\left\|(-\Delta)^{\frac s2}\nabla\omega(\tau)\right\|_{L^p}\, d\tau\label{eq:5.236}\\
			&\lesssim \int^t_\frac t2(t-\tau)^{-\frac14}\left\|\omega(\tau)\right\|_{L^{p}}^\frac13\left\|\omega(\tau)\right\|_{\dot W^{-1+\delta, p}}^\frac23\left\|\omega(\tau)\right\|_{\dot W^{s+1, p}}\, d\tau\label{eq:5.237}\\
			&\lesssim M_0(t)^\frac13\Omega_0(t)^\frac13N_0(t)^\frac23\widetilde\Omega_0(t)^\frac23M_{1+s}(t)\Omega_{1+s}(t)t^\frac34\label{eq:5.238}\\
			&\lesssim |\beta|^{\frac29(1+\delta-\frac2{p'})-2+\frac4p}t^{\frac34-\frac1{3p}-\frac52(1-\frac2p)-\frac s2-\frac12-1+\frac1p-\frac32(1-\frac2p)}\label{eq:5.239}\\
			&\quad\cdot\Omega_0(t)^\frac13\left(|\beta|^{-\frac13(1+\delta-\frac2{p'})}\widetilde\Omega_0(t)\right)^\frac23\Omega_{1+s}(t)\label{eq:5.240}\\
			&=\left(|\beta|^\frac13t^\frac12\right)^{-8+\frac{52}{3p}}t^{-\frac s2-\frac34}\Omega_0(t)^\frac13\left(|\beta|^{-\frac13(1+\delta-\frac2{p'})}\widetilde\Omega_0(t)\right)^\frac23\Omega_{1+s}(t).\label{eq:5.241}
		\end{align}
	The interpolation of these two estimates yields
		\begin{align}
			&\int^t_\frac t2(t-\tau)^{-\frac14}\left\|u(\tau)\right\|_{L^{\frac{2p}{p-2}}}\left\|(-\Delta)^{\frac s2}\nabla\omega(\tau)\right\|_{L^p}\, d\tau\label{eq:5.242}\\
			&\lesssim t^{-\frac s2-\frac34}\Omega_{1+s}(t)\left(\Omega_0(t)^\frac13\left(|\beta|^{-\frac13(1+\delta-\frac2{p'})}\widetilde\Omega_0(t)\right)^\frac23\right)^\frac{\frac2p-\frac2{r_1}}{8-\frac{46}{3p}-\frac2{r_1}}\label{eq:5.243}\\
			&\quad\cdot\bigg(|\beta|^{-\frac{\delta}{3}}\left\|\omega_0\right\|_{\dot H^{-1+\delta}}
			+|\beta|^{-\frac{1+2\delta}{9}}\left\|\omega_0\right\|_{\dot H^{\frac23(-1+\delta)}}\\
			&\phantom{\bigg(|\beta|^{-\frac{\delta}{3}}\left\|\omega_0\right\|_{\dot H^{-1+\delta}}}+|\beta|^{-\frac{\delta}{3}}\left\|\omega_0\right\|_{\dot H^{-1+\delta}}
			|\beta|^{-\frac{4+2\delta}9}\left\|\omega_0\right\|_{\dot H^{\frac{1+2\delta}{3}}}\bigg)^\frac{8-\frac{52}{3p}}{8-\frac{46}{3p}-\frac2{r_1}}\label{eq:5.244}
		\end{align}
	and
		\begin{align}
			&\int^t_\frac t2(t-\tau)^{-\frac14}\left\|u(\tau)\right\|_{L^{\frac{2p}{p-2}}}\left\|(-\Delta)^{\frac s2}\nabla\omega(\tau)\right\|_{L^p}\, d\tau\label{eq:5.245}\\
			&\lesssim |\beta|^{-\frac12}t^{-\frac s2-\frac32}\Omega_{1+s}(t)\left(\Omega_0(t)^\frac13\left(|\beta|^{-\frac13(1+\delta-\frac2{p'})}\widetilde\Omega_0(t)\right)^\frac23\right)^\frac{\frac{3}2+\frac{2}{p}-\frac2{r_1}}{8-\frac{46}{3p}-\frac2{r_1}}\label{eq:5.246}\\
			&\quad\cdot\bigg(|\beta|^{-\frac{\delta}{3}}\left\|\omega_0\right\|_{\dot H^{-1+\delta}}
			+|\beta|^{-\frac{1+2\delta}{9}}\left\|\omega_0\right\|_{\dot H^{\frac23(-1+\delta)}}\\
			&\phantom{\bigg(|\beta|^{-\frac{\delta}{3}}\left\|\omega_0\right\|_{\dot H^{-1+\delta}}}+|\beta|^{-\frac{\delta}{3}}\left\|\omega_0\right\|_{\dot H^{-1+\delta}}
			|\beta|^{-\frac{4+2\delta}9}\left\|\omega_0\right\|_{\dot H^{\frac{1+2\delta}{3}}}\bigg)^\frac{\frac{13}2-\frac{52}{3p}}{8-\frac{46}{3p}-\frac2{r_1}}.\label{eq:5.247}
		\end{align}
		These are sufficient to obtain the estimate of \eqref{eq:5.208}.
\qed

\begin{proposition}\label{prop:5.10}
	Assume that
		$0\leq \delta<\frac{1}{13}$
	and 
		$\omega_0\in \dot H^{-1+\delta}\cap \dot H^{3-\frac2p}\cap \dot W^{-1+\delta, p'}\cap L^1$
	satisfies the conditions in Proposition~\ref{prop:5.2}, Proposition~\ref{prop:5.3}, and Lemma~\ref{lem:5.4}. Then, for any 
		$s\geq0$
	there is a constant
		$C>0$
	dependent only on 
		$s, $
		$\mathcal A, $
		$\left\|\omega_0\right\|_{L^1}, $
		$|\beta|^{-\frac\delta3}\left\|\omega_0\right\|_{\dot H^{-1+\delta}}$, 
		$|\beta|^{-\frac{1+2\delta}{9}}\left\|\omega_0\right\|_{\dot H^{\frac23(-1+\delta)}}, $
		$|\beta|^{-\frac{1+\delta}3}\left\|\omega_0\right\|_{\dot H^{\delta}}$, 
		$|\beta|^{-\frac{4+2\delta}9}\left\|\omega_0\right\|_{\dot H^{\frac{1+2\delta}{3}}}, $
		$|\beta|^{-\frac{2}3}\left\|\omega_0\right\|_{\dot H^{1}},$
		$|\beta|^{-\frac{1}{3}(\frac{11}4-\frac2p)}\left\|\omega_0\right\|_{\dot H^{\frac74-\frac2p}}, $
		$|\beta|^{-1}\left\|\omega_0\right\|_{\dot H^2}, $
		$|\beta|^{-\frac{1}{3}(\frac{15}4-\frac2p)}\left\|\omega_0\right\|_{\dot H^{\frac{11}4-\frac2p}}$
	and 
		$|\beta|^{-\frac13(1+\delta-\frac2{p'})}\left\|\omega_0\right\|_{\dot W^{-1+\delta, p'}}$
	satisfying
		\begin{align}
			\int^t_\frac t2\left\|T_\beta(t-\tau)\operatorname{div}(u(\tau)\omega(\tau))\right\|_{\dot W^{s, \infty}}\, d\tau
			\leq CM_{s, \infty}(t)\label{eq:5.248}
		\end{align}
	for all
		$t>0$.
\end{proposition}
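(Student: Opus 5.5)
We estimate the near-diagonal Duhamel piece by moving the $L^\infty$ norm onto $L^4$-based quantities, which Lemma~\ref{lem:5.9} already controls. We use the smoothing of $T_\beta(t-\tau)$ only for the last factor.

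Write $\operatorname{div}(u\omega)=u\cdot\nabla\omega$. By Bernstein's inequality applied to $e^{(t-\tau)\Delta}$, we have $\|T_\beta(t-\tau)g\|_{\dot W^{s,\infty}}\lesssim (t-\tau)^{-\frac12}\|g\|_{\dot W^{s,4}}$ on the heat scale; we ignore the dispersive gain here because the window $\tau\in(t/2,t)$ is short. For the product we use Proposition~\ref{prop:2.4} with $\frac14=\frac14+0$, or more safely $\frac14=\frac18+\frac18$. This gives
\begin{align*}
\|(-\Delta)^{\frac s2}(u\cdot\nabla\omega)\|_{L^4}\lesssim \|(-\Delta)^{\frac s2}u\|_{L^{8}}\|\nabla\omega\|_{L^{8}}+\|u\|_{L^\infty}\|(-\Delta)^{\frac s2}\nabla\omega\|_{L^{4}}.
\end{align*}
The $L^8$ and $L^\infty$ factors are interpolated (Gagliardo--Nirenberg) between $\dot W^{\sigma,4}$ norms, already bounded by $\Omega_{\sigma,4}\le C$ from Lemma~\ref{lem:5.9}, and the $\dot H^{-1}$, $\dot H^{\sigma}$ bounds from Lemma~\ref{lem:5.7} and Proposition~\ref{prop:5.6}. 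The velocity satisfies $\|u\|_{L^\infty}\lesssim \|\omega\|_{\dot H^{-1}}^{a}\|\omega\|_{\dot W^{\sigma,4}}^{1-a}$ with a suitable small $\sigma>0$ (e.g.\ an $L^2$/$L^4$ Besov interpolation avoiding the endpoint $\dot H^1\not\hookrightarrow L^\infty$).

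Plugging in the decay rates, each factor on $(t/2,t)$ is comparable to its value at time $t$. The time integral of $(t-\tau)^{-1/2}$ over $(t/2,t)$ gives $t^{1/2}$. We then compare the result to $M_{s,\infty}(t)=t^{-\frac s2-1}\min\{1,|\beta|^{-1}t^{-\frac32}\}$. We do this separately in the two regimes $|\beta|^{\frac13}t^{\frac12}\le1$ and $\ge1$, in the same way as the paper handles the pairs of estimates followed by interpolation (cf.\ the proofs of Propositions~\ref{prop:5.5} and \ref{prop:5.6}).

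\textbf{Short times.} Here $M_{\cdot,\cdot}$ reduce to heat rates and everything is scale-consistent: $t^{1/2}\cdot t^{-\frac s2-\frac32}\cdot t^{-\frac12}=t^{-\frac s2-\frac32}$. This gives exactly $t^{-\frac s2-1}$ after accounting for the extra $t^{1/2}$ from the $L^\infty$ bound of $u$ in terms of $\dot H^{-1}$.

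\textbf{Large times.} We need the product of dispersive factors $|\beta|^{-1+\frac2a}t^{-\frac32(1-\frac2a)}$ to total at least $|\beta|^{-1}t^{-\frac32}$. With $a=4$ each $L^4$ factor contributes $(|\beta|^{\frac13}t^{\frac12})^{-\frac32}$, so two such factors give exponent $-3$, which beats the needed $-3$. The velocity factor, whose $\dot H^{-1}$ part has no dispersive decay, is the delicate one. If the count falls short, we will instead keep the dispersive estimate of Proposition~\ref{prop:2.2} in the propagator, with $p=\infty$ from $L^1$ via Lemma~\ref{lem:2.1}. On $(t/2,t)$ that kernel behaves like $|\beta|^{-1}(t-\tau)^{-\frac32}$, which is nonintegrable near $\tau=t$. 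So we split $(t/2,t)$ at $t-\tau=|\beta|^{-\frac23}$, using the heat bound near the diagonal and the dispersive bound away from it, and interpolate as in \eqref{eq:5.224}--\eqref{eq:5.229}.

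The main obstacle is this exponent bookkeeping in the large-time regime. The condition $\delta<\frac1{13}$ (so $\frac1p$ close to $\frac13$) should be exactly what makes the relevant exponent, of the type $-8+\frac{52}{3p}$, negative enough. The higher Sobolev norms of $\omega_0$ listed in the statement enter through Lemma~\ref{lem:4.1}-type bounds on $L^{r_1}_t\dot W^{\sigma,p}$. Finally, the integrand must be finite so that sup-type absorption is unnecessary; the estimate is linear in the already bounded quantities, so no smallness is needed beyond that already assumed.
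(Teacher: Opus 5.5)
Your outline does not produce the dispersive factor in $M_{s,\infty}(t)$. The obstruction is the one you flag but leave unresolved.

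Take $s=0$ and write $X=|\beta|^{1/3}t^{1/2}$. For $X\ge1$ the target is $M_{0,\infty}(t)=t^{-1}X^{-3}$. In your toolkit the velocity is controlled only through $\|u\|_{L^2}=\|\omega\|_{\dot H^{-1}}\le C$ (Lemma~\ref{lem:5.7}, no decay), interpolated with $L^4$-based bounds; $L^2$-based norms carry no dispersive factor at all. This gives at best $\|u\|_{L^\infty}\lesssim\|u\|_{L^2}^{1/3}\|\omega\|_{L^4}^{2/3}\lesssim t^{-1/2}X^{-1}$, while $\|\nabla\omega\|_{L^4}\lesssim t^{-5/4}X^{-3/2}$. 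The product therefore carries $X^{-5/2}$, not $X^{-3}$. Your count ``two $L^4$ factors give $-3$'' does not apply, since only one factor is an $L^4$ norm of $\omega$, and a heat-only propagator cannot supply the rest.

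The fallback fails for the same reason. On $t-\tau>|\beta|^{-2/3}$ the $L^1\to L^\infty$ kernel integrates to $\int_{|\beta|^{-2/3}}^{t/2}\sigma^{-1}|\beta|^{-1}\sigma^{-3/2}\,d\sigma\sim1$. It multiplies $\|u\cdot\nabla\omega\|_{L^1}\le\|u\|_{L^2}\|\nabla\omega\|_{L^2}\lesssim t^{-1}$, which has no dispersive decay. The same exponent count shows that no intermediate $L^{q'}\to L^q$ splitting closes either, as long as $u$ is controlled only through $\dot H^{-1}$ and $L^2$/$L^4$ bounds. The deficit is in the velocity factor, not in the propagator.

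Independently, your first step $\|T_\beta(t-\tau)g\|_{\dot W^{s,\infty}}\lesssim(t-\tau)^{-1/2}\|g\|_{\dot W^{s,4}}$ is not available. The heat $L^4\to L^\infty$ rate in two dimensions is $(t-\tau)^{-1/4}$. More importantly, $e^{t\beta L_1}$ is unitary on $L^2$ but not bounded on $L^4$, because its symbol $e^{it\beta\xi_1/|\xi|^2}$ oscillates without control as $\xi\to0$. The paper only passes $T_\beta$ through $L^2$ or through $L^{p'}\to L^p$ dispersive bounds.

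The missing ingredient is Lemma~\ref{lem:5.4}. It uses $\omega_0\in\dot W^{-1+\delta,p'}$ to give the velocity-level norm $\|\omega(t)\|_{\dot W^{-1+\delta,p}}$ the linear $L^{p'}\to L^p$ dispersive decay $N_0(t)$; this is why $|\beta|^{-\frac13(1+\delta-\frac2{p'})}\|\omega_0\|_{\dot W^{-1+\delta,p'}}$ appears among the dependencies. The paper's argument then runs as follows.
\begin{enumerate}
\item For the large-time rate it uses the $L^{8/5}\to L^{8/3}$ dispersive estimate followed by heat smoothing to $L^\infty$, which gives the kernel $(t-\tau)^{-5/8}\min\{1,|\beta|^{-1/4}(t-\tau)^{-3/8}\}$.
\item It bounds $\|u\cdot\nabla\omega\|_{L^{8/5}}\lesssim\|\omega\|_{\dot W^{-1+\delta,p}}\|\omega\|_{\dot W^{1,b}}$ with $\frac1b=\frac2p-\frac38\in[\frac14,\frac12]$, so $\Omega_{1,b}$ is controlled by interpolating Proposition~\ref{prop:5.6} and Lemma~\ref{lem:5.9}.
\item Raising the $\min$ to the power $\frac{104}{3p}-11$ yields $|\beta|^{-1}t^{-5/2}$. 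That power lies in $(0,1)$ exactly when $\delta<\frac1{13}$; this is where the hypothesis enters, not through an exponent like $-8+\frac{52}{3p}$.
\item This bound is interpolated with a second one of size $X^{\frac2p-\frac2{r_1}}t^{-1}$. The second bound comes from the $L^2\to L^\infty$ estimate with $\|u\|_{L^{\frac{2p}{p-2}}}\lesssim\|\omega\|_{\dot W^{\frac23(-1+\delta),p}}$ placed in $L^{r_1}_t$ via Lemma~\ref{lem:4.1}. This is where the additional Sobolev norms of $\omega_0$ enter.
\end{enumerate}
For $s>0$ you also need Lemma~\ref{lem:c.1}, the higher-order analogue of Lemma~\ref{lem:5.4}.
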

\proof
	Here, we only prove the case of
		$s=0.$
	See Appendix~\ref{sect:c} for the proof of the general cases.
	
	Similarly to \eqref{eq:5.204}, we have
		\begin{align}
			&\int^t_\frac t2\left\|T_\beta(t-\tau)\operatorname{div}(u(\tau)\omega(\tau))\right\|_{L^{\infty}}\, d\tau\label{eq:5.249}\\
			&\lesssim \int^t_\frac t2(t-\tau)^{-\frac12}\left\|u(\tau)\right\|_{\dot W^{\frac23(-1+\delta), p}}\left\|\nabla\omega(\tau)\right\|_{L^{p}}\, d\tau\label{eq:5.250}\\
			&\lesssim t^{1-\frac1{r_1}-\frac12}M_1(t)\Omega_1(t)\left\|\omega\right\|_{L^{r_1}([0, \infty); \dot W^{\frac23(-1+\delta), p})}\label{eq:5.251}\\
			&\lesssim \left(|\beta|^\frac13t^\frac12\right)^{\frac2p-\frac2{r_1}}t^{-1}\Omega_1(t)\label{eq:5.252}\\
			&\quad\cdot\bigg(|\beta|^{-\frac{\delta}{3}}\left\|\omega_0\right\|_{\dot H^{-1+\delta}}
			+|\beta|^{-\frac{1+2\delta}{9}}\left\|\omega_0\right\|_{\dot H^{\frac23(-1+\delta)}}+|\beta|^{-\frac{\delta}{3}}\left\|\omega_0\right\|_{\dot H^{-1+\delta}}
			|\beta|^{-\frac{4+2\delta}9}\left\|\omega_0\right\|_{\dot H^{\frac{1+2\delta}{3}}}\bigg).\quad\label{eq:5.253}
		\end{align}
	We can also deduce
		\begin{align}
			&\int^t_\frac t2\left\|T_\beta(t-\tau)\operatorname{div}(u(\tau)\omega(\tau))\right\|_{L^{\infty}}\, d\tau\label{eq:5.254}\\
			&\lesssim \int^t_\frac t2(t-\tau)^{-\frac38}\left\|e^{\frac12(t-\tau)\Delta}e^{(t-\tau)\beta L_1}(u(\tau)\cdot\nabla\omega(\tau))\right\|_{L^{\frac83}}\, d\tau\label{eq:5.255}\\
			&\lesssim \int^t_\frac t2(t-\tau)^{-\frac38-1+\frac68}\min\left\{1, |\beta|^{-1+\frac{6}{8}}(t-\tau)^{-\frac32(1-\frac68)}\right\}\left\|u(\tau)\cdot\nabla\omega(\tau)\right\|_{L^{\frac85}}\, d\tau.\label{eq:5.256}
		\end{align}
	Now, define the exponent
		$b$
	by
		$\frac58=\frac1p-\frac\delta2+\frac1b, $
	i.e., 
		$\frac1b=\frac2p-\frac38.$
	Then the condition of
		$p$
	ensures that
		$\frac{1}{b}\in [\frac23-\frac38, \frac{22}{30}-\frac38]=[\frac7{24}, \frac{43}{120}]\subset [\frac14, \frac12].$
	Thus, by interpolating the estimates obtained by Proposition~\ref{prop:5.6} and Lemma~\ref{lem:5.9}, we see that for any
		$s\geq0$, 
	the function
		$\Omega_{s, b}$
	is bounded on the interval
		$(0, \infty)$.
	Using this estimate gives
		\begin{align}
			&\int^t_\frac t2\left\|T_\beta(t-\tau)\operatorname{div}(u(\tau)\omega(\tau))\right\|_{L^{\infty}}\, d\tau\label{eq:5.257}\\
			&\lesssim \int^t_\frac t2(t-\tau)^{-\frac58}\min\left\{1, |\beta|^{-\frac{1}{4}}(t-\tau)^{-\frac38}\right\}\left\|u(\tau)\cdot\nabla\omega(\tau)\right\|_{L^{\frac85}}\, d\tau\label{eq:5.258}\\
			&\lesssim \int^t_\frac t2(t-\tau)^{-\frac58}\min\left\{1, |\beta|^{-\frac{1}{4}}(t-\tau)^{-\frac38}\right\}\left\|\omega(\tau)\right\|_{\dot W^{-1+\delta, p}}\left\|\omega(\tau)\right\|_{\dot W^{1, b}}\, d\tau\label{eq:5.259}\\
			&\lesssim N_0(t)\widetilde \Omega_0(t) M_{1, b}(t)\Omega_{1, b}(t)\int^t_\frac t2(t-\tau)^{-\frac58}\min\left\{1, |\beta|^{-\frac{1}{4}}(t-\tau)^{-\frac38}\right\}\, d\tau\label{eq:5.260}\\
			&\leq|\beta|^{\frac13(1+\delta-\frac2{p'})-2+\frac2p+\frac2b}t^{-\frac52(1-\frac2p)-\frac12-1+\frac1b-\frac32(1-\frac2b)}\Omega_{1, b}(t)\label{eq:5.261}\\
			&\quad\cdot\left(|\beta|^{-\frac13(1+\delta-\frac2{p'})}\widetilde \Omega_0(t)\right)\int^t_\frac t2(t-\tau)^{-\frac58}\min\left\{1, |\beta|^{-\frac{1}{4}}(t-\tau)^{-\frac38}\right\}\, d\tau\label{eq:5.262}\\
			&=|\beta|^{\frac13(\frac{26}p-\frac{45}4)}t^{-7+\frac{13}p}\Omega_{1, b}(t)\label{eq:5.263}\\
			&\quad\cdot\left(|\beta|^{-\frac13(1+\delta-\frac2{p'})}\widetilde \Omega_0(t)\right)\int^t_\frac t2(t-\tau)^{-\frac58}\min\left\{1, |\beta|^{-\frac{1}{4}}(t-\tau)^{-\frac38}\right\}\, d\tau.\label{eq:5.264}
		\end{align}
	Note that both 
		$-7+\frac{13}p<-\frac52$
	and
		$-7+\frac{13}p+\frac38>-\frac52$
	hold under the assumption of the proposition, since they are equivalent to
		$-\frac{5}{52}<\delta<\frac1{13}.$
	Therefore, we obtain the condition
		$\frac{104}{3p}-11\in (0, 1)$
	and the estimate
		\begin{align}
			&\int^t_\frac t2\left\|T_\beta(t-\tau)\operatorname{div}(u(\tau)\omega(\tau))\right\|_{L^{\infty}}\, d\tau\label{eq:5.265}\\
			&\lesssim|\beta|^{\frac13(\frac{26}p-\frac{45}4)}t^{-7+\frac{13}p}\Omega_{1, b}(t)\label{eq:5.266}\\
			&\quad\cdot\left(|\beta|^{-\frac13(1+\delta-\frac2{p'})}\widetilde \Omega_0(t)\right)\int^t_\frac t2(t-\tau)^{-\frac58}\left(  |\beta|^{-\frac{1}{4}}(t-\tau)^{-\frac38}\right)^{\frac{104}{3p}-11} \, d\tau\label{eq:5.267}\\
			&\lesssim |\beta|^{-1}t^{-\frac52}\Omega_{1, b}(t)\left(|\beta|^{-\frac13(1+\delta-\frac2{p'})}\widetilde \Omega_0(t)\right).\label{eq:5.268}
		\end{align}
	Interpolating \eqref{eq:5.253} and \eqref{eq:5.268}, we have
		\begin{align}
			&\int^t_\frac t2\left\|T_\beta(t-\tau)\operatorname{div}(u(\tau)\omega(\tau))\right\|_{L^{\infty}}\, d\tau\label{eq:5.269}\\
			&\lesssim t^{-1}\left(\Omega_{1, b}(t)\left(|\beta|^{-\frac13(1+\delta-\frac2{p'})}\widetilde \Omega_0(t)\right)\right)^\frac{\frac2p-\frac2{r_1}}{3+\frac2p-\frac2{r_1}}\label{eq:5.270}\\
			&\quad\cdot\bigg(\Omega_1(t)\bigg(|\beta|^{-\frac{\delta}{3}}\left\|\omega_0\right\|_{\dot H^{-1+\delta}}
			+|\beta|^{-\frac{1+2\delta}{9}}\left\|\omega_0\right\|_{\dot H^{\frac23(-1+\delta)}}+|\beta|^{-\frac{\delta}{3}}\left\|\omega_0\right\|_{\dot H^{-1+\delta}}
			|\beta|^{-\frac{4+2\delta}9}\left\|\omega_0\right\|_{\dot H^{\frac{1+2\delta}{3}}}\bigg)\bigg)^\frac{3}{3+\frac2p-\frac2{r_1}}.\label{eq:5.271}
		\end{align}
	Combining \eqref{eq:5.268} and \eqref{eq:5.271}, we obtain the desired estimate.
\qed

Now, we can prove the temporal decay part of Theorem~\ref{thm:1.3}.
	By Lemma~\ref{lem:5.8} and Proposition~\ref{prop:5.10}, we see that for any
		$s\geq0$, 
	the function
		$\Omega_{s, \infty}$
	is bounded on
		$(0, \infty)$.
	Proposition~\ref{prop:5.6} gives the boundedness of
		$\Omega_{s, 2}.$
	Interpolating these bounds, we see that
		$\Omega_{s, a}$
	is bounded on
		$(0, \infty)$
	for any 
		$s\geq0$
	and 
		$a\in[2, \infty].$
	The dependence on the initial data can be deduced by interpolation.

\subsection{Asymptotics of the solution}
In this subsection, we prove the asymptotics part of Theorem~\ref{thm:1.3}.

\begin{lemma}\label{lem:5.11}
	Assume that
		$0\leq \delta<\frac1{13}, $
		$\frac12-\frac1p+\frac\delta2-\frac32(1-\frac2p)
		<\frac1{r_1}
		<\frac12-\frac1p+\frac\delta2$
	and the initial data
		$\omega_0\in \dot H^{-1+\delta}\cap \dot H^{3-\frac2p}\cap \dot W^{-1+\delta, p'}\cap L^1$
	satisfies the conditions in Proposition~\ref{prop:5.2}, Proposition~\ref{prop:5.3}, and Lemma~\ref{lem:5.4}. Then there is a constant
		$c_0>0$
	and
		$C=C(\beta, \omega_0)$
	satisfying
		$\left\|\omega(t)\right\|_{\dot H^{-1}}
		\leq Ct^{-c_0}$
	for $t>0.$
\end{lemma}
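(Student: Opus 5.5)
We follow the three-term splitting already used in Lemma~\ref{lem:5.7}:
\begin{align*}
\left\|\omega(t)\right\|_{\dot H^{-1}}
\leq \left\|T_\beta(t)\omega_0\right\|_{\dot H^{-1}}
+\int_0^{\frac t2}\left\|T_\beta(t-\tau)\operatorname{div}(u\omega)\right\|_{\dot H^{-1}}d\tau
+\int_{\frac t2}^t\left\|T_\beta(t-\tau)\operatorname{div}(u\omega)\right\|_{\dot H^{-1}}d\tau .
\end{align*}
The goal is to extract a small power of decay in each term. This should now be possible because the strict inequality $\frac12-\frac1p+\frac\delta2-\frac32(1-\frac2p)<\frac1{r_1}$ leaves room in every exponent that was exactly balanced in the proof of Lemma~\ref{lem:5.7}. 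For $t\leq 1$ (or $t\lesssim|\beta|^{-2/3}$) the claim follows from Lemma~\ref{lem:5.7}, so only large $t$ matters. We also allow $C$ to depend on $\beta$ and $\omega_0$, so scaling-invariant bookkeeping is unnecessary.

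\textbf{Linear term.} Since $\left\|T_\beta(t)\omega_0\right\|_{\dot H^{-1}}=\left\|e^{t\Delta}\omega_0\right\|_{\dot H^{-1}}$, we use $\omega_0\in\dot W^{-1+\delta,p'}$. The heat semigroup gives $\left\|e^{t\Delta}\omega_0\right\|_{\dot H^{-1}}\lesssim t^{-\frac\delta2-(\frac1{p'}-\frac12)}\left\|\omega_0\right\|_{\dot W^{-1+\delta,p'}}$. The exponent $\frac\delta2+\frac12-\frac1p$ is positive because $p>2$, so this term decays like $t^{-c}$.

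\textbf{Second term.} We repeat \eqref{eq:5.137}--\eqref{eq:5.142} with $s=-1$:
\begin{align*}
\int_0^{\frac t2}\lesssim t^{-(\frac1{p'}-\frac12)}\Omega_0(t)\left\|\omega\right\|_{Y_1}\left\|M_0\right\|_{L^{r_1'}(0,\frac t2)} .
\end{align*}
Before, $\theta\in(0,1]$ was chosen so that the total power of $t$ vanishes. Now we take $\theta=1$ instead. The integrability of $M_0^{r_1'}$ near $\tau=0$ and at $\infty$ holds by the computations in Lemma~\ref{lem:5.4}. The resulting power of $t$ is
\begin{align*}
-\frac1{p'}+\frac12-\frac1{r_1}+\frac1p-\frac32\left(1-\frac2p\right),
\end{align*}
which is strictly negative exactly by the new strict assumption on $r_1$. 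Since $\Omega_0$ is bounded by Proposition~\ref{prop:5.2}, this term is $\lesssim t^{-c}$. For small $\tau$ we keep $\min\{1,\cdot\}\leq1$ and split the integral at $|\beta|^{-2/3}$.

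\textbf{Third term.} This is the main obstacle. In Lemma~\ref{lem:5.7} it was handled through \eqref{eq:5.163} with $s=-1$ by interpolating two bounds, yielding only boundedness. Here we abandon the $\left\|\omega\right\|_{Y_1}$ route and use pointwise decay instead:
\begin{align*}
\left\|\omega(\tau)\right\|_{\dot W^{-1+\delta,p}}\lesssim N_0(\tau)\ \text{(Lemma~\ref{lem:5.4})},\qquad
\left\|\omega(\tau)\right\|_{L^p}\lesssim M_0(\tau)\ \text{(Proposition~\ref{prop:5.2})}.
\end{align*}
The time integral contributes $t^{\frac32-\frac1{p'}}$, so the third term is bounded by
\begin{align*}
t^{\frac32-\frac1{p'}}N_0(t)M_0(t)\sim|\beta|^{\cdots}\,t^{-7+\frac{14}p+\frac\delta2}\qquad (t\geq|\beta|^{-2/3}),
\end{align*}
which is the computation in \eqref{eq:5.184}. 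The exponent is negative since $-7+\delta+\frac{14}p<0$ in our range of $p$. Alternatively, one could avoid Lemma~\ref{lem:5.4} by combining $\left\|\omega\right\|_{\dot H^{-1}}$ boundedness with $\left\|\omega\right\|_{L^2}\lesssim t^{-1/2}$ from Proposition~\ref{prop:5.6}.

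Collecting the three estimates gives $\left\|\omega(t)\right\|_{\dot H^{-1}}\leq Ct^{-c_0}$, with $c_0$ the minimum of the three positive exponents.
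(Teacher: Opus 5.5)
Your three-term split, your linear term (heat decay from $\dot W^{-1+\delta,p'}$ into $\dot H^{-1}$) and your $(\frac t2,t)$ term (pointwise bounds $N_0\widetilde\Omega_0$ and $M_0\Omega_0$ from Lemma~\ref{lem:5.4} and Proposition~\ref{prop:5.2}, as in \eqref{eq:5.281}) follow the paper's argument, apart from a few exponent slips.

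\textbf{Linear term.} Going from $\dot W^{-1+\delta,p'}$ to $\dot H^{-1}$ lowers regularity by $\delta$. So the bound is $t^{\frac\delta2-(\frac1{p'}-\frac12)}=t^{-\frac32+\frac4p}$, not $t^{-\frac\delta2-(\frac1{p'}-\frac12)}$. Its decay rate $\frac16-\frac{2\delta}3$ is positive because $\delta<\frac14$ (the paper's condition $p_0<2$), not merely because $p>2$.

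\textbf{Third term.} The power of $t$ is $-\frac92+\frac{10}p=\frac12(-7+\delta+\frac{14}p)$, not $-7+\frac{14}p+\frac\delta2$. It has the same sign, so this is harmless.

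\textbf{Side remark.} Your alternative does not work as stated. A bounded $\|u\|_{L^2}$ together with the heat-type rate $\|\omega\|_{L^2}\lesssim t^{-1/2}$ is exactly scaling-critical for $\dot H^{-1}$, so the $(\frac t2,t)$ integral comes out $O(1)$, not decaying. You need the dispersive gain of Lemma~\ref{lem:5.4}, which Lemma~\ref{lem:5.7} uses anyway.

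\textbf{First half of the Duhamel integral.} This is where you genuinely depart from the paper, and the exponent you state there is wrong.
\begin{itemize}
\item With $\theta=1$ the majorant $|\beta|^{-1+\frac2p}\tau^{-\frac76+\frac{2\delta}3}$ is not $r_1'$-integrable at $0$.
\item After your split at $|\beta|^{-2/3}$, the integrand on $(|\beta|^{-2/3},\frac t2)$ has exponent below $-1$. That piece is controlled by its lower endpoint and produces no power of $t$.
\item In fact $\|M_0\|_{L^{r_1'}((0,\frac t2))}$ is nondecreasing in $t$, so it can never supply decay.
\end{itemize}
What your ingredients actually give is $\|M_0\|_{L^{r_1'}((0,\frac t2))}\le\|M_0\|_{L^{r_1'}((0,\infty))}<\infty$, which needs only $\frac1{r_1}<\frac1p$. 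Hence the term is $\lesssim t^{-(\frac12-\frac1p)}\Omega_0(t)\|\omega\|_{Y_1}\|M_0\|_{L^{r_1'}((0,\infty))}$, with all the decay coming from the prefactor $t^{-(\frac1{p'}-\frac12)}$.

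This corrected argument is valid and simpler than the paper's. The paper reuses the balancing $\theta$ of Proposition~\ref{prop:5.6} and adds a small $\gamma>0$ through $\min\{1,x\}\le x^{\theta+\gamma}$. It obtains only $(|\beta|^{\frac13}t^{\frac12})^{-3\gamma(1-\frac2p)}$, with $\gamma<\frac13$ forced by integrability at $0$, which is strictly slower than your $t^{-\frac12(1-\frac2p)}$. The paper's device needs $\theta<1$, which is precisely the strict lower bound on $\frac1{r_1}$. So, contrary to your attribution, the decay of this term in your version does not come from the strict hypothesis, and your version never uses it.
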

\proof
Define
	$p_0\in[p', \infty)$
by
	$\frac1{p_0}=\frac1{p'}-\frac\delta2=2(1-\frac2p)>0.$
The condition
	$p_0<2$
is equivalent to
	$\frac1{p}<\frac38, $
and this is true in our setting.
Thus, by the embedding
	$\dot W^{\delta, p'}\hookrightarrow L^{p_0}, $
we obtain
	\begin{align}
		\left\|T_\beta(t)\omega_0\right\|_{\dot H^{-1}}
		=\left\|e^{t\Delta}\omega_0\right\|_{\dot H^{-1}}
		\lesssim t^{-\frac1{p_0}+\frac12}\left\|\omega_0\right\|_{\dot W^{-1, p_0}}
		\lesssim t^{-\frac32+\frac4p}\left\|\omega_0\right\|_{\dot W^{-1+\delta, p'}}, \label{eq:5.272}
	\end{align}
and this ensures the decay of the linear term.

For the first half of the Duhamel integral, as in Proposition~\ref{prop:5.6} and Lemma~\ref{lem:5.7}, we have
	\begin{align}
		&\int_0^\frac t2\left\|T_\beta(t-\tau)\operatorname{div}(u(\tau)\omega(\tau))\right\|_{\dot H^{-1}}\, d\tau\label{eq:5.273}\\
		&\lesssim t^{-(\frac1{p'}-\frac12)}\Omega_0(t)\left\|\omega\right\|_{Y_1}\left\|M_0\right\|_{L^{r_1'}((0, \frac t2))}.\label{eq:5.274}
	\end{align}
Under the condition 
	$\frac12-\frac1p+\frac\delta2-\frac32(1-\frac2p)
	<\frac1{r_1}
	<\frac12-\frac1p+\frac\delta2$, 
we can take
	$\theta, \gamma\in(0, 1)$
such that
	$-\frac1{p'}+\frac12-\frac1{r_1}+\frac1p-\frac32\theta(1-\frac2p)=0, 1-\frac1p+\frac32(\theta+\gamma)(1-\frac2p)<1-\frac1{r_1}, $
and
	$\theta+\gamma<1.$
Then we have
	\begin{align}
		&\left\|M_0\right\|_{L^{r_1'}((0, \frac t2))}
		=\left(\int_0^\frac t2\left(\tau^{-1+\frac1p}\min\left\{1, |\beta|^{-1+\frac2p}\tau^{-\frac32(1-\frac2p)}\right\}\right)^{r_1'}\, d\tau\right)^{1-\frac1{r_1}}\label{eq:5.275}\\
		&\leq\left(\int_0^\frac t2\left(\tau^{-1+\frac1p}\left(|\beta|^{-1+\frac2p}\tau^{-\frac32(1-\frac2p)}\right)^{\theta+\gamma}\right)^{r_1'}\, d\tau\right)^{1-\frac1{r_1}}\label{eq:5.276}\\
		&\lesssim |\beta|^{-(\theta+\gamma)(1-\frac2p)}t^{-\frac1{r_1}+\frac1p-\frac32(\theta+\gamma)(1-\frac2p)}.\label{eq:5.277}
	\end{align}
Inserting this estimate to the previous one, we obtain
	\begin{align}
		&\int_0^\frac t2\left\|T_\beta(t-\tau)\operatorname{div}(u(\tau)\omega(\tau))\right\|_{\dot H^{-1}}\, d\tau\label{eq:5.278}\\
		&\lesssim \left(\left|\beta\right|^\frac13t^\frac12\right)^{-3\gamma(1-\frac2p)}\Omega_0(t)|\beta|^{-\frac\delta3}\left\|\omega_0\right\|_{\dot H^{-1+\delta}}.\label{eq:5.279}
	\end{align}
	
For the second half, as in Proposition~\ref{prop:5.6}, we have
		\begin{align}
			&\int^t_\frac t2\left\|T_\beta(t-\tau)\operatorname{div}(u(\tau)\omega(\tau))\right\|_{\dot H^{-1}}\, d\tau
			\lesssim \int^t_\frac t2(t-\tau)^{-\frac1{p'}+\frac12}\left\|u(\tau)\omega(\tau)\right\|_{L^{p'}}\, d\tau\label{eq:5.280}\\
			 &\lesssim \left(|\beta|^\frac13t^\frac12\right)^{-7+\delta+\frac{14}{p}} |\beta|^{-\frac13(1+\delta-\frac2{p'})}\widetilde \Omega_0(t) \Omega_0(t).\label{eq:5.281}
		\end{align}
Combining the above estimates, we obtain the desired decay of
	$\left\|\omega(t)\right\|_{\dot H^{-1}}.$
\qed

\begin{proposition}\label{lem:5.12}
	Assume that
		$0\leq \delta<\frac1{13}$, 
		$\frac12-\frac1p+\frac\delta2-\frac32(1-\frac2p)
		<\frac1{r_1}
		<\frac12-\frac1p+\frac\delta2$
	and the initial data
		$\omega_0\in \dot H^{-1+\delta}\cap \dot H^{3-\frac2p}\cap \dot W^{-1+\delta, p'}\cap L^1$
	satisfies the conditions in Proposition~\ref{prop:5.2}, Proposition~\ref{prop:5.3}, and Lemma~\ref{lem:5.4}. Then, for any 
		$s\geq0$
	and
		$a\in[2, \infty]$, 
	it holds that
		\begin{align}
			\lim_{t\to\infty}M_{s, a}(t)^{-1}\left\|\omega(t)-K_{\beta, t}\int_{\mathbb R^2}\omega_0(y)\, dy\right\|_{\dot W^{s, a}}=0.\label{eq:5.282}
		\end{align}
\end{proposition}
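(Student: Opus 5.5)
We reduce to the linear asymptotics of Proposition~\ref{prop:2.4'}. Since $\int_{\mathbb R^2}\omega_0\,dy$ multiplies $K_{\beta,t}$, and Proposition~\ref{prop:2.4'} already gives $M_{s,a}(t)^{-1}\|T_\beta(t)\omega_0-K_{\beta,t}\int\omega_0\|_{\dot W^{s,a}}\to0$, it suffices to prove
\begin{align*}
\lim_{t\to\infty}M_{s,a}(t)^{-1}\left\|\int_0^tT_\beta(t-\tau)\operatorname{div}(u(\tau)\omega(\tau))\,d\tau\right\|_{\dot W^{s,a}}=0.
\end{align*}
Moreover, by the interpolation argument used at the end of the decay part, it is enough to treat the two endpoints $a=2$ and $a=\infty$ (with arbitrary $s\ge0$), since $\Omega_{s,a}$ is already known to be bounded for all $s\ge0$ and $a\in[2,\infty]$. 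As usual we split the Duhamel integral at $\tau=t/2$.

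For the part over $(0,t/2)$, we use the $L^1$-type estimate from Proposition~\ref{cor:2.3} as in Lemma~\ref{lem:5.8}. It gives the bound $M_{s,a}(t)\,t^{-1/2}\int_0^{t/2}\|u(\tau)\|_{L^2}\|\omega(\tau)\|_{L^2}\,d\tau$. We then use $\|u(\tau)\|_{L^2}=\|\omega(\tau)\|_{\dot H^{-1}}\le C\tau^{-c_0}$ from Lemma~\ref{lem:5.11} together with $\|\omega(\tau)\|_{L^2}\lesssim\tau^{-1/2}$ from Proposition~\ref{prop:5.6}. This yields a bound of order $M_{s,a}(t)\,t^{-1/2}\int_0^{t/2}\min\{1,\tau^{-c_0}\}\tau^{-1/2}\,d\tau\lesssim M_{s,a}(t)\,t^{-c_0}$, up to adjusting $c_0$ near $\tau=0$ by using the uniform $\dot H^{-1}$ bound of Lemma~\ref{lem:5.7} there. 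Hence this part is $o(M_{s,a}(t))$. The same strategy works for $a=2$ via \eqref{eq:5.137}, where we instead use $\|u\omega\|_{L^1}$ and $e^{t\Delta}\colon L^1\to\dot H^s$.

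For the part over $(t/2,t)$, we revisit the estimates of Propositions~\ref{prop:5.6} and \ref{prop:5.10} (and Appendix~\ref{sect:c}). Each bound there was obtained by interpolating two estimates, one carrying a factor $(|\beta|^{1/3}t^{1/2})^{\kappa_1}$ with $\kappa_1>0$ and one carrying $(|\beta|^{1/3}t^{1/2})^{\kappa_2}$ with $\kappa_2<0$. Instead of interpolating to exactly cancel, we choose the interpolation parameter to leave a small negative power of $t$. This gives an extra factor $t^{-\epsilon}$ relative to $M_{s,a}(t)$ for large $t$. For instance, for $a=2$ we use the bound \eqref{eq:5.184} directly for large $t$, which decays like $(|\beta|^{1/3}t^{1/2})^{-7+\delta+14/p}t^{-s/2-1/2}$, and this is $o(t^{-s/2-1/2})$ since $-7+\delta+14/p<0$. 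For $a=\infty$ the analogous choice is \eqref{eq:5.268} (and its higher-$s$ versions), which gives $|\beta|^{-1}t^{-5/2}$ with a strict inequality available: since $\delta<1/13$ strictly, the exponent $\frac{104}{3p}-11$ can be perturbed within $(0,1)$ to gain $t^{-\epsilon}$.

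The main obstacle is this last step. We must check, uniformly in $s$ and at $a=\infty$, that every near-diagonal estimate has strict room in its exponents. This is exactly where the strict hypotheses $\frac12-\frac1p+\frac\delta2-\frac32(1-\frac2p)<\frac1{r_1}$ and $\delta<\frac1{13}$ enter. The first one is needed already for Lemma~\ref{lem:5.11}, which supplies the decay of $\|\omega\|_{\dot H^{-1}}$ used in the first half.
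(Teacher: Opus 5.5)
Your proposal is correct and matches the paper's proof: reduction to $a\in\{2,\infty\}$, Proposition~\ref{prop:2.4'} for the linear term, a split at $t/2$, \eqref{eq:5.184} for the second half at $a=2$, and, for the second half at $a=\infty$ (including Appendix~\ref{sect:c}), perturbing the exponent $\frac{104}{3p}-11$ in \eqref{eq:5.264} using $\delta<\frac1{13}$. The only deviation is in the first half at $a=2$: you use the same $L^1$ route as at $a=\infty$, with $\|u(\tau)\|_{L^2}\lesssim\tau^{-\varepsilon}$ from Lemmas~\ref{lem:5.7} and \ref{lem:5.11}, whereas the paper redoes the $\theta+\gamma$ splitting of $\|M_0\|_{L^{r_1'}}$; both routes are valid. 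One small correction to your general remark: at $a=\infty$ the large-time bound \eqref{eq:5.268} is exactly of order $M_{0,\infty}(t)$, so choosing an interpolation parameter gains nothing there, and the gain comes entirely from the exponent perturbation you then describe.
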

\proof
By the interpolation inequality, it suffices to show the estimates for the endpoint cases
	$a=2, \infty.$
We write
	\begin{align}
		&M_{s, a}(t)^{-1}\left\|\omega(t)-K_{\beta, t}\int_{\mathbb R^2}\omega_0(y)\, dy\right\|_{\dot W^{s, a}}\label{eq:5.283}\\
		&\leq M_{s, a}(t)^{-1}\left\|T_\beta(t)\omega_0-K_{\beta, t}\int_{\mathbb R^2}\omega_0(y)\, dy\right\|_{\dot W^{s, a}}\label{eq:5.284}\\
		&\quad+M_{s, a}(t)^{-1}\int_0^\frac t2\left\|T_\beta(t-\tau)\operatorname{div}(u(\tau)\omega(\tau))\right\|_{\dot W^{s, a}}\, d\tau\label{eq:5.285}\\
		&\quad+M_{s, a}(t)^{-1}\int^t_\frac t2\left\|T_\beta(t-\tau)\operatorname{div}(u(\tau)\omega(\tau))\right\|_{\dot W^{s, a}}\, d\tau\label{eq:5.286}
	\end{align}
and show that each term in the right-hand side converges to 0 as 
	$t$
tends to
	$\infty$. 
The convergence of the first term follows by Proposition~\ref{prop:2.4'}, so we are left to estimate the second and third terms.
	
We first treat the case of
	$a=2$.
From the argument in Proposition~\ref{prop:5.6}, we have
	\begin{align}
		&\int_0^\frac t2\left\|T_\beta(t-\tau)\operatorname{div}(u(\tau)\omega(\tau))\right\|_{\dot H^{s}}\, d\tau\label{eq:5.287}
		\\
		&\lesssim t^{-\frac s2-\frac12-(\frac1{p'}-\frac12)}\Omega_0(t)\left\|\omega(\tau)\right\|_{Y_1}\left\|M_0\right\|_{L^{r_1'}((0, \frac t2))}.\label{eq:5.288}
	\end{align}
By the argument in Lemma~\ref{lem:5.11}, we have
	\begin{align}
		&\left\|M_0\right\|_{L^{r_1'}((0, \frac t2))}
		\lesssim |\beta|^{-(\theta+\gamma)(1-\frac2p)}t^{-\frac1{r_1}+\frac1p-\frac32(\theta+\gamma)(1-\frac2p)}.\label{eq:5.291}
	\end{align}
Inserting this estimate to the previous one, we obtain
	\begin{align}
		&\int_0^\frac t2\left\|T_\beta(t-\tau)\operatorname{div}(u(\tau)\omega(\tau))\right\|_{\dot H^{s}}\, d\tau\label{eq:5.292}\\
		&\lesssim \left(\left|\beta\right|^\frac13t^\frac12\right)^{-3\gamma(1-\frac2p)}t^{-\frac s2-\frac12}\Omega_0(t)|\beta|^{-\frac\delta3}\left\|\omega_0\right\|_{\dot H^{-1+\delta}}.\label{eq:5.293}
	\end{align}
This is enough to estimate the first half of the Duhamel integral. For the second half, 
from \eqref{eq:5.184} we have
	\begin{align}
		&\int^t_\frac t2\left\|T_\beta(t-\tau)\operatorname{div}(u(\tau)\omega(\tau))\right\|_{\dot H^s}\, d\tau\label{eq:5.296}\\
		&\lesssim\left(|\beta|^\frac13t^\frac12\right)^{-7+\delta+\frac{14}{p}}t^{-\frac s2-\frac12}|\beta|^{-\frac13(1+\delta-\frac2{p'})}\widetilde \Omega_0(t)\Omega_{1+s}(t).
		\label{eq:5.298}
	\end{align}
These are sufficient to conclude the estimates for
	$a=2.$
	
Let us next consider the case of
	$a=\infty.$
For the first half of the time integral, we have
		\begin{align}
			&\int^\frac t2_0\left\|T_\beta(t-\tau)\operatorname{div}(u(\tau)\omega(\tau))\right\|_{\dot W^{s, \infty}}\, d\tau\label{eq:5.299}\\
			&\lesssim t^{-\frac12}M_{s, \infty}(t)\int^\frac t2_0 \left\|u(\tau)\right\|_{L^2}\left\|\omega(\tau)\right\|_{L^2}\, d\tau\label{eq:5.300}.
		\end{align}
Lemma~\ref{lem:5.7} and Lemma~\ref{lem:5.11} ensures that there are two constants
	$C=C(\beta, \omega_0)$
and 
	$0<\varepsilon<\frac12$
satisfying
	$\left\|u(\tau)\right\|_{L^2}\leq C t^{-\varepsilon}$
for
	$t>0.$
Using also the boundedness of
	$\Omega_{0, 2}(t), $
we obtain
		\begin{align}
			&\int^\frac t2_0\left\|T_\beta(t-\tau)\operatorname{div}(u(\tau)\omega(\tau))\right\|_{\dot W^{s, \infty}}\, d\tau\label{eq:5.301}\\
			&\lesssim_{\beta, \omega_0} t^{-\frac12}M_{s, \infty}(t)\Omega_{0, 2}(t)\int^\frac t2_0 \tau^{-\varepsilon-\frac12}\, d\tau\label{eq:5.302}
			\lesssim t^{-\varepsilon}M_{s, \infty}(t)\Omega_{0, 2}(t).
		\end{align}
		
For the second half, in the case of 
	$s=0$, 
from \eqref{eq:5.264} we have 
		\begin{align}
			&\int^t_\frac t2\left\|T_\beta(t-\tau)\operatorname{div}(u(\tau)\omega(\tau))\right\|_{L^{\infty}}\, d\tau\label{eq:5.303}\\
			&\lesssim|\beta|^{\frac13(\frac{26}p-\frac{45}4)}t^{-7+\frac{13}p}\Omega_{1, b}(t)\label{eq:5.304}\\
			&\quad\cdot\left(|\beta|^{-\frac13(1+\delta-\frac2{p'})}\widetilde \Omega_0(t)\right)\int^t_\frac t2(t-\tau)^{-\frac58}\min\left\{1, |\beta|^{-\frac{1}{4}}(t-\tau)^{-\frac38}\right\}\, d\tau.\label{eq:5.305}
		\end{align}
By taking
	$\gamma\in(0, 1)$
small enough to satisfy
	$\frac{104}{3p}-11+\gamma<1, $
we obtain
		\begin{align}
			&\int^t_\frac t2\left\|T_\beta(t-\tau)\operatorname{div}(u(\tau)\omega(\tau))\right\|_{L^{\infty}}\, d\tau\label{eq:5.306}\\
			&\lesssim|\beta|^{\frac13(\frac{26}p-\frac{45}4)}t^{-7+\frac{13}p}\Omega_{1, b}(t)\label{eq:5.307}\\
			&\quad\cdot\left(|\beta|^{-\frac13(1+\delta-\frac2{p'})}\widetilde \Omega_0(t)\right)\int^t_\frac t2(t-\tau)^{-\frac58}\left(  |\beta|^{-\frac{1}{4}}(t-\tau)^{-\frac38}\right)^{\frac{104}{3p}-11+\gamma} \, d\tau\label{eq:5.308}\\
			&\lesssim \left(\left|\beta\right|^\frac13t^\frac12\right)^{-\frac34\gamma}|\beta|^{-1}t^{-\frac52}\Omega_{1, b}(t)\left(|\beta|^{-\frac13(1+\delta-\frac2{p'})}\widetilde \Omega_0(t)\right).\label{eq:5.309}
		\end{align}
For 
	$s\in\mathbb N$, 
by the argument in Appendix~\ref{sect:c}, we similarly obtain
	\begin{align}
		&\int^t_\frac t2\left\|T_\beta(t-\tau)\operatorname{div}(u(\tau)\omega(\tau))\right\|_{\dot W^{s, \infty}}\, d\tau\label{eq:5.310}\\
		&\lesssim \int^t_\frac t2(t-\tau)^{-\frac58}\min\left\{1, |\beta|^{-\frac{1}{4}}(t-\tau)^{-\frac38}\right\}\left\|\omega(\tau)\right\|_{\dot W^{-1+\delta+s, p}}\left\|\omega(\tau)\right\|_{\dot W^{1, b}}\, d\tau\label{eq:5.311}\\
		&+\int^t_\frac t2(t-\tau)^{-\frac58}\min\left\{1, |\beta|^{-\frac{1}{4}}(t-\tau)^{-\frac38}\right\}\left\|\omega(\tau)\right\|_{\dot W^{-1+\delta, p}}\left\|\omega(\tau)\right\|_{\dot W^{1+s, b}}\, d\tau\label{eq:5.312}\\
		& \lesssim \left(\left|\beta\right|^\frac13t^\frac12\right)^{-\frac34\gamma} |\beta|^{-1}t^{-\frac s2-\frac52}\label{eq:5.313}\\
		&\qquad\cdot\left(\Omega_{1, b}(t)\left(|\beta|^{-\frac13(1+\delta-\frac2{p'})}\widetilde\Omega_s(t)\right)
		+\Omega_{1+s, b}(t)\left(|\beta|^{-\frac13(1+\delta-\frac2{p'})}\widetilde\Omega_0(t)\right)\right).\label{eq:5.314}
	\end{align}
Combining these estimates, we obtain the desired decays for
	$\left\|\omega(t)\right\|_{\dot W^{s, \infty}}.$
This completes the proof.
\qed

By Proposition~\ref{lem:5.12}, we obtain the asymptotics part of Theorem~\ref{thm:1.3}.

\appendix

\section{Calculation of $\partial_t\omega$}\label{sect:b}
In this section, we show that under the condition \eqref{eq:4.1}, 
	\begin{align}
		\lim_{h\downarrow 0}\frac{\omega(t)-\omega(t-h)}{h}=-\operatorname{div}(u(t)\omega(t))+\Delta \omega(t)+\beta L_1\omega(t)\label{eq:b.1}
	\end{align}
	holds for
		$t>0$
	in 
		$\dot H^{\delta+m}$, 
	where
		$m\geq0$
	is arbitrary.
	The strategy is almost the same as that of Proposition~\ref{prop:4.4}. In the following argument, we assume that
		$0<h<\frac t2.$
	
	We write
		\begin{align}
			&\frac{\omega(t)-\omega(t-h)}{h}
			=\frac{T_\beta(t)\omega_0-T_\beta(t-h)\omega_0}{h}\label{eq:b.2}\\
			&-\frac1h\left(\int_0^{t}T_\beta(t-\tau)\operatorname{div}(u(\tau)\omega(\tau))\, d\tau-\int_0^{t-h}T_\beta(t-\tau)\operatorname{div}(u(\tau)\omega(\tau))\, d\tau\right)\label{eq:b.3}\\
			&-\frac1h\left(\int_0^{t-h}T_\beta(t-\tau)\operatorname{div}(u(\tau)\omega(\tau))\, d\tau
			-\int_0^{t-h}T_\beta(t-h-\tau)\operatorname{div}(u(\tau)\omega(\tau))\, d\tau\right).\label{eq:b.4}
		\end{align}
	
	For \eqref{eq:b.2}, we have
		\begin{align}
			&\quad\left\|\frac{T_\beta(t)\omega_0-T_\beta(t-h)\omega_0}{h}-\Delta T_\beta(t)\omega_0-\beta L_1 T_\beta(t)\omega_0\right\|_{\dot H^{\delta+m}}\label{eq:b.5}\\
			&\leq\left\|\frac{e^{h\Delta}e^{h\beta L_1}T_\beta(t-h)\omega_0-e^{h\beta L_1}T_\beta(t-h)\omega_0}{h}-e^{h\beta L_1}\Delta T_\beta(t-h)\omega_0\right\|_{\dot H^{\delta+m}}\label{eq:b.6}\\
			&+\left\|e^{h\beta L_1}\Delta T_\beta(t-h)\omega_0-e^{h\Delta}e^{h\beta L_1}\Delta T_\beta(t-h)\omega_0\right\|_{\dot H^{\delta+m}}\label{eq:b.7}\\
			&+\left\|\frac{e^{h\beta L_1}T_\beta(t-h)\omega_0-T_\beta(t-h)\omega_0}{h}-\beta L_1 T_\beta(t-h)\omega_0\right\|_{\dot H^{\delta+m}}\label{eq:b.8}\\
			&+\left\|\beta L_1 T_\beta(t-h)\omega_0-\beta L_1T_\beta(t)\omega_0\right\|_{\dot H^{\delta+m}}.\label{eq:b.9}
		\end{align}
	and we can deduce that each line tends to 0 as
		$h\to0.$
		
		We next treat \eqref{eq:b.3}. We have
			\begin{align}
				&\quad\left\|\frac1h\int^t_{t-h}T_\beta(t-\tau)\operatorname{div}(u(\tau)\omega(\tau))\, d\tau-\operatorname{div}(u(t)\omega(t))\right\|_{\dot H^{\delta+m}}\label{eq:b.10}\\
				&\leq\left\|\frac1h\int^t_{t-h}T_\beta(t-\tau)\left(\operatorname{div}(u(\tau)\omega(\tau))-\operatorname{div}(u(t)\omega(t))\right)\, d\tau\right\|_{\dot H^{\delta+m}}\label{eq:b.11}\\
				&+\left\|\frac1h\int^t_{t-h}\left(T_\beta(t-\tau)-1\right)\operatorname{div}(u(t)\omega(t))\, d\tau\right\|_{\dot H^{\delta+m}}\label{eq:b.12}\\
				&\leq \sup_{t-h<\tau<t}\left\|\operatorname{div}(u(\tau)\omega(\tau))-\operatorname{div}(u(t)\omega(t))\right\|_{\dot H^{\delta+m}}\label{eq:b.13}\\
				&+\sup_{t-h<\tau<t}\left\|\left(T_\beta(t-\tau)-1\right)\operatorname{div}(u(t)\omega(t))\right\|_{\dot H^{\delta+m}}.\label{eq:b.14}
			\end{align}
		and the bound tends to 0 as 
			$h\to0$.
		
		For \eqref{eq:b.4}, we write
			\begin{align}
				&\quad\frac1h\int_0^{t-h}\left(T_\beta(h)-1\right)T_\beta(t-h-\tau)\operatorname{div}(u(\tau)\omega(\tau))\, d\tau
				\label{eq:b.15}\\
				&=\frac1h\int_0^{\frac {t-h}2}\left(e^{h\Delta}-1\right)e^{h\beta L_1}T_\beta(t-h-\tau)\operatorname{div}(u(\tau)\omega(\tau))\, d\tau
				\label{eq:b.16}\\
				&+\frac1h\int_0^{\frac {t-h}2}\left(e^{h\beta L_1}-1\right)T_\beta(t-h-\tau)\operatorname{div}(u(\tau)\omega(\tau))\, d\tau
				\label{eq:b.17}\\
				&+\frac1h\int_\frac {t-h}2^{t-h}\left(e^{h\Delta}-1\right)e^{h\beta L_1}T_\beta(t-h-\tau)\operatorname{div}(u(\tau)\omega(\tau))\, d\tau
				\label{eq:b.18}\\
				&+\frac1h\int_\frac {t-h}2^{t-h}\left(e^{h\beta L_1}-1\right)T_\beta(t-h-\tau)\operatorname{div}(u(\tau)\omega(\tau))\, d\tau
				\label{eq:b.19}
			\end{align}
		and calculate the limits of each line.
		For \eqref{eq:b.16}, we have
			\begin{align}
				&\quad\left\|\frac1h\int_0^{\frac {t-h}2}\left(e^{h\Delta}-1\right)e^{h\beta L_1}T_\beta(t-h-\tau)\operatorname{div}(u(\tau)\omega(\tau))\, d\tau
				-\int_0^{\frac t2}\Delta T_\beta(t-\tau)\operatorname{div}(u(\tau)\omega(\tau))\, d\tau
				\right\|_{\dot H^{\delta +m}}
				\notag\\
				&\leq \left\|\frac1h\int_0^{\frac {t-h}2}\left(e^{h\Delta}-1\right)e^{h\beta L_1}T_\beta(t-h-\tau)\operatorname{div}(u(\tau)\omega(\tau))\, d\tau\right.\label{eq:b.21}\\
				&\phantom{\frac1h\int_0^{\frac {t-h}2}\left(e^{h\Delta}-1\right)}\left.-\frac1h\int_0^{\frac {t-h}2}\left(e^{h\Delta}-1\right)T_\beta(t-h-\tau)\operatorname{div}(u(\tau)\omega(\tau))\, d\tau\right\|_{\dot H^{\delta+m}}
				\label{eq:b.22}\\
				&+\left\|\frac1h\int_0^{\frac {t-h}2}\left(e^{h\Delta}-1\right)T_\beta(t-h-\tau)\operatorname{div}(u(\tau)\omega(\tau))\, d\tau
				-\int_0^{\frac {t-h}2}\Delta T_\beta(t-h-\tau)\operatorname{div}(u(\tau)\omega(\tau))\, d\tau\right\|_{\dot H^{\delta+m}}\label{eq:b.23}\\
				&+\left\|\int_0^{\frac {t-h}2}\Delta T_\beta(t-h-\tau)\operatorname{div}(u(\tau)\omega(\tau))\, d\tau
				-\int_0^{\frac {t-h}2}\Delta T_\beta(t-\tau)\operatorname{div}(u(\tau)\omega(\tau))\, d\tau\right\|_{\dot H^{\delta+m}}\label{eq:b.24}\\
				&+\left\|-\int_{\frac {t-h}2}^\frac t2\Delta T_\beta(t-\tau)\operatorname{div}(u(\tau)\omega(\tau))\, d\tau\right\|_{\dot H^{\delta+m}}\label{eq:b.25}
			\end{align}
		and we can show that each line converges to 0 as
			$h\to0.$
		For \eqref{eq:b.22}, 
			\begin{align}
				&\left\|\frac1h\int_0^{\frac {t-h}2}\left(e^{h\Delta}-1\right)\left(e^{h\beta L_1}-1\right)T_\beta(t-h-\tau)\operatorname{div}(u(\tau)\omega(\tau))\, d\tau\right\|_{\dot H^{\delta+m}}
				\label{eq:b.26}\\
				&\lesssim h\int_0^{\frac {t-h}2}\left\|\Delta\beta L_1e^{(t-h-\tau)\Delta}(-\Delta)^\frac{1+\delta+m}{2}(u(\tau)\omega(\tau))\right\|_{L^2}\, d\tau
				\label{eq:b.27}\\
				&\lesssim h|\beta|\int_0^{\frac {t-h}2}(t-h-\tau)^{-\frac{2+m}{2}-\frac 1q+\frac12}\left\|(-\Delta)^\frac{\delta}{2}(u(\tau)\omega(\tau))\right\|_{L^q}\, d\tau
				\label{eq:b.28}\\
				&\lesssim h|\beta|t^{1-\frac1{r_1}-\frac1{r_2}-\frac{2+m}{2}-\frac 1q+\frac12}\left\|\omega\right\|_{Y_1}\left\|\omega\right\|_{Y_2}, 
				\label{eq:b.29}
			\end{align}
		and the bound goes to 0 as
			$h\to0.$
		For \eqref{eq:b.23}, we have
			\begin{align}
				&\quad\left\|\int_0^{\frac {t-h}2}\left(\frac{e^{h\Delta}-1}{h}-\Delta\right)T_\beta(t-h-\tau)\operatorname{div}(u(\tau)\omega(\tau))\, d\tau\right\|_{\dot H^{\delta+m}}
				\label{eq:b.30}\\
				&\lesssim\int_0^{\frac {t-h}2}\left\|\left(\frac{e^{h\Delta}-1}{h}-\Delta\right)e^{(t-h-\tau)\Delta}(-\Delta)^{\frac{1+\delta+m}{2}}(u(\tau)\omega(\tau))\right\|_{L^2}\, d\tau.
				\label{eq:b.31}
			\end{align}
		The Plancherel theorem gives
			\begin{align}
				&\quad\left\|\left(\frac{e^{h\Delta}-1}{h}-\Delta\right)e^{(t-h-\tau)\Delta}(-\Delta)^{\frac{1+\delta+m}{2}}(u(\tau)\omega(\tau))\right\|_{L^2}\label{eq:b.32}\\
				&\leq\frac1{2\pi}\left(\int_{\mathbb R^2}\left||\xi|^2\int_0^1\left(e^{-h\theta |\xi|^2}-1\right)\, d\theta\right|^2\left|\mathcal F\left(e^{\frac{t}4\Delta}(-\Delta)^{\frac{1+\delta+m}{2}}(u(\tau)\omega(\tau))\right)(\xi)\right|^2\, d\xi\right)^\frac12\label{eq:b.33}
			\end{align}
		and this tends to 0 for each
			$\tau\in(0, \frac t2)$
		as
			$h\to0, $
		by the dominated convergence theorem.
		We also have
			\begin{align}
				&\quad1_{(0, \frac{t-h}{2})}(\tau)\left\|\left(\frac{e^{h\Delta}-1}{h}-\Delta\right)e^{(t-h-\tau)\Delta}(-\Delta)^{\frac{1+\delta+m}{2}}(u(\tau)\omega(\tau))\right\|_{L^2}\label{eq:b.34}\\
				&\lesssim1_{(0, \frac{t-h}{2})}(\tau)\left\|e^{(t-h-\tau)\Delta}(-\Delta)^{\frac{3+\delta+m}{2}}(u(\tau)\omega(\tau))\right\|_{L^2}\label{eq:b.35}\\
				&\lesssim(t-\tau)^{-\frac{3+m}{2}-\frac1q+\frac12}\left\|(-\Delta)^{\frac{\delta}{2}}(u(\tau)\omega(\tau))\right\|_{L^q}\label{eq:b.36}
			\end{align}
		and this is integrable on
			$(0, \frac t2).$
		Then again the dominated convergence theorem gives
			\begin{align}
				\int_0^{\frac {t-h}2}\left\|\left(\frac{e^{h\Delta}-1}{h}-\Delta\right)e^{(t-h-\tau)\Delta}(-\Delta)^{\frac{1+\delta+m}{2}}(u(\tau)\omega(\tau))\right\|_{L^2}\, d\tau
				\to 0\quad\text{as $h\to0$.}\label{eq.b.37}
			\end{align}
		
		For	\eqref{eq:b.24}, we have
			\begin{align}
				&\quad\left\|\int_0^{\frac {t-h}2}\Delta \left(T_\beta(h)-1\right)T_\beta(t-h-\tau)\operatorname{div}(u(\tau)\omega(\tau))\, d\tau
				\right\|_{\dot H^{\delta+m}}\label{eq:b.38}\\
				&\lesssim\int_0^{\frac {t-h}2}\left\|\left(T_\beta(h)-1\right)e^{(t-h-\tau)\Delta}(-\Delta)^\frac{3+\delta+m}{2}(u(\tau)\omega(\tau))\right\|_{L^2}\, d\tau
				\label{eq:b.39}\\
				&\leq\int_0^{\frac {t-h}2}\left\|\left(e^{h\Delta}e^{h\beta L_1}-e^{h\beta L_1}\right)e^{(t-h-\tau)\Delta}(-\Delta)^\frac{3+\delta+m}{2}(u(\tau)\omega(\tau))\right\|_{L^2}\, d\tau\label{eq:b.40}\\
				&+\int_0^{\frac {t-h}2}\left\|\left(e^{h\beta L_1}-1\right)e^{(t-h-\tau)\Delta}(-\Delta)^\frac{3+\delta+m}{2}(u(\tau)\omega(\tau))\right\|_{L^2}\, d\tau\label{eq:b.41}\\
				&\lesssim h\int_0^{\frac {t-h}2}\left\|e^{(t-h-\tau)\Delta}(-\Delta)^\frac{5+\delta+m}{2}(u(\tau)\omega(\tau))\right\|_{L^2}\, d\tau\label{eq:b.42}\\
				&+h|\beta|\int_0^{\frac {t-h}2}\left\|e^{(t-h-\tau)\Delta}(-\Delta)^\frac{2+\delta+m}{2}(u(\tau)\omega(\tau))\right\|_{L^2}\, d\tau\label{eq:b.43}\\
				&\lesssim h\int_0^{\frac {t-h}2}(t-h-\tau)^{-\frac{5+m}{2}-\frac1q+\frac12}\left\|(-\Delta)^\frac{\delta}{2}(u(\tau)\omega(\tau))\right\|_{L^q}\, d\tau\label{eq:b.44}\\
				&+h|\beta|\int_0^{\frac {t-h}2}(t-h-\tau)^{-\frac{2+m}{2}-\frac1q+\frac12}\left\|(-\Delta)^\frac{\delta}{2}(u(\tau)\omega(\tau))\right\|_{L^q}\, d\tau\label{eq:b.45}\\
				&\lesssim ht^{-\frac{5+m}{2}-\frac1q+\frac12}\int_0^{\frac {t}2}\left\|(-\Delta)^\frac{\delta}{2}(u(\tau)\omega(\tau))\right\|_{L^q}\, d\tau\label{eq:b.46}\\
				&+ht^{-\frac{2+m}{2}-\frac1q+\frac12}|\beta|\int_0^{\frac {t}2}\left\|(-\Delta)^\frac{\delta}{2}(u(\tau)\omega(\tau))\right\|_{L^q}\, d\tau\label{eq:b.47}
			\end{align}
		and the bound tends to 0 as
			$h\to 0.$
			
		For \eqref{eq:b.25}, we have
			\begin{align}
				&\quad\left\|-\int_{\frac {t-h}2}^\frac t2\Delta T_\beta(t-\tau)\operatorname{div}(u(\tau)\omega(\tau))\, d\tau\right\|_{\dot H^{\delta+m}}\label{eq:b.48}\\
				&\lesssim\int_{\frac {t-h}2}^\frac t2\left\|e^{(t-\tau)\Delta}(-\Delta)^{\frac{3+\delta+m}{2}}(u(\tau)\omega(\tau))\right\|_{L^2}\, d\tau\label{eq:b.49}\\
				&\lesssim \int_{\frac {t-h}2}^\frac t2(t-\tau)^{-\frac{3+m}{2}-\frac1q+\frac12}\left\|(-\Delta)^{\frac{\delta}{2}}(u(\tau)\omega(\tau))\right\|_{L^q}\, d\tau\label{eq:b.50}\\
				&\lesssim \int_{\frac {t-h}2}^\frac t2(t-\tau)^{-\frac{3+m}{2}-\frac1q+\frac12}\left\|(-\Delta)^{\frac{\delta}{2}}(u(\tau)\omega(\tau))\right\|_{L^q}\, d\tau\label{eq:b.51}\\
				&\lesssim t^{-\frac{3+m}{2}}h^{1-\frac1{r_1}-\frac1{r_2}-\frac1q+\frac12}\left\|\omega\right\|_{Y_1}\left\|\omega\right\|_{Y_2}\to0\quad\text{as $h\to 0$}\label{eq:b.52}
			\end{align}
		and this gives the estimate for \eqref{eq:b.16}.

		For \eqref{eq:b.17}, similarly we have
			\begin{align}
				&\left\|\frac1h\int_0^{\frac {t-h}2}\left(e^{h\beta L_1}-1\right)T_\beta(t-h-\tau)\operatorname{div}(u(\tau)\omega(\tau))\, d\tau
				-\int_0^{\frac t2}\beta L_1T_\beta(t-\tau)\operatorname{div}(u(\tau)\omega(\tau))\, d\tau\right\|_{\dot H^{\delta+m}}\notag\\
				&\leq \left\|\frac1h\int_0^{\frac {t-h}2}\left(e^{h\beta L_1}-1\right)T_\beta(t-h-\tau)\operatorname{div}(u(\tau)\omega(\tau))\, d\tau
				-\int_0^{\frac {t-h}2}\beta L_1T_\beta(t-h-\tau)\operatorname{div}(u(\tau)\omega(\tau))\, d\tau\right\|_{\dot H^{\delta+m}}\label{eq:b.54}\\
				&+\left\|\int_0^{\frac {t-h}2}\beta L_1T_\beta(t-h-\tau)\operatorname{div}(u(\tau)\omega(\tau))\, d\tau
				-\int_0^{\frac {t-h}2}\beta L_1T_\beta(t-\tau)\operatorname{div}(u(\tau)\omega(\tau))\, d\tau\right\|_{\dot H^{\delta+m}}\label{eq:b.55}\\
				&+\left\|\int_{\frac {t-h}2}^\frac{t}{2}\beta L_1T_\beta(t-\tau)\operatorname{div}(u(\tau)\omega(\tau))\, d\tau\right\|_{\dot H^{\delta+m}}\label{eq:b.56}			\end{align}
		For \eqref{eq:b.54}, we have
			\begin{align}
				&\quad\left\|\int_0^{\frac {t-h}2}\left(\frac{e^{h\beta L_1}-1}{h}-\beta L_1\right)T_\beta(t-h-\tau)\operatorname{div}(u(\tau)\omega(\tau))\, d\tau \right\|_{\dot H^{\delta+m}}\label{eq:b.57}\\
				&\lesssim\int_0^{\frac {t-h}2}\left\|\left(\frac{e^{h\beta L_1}-1}{h}-\beta L_1\right)e^{(t-h-\tau)\Delta}(-\Delta)^\frac{1+\delta+m}{2}(u(\tau)\omega(\tau))\right\|_{L^2}\, d\tau. \label{eq:b.58}
			\end{align}
		For
			$\tau\in(0, \frac{t}{2})$, 
		the Plancherel theorem yields
			\begin{align}
				&\quad1_{(0, \frac{t-h}{2})}(\tau)\left\|\left(\frac{e^{h\beta L_1}-1}{h}-\beta L_1\right)e^{(t-h-\tau)\Delta}(-\Delta)^\frac{1+\delta+m}{2}(u(\tau)\omega(\tau))\right\|_{L^2}\label{eq:b.59}\\
				&\leq\frac1{2\pi}\left(\int_{\mathbb R^2}\left|\beta\frac{i\xi_1}{|\xi|^2}\int_0^1\left(e^{h\theta \beta\frac{i\xi_1}{|\xi|^2}}-1\right)\, d\theta\right|^2\left|\mathcal F\left(e^{\frac{t}4\Delta}(-\Delta)^{\frac{1+\delta+m}{2}}(u(\tau)\omega(\tau))\right)(\xi)\right|^2\, d\xi\right)^\frac12\notag
			\end{align}
		and this converges to 0 as 
			$h\to 0.$
		We also have
			\begin{align}
				&\quad1_{(0, \frac{t-h}{2})}(\tau)\left\|\left(\frac{e^{h\beta L_1}-1}{h}-\beta L_1\right)e^{(t-h-\tau)\Delta}(-\Delta)^\frac{1+\delta+m}{2}(u(\tau)\omega(\tau))\right\|_{L^2}\label{eq:b.61}\\
				&\lesssim 1_{(0, \frac{t-h}{2})}(\tau)\left|\beta\right|\left\|e^{(t-h-\tau)\Delta}(-\Delta)^\frac{\delta+m}{2}(u(\tau)\omega(\tau))\right\|_{L^2}\label{eq:b.62}\\
				&\lesssim 1_{(0, \frac{t-h}{2})}(\tau)\left|\beta\right|(t-h-\tau)^{-\frac m2-\frac1q+\frac12}\left\|(-\Delta)^\frac{\delta}{2}(u(\tau)\omega(\tau))\right\|_{L^q}\label{eq:b.63}\\
				&\lesssim\left|\beta\right|t^{-\frac m2-\frac1q+\frac12}\left\|(-\Delta)^\frac{\delta}{2}(u(\tau)\omega(\tau))\right\|_{L^q}.\label{eq:b.64}
			\end{align}
		The right-hand side is integrable on
			$\tau\in(0, \frac t2).$
		Therefore, the dominated convergence theorem ensures that \eqref{eq:b.54} tends to 0 as
			$h\to0.$
			
		For \eqref{eq:b.55}, we have
			\begin{align}
				&\quad\left\|\int_0^{\frac {t-h}2}\beta L_1\left(T_\beta(h)-1\right)T_\beta(t-h-\tau)\operatorname{div}(u(\tau)\omega(\tau))\, d\tau
				\right\|_{\dot H^{\delta+m}}
				\label{eq:b.65}\\
				&\lesssim |\beta |\int_0^{\frac {t-h}2}\left\|\left(T_\beta(h)-1\right)e^{(t-h-\tau)\Delta}(-\Delta)^\frac{\delta+m}{2}(u(\tau)\omega(\tau))\right\|_{L^2}\, d\tau\label{eq:b.66}\\
				&\leq |\beta |\int_0^{\frac {t-h}2}\left\|\left(e^{h\Delta}e^{h\beta L_1}-e^{h\beta L_1}\right)e^{(t-h-\tau)\Delta}(-\Delta)^\frac{\delta+m}{2}(u(\tau)\omega(\tau))\right\|_{L^2}\, d\tau\label{eq:b.67}\\
				&+|\beta |\int_0^{\frac {t-h}2}\left\|\left(e^{h\beta L_1}-1\right)e^{(t-h-\tau)\Delta}(-\Delta)^\frac{\delta+m}{2}(u(\tau)\omega(\tau))\right\|_{L^2}\, d\tau.\label{eq:b.68}
			\end{align}
		We can deduce that for \eqref{eq:b.67}, 
			\begin{align}
				&\quad|\beta |\int_0^{\frac {t-h}2}\left\|\left(e^{h\Delta}-1\right)e^{(t-h-\tau)\Delta}(-\Delta)^\frac{\delta+m}{2}(u(\tau)\omega(\tau))\right\|_{L^2}\, d\tau\label{eq:b.69}\\
				&\lesssim h|\beta |\int_0^{\frac {t-h}2}\left\|e^{(t-h-\tau)\Delta}(-\Delta)^\frac{2+\delta+m}{2}(u(\tau)\omega(\tau))\right\|_{L^2}\, d\tau\label{eq:b.70}\\
				&\lesssim h|\beta |\int_0^{\frac {t-h}2}(t-h-\tau)^{-\frac {2+m}{2}-\frac1q+\frac12}\left\|(-\Delta)^\frac{\delta}{2}(u(\tau)\omega(\tau))\right\|_{L^q}\, d\tau
				\to0\quad\text{as $h\to0.$}\label{eq:b.71}
			\end{align}
		For \eqref{eq:b.68}, take
			$\gamma_2\in(0, 1)$
		small enough to satisfy
			$\frac1q>\frac12+\frac{\gamma_2}{2}.$
		Then we have
			\begin{align}
				&\quad|\beta |\int_0^{\frac {t-h}2}\left\|\left(e^{h\beta L_1}-1\right)e^{(t-h-\tau)\Delta}(-\Delta)^\frac{\delta+m}{2}(u(\tau)\omega(\tau))\right\|_{L^2}\, d\tau\label{eq:b.72}\\
				&\lesssim |\beta|^{1+\gamma_2}h^{\gamma_2}\int_0^{\frac {t-h}2}\left\|e^{(t-h-\tau)\Delta}(-\Delta)^\frac{\delta+m-\gamma_2}{2}(u(\tau)\omega(\tau))\right\|_{L^2}\, d\tau\label{eq:b.73}\\
				&\lesssim |\beta|^{1+\gamma_2}h^{\gamma_2}\int_0^{\frac {t-h}2}(t-h-\tau)^{-\frac m2-\frac1q+\frac{1+\gamma_2}{2}}\left\|(-\Delta)^\frac{\delta}{2}(u(\tau)\omega(\tau))\right\|_{L^q}\, d\tau\label{eq:b.74}\\
				&\to 0\quad\text{as $h\to0.$}\label{eq:b.75}
			\end{align}
		and this is sufficient for the estimate of \eqref{eq:b.55}.
		For \eqref{eq:b.56}, we have
			\begin{align}
				&\quad\left\|\int_{\frac {t-h}2}^\frac{t}{2}\beta L_1T_\beta(t-\tau)\operatorname{div}(u(\tau)\omega(\tau))\, d\tau\right\|_{\dot H^{\delta+m}}\label{eq:b.76}\\
				&\lesssim|\beta|\int_{\frac {t-h}2}^\frac{t}{2}\left\|e^{(t-\tau)\Delta}(-\Delta)^{\frac{\delta+m}{2}}(u(\tau)\omega(\tau))\right\|_{L^2}\, d\tau\label{eq:b.77}\\
				&\lesssim|\beta|\int_{\frac {t-h}2}^\frac{t}{2}(t-\tau)^{-\frac{m}2-\frac1q+\frac12}\left\|(-\Delta)^{\frac{\delta}{2}}(u(\tau)\omega(\tau))\right\|_{L^q}\, d\tau\label{eq:b.78}\\
				&\lesssim|\beta|t^{-\frac{m}2}h^{1-\frac1{r_1}-\frac1{r_2}-\frac1q+\frac12}\left\|\omega\right\|_{Y_1}\left\|\omega\right\|_{Y_2}\label{eq:b.79}\\
				&\to 0\quad\text{as $h\to 0.$}\label{eq:b.80}
			\end{align}
		and this gives the estimate for \eqref{eq:b.17}.
		
		The estimates for \eqref{eq:b.18} and \eqref{eq:b.19} can similarly be done. For \eqref{eq:b.18}, we have
			\begin{align}
				&\quad\left\|\frac1h\int_\frac {t-h}2^{t-h}\left(e^{h\Delta}-1\right)e^{h\beta L_1}T_\beta(t-h-\tau)\operatorname{div}(u(\tau)\omega(\tau))\, d\tau
				-\int_\frac t2^{t}\Delta T_\beta(t-\tau)\operatorname{div}(u(\tau)\omega(\tau))\, d\tau\right\|_{\dot H^{\delta+m}}
				\notag\\
				&\leq\left\|\frac1h\int_\frac {t-h}2^{t-h}\left(e^{h\Delta}-1\right)e^{h\beta L_1}T_\beta(t-h-\tau)\operatorname{div}(u(\tau)\omega(\tau))\, d\tau\right.\label{eq:b.82}\\
				&\quad\phantom{\leq\|\frac1h\int_\frac {t-h}2^{t-h}}\left.-\frac1h\int_\frac {t-h}2^{t-h}\left(e^{h\Delta}-1\right)e^{h\beta L_1} T_\beta(t-\tau)\operatorname{div}(u(\tau)\omega(\tau))\, d\tau\right\|_{\dot H^{\delta+m}}
				\label{eq:b.83}\\
				&+\left\|\frac1h\int_\frac {t-h}2^{t-h}\left(e^{h\Delta}-1\right)e^{h\beta L_1} T_\beta(t-\tau)\operatorname{div}(u(\tau)\omega(\tau))\, d\tau\right.\label{eq:b.84}\\
				&\left.\quad\phantom{\leq\|\frac1h\int_\frac {t-h}2^{t-h}}-\frac1h\int_\frac {t-h}2^{t-h}\left(e^{h\Delta}-1\right) T_\beta(t-\tau)\operatorname{div}(u(\tau)\omega(\tau))\, d\tau\right\|_{\dot H^{\delta+m}}
				\label{eq:b.85}\\
				&+\left\|\frac1h\int_\frac {t-h}2^{t-h}\left(e^{h\Delta}-1\right) T_\beta(t-\tau)\operatorname{div}(u(\tau)\omega(\tau))\, d\tau
				-\int_\frac {t-h}2^{t-h}\Delta T_\beta(t-\tau)\operatorname{div}(u(\tau)\omega(\tau))\, d\tau\right\|_{\dot H^{\delta+m}}
				\label{eq:b.86}\\
				&+\left\|\int_\frac {t-h}2^{t-h}\Delta T_\beta(t-\tau)\operatorname{div}(u(\tau)\omega(\tau))\, d\tau
				-\int_\frac t2^{t}\Delta T_\beta(t-\tau)\operatorname{div}(u(\tau)\omega(\tau))\, d\tau\right\|_{\dot H^{\delta+m}}.\label{eq:b.87}
			\end{align}
		We show that each line converges to 0.
		
		For \eqref{eq:b.83}, we see that
			\begin{align}
				&\quad\left\|\frac1h\int_\frac {t-h}2^{t-h}\left(e^{h\Delta}-1\right)e^{h\beta L_1}\left(T_\beta(h)-1\right)T_\beta(t-h-\tau)\operatorname{div}(u(\tau)\omega(\tau))\, d\tau\right\|_{\dot H^{\delta+m}}\label{eq:b.88}\\
				&\lesssim \int_\frac {t-h}2^{t-h}\left\|\left(T_\beta(h)-1\right)e^{(t-h-\tau)\Delta}(-\Delta)^\frac{3+\delta+m}{2}(u(\tau)\omega(\tau))\right\|_{L^2}\, d\tau\label{eq:b.89}\\
				&\leq\int_\frac {t-h}2^{t-h}\left\|\left(e^{h\Delta}e^{h\beta L_1}-e^{h\beta L_1}\right)e^{(t-h-\tau)\Delta}(-\Delta)^\frac{3+\delta+m}{2}(u(\tau)\omega(\tau))\right\|_{L^2}\, d\tau\label{eq:b.90}\\
				&+\int_\frac {t-h}2^{t-h}\left\|\left(e^{h\beta L_1}-1\right)e^{(t-h-\tau)\Delta}(-\Delta)^\frac{3+\delta+m}{2}(u(\tau)\omega(\tau))\right\|_{L^2}\, d\tau\label{eq:b.91}\\
				&\lesssim h\int_\frac {t}4^{t}\left\|(-\Delta)^\frac{5+\delta+m}{2}(u(\tau)\omega(\tau))\right\|_{L^2}\, d\tau
				+h|\beta|\int_\frac {t}4^{t}\left\|(-\Delta)^\frac{2+\delta+m}{2}(u(\tau)\omega(\tau))\right\|_{L^2}\, d\tau\label{eq:b.92}\\
				&\to 0\quad\text{as $h\to0.$}\label{eq:b.93}
			\end{align}
		For \eqref{eq:b.85}, a similar argument shows that this term also tends to 0.
		
		For \eqref{eq:b.86}, we have
			\begin{align}
				&\quad\left\|\int_\frac {t-h}2^{t-h}\left(\frac{e^{h\Delta}-1}{h}-\Delta\right) T_\beta(t-\tau)\operatorname{div}(u(\tau)\omega(\tau))\, d\tau
				\right\|_{\dot H^{\delta+m}}\label{eq:b.94}\\
				&\lesssim \int_\frac {t}4^{t}\left\|\left(\frac{e^{h\Delta}-1}{h}-\Delta\right) e^{(t-\tau)\Delta}(-\Delta)^\frac{1+\delta+m}{2}(u(\tau)\omega(\tau))\right\|_{L^2}\, d\tau.\label{eq:b.95}
			\end{align}
		As in the estimate of \eqref{eq:b.23}, applying the dominated convergence theorem is enough to show that this term converges to 0, since 
			$\left\|(-\Delta)^\frac{3+\delta+m}{2}(u(\tau)\omega(\tau))\right\|_{L^2}$
		is integrable on
			$(\frac t4, t).$
		For \eqref{eq:b.87}, we have
			\begin{align}
				&\quad\left\|\int_\frac {t-h}2^{t-h}\Delta T_\beta(t-\tau)\operatorname{div}(u(\tau)\omega(\tau))\, d\tau
				-\int_\frac t2^{t}\Delta T_\beta(t-\tau)\operatorname{div}(u(\tau)\omega(\tau))\, d\tau\right\|_{\dot H^{\delta+m}}\label{eq:b.96}\\
				&\lesssim h\sup_{\tau\in (\frac t4, t)}\left\|(-\Delta)^\frac{3+\delta+m}{2}\left(u(\tau)\omega(\tau)\right)\right\|_{L^2}
				\to 0\quad\text{as $h\to0$}, \label{eq:b.97}
			\end{align}
		and this completes the estimate for \eqref{eq:b.18}.
			
		Finally for \eqref{eq:b.19}, we have
			{
			\begin{align}\displaybreak[0]
				&\quad\left\|\frac1h\int_\frac {t-h}2^{t-h}\left(e^{h\beta L_1}-1\right)T_\beta(t-h-\tau)\operatorname{div}(u(\tau)\omega(\tau))\, d\tau
				-\int_\frac t2^{t}\beta L_1T_\beta(t-\tau)\operatorname{div}(u(\tau)\omega(\tau))\, d\tau\right\|_{\dot H^{\delta+m}}
				\notag\\
				&\leq\left\|\frac1h\int_\frac {t-h}2^{t-h}\left(e^{h\beta L_1}-1\right)T_\beta(t-h-\tau)\operatorname{div}(u(\tau)\omega(\tau))\, d\tau\right.\label{eq:b.99}\\
				&\left.\quad\phantom{\leq\frac1h\int_\frac {t-h}2^{t-h}}-\frac1h\int_\frac {t-h}2^{t-h}\left(e^{h\beta L_1}-1\right)T_\beta(t-\tau)\operatorname{div}(u(\tau)\omega(\tau))\, d\tau\right\|_{\dot H^{\delta+m}}\label{eq:b.100}\\
				&+\left\|\frac1h\int_\frac {t-h}2^{t-h}\left(e^{h\beta L_1}-1\right)T_\beta(t-\tau)\operatorname{div}(u(\tau)\omega(\tau))\, d\tau
				-\int_\frac {t-h}2^{t-h}\beta L_1T_\beta(t-\tau)\operatorname{div}(u(\tau)\omega(\tau))\, d\tau\right\|_{\dot H^{\delta+m}}\label{eq:b.101}\\
				&+\left\|\int_\frac {t-h}2^{t-h}\beta L_1T_\beta(t-\tau)\operatorname{div}(u(\tau)\omega(\tau))\, d\tau
				-\int_\frac {t}2^{t}\beta L_1T_\beta(t-\tau)\operatorname{div}(u(\tau)\omega(\tau))\, d\tau\right\|_{\dot H^{\delta+m}}\label{eq:b.102}.
			\end{align}}
		For \eqref{eq:b.100}, we have
			\begin{align}
				&\quad\left\|\frac1h\int_\frac {t-h}2^{t-h}\left(e^{h\beta L_1}-1\right)\left(T_\beta(h)-1\right)T_\beta(t-h-\tau)\operatorname{div}(u(\tau)\omega(\tau))\, d\tau\right\|_{\dot H^{\delta+m}}\label{eq:b.103}\\
				&\lesssim \frac1h\int_\frac {t-h}2^{t-h}\left\|\left(e^{h\beta L_1}-1\right)\left(T_\beta(h)-1\right)e^{(t-h-\tau)\Delta}(-\Delta)^{\frac{1+\delta+m}{2}}(u(\tau)\omega(\tau))\right\|_{L^2}\, d\tau\label{eq:b.104}\\
				&\lesssim |\beta|\int_\frac {t-h}2^{t-h}\left\|\left(e^{h\Delta}e^{h\beta L_1}-e^{h\beta L_1}\right)e^{(t-h-\tau)\Delta}(-\Delta)^{\frac{\delta+m}{2}}(u(\tau)\omega(\tau))\right\|_{L^2}\, d\tau\label{eq:b.105}\\
				&+|\beta|\int_\frac {t-h}2^{t-h}\left\|\left(e^{h\beta L_1}-1\right)e^{(t-h-\tau)\Delta}(-\Delta)^{\frac{\delta+m}{2}}(u(\tau)\omega(\tau))\right\|_{L^2}\, d\tau\label{eq:b.106}
			\end{align}
		For \eqref{eq:b.105}, we have
			\begin{align}
				&\quad|\beta|\int_\frac {t-h}2^{t-h}\left\|\left(e^{h\Delta}e^{h\beta L_1}-e^{h\beta L_1}\right)e^{(t-h-\tau)\Delta}(-\Delta)^{\frac{\delta+m}{2}}(u(\tau)\omega(\tau))\right\|_{L^2}\, d\tau\label{eq:b.107}\\
				&\lesssim h|\beta|\int_\frac {t}4^{t}\left\|(-\Delta)^{\frac{2+\delta+m}{2}}(u(\tau)\omega(\tau))\right\|_{L^2}\, d\tau
				\to0\quad\text{as $h\to0.$}\label{eq:b.108}
			\end{align}
		For \eqref{eq:b.106}, we have
			\begin{align}
				&\quad|\beta|\int_\frac {t-h}2^{t-h}\left\|\left(e^{h\beta L_1}-1\right)e^{(t-h-\tau)\Delta}(-\Delta)^{\frac{\delta+m}{2}}(u(\tau)\omega(\tau))\right\|_{L^2}\, d\tau\label{eq:b.109}\\
				&\lesssim|\beta|^{1+\gamma_2}h^{\gamma_2}\int_\frac {t-h}2^{t-h}\left\|e^{(t-h-\tau)\Delta}(-\Delta)^{\frac{\delta+m-\gamma_2}{2}}(u(\tau)\omega(\tau))\right\|_{L^2}\, d\tau\label{eq:b.110}\\
				&\lesssim |\beta|^{1+\gamma_2}h^{\gamma_2}\int_\frac {t-h}2^{t-h}(t-h-\tau)^{-\frac1q+\frac12+\frac{\gamma_2}{2}}\left\|(-\Delta)^{\frac{\delta+m}{2}}(u(\tau)\omega(\tau))\right\|_{L^q}\, d\tau\label{eq:b.111}\\
				&\lesssim |\beta|^{1+\gamma_2}h^{\gamma_2}t^{1-\frac1{r_1}-\frac1{r_2}-\frac1q+\frac12+\frac{\gamma_2}{2}}\left(\left\|\omega\right\|_{L^{r_1}([\frac t4, \infty); \dot W^{-1+\delta+m, p_1})}\left\|\omega\right\|_{Y_2}
				+\left\|\omega\right\|_{L^{r_2}([\frac t4, \infty); \dot W^{\delta+m, p_2})}\left\|\omega\right\|_{Y_1}\right)\notag\\
				&\to 0\quad\text{as $h\to0$.}\label{eq:b.113}
			\end{align}
			
		For \eqref{eq:b.101}, we have
			\begin{align}
			&\quad\left\|\int_\frac {t-h}2^{t-h}\left(\frac{e^{h\beta L_1}-1}{h}-\beta L_1\right)T_\beta(t-\tau)\operatorname{div}(u(\tau)\omega(\tau))\, d\tau
				\right\|_{\dot H^{\delta+m}}\label{eq:b.114}\\
				&\lesssim \int_\frac {t}4^{t}\left\|\left(\frac{e^{h\beta L_1}-1}{h}-\beta L_1\right)(-\Delta)^\frac{1+\delta+m}{2}(u(\tau)\omega(\tau))\right\|_{L^2}\, d\tau\label{eq:b.115}
			\end{align}
		and the dominated convergence theorem ensures that this tends to 0.
		
		For \eqref{eq:b.102}, we have
			\begin{align}
				&\quad\left\|\int_\frac {t-h}2^{t-h}\beta L_1T_\beta(t-\tau)\operatorname{div}(u(\tau)\omega(\tau))\, d\tau
				-\int_\frac {t}2^{t}\beta L_1T_\beta(t-\tau)\operatorname{div}(u(\tau)\omega(\tau))\, d\tau\right\|_{\dot H^{\delta+m}}\notag\\
				&\lesssim h|\beta|\sup_{\frac t4<\tau<t}\left\|(-\Delta)^\frac{\delta+m}{2}(u(\tau)\omega(\tau))\right\|_{L^2}
				\to 0\quad\text{as $h\to0$.}\label{eq.b.117}
			\end{align}
		This completes the estimate of the desired convergence.

\section{Proof of Proposition~\ref{prop:5.10} for general $s$}\label{sect:c}

In this section, we prove Proposition~\ref{prop:5.10} for general
	$s>0$.
Here, we assume that the exponents
	$\mathcal A=(\delta, p, r_1, p, r_2)$
satisfies the assumption stated in the beginning of Section~\ref{sect:5}.
We begin by proving an analogue of Lemma~\ref{lem:5.4} for  higher-regularity norms.
\begin{lemma}\label{lem:c.1}
	Let
		$0\leq\delta\leq \frac15$
	be given and assume that
		$\omega_0\in \dot H^{-1+\delta}\cap \dot H^{3-\frac2p}\cap \dot W^{-1+\delta, p'}\cap L^1$
	satisfies the conditions in Proposition~\ref{prop:5.2}, Proposition~\ref{prop:5.3}, and Lemma~\ref{lem:5.4}. Then, for any 
		$s\geq0$
	there is a constant
		$C>0$
	dependent only on 
		$s, $
		$\mathcal A, $
		$\left\|\omega_0\right\|_{L^1}, $
		$|\beta|^{-\frac\delta3}\left\|\omega_0\right\|_{\dot H^{-1+\delta}},$ 
		$|\beta|^{-\frac{1+\delta}3}\left\|\omega_0\right\|_{\dot H^{\delta}},$ 
	and 
		$|\beta|^{-\frac13(1+\delta-\frac2{p'})}\left\|\omega_0\right\|_{\dot W^{-1+\delta, p'}}$
	satisfying
		\begin{align}
			|\beta|^{-\frac13(1+\delta-\frac2{p'})}\left\|\omega(t)\right\|_{\dot W^{-1+\delta+s, p}}
			\leq Ct^{-\frac s2-1+\frac2p}\min\left\{1, |\beta|^{-1+\frac2p}t^{-\frac32(1-\frac2p)}\right\}\label{eq:c.1}
		\end{align}
	for all
		$t>0$.
\end{lemma}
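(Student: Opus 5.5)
We prove \eqref{eq:c.1} by induction on the regularity, in increments $\alpha\in(0,1)$ with $\frac1{r_1}<\frac12-\frac1p+\frac\delta2-\frac\alpha2$, as in Proposition~\ref{prop:5.5}. Set $N_s(t)\coloneqq t^{-\frac s2}N_0(t)$ and $\widetilde\Omega_s(t)\coloneqq\sup_{0<\tau\le t}N_s(\tau)^{-1}\|\omega(\tau)\|_{\dot W^{-1+\delta+s,p}}$. The base case $s=0$ is Lemma~\ref{lem:5.4}. For the inductive step, from $s=k\alpha$ to $s=(k+1)\alpha$, we split the Duhamel formula as usual into the linear term, the integral over $(0,t/2)$ and the integral over $(t/2,t)$. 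We then estimate each piece in $\dot W^{-1+\delta+s,p}$ by $N_s(t)$ times a constant, plus a small multiple of $N_s(t)\widetilde\Omega_s(t)$.

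The linear term is the easy one. Writing $T_\beta(t)=e^{\frac t2\Delta}T_{2\beta}(t/2)$, Proposition~\ref{prop:2.2} with $s'-s$ extra derivatives gives $\|T_\beta(t)\omega_0\|_{\dot W^{-1+\delta+s,p}}\lesssim N_s(t)\|\omega_0\|_{\dot W^{-1+\delta,p'}}$. The same device handles the $(0,t/2)$ integral. Here $t-\tau\sim t$, so all $s$ derivatives fall on the heat kernel and produce the factor $t^{-s/2}$. Repeating \eqref{eq:5.86}--\eqref{eq:5.90} then gives the bound $N_s(t)\Omega_\delta(t)\|\omega\|_{Y_1}|\beta|^{-\frac23(\frac1p-\frac\delta2-\frac1{r_1})}$, which is finite by Propositions~\ref{prop:5.3} and~\ref{prop:5.5} together with interpolation for $\Omega_\delta$.

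The main obstacle is the $(t/2,t)$ integral, where the derivatives cannot be absorbed by the heat kernel. We place $\alpha$ derivatives on the kernel, at the cost of $(t-\tau)^{-\alpha/2}$, and apply Proposition~\ref{prop:2.4} to $(-\Delta)^{\frac{\delta+k\alpha}2}(u\omega)$ in $L^{p'}$. This produces the two terms
\[
\|\omega\|_{\dot W^{-1+\delta+k\alpha,p}}\|\omega\|_{\dot W^{\delta,p}}
\quad\text{and}\quad
\|\omega\|_{\dot W^{-1+\delta,p}}\|\omega\|_{\dot W^{\delta+k\alpha,p}}.
\]
For the first term we use the induction hypothesis $\widetilde\Omega_{k\alpha}$ together with the decay $M_\delta$ from Propositions~\ref{prop:5.3} and~\ref{prop:5.5}. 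For the second we use $\widetilde\Omega_0$ together with $M_{\delta+k\alpha}$, which follows from Proposition~\ref{prop:5.5} by interpolation. In both cases the time kernel is $(t-\tau)^{-\frac1p+\frac\delta2-\frac\alpha2}\min\{1,|\beta|^{-1+\frac2p}(t-\tau)^{-\frac32(1-\frac2p)}\}$. Integrated over $(t/2,t)$, this gives a power of $|\beta|^{\frac13}t^{\frac12}$ times $N_{(k+1)\alpha}(t)$. That power need not be bounded on its own, so, as in \eqref{eq:5.126}--\eqref{eq:5.129}, we also derive a second bound by putting one factor into $Y_2$ (or $Y_1$) via H\"older in time. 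This yields an exponent of opposite sign, and interpolating the two bounds removes the dependence on $t$.

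Should the exponents fail to interpolate cleanly, we fall back on estimating the second term by $\|\omega\|_{Y_2}$-type norms with $t\ge\varepsilon$, following Lemma~\ref{lem:4.1}. For this step, the smallness of $|\beta|^{-\frac{1+\delta}3}\|\omega_0\|_{\dot H^\delta}$ lets us absorb any $\widetilde\Omega_{(k+1)\alpha}$ contribution into the left-hand side, exactly as in \eqref{eq:5.93}; finiteness of $\widetilde\Omega_s(t)$ on bounded intervals follows from Theorem~\ref{thm:1.2} and Corollary~\ref{cor:4.3}. Finally, general $s$ is reached by taking $N$ steps with $N\alpha=s$, adjusting $\alpha$ if necessary.
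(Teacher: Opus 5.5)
Your argument is correct and follows the paper's overall structure. You split Duhamel into three pieces, treat the linear and $(0,\frac t2)$ pieces as in Lemma~\ref{lem:5.4} with the $s$ derivatives absorbed by the heat factor, and handle the $(\frac t2,t)$ piece with Proposition~\ref{prop:2.4} and the already established decay of $\|\omega\|_{\dot W^{\sigma,p}}$. Where you differ is in how you deal with the top-order term on $(\frac t2,t)$.

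The paper puts all $s$ derivatives on the product, estimating $\int_{t/2}^tN_0(t-\tau)\|(-\Delta)^{\frac{s+\delta}2}(u\omega)\|_{L^{p'}}\,d\tau$. It then eliminates the unknown factor with the product interpolation inequality
\[
\|\omega\|_{\dot W^{-1+\delta+s,p}}\|\omega\|_{\dot W^{\delta,p}}\lesssim\|\omega\|_{\dot W^{-1+\delta,p}}\|\omega\|_{\dot W^{\delta+s,p}}.
\]
Only $\widetilde\Omega_0$ (or $\|\omega\|_{Y_1}$) and the known $\Omega_{\delta+s}$ remain, so every $s$ is reached in one step, with no induction and no step-size constraint.

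You instead move $\alpha$ derivatives onto the semigroup and control $\|\omega\|_{\dot W^{-1+\delta+k\alpha,p}}$ by the induction hypothesis, as in Proposition~\ref{prop:5.5}. This avoids the product inequality but costs $s/\alpha$ iterations. Applying that inequality at level $k\alpha$ would make your induction hypothesis unnecessary.

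Two side remarks:
\begin{itemize}
\item If you keep the $\min$ factor of $M_\delta$ (resp.\ $M_{\delta+k\alpha}$) rather than passing to one branch, the powers of $t$ cancel exactly. Each $(\frac t2,t)$ term is then bounded by $N_{(k+1)\alpha}(t)$ times the relevant sup-quantities, so your two-bound interpolation is not needed.
\item Your main route never produces $\widetilde\Omega_{(k+1)\alpha}$ on the right-hand side, so no absorption or smallness is used. That is just as well, because the fallback's claim that $\widetilde\Omega_s$ is finite on bounded intervals does not follow from Theorem~\ref{thm:1.2} and Corollary~\ref{cor:4.3}. These give continuity on $(0,\infty)$ but no rate as $\tau\to0$, and $\omega_0\in\dot H^{3-\frac2p}$ only controls $\|\omega(\tau)\|_{\dot W^{1,p}}$ near $\tau=0$.
\end{itemize}
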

\proof
	It suffices to prove the estimate for 
		$s\in\mathbb N.$
	Define the functions
		$N_s$
	and
		$\widetilde\Omega_s$
	on
		$(0, \infty)$
	by
		\begin{align}
			N_s(t)\coloneqq t^{-\frac s2-1+\frac2p}\min\left\{1, |\beta|^{-1+\frac2p}t^{-\frac32(1-\frac2p)}\right\}\label{eq:c.2}
		\end{align}
	and
		\begin{align}
			\widetilde \Omega_s(t)\coloneqq \sup_{0<\tau\leq t}N_s(\tau)^{-1}\left\|\omega(\tau)\right\|_{\dot W^{-1+\delta+s, p}}.\label{eq:c.3}
		\end{align}
	Similarly to Lemma~\ref{lem:5.4}, we can prove 
		\begin{align}
			&\left\|T_\beta(t)\omega_0\right\|_{\dot W^{-1+\delta+s,p}}
			+\int_0^\frac t2\left\|T_\beta(t-\tau)\operatorname{div}(u(\tau)\omega(\tau))\right\|_{\dot W^{-1+\delta+s,p}}\, d\tau\label{eq:c.4}\\
			&\lesssim N_s(t)\left(\left\|\omega_0\right\|_{\dot W^{-1+\delta, p'}}+\Omega_\delta(t)|\beta|^{-\frac\delta3}\left\|\omega_0\right\|_{\dot H^{-1+\delta}}|\beta|^{\frac13(1+\delta-\frac2{p'})}\right).\label{eq:c.5}
		\end{align}
	Therefore, we are left to prove the estimate for the integral over
		$(\frac t2, t)$.
	We have
		\begin{align}
			&\quad\int^t_\frac t2\left\|T_\beta(t-\tau)\operatorname{div}(u(\tau)\omega(\tau))\right\|_{\dot W^{-1+\delta+s, p}}\, d\tau\label{eq:c.6}\\
			&\lesssim \int_\frac t2^tN_0(t-\tau)\left\|(-\Delta)^\frac {s+\delta}2(u(\tau)\omega(\tau))\right\|_{L^{p'}}\, d\tau\label{eq:c.7}\\
			&\lesssim \int_\frac t2^tN_0(t-\tau)\left\|\omega(\tau)\right\|_{\dot W^{-1+\delta+s, p}}\left\|\omega(\tau)\right\|_{\dot W^{\delta, p}}\, d\tau\label{eq:c.8}\\
			&+ \int_\frac t2^tN_0(t-\tau)\left\|\omega(\tau)\right\|_{\dot W^{-1+\delta, p}}\left\|\omega(\tau)\right\|_{\dot W^{\delta+s, p}}\, d\tau\label{eq:c.9}.
		\end{align}
	
	For \eqref{eq:c.8}, the interpolation inequality gives
		\begin{align*}
			\left\|\omega(\tau)\right\|_{\dot W^{-1+\delta+s, p}}
			\lesssim \left\|\omega(\tau)\right\|_{\dot W^{-1+\delta, p}}^\frac{1}{1+s}
			\left\|\omega(\tau)\right\|_{\dot W^{\delta+s, p}}^\frac{s}{1+s}
		\end{align*}
	and
		\begin{align*}
			\left\|\omega(\tau)\right\|_{\dot W^{\delta}}
			\lesssim \left\|\omega(\tau)\right\|_{\dot W^{-1+\delta, p}}^\frac{s}{1+s}
			\left\|\omega(\tau)\right\|_{\dot W^{\delta+s, p}}^\frac{1}{1+s}.
		\end{align*}
	Combining these inequalities, we have
		\begin{align*}
			\left\|\omega(\tau)\right\|_{\dot W^{-1+\delta+s, p}}
			\left\|\omega(\tau)\right\|_{\dot W^{\delta}}
			\lesssim \left\|\omega(\tau)\right\|_{\dot W^{-1+\delta, p}}
			\left\|\omega(\tau)\right\|_{\dot W^{\delta+s, p}}
		\end{align*}
	and therefore, the estimate of \eqref{eq:c.8} is reduced to that of \eqref{eq:c.9}.
		
	For \eqref{eq:c.9}, we have
		\begin{align}
			 &|\beta|^{-\frac13(1+\delta-\frac2{p'})}\int_\frac t2^tN_0(t-\tau)\left\|\omega(\tau)\right\|_{\dot W^{-1+\delta, p}}\left\|\omega(\tau)\right\|_{\dot W^{\delta+s, p}}\, d\tau\label{eq:c.18}\\
			 &\lesssim |\beta|^{-\frac13(1+\delta-\frac2{p'})}M_{\delta+s}(t)\Omega_{\delta+s}(t)\left\|N_0\right\|_{L^{r_1'}([0, \frac t2))}\left\|\omega\right\|_{Y_1}\label{eq:c.19}\\
			 &\lesssim |\beta|^{-\frac13(1+\delta-\frac2{p'})}t^{-\frac \delta2-\frac s2-1+\frac1p+1-\frac1{r_1}-1+\frac2p}\Omega_{\delta+s}(t)\left\|\omega\right\|_{Y_1}\label{eq:c.20}\\
			 &\lesssim \left(|\beta|^\frac13t^\frac12\right)^{2(1-\frac2p-\frac1{r_1})}t^{-\frac s2-1+\frac2p}\Omega_{\delta+s}(t)|\beta|^{-\frac\delta3}\left\|\omega_0\right\|_{\dot H^{-1+\delta}}\label{eq:c.21}
		\end{align}
	and
		\begin{align}
			&|\beta|^{-\frac13(1+\delta-\frac2{p'})}\int_\frac t2^tN_0(t-\tau)\left\|\omega(\tau)\right\|_{\dot W^{-1+\delta, p}}\left\|\omega(\tau)\right\|_{\dot W^{\delta+s, p}}\, d\tau\label{eq:c.22}\\
			&\lesssim |\beta|^{-\frac13(1+\delta-\frac2{p'})}t^{\frac2p}N_0(t)\widetilde \Omega_0(t)M_{\delta+s}(t)\Omega_{\delta+s}(t)\label{eq:c.23}\\
			&\lesssim |\beta|^{-2+\frac4p}t^{\frac6p-3}t^{-\frac s2-1+\frac2p}|\beta|^{-\frac13(1+\delta-\frac2{p'})}\widetilde \Omega_0(t)\Omega_{\delta+s}(t)\label{eq:c.24}\\
			&\lesssim \left(|\beta|^\frac13t^\frac12\right)^{-6+\frac{12}p}t^{-\frac s2-1+\frac2p}|\beta|^{-\frac13(1+\delta-\frac2{p'})}\widetilde \Omega_0(t)\Omega_{\delta+s}(t).\label{eq:c.25}
		\end{align}
	We can check that
		$1-\frac2p-\frac1{r_1}>0$
	and
		$\frac12(-6+\frac{12}p)<-\frac32(1-\frac2p).$
	Thus we obtain
		\begin{align}
			&|\beta|^{-\frac13(1+\delta-\frac2{p'})}\int_\frac t2^tN_0(t-\tau)\left\|\omega(\tau)\right\|_{\dot W^{-1+\delta, p}}\left\|\omega(\tau)\right\|_{\dot W^{\delta+s, p}}\, d\tau\label{eq:c.26}\\
			&\lesssim t^{-\frac s2-1+\frac2p}\Omega_{\delta+s}(t)\left(|\beta|^{-\frac\delta3}\left\|\omega_0\right\|_{\dot H^{-1+\delta}}\right)^\frac{3-\frac{6}{p}}{4-\frac{8}p-\frac1{r_1}}\left(|\beta|^{-\frac13(1+\delta-\frac2{p'})}\widetilde \Omega_0(t)\right)^\frac{1-\frac2p-\frac1{r_1}}{4-\frac{8}p-\frac1{r_1}}\label{eq:c.27}
		\end{align}
	and
		\begin{align}
			&|\beta|^{-\frac13(1+\delta-\frac2{p'})}\int_\frac t2^tN_0(t-\tau)\left\|\omega(\tau)\right\|_{\dot W^{-1+\delta, p}}\left\|\omega(\tau)\right\|_{\dot W^{\delta+s, p}}\, d\tau\label{eq:c.28}\\
			&\lesssim |\beta|^{-1+\frac2p}t^{-\frac s2-\frac52(1-\frac2p)}\Omega_{\delta+s}(t)\left(|\beta|^{-\frac\delta3}\left\|\omega_0\right\|_{\dot H^{-1+\delta}}\right)^\frac{3-\frac6p}{8-\frac{16}p-\frac2{r_1}}\left(|\beta|^{-\frac13(1+\delta-\frac2{p'})}\widetilde \Omega_0(t)\right)^\frac{5-\frac{10}p-\frac2{r_1}}{8-\frac{16}p-\frac2{r_1}}.\notag
		\end{align}
	These are sufficient to yield the desired estimate.
\qed

Finally, we treat the proof of Proposition~\ref{prop:5.10} for the case of general
	$s>0$.
Here, we assume that
	$0\leq\delta<\frac1{13}, $
and we may also assume that
	$s\in\mathbb N$
without loss of generality.
We have
	\begin{align}
		&\int^t_\frac t2\left\|T_\beta(t-\tau)\operatorname{div}(u(\tau)\omega(\tau))\right\|_{\dot W^{s, \infty}}\, d\tau\label{eq:c.30}\\
		&\lesssim \int^t_\frac t2(t-\tau)^{-\frac58}\min\left\{1, |\beta|^{-\frac{1}{4}}(t-\tau)^{-\frac38}\right\}\left\|(-\Delta)^\frac s2u(\tau)\cdot\nabla\omega(\tau)\right\|_{L^{\frac85}}\, d\tau\label{eq:c.31}
		\\
		&\lesssim \int^t_\frac t2(t-\tau)^{-\frac58}\min\left\{1, |\beta|^{-\frac{1}{4}}(t-\tau)^{-\frac38}\right\}\left\|\omega(\tau)\right\|_{\dot W^{-1+\delta+s, p}}\left\|\omega(\tau)\right\|_{\dot W^{1, b}}\, d\tau\label{eq:c.32}\\
		&+\int^t_\frac t2(t-\tau)^{-\frac58}\min\left\{1, |\beta|^{-\frac{1}{4}}(t-\tau)^{-\frac38}\right\}\left\|\omega(\tau)\right\|_{\dot W^{-1+\delta, p}}\left\|\omega(\tau)\right\|_{\dot W^{1+s, b}}\, d\tau\label{eq:c.33}.
	\end{align}
	
For \eqref{eq:c.32}, we have
	\begin{align}
		& \int^t_\frac t2(t-\tau)^{-\frac58}\min\left\{1, |\beta|^{-\frac{1}{4}}(t-\tau)^{-\frac38}\right\}\left\|\omega(\tau)\right\|_{\dot W^{-1+\delta+s, p}}\left\|\omega(\tau)\right\|_{\dot W^{1, b}}\, d\tau\label{eq:c.34}\\
		&\lesssim M_{-1+\delta+s}(t)\Omega_{-1+\delta+s}(t)\left\|\omega\right\|_{L^{r_1}([0, \infty); \dot W^{1, b})}\left(\int_{\frac t2}^t(t-\tau)^{-\frac58r_1'}\, d\tau\right)^{\frac1{r_1'}}.\label{eq:c.35}
	\end{align}
Our setting ensures that
	$\frac1b<\frac1p$, 
	$\dot W^{s+\frac34-\frac2p, p}\hookrightarrow \dot W^{s, b}$
and
	$\frac58<1-\frac1{r_1}.$
Therefore, we obtain
	\begin{align}
		& \int^t_\frac t2(t-\tau)^{-\frac58}\min\left\{1, |\beta|^{-\frac{1}{4}}(t-\tau)^{-\frac38}\right\}\left\|\omega(\tau)\right\|_{\dot W^{-1+\delta+s, p}}\left\|\omega(\tau)\right\|_{\dot W^{1, b}}\, d\tau\label{eq:c.36}\\
		&\lesssim t^{-\frac{-1+\delta+s}2-1+\frac1p}\Omega_{-1+\delta+s}(t)\left\|\omega\right\|_{L^{r_1}([0, \infty); \dot W^{\frac74-\frac2p, p})}t^{1-\frac1{r_1}-\frac58}\label{eq:c.37}\\
		&\lesssim \left(|\beta|^\frac13t^\frac12\right)^{\frac{15}4-\frac4p-\frac2{r_1}}t^{-\frac s2-1}\Omega_{-1+\delta+s}(t)\label{eq:c.38}\\
		&\cdot\left(|\beta|^{-\frac{\delta}{3}}\left\|\omega_0\right\|_{\dot H^{-1+\delta}}
		+|\beta|^{-\frac{1}{3}(\frac{11}4-\frac2p)}\left\|\omega_0\right\|_{\dot H^{\frac74-\frac2p}}
		+|\beta|^{-\frac{\delta}{3}}\left\|\omega_0\right\|_{\dot H^{-1+\delta}}
		|\beta|^{-\frac{1}{3}(\frac{15}4-\frac2p)}\left\|\omega_0\right\|_{\dot H^{\frac{11}4-\frac2p}}\right).\label{eq:c.39}
	\end{align}
As in the proof of Proposition~\ref{prop:5.10}, we can show
	\begin{align}
		& \int^t_\frac t2(t-\tau)^{-\frac58}\min\left\{1, |\beta|^{-\frac{1}{4}}(t-\tau)^{-\frac38}\right\}\left\|\omega(\tau)\right\|_{\dot W^{-1+\delta+s, p}}\left\|\omega(\tau)\right\|_{\dot W^{1, b}}\, d\tau\label{eq:c.40}\\
		&\lesssim N_s(t)\widetilde\Omega_s(t)M_{1, b}(t)\Omega_{1, b}(t)\int^t_\frac t2 (t-\tau)^{-\frac58}\min\left\{1, |\beta|^{-\frac{1}{4}}(t-\tau)^{-\frac38}\right\}\, d\tau\label{eq:c.41}\\
		&\lesssim |\beta|^{-1}t^{-\frac s2-\frac52}\Omega_{1, b}(t)\left(|\beta|^{-\frac13(1+\delta-\frac2{p'})}\widetilde\Omega_s(t)\right).\label{eq:c.42}
	\end{align}
By interpolating \eqref{eq:c.39} and \eqref{eq:c.42}, we have
	\begin{align}
		& \int^t_\frac t2(t-\tau)^{-\frac58}\min\left\{1, |\beta|^{-\frac{1}{4}}(t-\tau)^{-\frac38}\right\}\left\|\omega(\tau)\right\|_{\dot W^{-1+\delta+s, p}}\left\|\omega(\tau)\right\|_{\dot W^{1, b}}\, d\tau\label{eq:c.43}\\
		&\lesssim t^{-\frac s2-1}\left(\Omega_{1, b}(t)\left(|\beta|^{-\frac13(1+\delta-\frac2{p'})}\widetilde\Omega_s(t)\right)\right)^\frac{\frac{15}4-\frac4p-\frac2{r_1}}{\frac{27}4-\frac4p-\frac2{r_1}}\label{eq:c.44}\\
		&\cdot\Big[\Omega_{-1+\delta+s}(t)\Big(|\beta|^{-\frac{\delta}{3}}\left\|\omega_0\right\|_{\dot H^{-1+\delta}}
		+|\beta|^{-\frac{1}{3}(\frac{11}4-\frac2p)}\left\|\omega_0\right\|_{\dot H^{\frac74-\frac2p}}
		\\&\phantom{\Omega_{-1+\delta+s}(t)\Big(|\beta|^{-\frac{\delta}{3}}}+|\beta|^{-\frac{\delta}{3}}\left\|\omega_0\right\|_{\dot H^{-1+\delta}}
		|\beta|^{-\frac{1}{3}(\frac{15}4-\frac2p)}\left\|\omega_0\right\|_{\dot H^{\frac{11}4-\frac2p}}\Big)\Big]^\frac{3}{\frac{27}4-\frac4p-\frac2{r_1}}.\label{eq:c.45}
	\end{align}
This yields the desired estimate of \eqref{eq:c.32}.

For \eqref{eq:c.33}, we similarly obtain
	\begin{align}
		& \int^t_\frac t2(t-\tau)^{-\frac58}\min\left\{1, |\beta|^{-\frac{1}{4}}(t-\tau)^{-\frac38}\right\}\left\|\omega(\tau)\right\|_{\dot W^{-1+\delta, p}}\left\|\omega(\tau)\right\|_{\dot W^{1+s, b}}\, d\tau\label{eq:c.46}\\
		&\lesssim M_{1+s, b}(t)\Omega_{1+s, b}(t)\left\|\omega\right\|_{Y_1}t^{\frac38-\frac1{r_1}}\label{eq:c.47}\\
		&\lesssim \left(|\beta|^\frac13t^\frac12\right)^{1+\delta-\frac2p-\frac2{r_1}}t^{-\frac s2-1}\Omega_{1+s, b}(t)|\beta|^{-\frac\delta3}\left\|\omega_0\right\|_{\dot H^{-1+\delta}}\label{eq:c.48}
	\end{align}
and
	\begin{align}
		& \int^t_\frac t2(t-\tau)^{-\frac58}\min\left\{1, |\beta|^{-\frac{1}{4}}(t-\tau)^{-\frac38}\right\}\left\|\omega(\tau)\right\|_{\dot W^{-1+\delta, p}}\left\|\omega(\tau)\right\|_{\dot W^{1+s, b}}\, d\tau\label{eq:c.49}\\
		&\lesssim N_0(t)\widetilde\Omega_0(t)M_{1+s, b}(t)\Omega_{1+s, b}(t)\int^t_\frac t2 (t-\tau)^{-\frac58}\min\left\{1, |\beta|^{-\frac{1}{4}}(t-\tau)^{-\frac38}\right\}\, d\tau\label{eq:c.50}\\
		&\lesssim |\beta|^{-1}t^{-\frac s2-\frac52}\Omega_{1+s, b}(t)\left(|\beta|^{-\frac13(1+\delta-\frac2{p'})}\widetilde\Omega_0(t)\right).\label{eq:c.51}
	\end{align}
Now the interpolation of these estimates gives
	\begin{align}
		& \int^t_\frac t2(t-\tau)^{-\frac58}\min\left\{1, |\beta|^{-\frac{1}{4}}(t-\tau)^{-\frac38}\right\}\left\|\omega(\tau)\right\|_{\dot W^{-1+\delta, p}}\left\|\omega(\tau)\right\|_{\dot W^{1+s, b}}\, d\tau\label{eq:c.52}\\
		&\lesssim t^{-\frac s2-1}\Omega_{1+s, b}(t)\left(|\beta|^{-\frac\delta3}\left\|\omega_0\right\|_{\dot H^{-1+\delta}}\right)^\frac{3}{4+\delta-\frac2p-\frac2{r_1}}
		\left(|\beta|^{-\frac13(1+\delta-\frac2{p'})}\widetilde\Omega_0(t)\right)^\frac{1+\delta-\frac2p-\frac2{r_1}}{4+\delta-\frac2p-\frac2{r_1}}.\label{eq:c.53}
	\end{align}
This is sufficient to conclude the proof.

\subsection*{Acknowledgement}
The author would like to express his sincere gratitude to Professor Ryo Takada for his valuable suggestions and continuous encouragement. The author is also grateful to Professor Yasunori Maekawa for his insightful comments.
This research was supported by FoPM, WINGS Program, the University of Tokyo.

%
%

\bibliographystyle{alpha} 
\bibliography{refs}          

\end{document}